\newcommand{\lyxmathsym}[1]{\ifmmode\begingroup\def\b@ld{bold}
  \text{\ifx\math@version\b@ld\bfseries\fi#1}\endgroup\else#1\fi}
\numberwithin{equation}{section}
\newenvironment{lyxlist}[1]
{\begin{list}{}
{\settowidth{\labelwidth}{#1}
 \setlength{\leftmargin}{\labelwidth}
 \addtolength{\leftmargin}{\labelsep}
 }}
{\end{list}}
\theoremstyle{plain}
\newtheorem{thm}{\protect\theoremname}[section]
  \theoremstyle{plain}
  \newtheorem{prop}[thm]{\protect\propositionname}
  \theoremstyle{plain}
  \newtheorem{lem}[thm]{\protect\lemmaname}
  \theoremstyle{plain}
  \newtheorem{cor}[thm]{\protect\corollaryname}
  \theoremstyle{remark}
  \newtheorem{rem}[thm]{\protect\remarkname}
\numberwithin{equation}{section}
\theoremstyle{plain}
\newtheorem*{thma}{Theorem A}
\newtheorem*{thmb}{Theorem B}
  \providecommand{\corollaryname}{Corollary}
  \providecommand{\lemmaname}{Lemma}
  \providecommand{\propositionname}{Proposition}
  \providecommand{\remarkname}{Remark}
\providecommand{\theoremname}{Theorem}
\begin{document}

\title{The unstable set of a periodic orbit for delayed positive feedback}

\maketitle
\begin{center}
{\large{Tibor Krisztin}}%
\footnote{e-mail: krisztin@math.u-szeged.hu%
},{\large{ Gabriella Vas}}%
\footnote{e-mail: vasg@math.u-szeged.hu%
} 
\par\end{center}

\begin{center}
MTA-SZTE Analysis and Stochastic Research Group, Bolyai Institute,
University of Szeged, Hungary
\par\end{center}

\noindent \textbf{~}
\begin{abstract}
\noindent In the paper {[}Large-amplitude periodic solutions for differential
equations with delayed monotone positive feedback, JDDE 23 (2011),
no. 4, 727\textendash{}790{]}, we have constructed large-amplitude
periodic orbits for an equation with delayed monotone positive feedback.
We have shown that the unstable sets of the large-amplitude periodic
orbits constitute the global attractor besides spindle-like structures.
In this paper we focus on a large-amplitude periodic orbit $\mathcal{O}_{p}$
with two Floquet multipliers outside the unit circle, and we intend
to characterize the geometric structure of its unstable set $\mathcal{W}^{u}\left(\mathcal{O}_{p}\right)$.
We prove that $\mathcal{W}^{u}\left(\mathcal{O}_{p}\right)$ is a
three-dimensional $C^{1}$-submanifold of the phase space and admits
a smooth global graph representation. Within $\mathcal{W}^{u}\left(\mathcal{O}_{p}\right)$,
there exist heteroclinic connections from $\mathcal{O}_{p}$ to three
different periodic orbits. These connecting sets are two-dimensional
$C^{1}$-submanifolds of $\mathcal{W}^{u}\left(\mathcal{O}_{p}\right)$
and homeomorphic to the two-dimensional open annulus. They form $C^{1}$-smooth
separatrices in the sense that they divide the points of $\mathcal{W}^{u}\left(\mathcal{O}_{p}\right)$
into three subsets according to their $\omega$-limit sets.
\end{abstract}
\noindent ~

\noindent \textbf{Key words:} Delay differential equation, Positive
feedback, Periodic orbit, Unstable set, Floquet theory, Poincar\'e
map, Invariant manifold, Lyapunov functional, Transversality

~

\noindent \textbf{Suggested running head:} The unstable set of a periodic
orbit

\noindent ~

\noindent \textbf{AMS Subject Classification}: 34K13, 34K19, 37C70,
37D05, 37L25, 37L45

\section{Introduction}

Consider the delay differential equation
\begin{equation}
\dot{x}\left(t\right)=-\mu x\left(t\right)+f\left(x\left(t-1\right)\right),\label{eq:eq_general}
\end{equation}
 where $\mu$ is a positive constant and $f:\mathbb{R}\rightarrow\mathbb{R}$
is a smooth monotone nonlinearity.

The natural phase space for Eq.\,\eqref{eq:eq_general} is $C=C\left(\left[-1,0\right],\mathbb{R}\right)$
equipped with the supremum norm. For any $\varphi\in C$, there is
a unique solution $x^{\varphi}:\left[-1,\infty\right)\rightarrow\mathbb{R}$
of \eqref{eq:eq_general}. For each $t\geq0$, $x_{t}^{\varphi}\in C$
is defined by $x_{t}^{\varphi}\left(s\right)=x^{\varphi}\left(t+s\right)$,
$-1\leq s\leq0$. Then the map 
\[
\Phi:\left[-1,\infty\right)\times C\ni\left(t,\varphi\right)\mapsto x_{t}^{\varphi}\in C
\]
 is a continuous semiflow. 

In \cite{Krisztin-Vas}, the authors of this paper have studied Eq.\,\eqref{eq:eq_general}
under the subsequent hypothesis:
\begin{lyxlist}{00.00.0000}
\item [{{(H1)}}] $\mu>0$, $f\in C^{1}\left(\mathbb{R},\mathbb{R}\right)$
with $f'\left(\xi\right)>0$ for all $\xi\in\mathbb{R}$, and 
\[
\xi_{-2}<\xi_{-1}<\xi_{0}=0<\xi_{1}<\xi_{2}
\]
 are five consecutive zeros of $\mathbb{R}\ni\xi\mapsto-\mu\xi+f\left(\xi\right)\in\mathbb{R}$
with $f'\left(\xi_{j}\right)<\mu<f'\left(\xi_{k}\right)$ for $j\in\left\{ -2,0,2\right\} $
and $k\in\left\{ -1,1\right\} $ (see Fig.\,1). 
\end{lyxlist}
\begin{center}
\begin{figure}[h]
\begin{centering}
\includegraphics[scale=0.5]{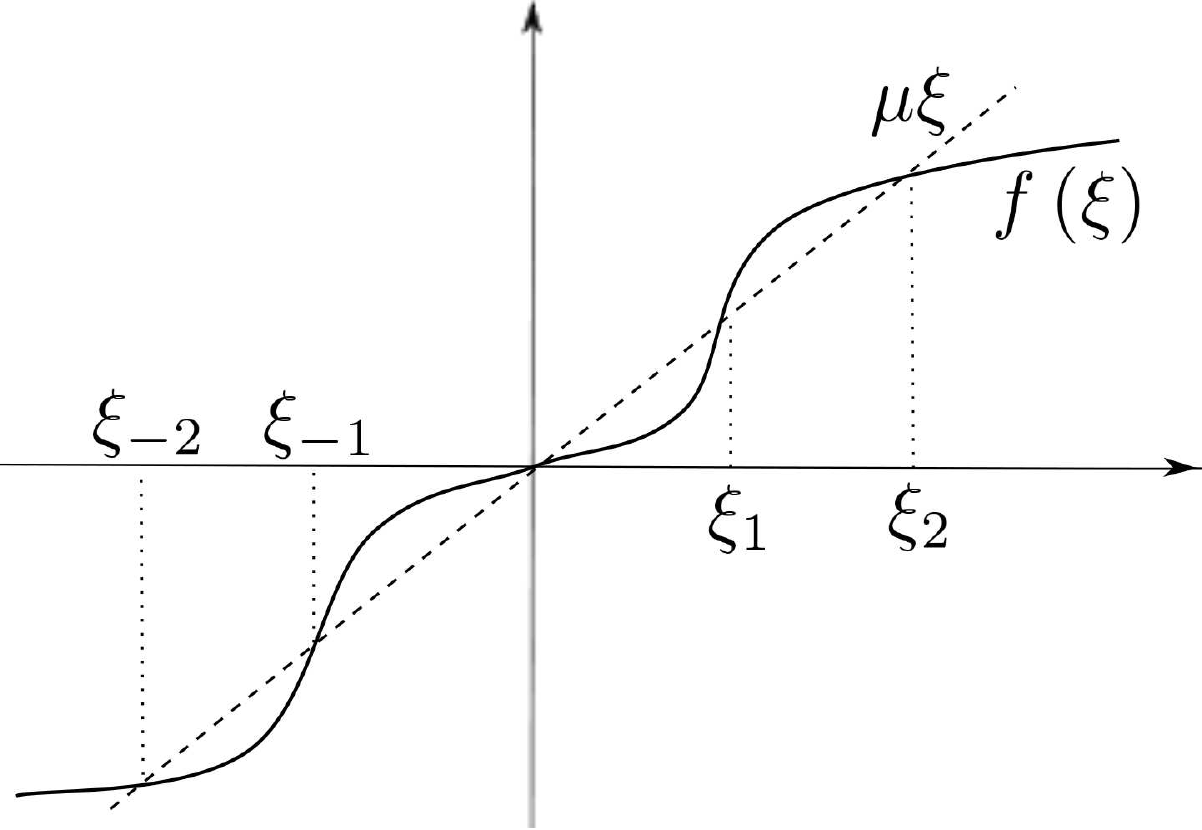} 
\par\end{centering}

\caption{A feedback function satisfying condition (H1). }

\end{figure}

\par\end{center}

Under hypothesis (H1), $\hat{\xi}_{j}\in C$, defined by $\hat{\xi}_{j}\left(s\right)=\xi_{j}$,
$-1\leq s\leq0$, is an equilibrium point of $\Phi$ for all $j\in\left\{ -2,-1,0,1,2\right\} $,
furthermore $\hat{\xi}_{-2},$ $\hat{\xi}_{0}$ and $\hat{\xi}_{2}$
are stable, and $\hat{\xi}_{-1}$ and $\hat{\xi}_{1}$ are unstable.
By the monotone property of $f$, the subsets 
\[
C_{-2,2}=\left\{ \varphi\in C:\,\xi_{-2}\leq\varphi\left(s\right)\leq\xi_{2}\mbox{ for all }s\in\left[-1,0\right]\right\} ,
\]
\[
C_{-2,0}=\left\{ \varphi\in C:\,\xi_{-2}\leq\varphi\left(s\right)\leq0\mbox{ for all }s\in\left[-1,0\right]\right\} ,
\]
\[
C_{0,2}=\left\{ \varphi\in C:\,0\leq\varphi\left(s\right)\leq\xi_{2}\mbox{ for all }s\in\left[-1,0\right]\right\} 
\]
 of the phase space $C$ are positively invariant under the semiflow
$\Phi$ (see Proposition \ref{pro:monotone_dynamical_system} in Section
\ref{sec:Prelimimaries}).

Let $\mathcal{A}$, $\mathcal{A}_{-2,0}$ and $\mathcal{A}_{0,2}$
denote the global attractors of the restrictions $\Phi|_{\left[0,\infty\right)\times C_{-2,2}}$,
$\Phi|_{\left[0,\infty\right)\times C_{-2,0}}$ and $\Phi|_{\left[0,\infty\right)\times C_{0,2}}$,
respectively. If (H1) holds and $\xi_{-2},\xi_{-1},0,\xi_{1},\xi_{2}$
are the only zeros of $-\mu\xi+f\left(\xi\right)$, then $\mathcal{A}$
is the global attractor of $\Phi$. The structures of $\mathcal{A}_{-2,0}$
and $\mathcal{A}_{0,2}$ are (at least partially) well understood,
see e.g.~\cite{Krisztin-1,Krisztin-2,Krisztin-3,Krisztin-Walther,Krisztin-Walther-Wu,Krisztin-Wu}.
$\mathcal{A}_{-2,0}$ and $\mathcal{A}_{0,2}$ admit Morse decompositions
\cite{Polner}. Further technical conditions regarding $f$ ensure
that $\mathcal{A}_{-2,0}$ and $\mathcal{A}_{0,2}$ have spindle-like
structures \cite{Krisztin-1,Krisztin-Walther,Krisztin-Walther-Wu,Krisztin-Wu}:
$\mathcal{A}_{0,2}$ is the closure of the unstable set of $\hat{\xi}_{1}$
containing the equilibrium points $\hat{\xi}_{0}$, $\hat{\xi}_{1}$,
$\hat{\xi}_{2}$, periodic orbits in $C_{0,2}$ and heteroclinic orbits
among them. In other cases $\mathcal{A}_{0,2}$ is larger than the
the closure of the unstable set of $\hat{\xi}_{1}$. The structure
of $\mathcal{A}_{-2,0}$ is similar. See Fig. 2 for a simple situation.

\begin{center}
\begin{figure}[h]
\begin{centering}
\includegraphics[width=5cm,height=6cm]{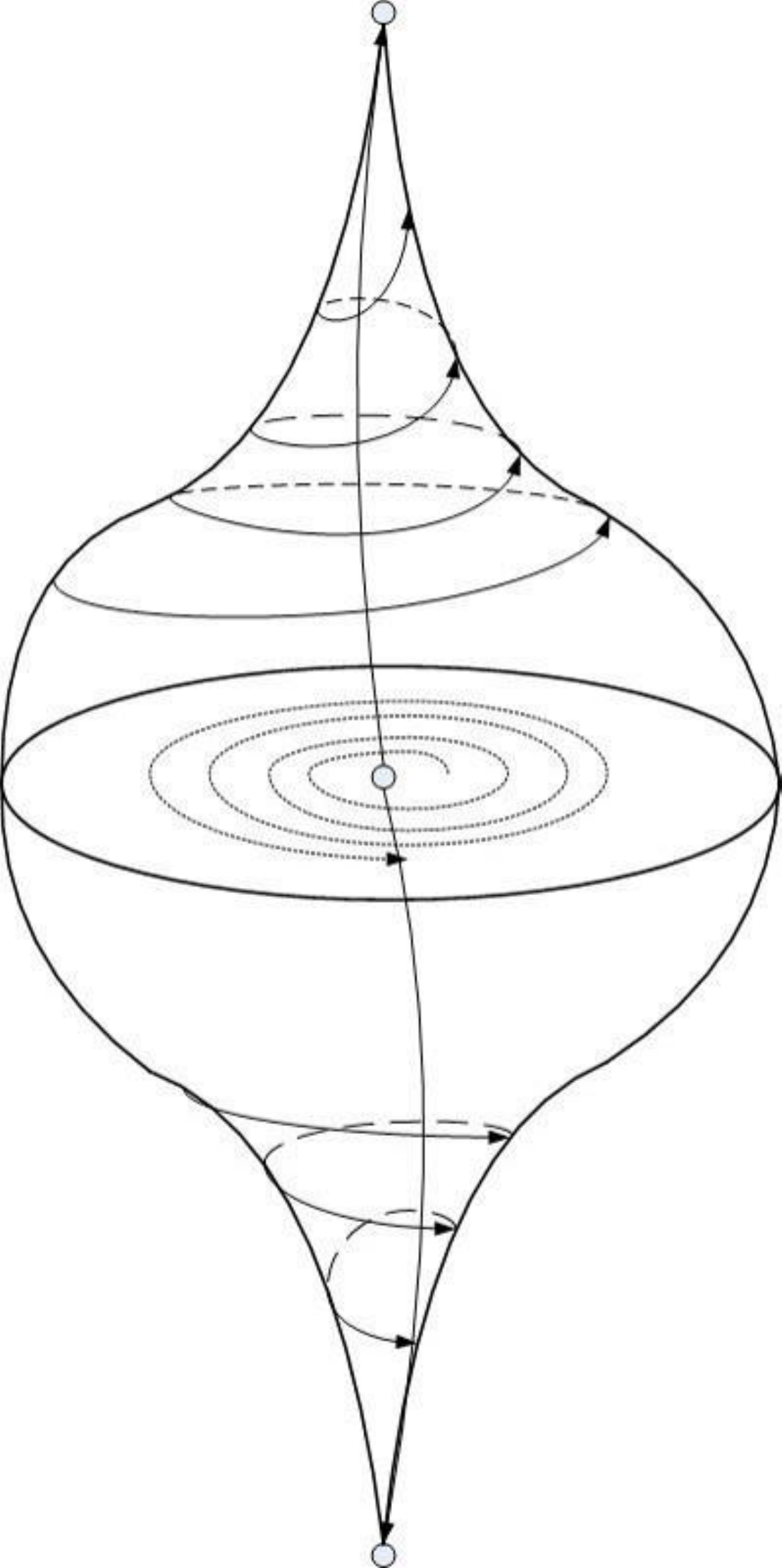} 
\par\end{centering}

\caption{A spindle-like structure }
\end{figure}

\par\end{center}

The monograph \cite{Krisztin-Walther-Wu} of Krisztin, Walther and
Wu has addressed the question whether the equality $\mathcal{A}=\mathcal{A}_{-2,0}\cup\mathcal{A}_{0,2}$
holds under hypothesis (H1). The authors of this paper have constructed
an example in \cite{Krisztin-Vas} so that (H1) holds, and Eq.\,\eqref{eq:eq_general}
admits periodic orbits in $\mathcal{A}\setminus\left(\mathcal{A}_{-2,0}\cup\mathcal{A}_{0,2}\right)$,
that is, besides the spindle-like structures. The periodic solutions
defining these periodic orbits oscillate slowly around $0$ and have
large amplitudes in the following sense. 

A periodic solution $r:\mathbb{R}\rightarrow\mathbb{R}$ of Eq.\,\eqref{eq:eq_general}
is called a large amplitude periodic solution if $r(\mathbb{R})\supset(\xi_{-1},\xi_{1})$.
A solution $r:\mathbb{R}\rightarrow\mathbb{R}$ is slowly oscillatory
if for each $t$, the restriction $r|_{[t-1,t]}$ has one or two sign
changes. Note that here slow oscillation is different from the usual
one used for equations with negative feedback condition \cite{Diekmann et al.,Walther-2}.
A large-amplitude slowly oscillatory periodic solution $r:\mathbb{R}\rightarrow\mathbb{R}$
is abbreviated as an LSOP solution. We say that an LSOP solution $r:\mathbb{R}\rightarrow\mathbb{R}$
is normalized if $r(-1)=0$, and for some $\eta>0$, $r(s)>0$ for
all $s\in(-1,-1+\eta)$. 

The first main result of \cite{Krisztin-Vas} is as follows.

\begin{thma}

There exist $\mu$ and $f$ satisfying (H1) such that Eq.\,\eqref{eq:eq_general}
has exactly two normalized LSOP solutions $p:\mathbb{R}\rightarrow\mathbb{R}$
and $q:\mathbb{R}\rightarrow\mathbb{R}$. For the ranges of $p$ and
$q$, $(\xi_{-1},\xi_{1})\subset p(\mathbb{R})\subset q(\mathbb{R})\subset\left(\xi_{-2},\xi_{2}\right)$
 holds. The corresponding periodic orbits 
\[
\mathcal{O}_{p}=\left\{ p_{t}:\, t\in\mathbb{R}\right\} \ \mathit{and}\ \mathcal{O}_{q}=\left\{ q_{t}:\, t\in\mathbb{R}\right\} 
\]
 are hyperbolic and unstable. $\mathcal{O}_{p}$ admits two different
Floquet multipliers outside the unit circle, which are real and simple.
$\mathcal{O}_{q}$ has one real simple Floquet multiplier outside
the unit circle.

\end{thma}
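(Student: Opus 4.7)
The plan is to exhibit an explicit pair $(\mu, f)$ for which every slowly oscillating solution of \eqref{eq:eq_general} can be tracked in closed form by the method of steps, reducing the theorem to a finite-dimensional analysis. A natural candidate is to take $f$ piecewise affine on a partition of $\mathbb{R}$ determined by the zeros $\xi_{-2}, \xi_{-1}, 0, \xi_{1}, \xi_{2}$ and by appropriate intermediate levels, and then to smooth the corners on arbitrarily small neighbourhoods so that (H1) holds. On any subinterval where $x(t-1)$ stays in a single affine piece of $f$, \eqref{eq:eq_general} becomes an inhomogeneous linear scalar ODE that is integrated explicitly.

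For the existence part, I would introduce a Poincar\'e-type section adapted to normalized LSOPs, such as
\[
\Sigma = \{\varphi \in C : \varphi(-1) = 0,\ \varphi \text{ strictly increasing at } -1\},
\]
and parametrize candidates in $\Sigma$ by a short list of return data: the first two positive zeros $z_{1} < z_{2}$ of $x^{\varphi}$ together with the extremal values reached between consecutive zeros. Using (H1) and the monotone comparison principle (Proposition \ref{pro:monotone_dynamical_system}), normalized LSOPs correspond bijectively to fixed points of an explicit return map $T$ on a bounded region of these parameters; for the chosen piecewise-affine $f$, $T$ can be written down in closed form in terms of exponentials.

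Producing exactly two fixed points is then a matter of tuning the parameters of $f$ so that the fixed-point equation for $T$ collapses, after elimination, to the intersection of the graphs of two real-analytic functions of a single variable with exactly two intersections, ordered so that $(\xi_{-1}, \xi_{1}) \subset p(\mathbb{R}) \subset q(\mathbb{R}) \subset (\xi_{-2}, \xi_{2})$. Hyperbolicity and the Floquet counts come from the monodromy $M_{r} = D_{2} \Phi(\omega_{r}, r_{0})$: it is compact on $C$, its essential spectrum lies strictly inside the unit disk, and for a piecewise-affine $f$ the finitely many nonzero eigenvalues are those of an explicit finite-dimensional block assembled from the values of $f'$ on the pieces traversed during one period. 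A discrete Lyapunov functional of Mallet-Paret--Sell type adapted to positive feedback caps the number of unstable multipliers, and the explicit spectral computation then yields two real simple multipliers outside the unit circle for $\mathcal{O}_{p}$ and one for $\mathcal{O}_{q}$.

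The principal obstacle is the tuning step: $(\mu, f)$ must be chosen so that $T$ has exactly two nondegenerate fixed points producing precisely the stated Floquet pictures, and both properties must survive the $C^{1}$-small smoothing of the piecewise-affine model into an $f$ satisfying (H1). Persistence follows from the implicit function theorem applied simultaneously to $T$ and to the spectral projections of $M_{r}$ once nondegeneracy is established at the piecewise-affine level, but the combinatorial and quantitative bookkeeping needed to arrange the initial tuning, with simultaneous control of $p$, $q$, and the six spectral quantities above, is where the real work lies.
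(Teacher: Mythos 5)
Theorem A is not proved in this paper at all: it is quoted from \cite{Krisztin-Vas} (Theorem 1.1 there, supplemented by Section 4 of that paper for reality and simplicity of the multipliers), and the only hint the present paper gives is that the construction takes $\mu=1$ and $f$ close to the three-valued step nonlinearity $f^{K,0}$ with $K$ large. Your strategy is the same in broad outline --- pick an explicit model nonlinearity, track slowly oscillating solutions by the method of steps, reduce to a finite-dimensional return map, tune parameters, and then smooth --- but the model you choose is different and less convenient. A step function makes the method of steps collapse: whenever $x(t-1)$ stays in one of the three constant regions, the equation is $\dot{x}=-x+c$ with $c\in\{-K,0,K\}$, and the solution is a single exponential; the LSOP profile is therefore a concatenation of finitely many exponentials glued at level crossings of $\pm 1$, so periods, zeros and extrema can be extracted by elementary calculus. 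A genuinely piecewise-affine $f$ with nonzero slopes loses this collapse: after the first step of the iteration, the forcing $f(x(t-1))$ is itself a translated exponential, and after a few steps the formulas involve iterated integrals of exponentials and no longer reduce to a low-dimensional explicit map. So while your choice technically works in principle, it makes the ``tuning'' step you identify as the principal obstacle substantially harder than it needs to be; this is precisely why the model in \cite{Krisztin-Vas} is a step function.

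The more serious gap is that the proposal defers the entire content of the theorem to the tuning step without indicating how it is carried out. Mere existence of some LSOP is comparatively soft; what the theorem asserts is \emph{exactness} --- exactly two normalized LSOPs, one monodromy operator with exactly two simple real unstable multipliers and another with exactly one --- and that all of this is preserved under a $C^1$-small smoothing of the model. Appealing to the implicit function theorem for persistence is fine once nondegeneracy is established, but the nondegeneracy itself --- that the return map of the explicit model has exactly two fixed points, each hyperbolic, producing the two stated Floquet pictures --- is where the real work is, and your sketch gives no mechanism for it. In particular, the Mallet--Paret--Sell Lyapunov functional does bound the number of Floquet multipliers outside the unit circle, but it does not by itself give reality or simplicity; you would need either an explicit spectral computation at the model level or a separate argument (as in Section 4 of \cite{Krisztin-Vas}) to exclude complex conjugate pairs and nontrivial Jordan blocks. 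Without that, the proposal is a plausible roadmap rather than a proof.
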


Note that although Theorem 1.1 in \cite{Krisztin-Vas} does not mention
that the Floquet multipliers found outside the unit circle are simple
and real, these properties are verified in Section 4 of the same paper.

In the proof of the theorem, $\mu=1$ and $f$ is close to the step
function 
\[
f^{K,0}\left(x\right)=\begin{cases}
-K & \mbox{if }x<-1,\\
0 & \mbox{if }\left|x\right|\leq1,\\
K & \mbox{if }x>1,
\end{cases}
\]
 where $K>0$ is chosen large enough.

In their paper \cite{Fiedler-Rocha-Wolfrum}, Fiedler, Rocha and Wolfrum
considered a special class of one-dimensional parabolic partial differential
equations and obtained a catalogue listing the possible structures
of the global attractor. In particular, the result of Theorem A motivated
Fiedler, Rocha and Wolfrum to find an analogous configuration for
their equation. It is an interesting question whether all the structures
found by them have counterparts in the theory of Eq.\,\eqref{eq:eq_general}.

Let $\mathcal{W}^{u}\left(\mathcal{O}_{p}\right)$ and $\mathcal{W}^{u}\left(\mathcal{O}_{q}\right)$
denote the unstable sets of $\mathcal{O}_{p}$ and $\mathcal{O}_{q}$,
respectively.

A solution $r:\mathbb{R}\rightarrow\mathbb{R}$ is called slowly oscillatory
around $\xi_{k}$, $k\in\left\{ -1,1\right\} $, if $\mathbb{R}\ni t\mapsto r(t)-\xi_{k}\in\mathbb{R}$
admits one or two sign changes on each interval of length $1$. As
it is described by Proposition 2.7 in \cite{Krisztin-Vas}, $f$ and
$\mu$ in Theorem A are set so that there exist at least one periodic
solution oscillating slowly around $\xi_{1}$ with range in $(0,\xi_{2})$,
furthermore there is a solution $x^{1}:\mathbb{R}\rightarrow\mathbb{R}$
among such periodic solutions that has maximal range $x^{1}(\mathbb{R})$
in the sense that $x^{1}(\mathbb{R})\supset x(\mathbb{R})$ for all
periodic solutions $x$ oscillating slowly around $\xi_{1}$ with
range in $(0,\xi_{2})$. Similarly, there exists a maximal periodic
solution $x^{-1}$ oscillating slowly around $\xi_{-1}$ with range
in $(\xi_{-2},0)$. Set 
\[
\mathcal{O}_{1}=\left\{ x_{t}^{1}:t\in\mathbb{R}\right\} \mbox{ and }\mathcal{O}_{-1}=\left\{ x_{t}^{-1}:t\in\mathbb{R}\right\} .
\]

Let $\omega\left(\varphi\right)$ denote the $\omega$-limit set of
any $\varphi\in C$. Introduce the connecting sets
\begin{align*}
C_{j}^{p}= & \left\{ \varphi\in\mathcal{W}^{u}\left(\mathcal{O}_{p}\right):\,\omega\left(\varphi\right)=\hat{\xi}_{j}\right\} ,\qquad j\in\left\{ -2,0,2\right\} ,
\end{align*}
\begin{align*}
C_{k}^{p}= & \left\{ \varphi\in\mathcal{W}^{u}\left(\mathcal{O}_{p}\right):\,\omega\left(\varphi\right)=\mathcal{O}_{k}\right\} ,\qquad k\in\left\{ -1,1\right\} ,
\end{align*}
and
\begin{align*}
C_{q}^{p}= & \left\{ \varphi\in\mathcal{W}^{u}\left(\mathcal{O}_{p}\right):\,\omega\left(\varphi\right)=\mathcal{O}_{q}\right\} .
\end{align*}
Sets $C_{j}^{q}$, $j\in\left\{ -2,2\right\} $, are defined analogously.

The next theorem has also been given in \cite{Krisztin-Vas} and describes
the dynamics in $\mathcal{A}\setminus(\mathcal{A}_{-2,0}\cup\mathcal{A}_{0,2})$
. 

\begin{thmb}

One may set $\mu$ and $f$ satisfying (H1) such that the statement
of Theorem A holds, and for the global attractor $\mathcal{A}$ we
have the equality 
\[
\mathcal{A}=\mathcal{A}_{-2,0}\cup\mathcal{A}_{0,2}\cup\mathcal{W}^{u}\left(\mathcal{O}_{p}\right)\cup\mathcal{W}^{u}\left(\mathcal{O}_{q}\right).
\]
 Moreover, the dynamics on $\mathcal{W}^{u}\left(\mathcal{O}_{p}\right)$
and $\mathcal{W}^{u}\left(\mathcal{O}_{q}\right)$ is as follows.
The connecting sets $C_{j}^{p}$, $C_{q}^{p}$, $C_{k}^{p}$, $j\in\left\{ -2,0,2\right\} $,
$k\in\left\{ -1,1\right\} $, are nonempty, and
\[
\mathcal{W}^{u}\left(\mathcal{O}_{p}\right)=\mathcal{O}_{p}\cup C_{-2}^{p}\cup C_{-1}^{p}\cup C_{0}^{p}\cup C_{1}^{p}\cup C_{2}^{p}\cup C_{q}^{p}.
\]
The connecting sets $C_{-2}^{q}$ and $C_{2}^{q}$ are nonempty, and
\[
\mathcal{W}^{u}\left(\mathcal{O}_{q}\right)=\mathcal{O}_{q}\cup C_{-2}^{q}\cup C_{2}^{q}.
\]
\end{thmb}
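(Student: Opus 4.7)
The plan is to combine the Mallet--Paret--Sell discrete Lyapunov functional $V$ (counting sign changes of $\varphi(s)-\hat{\xi}_k$ for $k\in\{-1,0,1\}$) with the monotonicity of the semiflow generated by \eqref{eq:eq_general}. For this positive feedback equation $V$ is non-increasing along trajectories, integer-valued, and constant on the $\alpha$- and $\omega$-limit sets of bounded complete orbits, which yields a Poincar\'e--Bendixson trichotomy: every such limit set is a single equilibrium, a single periodic orbit, or a structured union of equilibria with heteroclinic connections. Coupled with hyperbolicity of the five equilibria $\hat{\xi}_j$ and of the four periodic orbits $\mathcal{O}_{\pm1},\mathcal{O}_p,\mathcal{O}_q$ (supplied by Theorem A together with the spindle results already recalled), this restricts every limit set met below to one of these nine invariant sets.

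For the attractor decomposition, take $\varphi\in\mathcal{A}$ and lift it to a bounded complete orbit $u:\mathbb{R}\to C$. The set $\alpha(u)$ is a nonempty compact invariant subset of $\mathcal{A}$, hence one of the nine sets above. If $\alpha(u)\subset C_{-2,0}\cup C_{0,2}$, positive invariance of these cones places the whole orbit inside the corresponding spindle. Otherwise $\alpha(u)\in\{\mathcal{O}_p,\mathcal{O}_q\}$, as these are the only invariant sets whose range crosses both $\xi_{-1}$ and $\xi_1$, so $\varphi\in\mathcal{W}^u(\mathcal{O}_p)\cup\mathcal{W}^u(\mathcal{O}_q)$. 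To identify $\mathcal{W}^u(\mathcal{O}_p)$, I would show that for every nontrivial $\varphi$ in it the $\omega$-limit again lies in the nine-set list, and then rule out $\omega(\varphi)=\mathcal{O}_p$ by hyperbolicity and $\omega(\varphi)\in\{\hat{\xi}_{-1},\hat{\xi}_1\}$ by comparing $V$-values with the oscillation of $p$, leaving the seven pieces claimed. For $\mathcal{W}^u(\mathcal{O}_q)$ the maximality of $q$ together with its single unstable direction forces monotone trapping between $\hat{\xi}_{-2}$ and $\hat{\xi}_2$, excluding all limits except $\hat{\xi}_{\pm2}$.

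The main obstacle is nonemptiness of each connecting set. Via the local unstable manifold theorem applied to the time-$T_p$ map at a point of $\mathcal{O}_p$, one obtains a three-dimensional local unstable manifold of $\mathcal{O}_p$, with two leading real simple eigendirections supplied by Theorem A. I would select four ordered arcs of initial data on this manifold: small perturbations strictly above $p_0$ produce trajectories trapped between $p_t$ and $\hat{\xi}_2$, limiting either to $\hat{\xi}_2$ or, when further squeezed between $p_t$ and $q_t$, to $\mathcal{O}_q$; symmetric arcs below give $\hat{\xi}_{-2}$ and again $\mathcal{O}_q$; transversal arcs along the second leading direction yield trajectories with controlled $V$-value limiting to $\hat{\xi}_0$ or to $\mathcal{O}_{\pm1}$. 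Nonemptiness of $C_{\pm2}^q$ is analogous using the one-dimensional unstable direction of $\mathcal{O}_q$. The delicate point is matching the topology of the three-dimensional unstable manifold with the seven prescribed $\omega$-limits so that each $C_{\bullet}^p$ is realized; this is where the sharpened Floquet information from Theorem A (two simple real multipliers outside the unit circle) and the monotone comparison between $\mathcal{O}_p$ and $\mathcal{O}_q$ become essential.
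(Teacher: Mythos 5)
Theorem B is not proved in this paper: it is recalled verbatim from the authors' earlier work \cite{Krisztin-Vas} (the text says ``The next theorem has also been given in \cite{Krisztin-Vas}''), so there is no in-paper proof to compare against. Judged on its own, your sketch has several genuine gaps.

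You invoke ``hyperbolicity of \dots the four periodic orbits $\mathcal{O}_{\pm 1},\mathcal{O}_{p},\mathcal{O}_{q}$,'' but the paper explicitly states that it is \emph{not} known whether $\mathcal{O}_{1}$ and $\mathcal{O}_{-1}$ are hyperbolic; Section 3 and Proposition \ref{dimC_u=00003D1} are written precisely to handle both the hyperbolic and the nonhyperbolic case, replacing local stable manifolds by local center-stable manifolds when necessary. Any step in your argument that uses hyperbolic saddle behaviour at $\mathcal{O}_{\pm 1}$ is therefore unsupported. Relatedly, your reduction of all possible limit sets to a list of ``nine invariant sets'' (five equilibria, four periodic orbits) is not justified: $\mathcal{A}_{0,2}$ and $\mathcal{A}_{-2,0}$ may contain further periodic orbits, since $x^{1}$ and $x^{-1}$ are only the \emph{maximal} such orbits (see the discussion after Theorem A, quoting Proposition 2.7 of \cite{Krisztin-Vas}). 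The Poincar\'e--Bendixson theorem constrains the structure of limit sets, but without an extra argument it does not yield a finite list, and your attractor-decomposition step collapses at this point. A smaller issue of the same kind: positive invariance of $C_{-2,0}$, $C_{0,2}$ gives forward trapping, not that a complete orbit whose $\alpha$-limit set lies in $C_{-2,0}$ must itself lie in $C_{-2,0}$.

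The second, and probably larger, gap is nonemptiness of the seven pieces in $\mathcal{W}^{u}\left(\mathcal{O}_{p}\right)$ and of $C_{\pm 2}^{q}$. This is the substantive content of Theorem B, and your sketch defers exactly at the decisive point (``the delicate point is matching the topology\dots''). The monotone squeezing you describe only yields that perturbed trajectories stay ordered relative to $p_{t}$ (or $q_{t}$, or $\hat{\xi}_{\pm 2}$), which does not identify their $\omega$-limit sets: an orbit trapped between $p_{t}$ and $q_{t}$ need not converge to $\mathcal{O}_{q}$, and an arc transversal to $v_{1}$ need not pick up orbits converging to $\mathcal{O}_{\pm 1}$ or $\hat{\xi}_{0}$. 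Likewise, ruling out heteroclinics from $\mathcal{O}_{p}$ to $\hat{\xi}_{\pm 1}$ via ``$V$-value comparison with the oscillation of $p$'' is not obviously conclusive: $V\left(p_{t}-\hat{\xi}_{1}\right)=2$ and the $V$-value of an orbit approaching $\hat{\xi}_{1}$ is also $\le 2$, so monotonicity of $V$ alone does not forbid the connection. Also note, for precision, that the local unstable manifold of the Poincar\'e return map $P_{Y}$ at $p_{0}$ is two-dimensional; the three-dimensionality of $\mathcal{W}^{u}\left(\mathcal{O}_{p}\right)$ comes from forward flowing this manifold (this is Theorem \ref{main_theorem_3}, proved later in the paper, not an input available for proving Theorem B).
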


The system of heteroclinic connections is represented in Fig.\,3. 

\begin{center}
\begin{figure}[h]
\begin{centering}
\includegraphics[scale=0.6]{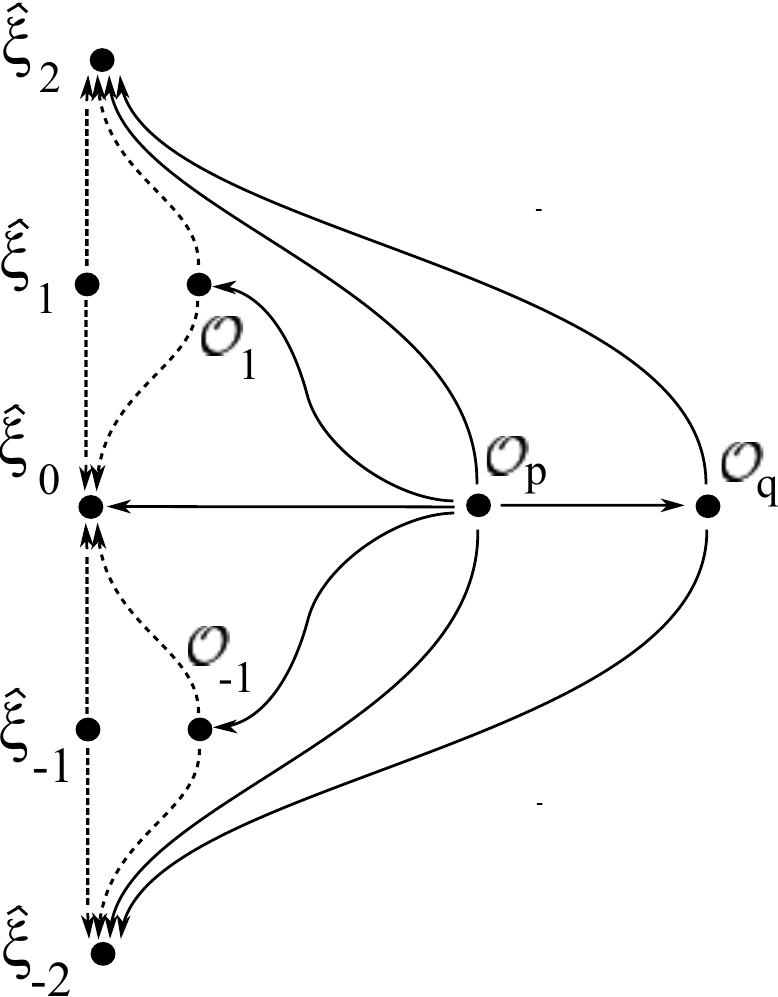} 
\par\end{centering}

\caption{Connecting orbits: the dashed arrows represent heteroclinic connections
in $\mathcal{A}_{-2,0}$ and in $\mathcal{A}_{0,2}$, while the solid
ones represent connecting orbits given by Theorem B.}
\end{figure}

\par\end{center}

Hereinafter we fix $\mu=1$ and set $f$ in Eq.\,\eqref{eq:eq_general}
so that Theorems A and B hold. The purpose of this paper is to characterize
the geometrical properties of $\mathcal{W}^{u}\left(\mathcal{O}_{p}\right)$
and the connecting sets within $\mathcal{W}^{u}\left(\mathcal{O}_{p}\right)$. 

We say that a subset $W$ of $C$ admits global graph representation,
if there exists a splitting $C=G\oplus E$ with closed subspaces $G$
and $E$ of $C$, a subset $U$ of $G$ and a map $w:U\rightarrow E$
such that 
\[
W=\left\{ \chi+w\left(\chi\right):\,\chi\in U\right\} .
\]
$W$ is said to have a smooth global graph representation if in the
above definition $U$ is open in $G$ and $w$ is $C^{1}$-smooth
on $U$. Note that in this case $W$ is a $C^{1}$-submanifold of
$C$ in the usual sense with dimension $\dim G$, see e.g. the definition
of Lang in \cite{Lang}. $W$ is said to admit a smooth global graph
representation with boundary if $G$ is $n$ dimensional with some
integer $n\geq1$, $U$ is the closure of an open set $U^{0}$, $w$
is $C^{1}$-smooth on $U^{0}$, the boundary $\mbox{bd}U$ of $U$
in $G$ is an $(n-1)$-dimensional $C^{1}$-submanifold of $G$, and
all points of $\mbox{bd}U$ have an open neighborhood in $G$ on which
$w$ can be extended to a $C^{1}$-smooth function. In this case $W$
is an $n$-dimensional $C^{1}$-submanifold of $C$ with boundary
in the usual sense \cite{Lang}.

The first result of this paper is the following.
\begin{thm}
\label{main_theorem_3} $\mathcal{W}^{u}\left(\mathcal{O}_{p}\right)$,
$C_{-2}^{p},\, C_{0}^{p}$ and $C_{2}^{p}$ are three-dimensional
$C^{1}$-submanifolds of $C$ admitting smooth global graph representations. 
\end{thm}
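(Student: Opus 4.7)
The plan is to build $\mathcal{W}^{u}(\mathcal{O}_{p})$ as a three-dimensional $C^{1}$-immersed submanifold via a local unstable manifold of a Poincar\'e map, and then promote it to a smooth global graph by choosing an appropriate three-dimensional subspace $G$ and exploiting the discrete sign-change Lyapunov functional that accompanies monotone positive feedback.

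First I would fix $\psi_{0}=p_{0}\in\mathcal{O}_{p}$ and a closed hyperplane $H\subset C$ transverse to $\dot{p}_{0}$, and set up the $C^{1}$ Poincar\'e map $P$ on a neighborhood of $\psi_{0}$ in $H$. By Theorem A, $DP(\psi_{0})$ has exactly two real simple eigenvalues outside the closed unit disc and no eigenvalues on its boundary, so the standard local unstable manifold theorem for $C^{1}$ diffeomorphisms produces a two-dimensional $C^{1}$ local unstable manifold $W_{\mathrm{loc}}^{u}(P,\psi_{0})\subset H$ tangent at $\psi_{0}$ to the realified two-dimensional unstable Floquet subspace $E^{u}$. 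Saturating with the semiflow over one minimal period yields a three-dimensional $C^{1}$ local unstable manifold $W_{\mathrm{loc}}^{u}(\mathcal{O}_{p})$, and then
\[
\mathcal{W}^{u}(\mathcal{O}_{p})\;=\;\bigcup_{t\geq 0}\Phi\bigl(t,W_{\mathrm{loc}}^{u}(\mathcal{O}_{p})\bigr).
\]
Since $\Phi(t,\cdot)$ is $C^{1}$ for $t\geq 1$ and acts injectively on the backward extendable unstable set, this union is a $C^{1}$-immersed three-dimensional submanifold of $C$.

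The main work is the global graph representation. I would take $G=\mathbb{R}\dot{p}_{0}\oplus E^{u}$, that is, the tangent space of $\mathcal{W}^{u}(\mathcal{O}_{p})$ at $\psi_{0}$, and let $E$ be a closed Floquet-invariant complement, namely the direct sum of all generalized eigenspaces of the monodromy operator whose multipliers lie strictly inside the unit circle. Near $\psi_{0}$ the projection $\pi_{G}$ along $E$ is already a $C^{1}$-diffeomorphism of $\mathcal{W}^{u}(\mathcal{O}_{p})$ onto an open piece of $G$, so the delicate point is showing that $\pi_{G}$ remains injective on the whole of $\mathcal{W}^{u}(\mathcal{O}_{p})$ and has open image $U\subset G$. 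For this I would invoke the discrete Lyapunov functional $V$ (the sign-change counter for scalar DDEs with positive feedback): $V$ is non-increasing along solutions and, by the structure of the Floquet spectrum of $\mathcal{O}_{p}$, constant on $\mathcal{W}^{u}(\mathcal{O}_{p})\setminus\mathcal{O}_{p}$. Together with the realness and positivity of the two unstable Floquet multipliers, this yields invariant cone fields along every orbit in the unstable set which exclude folding or self-intersection of the projection. The map $w:=\pi_{E}\circ(\pi_{G}|_{\mathcal{W}^{u}(\mathcal{O}_{p})})^{-1}\colon U\to E$ is then the required smooth global graph.

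For $C_{-2}^{p},C_{0}^{p},C_{2}^{p}$, I would use that by (H1) the equilibria $\hat{\xi}_{-2},\hat{\xi}_{0},\hat{\xi}_{2}$ are asymptotically stable, so their basins of attraction $B(\hat{\xi}_{j})$ are open in $C$. Hence
\[
C_{j}^{p}\;=\;\mathcal{W}^{u}(\mathcal{O}_{p})\cap B(\hat{\xi}_{j}),\qquad j\in\{-2,0,2\},
\]
is open in $\mathcal{W}^{u}(\mathcal{O}_{p})$, so it corresponds to an open subset $U_{j}=\pi_{G}(C_{j}^{p})\subset U$ on which $w|_{U_{j}}$ is the desired smooth global graph representation. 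The principal obstacle is clearly the global injectivity of $\pi_{G}$ on the unstable set: in infinite dimensions the immersed $\mathcal{W}^{u}(\mathcal{O}_{p})$ could a priori fold back on itself, and it seems essential to rely on the special structure of the positive-feedback scalar DDE --- the sign-change Lyapunov functional together with the realness and positivity of the two unstable Floquet multipliers --- rather than on any abstract invariant-manifold argument.
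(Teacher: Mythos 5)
Your overall strategy closely parallels the paper's: first exhibit $\mathcal{W}^{u}\left(\mathcal{O}_{p}\right)$ as an immersed three-dimensional $C^{1}$-manifold by pushing a two-dimensional local unstable manifold of a Poincar\'e map forward under the $C^{1}$ semiflow, then establish an embedding and a global graph by proving that a suitable fixed projection of $C$ onto a three-dimensional subspace is injective on the unstable set and on its tangent spaces, and finally obtain $C_{j}^{p}$, $j\in\left\{ -2,0,2\right\}$, as open subsets of $\mathcal{W}^{u}\left(\mathcal{O}_{p}\right)$ because the basins of attraction of the stable equilibria are open. The place where you diverge is the choice of projection: the paper works with the explicit map $\pi_{3}\varphi=\left(\varphi\left(0\right),\varphi\left(-1\right),\intop_{-1}^{0}\varphi\right)$, while you take the Floquet spectral projection along the stable subspace $C_{s}$ onto $G=\mathbb{R}\dot{p}_{0}\oplus C_{u}$. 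The paper's choice lets one prove injectivity by the elementary dropping lemma for $V$ at double zeros (Lemma \ref{lem:4_properties_of_V}(ii)); your choice would instead have to invoke the Floquet-theoretic fact that $C_{s}\cap V^{-1}\left(\left\{ 0,2\right\}\right)=\emptyset$, which for $\mathcal{O}_{p}$ follows from $\dim C_{r_{M}<}=3$. Both routes lead to the same conclusion; the paper's is somewhat lighter on Floquet machinery.

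There is, however, a genuine gap in your injectivity argument. You write that $V$ is ``constant on $\mathcal{W}^{u}\left(\mathcal{O}_{p}\right)\setminus\mathcal{O}_{p}$'' and that this, together with the realness and positivity of the unstable multipliers, gives ``invariant cone fields'' which ``exclude folding or self-intersection of the projection.'' This does not constitute a proof. The precise facts you need are: (a) $V\left(\varphi-\psi\right)\leq2$ for all distinct $\varphi,\psi\in\mathcal{W}^{u}\left(\mathcal{O}_{p}\right)$ (the paper's Proposition \ref{prop: V on unstable set}, proved by the monotonicity and semicontinuity of $V$ along a backward sequence approaching $\mathcal{O}_{p}$, together with the fact that $V\left(p_{\tau}-p_{\sigma}\right)=2$), and likewise $V\left(\eta\right)\leq2$ for nonzero tangent vectors $\eta$; and (b) $C_{s}\cap V^{-1}\left(\left\{ 0,2\right\}\right)=\emptyset$. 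From (a) and (b) one concludes directly that if $\pi_{G}\varphi=\pi_{G}\psi$ then $\varphi-\psi\in C_{s}$ with $V\left(\varphi-\psi\right)\leq2$, which forces $\varphi=\psi$; and similarly $\pi_{G}$ restricted to each tangent space is injective, hence a local diffeomorphism at every point, not only near $p_{0}$. Neither (a) nor (b) is stated in your proposal, ``constant'' is not the right statement (the value could be $0$ for ordered pairs), and ``invariant cone fields'' names a different technique that does not deliver injectivity of a global linear projection. Once you replace the vague cone-field appeal with the explicit pair (a)--(b), your version of the argument closes.
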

The next objects of our study are the connecting sets $C_{q}^{p}$,
$C_{-1}^{p}$, $C_{1}^{p}$ containing the heteroclinic orbits from
$\mathcal{O}_{p}$ to $\mathcal{O}_{q}$, $\mathcal{O}_{-1}$, $\mathcal{O}_{1}$,
respectively. We actually get a detailed picture of the structure
of $\mathcal{W}^{u}\left(\mathcal{O}_{p}\right)$ by characterizing
the unions
\[
S_{-1}=C_{-1}^{p}\cup\mathcal{O}_{p}\cup C_{q}^{p}\qquad\mbox{and}\qquad S_{1}=C_{1}^{p}\cup\mathcal{O}_{p}\cup C_{q}^{p}.
\]

A solution $x:\mathbb{R}\rightarrow\mathbb{R}$ is said to oscillate
around $\xi_{i}$, $i\in\left\{ -2,-1,0,1,2\right\} $, if the set
$x^{-1}\left(\xi_{i}\right)\subset\mathbb{R}$ is not bounded from
above. It is a direct consequence of Theorem B that for $k\in\left\{ -1,1\right\} $,
\begin{equation}
S_{k}=\left\{ \varphi\in\mathcal{W}^{u}\left(\mathcal{O}_{p}\right):\, x^{\varphi}\mbox{ oscillates around }\xi_{k}\right\} .\label{S_k}
\end{equation}

We say that a subset $W$ of $\mathcal{W}^{u}\left(\mathcal{O}_{p}\right)$
is above $S_{k}$, $k\in\left\{ -1,1\right\} $, if to each $\varphi\in W$
there corresponds an element $\psi$ of $S_{k}$ with $\psi\ll\varphi$
(that is, $\psi\left(s\right)<\varphi\left(s\right)$ for all $s\in\left[-1,0\right]$).
Similarly, a subset $W$ of $\mathcal{W}^{u}\left(\mathcal{O}_{p}\right)$
is below $S_{k}$, $k\in\left\{ -1,1\right\} $, if for all $\varphi\in W$
there exists $\psi\in S_{k}$ with $\varphi\ll\psi$. $W$ is between
$S_{-1}$ and $S_{1}$ if it is below $S_{1}$ and above $S_{-1}$.

Our main result offers geometrical and topological descriptions of
$C_{q}^{p}$, $C_{-1}^{p}$, $C_{1}^{p}$, $S_{-1}$ and $S_{1}$,
and their closures in $C$. It shows that $S_{-1}$ and $S_{1}$ separate
the points of $\mathcal{W}^{u}\left(\mathcal{O}_{p}\right)$ into
three groups according to their $\omega$-limit sets. Thereby, $S_{-1}$
and $S_{1}$ play a key role in the dynamics of the equation.
\begin{thm}
\label{main_theorem_4} ~

\noindent (i) The sets $C_{q}^{p}$, $C_{-1}^{p}$, $C_{1}^{p}$,
$S_{-1}$ and $S_{1}$ are two-dimensional $C^{1}$-submanifolds of
$\mathcal{W}^{u}\left(\mathcal{O}_{p}\right)$ with smooth global
graph representations. They are homeomorphic to the open annulus 
\[
A^{\left(1,2\right)}=\left\{ u\in\mathbb{R}^{2}:\,1<\left|u\right|<2\right\} .
\]

\noindent (ii) The equalitie\textup{s
\[
\overline{C_{q}^{p}}=\mathcal{O}_{p}\cup C_{q}^{p}\cup\mathcal{O}_{q},\qquad\overline{C_{k}^{p}}=\mathcal{O}_{p}\cup C_{k}^{p}\cup\mathcal{O}_{k}
\]
}and 
\[
\overline{S_{k}}=\mathcal{O}_{k}\cup S_{k}\cup\mathcal{O}_{q}=\mathcal{O}_{k}\cup C_{k}^{p}\cup\mathcal{O}_{p}\cup C_{q}^{p}\cup\mathcal{O}_{q}
\]
 hold for both $k\in\left\{ -1,1\right\} $. The sets $\overline{C_{q}^{p}}$,
$\overline{C_{-1}^{p}}$, $\overline{C_{1}^{p}}$,\textup{ $\overline{S_{-1}}$}
and \textup{$\overline{S_{1}}$} admit smooth global graph representations
with boundary, and thereby they are two-dimensional $C^{1}$-submanifolds
of $C$ with boundary. In addition, they are homeomorphic to the closed
annulus 
\[
A^{\left[1,2\right]}=\left\{ u\in\mathbb{R}^{2}:\,1\leq\left|u\right|\leq2\right\} .
\]
(iii) $S_{_{-1}}$ and $S_{1}$ are separatrices in the sense that
$C_{2}^{p}$ is above $S_{1}$, $C_{0}^{p}$ is between $S_{-1}$
and $S_{1}$, furthermore $C_{-2}^{p}$ is below $S_{-1}$.
\end{thm}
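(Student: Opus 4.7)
My overall plan is to build two-dimensional $C^1$-invariant submanifolds of $\mathcal{W}^u(\mathcal{O}_p)$ from the Floquet decomposition along $\mathcal{O}_p$, and then identify these intrinsic objects with the dynamically defined connecting sets using monotonicity, a discrete Lyapunov functional, and Theorem~B. Fix a hyperplane $H\subset C$ transverse to $\dot{p}_0$ at $p_0$, and let $P$ denote the associated Poincar\'e return map. The two real simple Floquet multipliers $\lambda_1>\lambda_2>1$ yield one-dimensional eigenspaces $\mathbb{R}v_1,\mathbb{R}v_2\subset T_{p_0}H$. A Lyapunov--Perron construction in $H$ produces the two-dimensional local unstable manifold $W^u_{\mathrm{loc}}$ of $P$, the local strong unstable curve $W^{uu}_{\mathrm{loc}}$ tangent to $v_1$, and, thanks to the spectral gap $\lambda_2<\lambda_1$, the local weak unstable curve $W^{wu}_{\mathrm{loc}}\subset W^u_{\mathrm{loc}}$ tangent to $v_2$. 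Saturating these local curves by backward $P$-iterates and flowing under $\Phi$ over a full period of $\mathcal{O}_p$ yields two global two-dimensional $C^1$-invariant submanifolds $\mathcal{M}_{\mathrm{str}}$ and $\mathcal{M}_{\mathrm{wk}}$ of $\mathcal{W}^u(\mathcal{O}_p)$.

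The critical step is to identify $S_{-1}$ and $S_1$ with $\mathcal{M}_{\mathrm{str}}$ and $\mathcal{M}_{\mathrm{wk}}$ in some order, and $C_q^p$ with the union of those halves of the flow-outs of $W^{uu}_{\mathrm{loc}}\setminus\{p_0\}$ and $W^{wu}_{\mathrm{loc}}\setminus\{p_0\}$ whose $\omega$-limits equal $\mathcal{O}_q$, while $C_{\pm1}^p$ are the remaining halves converging to $\mathcal{O}_{\pm1}$. Three ingredients drive this matching. First, monotonicity of $\Phi$ together with a Perron--Frobenius-type fact that the leading Floquet eigenvector of $\mathcal{O}_p$ is sign-definite at every $p_t$ forces one half of $W^{uu}_{\mathrm{loc}}\setminus\{p_0\}$ to lie strictly above $\mathcal{O}_p$ in the pointwise order and the other strictly below (with an analogous statement for $W^{wu}_{\mathrm{loc}}$). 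Second, the discrete Lyapunov functional of Mallet-Paret, counting sign changes relative to each of the levels $\xi_{-1},0,\xi_1$, is nonincreasing along $\Phi$ and distinguishes the LSOP count $2$ (taken on $\mathcal{O}_p$ and $\mathcal{O}_q$) from the smaller count on $\mathcal{O}_{\pm1}$ (which do not oscillate around $0$). Third, Theorem~B limits admissible $\omega$-limits in $\mathcal{W}^u(\mathcal{O}_p)$ to $\mathcal{O}_p,\mathcal{O}_q,\mathcal{O}_{\pm1}$ and the three stable equilibria. These together pin down each half-arc's target, after which the $C^1$ smoothness of $S_{\pm1}$ across $\mathcal{O}_p$ is automatic, since each $S_k$ is intrinsically the flow-out of a single smooth $P$-invariant curve through $p_0$.

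The topological claims then follow from an explicit parametrization. On each of $C_q^p,C_{\pm1}^p,S_{\pm1}$ the $\Phi$-orbits form a one-parameter family ($t\in\mathbb{R}$), itself parametrized by a circle via the base point on $\mathcal{O}_p$, yielding a homeomorphism with $\mathbb{R}\times S^1\cong A^{(1,2)}$; appending the limit periodic orbits gives $A^{[1,2]}$. The closure equalities $\overline{C_q^p}=\mathcal{O}_p\cup C_q^p\cup\mathcal{O}_q$, $\overline{C_k^p}=\mathcal{O}_p\cup C_k^p\cup\mathcal{O}_k$ and $\overline{S_k}=\mathcal{O}_k\cup S_k\cup\mathcal{O}_q$ are obtained by showing that any accumulation point of a sequence in $C_q^p$ (or $C_k^p$) must lie in $\mathcal{O}_p$ or in its forward $\omega$-limit; hyperbolicity of the target orbits and their local $C^1$ stable manifolds then deliver the smooth graph representations with boundary. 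Finally, for (iii), the union $S_{-1}\cup S_1$ is a codimension-one $C^1$ subset of the connected three-dimensional manifold $\mathcal{W}^u(\mathcal{O}_p)$ whose complement has exactly three components, namely $C_{-2}^p,C_0^p,C_2^p$; monotonicity of $\Phi$ together with the strict ordering $\hat{\xi}_{-2}\ll\hat{\xi}_0\ll\hat{\xi}_2$ of the equilibrium targets identifies these three components with the sets below $S_{-1}$, between $S_{-1}$ and $S_1$, and above $S_1$ respectively.

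The principal obstacle I anticipate is the sign-definiteness of the leading Floquet eigenvector used in the identification step: this amounts to a Perron--Frobenius-type statement for the monodromy operator of $\Phi$ linearized along $\mathcal{O}_p$, requiring strong positivity and irreducibility results for a time-periodic monotone system. Once this sign-definiteness is secured, the Mallet-Paret functional and Theorem~B together pin down the $\omega$-limit on each half-arc, and the remaining geometric, topological and separatrix statements reduce to standard invariant-manifold machinery.
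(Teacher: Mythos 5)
Your central construction—building $\mathcal{M}_{\mathrm{str}}$ and $\mathcal{M}_{\mathrm{wk}}$ as flow-outs of the strong and weak local unstable curves tangent to $v_1$ and $v_2$, and then identifying $S_{-1}$ and $S_1$ with these—cannot work, because it is incompatible with the geometry of $S_{\pm1}$. By definition $S_{-1}=C_{-1}^p\cup\mathcal{O}_p\cup C_q^p$ and $S_1=C_1^p\cup\mathcal{O}_p\cup C_q^p$, so $S_{-1}\cap S_1=\mathcal{O}_p\cup C_q^p$ is itself a two-dimensional set. In contrast, the strong unstable curve (tangent to $v_1$) and any weak unstable curve (tangent to $v_2$) are transversal at $p_0$, so their flow-outs would meet only in $\mathcal{O}_p$. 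The actual picture (see Fig.~5 in the paper) is that the tangent spaces of $S_{-1}$ and $S_1$ \emph{coincide} along $\mathcal{O}_p$: inside the two-dimensional local unstable disc, the three half-arcs $C_{-1}^p$, $C_1^p$, $C_q^p$ emanating from $p_0$ are all tangent to the same line, with $C_{-1}^p$ and $C_1^p$ on the same side and $C_q^p$ on the opposite side. No strong/weak unstable decomposition reproduces this degenerate tangency, and the weak unstable curve is not even canonical (every backward $P$-orbit off $W^{uu}_{\mathrm{loc}}$ approaches $p_0$ tangent to $v_2$), so there is no preferred $\mathcal{M}_{\mathrm{wk}}$ to identify with anything.

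There are further gaps downstream. You invoke hyperbolicity of the target orbits to produce local stable manifolds, but hyperbolicity of $\mathcal{O}_{\pm1}$ is not known (the paper must allow a nonhyperbolic case and work with a local \emph{center-stable} manifold, together with the separately proved fact $\dim C_u=1$ for $\mathcal{O}_{\pm1}$). The topological identification ``orbits parametrized by $t\in\mathbb{R}$ and by a circle via the base point on $\mathcal{O}_p$'' presumes a globally well-defined pairing of each orbit in $C_q^p$ or $C_k^p$ with a base point, which is not established by your argument; the paper instead uses a carefully chosen two-dimensional projection $\pi_2$ whose injectivity on $\overline{S_k}$ is proved via the Mallet-Paret--Sell Lyapunov functional, and this injectivity is what yields the annulus homeomorphism and the closure computation (via Poincar\'e--Bendixson and the absence of homoclinics to $\hat\xi_{\pm1}$). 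Finally, the embeddedness of $C_q^p$ and $C_k^p$ in $\mathcal{W}^u(\mathcal{O}_p)$ is nontrivial: the paper must rule out accumulation by ``far-away'' pieces of the connecting set, and does so via injectivity of $\pi_2$, which your sketch has no substitute for. The paper's route is: show $\pi_2$ is injective on $\overline{S_k}$ and on tangent spaces, deduce graph representations; establish smoothness of $C_q^p$ and $C_k^p$ by a transversality argument (transversal intersection of $\mathcal{W}^u(\mathcal{O}_p)$ with a local (center-)stable manifold of a Poincar\'e map at a point of $\mathcal{O}_q$ or $\mathcal{O}_k$), then propagate with the $C^1$ flow on $\mathcal{W}^u(\mathcal{O}_p)$; prove smoothness across $\mathcal{O}_p$ and at the boundary orbits by a direct analysis of difference quotients along a line $L_k$ in $\pi_2$-image; and finally use the monotone structure of a local unstable manifold for the separatrix statement. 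None of this machinery appears in your proposal, and the key identification step on which your argument rests is structurally incorrect.
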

Fig.~4 visualizes the structure of the closure $\overline{\mathcal{W}^{u}\left(\mathcal{O}_{p}\right)}$
of $\mathcal{W}^{u}\left(\mathcal{O}_{p}\right)$ in $C$. To get
an overview of the above results regarding $\mathcal{W}^{u}\left(\mathcal{O}_{p}\right)$,
see the inner part of Fig.~4, drawn in black. We emphasize a particular
consequence of Theorem \ref{main_theorem_4}: the tangent spaces of
$S_{-1}$ and $S_{1}$ coincide along $\mathcal{O}_{p}$, see Fig.~5.

\begin{figure}[h]
\begin{centering}
\includegraphics[scale=0.85]{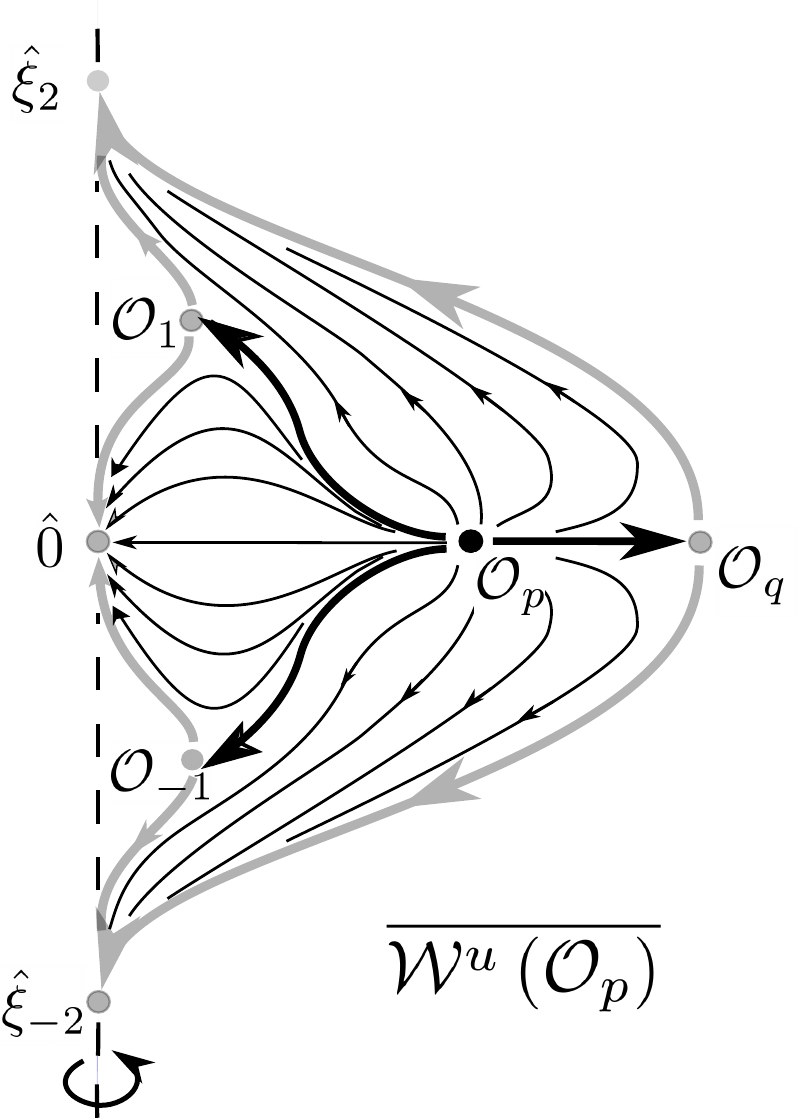} 
\par\end{centering}

\caption{$\overline{\mathcal{W}^{u}\left(\mathcal{O}_{p}\right)}$ can be visualized
as a ``tulip'' rotated around the vertical axis: the dots correspond
to equilibria and periodic orbits, the thick arrows symbolize two-dimensional
heteroclinic connecting sets, and the three groups of thin arrows
represent three-dimensional connecting sets. The elements of $\mathcal{W}^{u}\left(\mathcal{O}_{p}\right)$
are drawn in black. Grey is used for the boundary of $\mathcal{W}^{u}\left(\mathcal{O}_{p}\right)$. }
\end{figure}

\begin{figure}[h]
\begin{centering}
\includegraphics[scale=1.3]{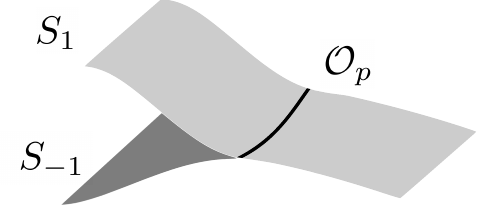} 
\par\end{centering}

\caption{The tangent spaces of $S_{-1}$ and $S_{1}$ coincide along $\mathcal{O}_{p}$.}
\end{figure}

Let $\mathcal{W}^{u}\left(\mathcal{O}_{1}\right)$ and $\mathcal{W}^{u}\left(\mathcal{O}_{-1}\right)$
denote the unstable sets of $\mathcal{O}_{1}$ and $\mathcal{O}_{-1}$,
respectively, defined as the forward extension of a one-dimensional
local unstable manifold of a return map (corresponding to the only
Floquet multiplier outside the unit circle which is real and simple),
see \eqref{unstable set is forwad extension of unstable manifold-1}.
We expect $\mathcal{W}^{u}\left(\mathcal{O}_{q}\right)$, $\mathcal{W}^{u}\left(\mathcal{O}_{-1}\right)$
and $\mathcal{W}^{u}\left(\mathcal{O}_{1}\right)$ to be two-dimensional
$C^{1}$-submanifolds of $C$. We conjecture that for the closure
$\overline{\mathcal{W}^{u}\left(\mathcal{O}_{p}\right)}$ of $\mathcal{W}^{u}\left(\mathcal{O}_{p}\right)$
in $C$, the equality 
\[
\overline{\mathcal{W}^{u}\left(\mathcal{O}_{p}\right)}=\mathcal{W}^{u}\left(\mathcal{O}_{p}\right)\cup\mathcal{W}^{u}\left(\mathcal{O}_{q}\right)\cup\mathcal{W}^{u}\left(\mathcal{O}_{1}\right)\cup\mathcal{W}^{u}\left(\mathcal{O}_{-1}\right)\cup\left\{ \hat{\xi}_{-2},\hat{0},\hat{\xi}_{2}\right\} 
\]
holds, as it represented in Fig. 4. Moreover, all points of $\mathcal{W}^{u}\left(\mathcal{O}_{q}\right)\cup\mathcal{W}^{u}\left(\mathcal{O}_{1}\right)\cup\mathcal{W}^{u}\left(\mathcal{O}_{-1}\right)$
have an open neighborhood on which the $C^{1}$-map in the graph representation
of $\mathcal{W}^{u}\left(\mathcal{O}_{p}\right)$ can be smoothly
extended.

It also remains an open question whether $\mathcal{A}\setminus(\mathcal{A}_{-2,0}\cup\mathcal{A}_{0,2})$
is homeomorphic to the three-dimensional body 
\[
\mathcal{B}_{3}\left(\left(0,0,0\right),2\right)\backslash\left\{ \mathcal{B}_{3}\left(\left(0,0,1\right),1\right)\cup\mathcal{B}_{3}\left(\left(0,0,-1\right),1\right)\right\} \subset\mathbb{R}^{3},
\]
 where $\mathcal{B}_{3}\left(\left(a_{1},a_{2},a_{3}\right),r\right)$
denotes the three-dimensional closed ball with center $\left(a_{1},a_{2},a_{3}\right)$
and radius $r$.

The proofs of Theorems \ref{main_theorem_3}--\ref{main_theorem_4}
apply general results on delay differential equations, the Floquet
theory (Appendix VII of \cite{Krisztin-Walther-Wu}, \cite{Mallet-Paret}),
results on local invariant manifolds for maps in Banach spaces (Appendices
I-II of \cite{Krisztin-Walther-Wu}), correspondences between different
return maps (Appendices I and V of \cite{Krisztin-Walther-Wu}), a
result from transversality theory \cite{Abraham-Robbin} and also
a discrete Lyapunov functional of Mallet-Paret and Sell counting the
sign changes of the elements of $C$ (Appendix VI of \cite{Krisztin-Walther-Wu},
\cite{Mallet-Paret_Sell}). 

This paper is organized as follows. Section \ref{sec:Prelimimaries}
offers a general overview of the theoretical background and introduces
the discrete Lyapunov functional. As the Floquet theory and certain
results on local invariant manifolds of return maps play essential
role in this work, Section \ref{sec:Floquet-multipliers and invariant manifolds}
is devoted to the discussion of these concepts. Sections 4 and 5 contain
the proofs of Theorems \ref{main_theorem_3} and \ref{main_theorem_4},
respectively. 

The proof of Theorem \ref{main_theorem_3} in Section 4 takes advantage
of the fact that the unstable set of a hyperbolic periodic orbit is
the forward continuation of a local unstable manifold of a Poincar\'e
map by the semiflow. In consequence, by using the smoothness of the
local unstable manifold and the injectivity of the derivative of the
solution operator, we prove that all points $\varphi$ of $\mathcal{W}^{u}\left(\mathcal{O}_{p}\right)$
belong to a subset $W_{\varphi}$ of $\mathcal{W}^{u}\left(\mathcal{O}_{p}\right)$
that is a three-dimensional $C^{1}$-submanifold of $C$. This means
that $\mathcal{W}^{u}\left(\mathcal{O}_{p}\right)$ is an immersed
submanifold of $C$. In general, an immersed submanifold is not necessarily
an embedded submanifold of the phase space. In order to prove that
$\mathcal{W}^{u}\left(\mathcal{O}_{p}\right)$ is embedded in $C$,
we have to show that for any $\varphi$ in $\mathcal{W}^{u}\left(\mathcal{O}_{p}\right)$,
there is no sequence in $\mathcal{W}^{u}\left(\mathcal{O}_{p}\right)\backslash W_{\varphi}$
converging to $\varphi$. We define a projection $\pi_{3}$ from $C$
into $\mathbb{R}^{3}$. Using well-known properties of the discrete
Lyapunov functional, we show that $\pi_{3}$ is injective on $\mathcal{W}^{u}\left(\mathcal{O}_{p}\right)$
and on the tangent spaces of $W_{\varphi}$. This implies that $\pi_{3}W_{\varphi}$
is open in $\mathbb{R}^{3}$. If a sequence $\left(\varphi^{n}\right)_{n=0}^{\infty}$
in $\mathcal{W}^{u}\left(\mathcal{O}_{p}\right)\backslash W_{\varphi}$
converges to $\varphi$ as $n\rightarrow\infty$, then $\pi_{3}\varphi^{n}\rightarrow\pi_{3}\varphi$
as $n\rightarrow\infty$, and $\pi_{3}\varphi^{n}\in\pi_{3}W_{\varphi}$
for all $n$ large enough. The injectivity of $\pi_{3}$ on $\mathcal{W}^{u}\left(\mathcal{O}_{p}\right)$
then implies that $\varphi^{n}\in W_{\varphi}$, which is a contradiction.
So $\mathcal{W}^{u}\left(\mathcal{O}_{p}\right)$ is a three-dimensional
embedded $C^{1}$-submanifold of the phase space. The description
of $\mathcal{W}^{u}\left(\mathcal{O}_{p}\right)$ is rounded up by
giving a graph representation for $\mathcal{W}^{u}\left(\mathcal{O}_{p}\right)$
in order to present the simplicity of it structure. The smoothness
of the sets $C_{-2}^{p},\, C_{0}^{p}$ and $C_{2}^{p}$ then follows
at once because they are open subsets of $\mathcal{W}^{u}\left(\mathcal{O}_{p}\right)$.
We also obtain as an important consequence that the semiflow defined
by the solution operator extends to a $C^{1}$-flow on $\mathcal{W}^{u}\left(\mathcal{O}_{p}\right)$
with injective derivatives.

The proof of Theorem \ref{main_theorem_4} in Section 5 is built from
several steps, and it is organized into five subsections.

In Subsection 5.1 we list preliminary results regarding the closure
$\overline{S_{k}}$ of $S_{k}$ in $C$, $k\in\left\{ -1,1\right\} $.
We introduce in particular a projection $\pi_{2}$ from $C$ into
$\mathbb{R}^{2}$, and -- using the special properties of the discrete
Lyapunov functional -- we show that $\pi_{2}$ is injective on $\overline{S_{k}}$.
The injectivity of $\pi_{2}|_{\overline{S_{k}}}$ is already sufficient
to give a two-dimensional graph representation for any subset $W$
of $\overline{S_{k}}$ (without smoothness properties): there is a
linear isomorphism $J_{2}:\mathbb{R}^{2}\rightarrow C$ such that
$P_{2}=J_{2}\circ\pi_{2}:C\rightarrow C$ is a projection onto a two-dimensional
subspace $G_{2}$ of $C$, and there exists a map $w_{k}$ defined
on the image set $P_{2}\overline{S_{k}}$ with range in $P_{2}^{-1}\left(0\right)$
such that for any subset $W\subseteq S_{k}$, 
\[
W=\left\{ \chi+w_{k}\left(\chi\right):\,\chi\in P_{2}W\right\} .
\]
The smoothness of $w_{k}$ and the properties of its domain $P_{2}\overline{S_{k}}\subset G_{2}$
are investigated later. Subsection 5.1 is closed with showing that
$\pi_{2}|_{\overline{S_{k}}}$ is a homeomorphism onto its image,
furthermore $\pi_{2}$ is injective on the tangent spaces of $\overline{S_{k}}$. 

It is clear that $\left(\mathcal{O}_{k}\cup S_{k}\cup\mathcal{O}_{q}\right)\subset\overline{S_{k}}$
for both $k\in\left\{ -1,1\right\} $. The converse inclusion is proved
in Subsection 5.2 based on the previously obtained result that $\overline{S_{k}}$
is mapped injectively into $\mathbb{R}^{2}$. Then it follows easily
that $\overline{C_{k}^{p}}$, $k\in\left\{ -1,1\right\} ,$ and $\overline{C_{q}^{p}}$
are not larger than the unions $\mathcal{O}_{p}\cup C_{k}^{p}\cup\mathcal{O}_{k}$
and $\mathcal{O}_{p}\cup C_{q}^{p}\cup\mathcal{O}_{q}$, respectively. 

It is a more challenging task to show that $C_{q}^{p}$ and $C_{k}^{p}$,
$k\in\left\{ -1,1\right\} ,$ are $C^{1}$-submanifolds of $\mathcal{W}^{u}\left(\mathcal{O}_{p}\right)$
(as stated by Theorem \ref{main_theorem_4}.(i)). The proof of this
assertion is contained in Subsection 5.3. It is partly based on transversality
\cite{Abraham-Robbin}; we verify that $\mathcal{W}^{u}\left(\mathcal{O}_{p}\right)$
intersects transversally a local center-stable manifold of a Poincar\'e
return map at a point of $\mathcal{O}_{k}$ and a local stable manifold
of a Poincar\'e return map at a point of $\mathcal{O}_{q}$, and
thereby the intersections -- subsets of $C_{q}^{p}$ and $C_{k}^{p}$
-- are one-dimensional submanifolds of $\mathcal{W}^{u}\left(\mathcal{O}_{p}\right)$.
The main difficulty in this task is that the hyperbolicity of $\mathcal{O}_{k}$
is not known. Krisztin, Walther and Wu have proved transversality
in a similar situation \cite{Krisztin-Walther-Wu}. Then we apply
techniques that already appeared in Section 4. The injectivity of
the derivative of the flow induced by the solution operator on $\mathcal{W}^{u}\left(\mathcal{O}_{p}\right)$
guarantees that each point $\varphi$ in $C_{q}^{p}$ or $C_{k}^{p}$
belongs to a ``small'' subset of $C_{q}^{p}$ or $C_{k}^{p}$, respectively,
that is a two-dimensional $C^{1}$-submanifold of $\mathcal{W}^{u}\left(\mathcal{O}_{p}\right)$.
Therefore, $C_{q}^{p}$ and $C_{k}^{p}$ are immersed $C^{1}$-submanifolds
of $\mathcal{W}^{u}\left(\mathcal{O}_{p}\right)$. In order to prove
that $C_{q}^{p}$ and $C_{k}^{p}$ are embedded in $\mathcal{W}^{u}\left(\mathcal{O}_{p}\right)$,
we repeat an argument from the proof of Theorem \ref{main_theorem_3}
with $\pi_{2}$ in the role of $\pi_{3}$. Based on the property that
$C_{q}^{p}$ and $C_{k}^{p}$ are $C^{1}$-submanifolds of $\mathcal{W}^{u}\left(\mathcal{O}_{p}\right)$,
we prove at the end of Subsection 5.3 that $w_{k}$ is continuously
differentiable on the open sets $P_{2}C_{q}^{p}$ and $P_{2}C_{k}^{p}$,
i.e., the representations 
\[
C_{q}^{p}=\left\{ \chi+w_{k}\left(\chi\right):\,\chi\in P_{2}C_{q}^{p}\right\} \quad\mbox{and}\quad C_{k}^{p}=\left\{ \chi+w_{k}\left(\chi\right):\,\chi\in P_{2}C_{k}^{p}\right\} .
\]
are smooth. 

Next we verify in Subsection 5.4 that the images of $C_{q}^{p}$,
$C_{k}^{p}$ and $S_{k}$, $k\in\left\{ -1,1\right\} ,$ under $\pi_{2}$
are topologically equivalent to the open annulus, and the images of
their closures are topologically equivalent to the closed annulus.

As
\[
S_{k}=\left\{ \chi+w_{k}\left(\chi\right):\,\chi\in P_{2}S_{k}\right\} \quad\mbox{and}\quad P_{2}S_{k}=P_{2}C_{k}^{p}\cup P_{2}\mathcal{O}_{p}\cup P_{2}C_{q}^{p},
\]
we have a smooth representation for $S_{k}$ if we show that $P_{2}S_{k}$
is open in $G_{2}$ and $w_{k}$ is smooth at the points of $P_{2}\mathcal{O}_{p}$.
This is done in Subsection 5.5. It follows immediately that $S_{k}$
is a $C^{1}$-submanifold of $\mathcal{W}^{u}\left(\mathcal{O}_{p}\right)$.
Simultaneously, we verify that all points of $P_{2}\mathcal{O}_{k}\cup P_{2}\mathcal{O}_{q}$
have open neighborhoods on which $w_{k}$ can be extended to $C^{1}$-functions.
As $P_{2}\mathcal{O}_{k}\cup P_{2}\mathcal{O}_{q}$ is the boundary
of \textit{$P_{2}\overline{S_{k}}$}, this step guarantees that $\overline{S_{k}}$
has a smooth representation with boundary, and thereby $\overline{S_{k}}$
is a $C^{1}$-submanifold of $C$ with boundary. The same reasonings
yield the analogous results for $\overline{C_{q}^{p}}$ and $\overline{C_{k}^{p}}$.
Summing up, the proofs of Theorem \ref{main_theorem_4}.(i) and (ii)
are completed in Subsection 5.5.

It remains to show that $S_{-1}$ and $S_{1}$ are indeed separatrices
in the sense described by Theorem \ref{main_theorem_4}.(iii). It
is easy to see that the assertion restricted to a local unstable manifold
of $\mathcal{O}_{p}$ holds. Then we use the monotonicity of the semiflow
to extend the statement for $\mathcal{W}^{u}\left(\mathcal{O}_{p}\right).$

Several techniques applied here have already appeared in the monograph
\cite{Krisztin-Walther-Wu} of Krisztin, \foreignlanguage{canadian}{Walther}
and Wu. The novelty of this paper compared to \cite{Krisztin-Walther-Wu}
is that here we describe the unstable set of a periodic orbit, while
\cite{Krisztin-Walther-Wu} considers the unstable set of an equlibrium
point.

\textbf{Acknowledgments. }Both authors were supported by the Hungarian
Scientific Research Fund, Grant No. K109782. The research of Gabriella
Vas was supported by the European Union and the State of Hungary,
co-financed by the European Social Fund in the framework of TÁMOP-4.2.4.A/
2-11/1-2012-0001 \textquoteleft{}National Excellence Program\textquoteright{}.
The research of Tibor Krisztin was also supported by the European
Union and co-funded by the European Social Fund. Project title: \textquotedblleft{}Telemedicine-focused
research activities on the field of Matematics, Informatics and Medical
sciences\textquotedblright{} Project number: TÁMOP-4.2.2.A-11/1/KONV-2012-0073.

\section{Preliminaries\label{sec:Prelimimaries}}

We fix $\mu=1$ and set $f$ in Eq.\,\eqref{eq:eq_general} so that
Theorems A and B hold. In this section we give a summary of the theoretical
background. In particular, we discuss the differentiability of the
semiflow, the basic properties of the global attractor, the discrete
Lyapunov functional of Mallet-Paret and Sell, and we list some technical
results. The discussion of the Floquet theory and the Poincar\'e
return maps is left to the next section.

\subsection*{Phase space, solution, segment.}

The natural phase space\emph{ }for Eq.\,\eqref{eq:eq_general} is
the Banach space $C=C\left(\left[-1,0\right],\mathbb{R}\right)$ of
continuous real functions defined on $\left[-1,0\right]$ equipped
with the supremum norm 
\[
\left\Vert \varphi\right\Vert =\sup_{-1\leq s\leq0}\left|\varphi\left(s\right)\right|.
\]

If $J$ is an interval, $u:J\rightarrow\mathbb{R}$ is continuous
and $\left[t-1,t\right]\subseteq J$, then the segment $u_{t}\in C$
is defined by $u_{t}\left(s\right)=u\left(t+s\right)$, $-1\leq s\leq0$.

Let $C^{1}$ denote the subspace of $C$ containing the continuously
differentiable functions. Then $C^{1}$ is also a Banach space with
the norm $\left\Vert \varphi\right\Vert _{C^{1}}=\left\Vert \varphi\right\Vert +\left\Vert \varphi'\right\Vert .$ 

For all $\xi\in\mathbb{R}$, $\hat{\xi}\in C$ is defined by $\hat{\xi}\left(s\right)=\xi$
for all $s\in\left[-1,0\right]$.

A solution of Eq.\,\eqref{eq:eq_general} is either a continuous
function on $\left[t_{0}-1,\infty\right)$, $t_{0}\in\mathbb{R}$,
which is differentiable for $t>t_{0}$ and satisfies equation Eq.\,\eqref{eq:eq_general}
on $\left(t_{0},\infty\right)$, or a continuously differentiable
function on $\mathbb{R}$ satisfying the equation for all $t\in\mathbb{R}$.
To all $\varphi\in C$, there corresponds a unique solution $x^{\varphi}:\left[-1,\infty\right)\rightarrow\mathbb{R}$
of Eq.~\eqref{eq:eq_general} with $x_{0}^{\varphi}=\varphi$. On
$\left(0,\infty\right)$, $x^{\varphi}$ is given by the variation-of-constants
formula for ordinary differential equations repeated on successive
intervals of length $1$: 
\begin{equation}
x^{\varphi}\left(t\right)=e^{n-t}x^{\varphi}\left(n\right)+\int_{n}^{t}e^{s-t}f\left(x^{\varphi}\left(s-1\right)\right)\mbox{d}s\quad\mbox{for all }n\in\mathbb{N},\, n\leq t\leq n+1.\label{eq:variation-of-constants formula}
\end{equation}

\subsection*{Semiflow.}

The solutions of Eq.\,\eqref{eq:eq_general} define the continuous
semiflow 
\[
\Phi:\mathbb{R}^{+}\times C\ni\left(t,\varphi\right)\mapsto x_{t}^{\varphi}\in C.
\]
All maps $\Phi\left(t,\cdot\right):C\rightarrow C$, $t\geq1$, are
compact \cite{Hale-1}. As $f'>0$ on $\mathbb{R}$, all maps $\Phi\left(t,\cdot\right):C\rightarrow C$,
$t\geq0$, are injective \cite{Krisztin-Walther-Wu}. It follows that
for every $\varphi\in C$ there is at most one solution $x:\mathbb{R}\rightarrow\mathbb{R}$
of Eq.\,\eqref{eq:eq_general} with $x_{0}=\varphi.$ Whenever such
solution exists, we denote it also by $x^{\varphi}$. 

For fixed $\varphi\in C$, the map $\left(1,\infty\right)\ni t\mapsto\Phi(t,\varphi)\in C$
is continuously differentiable with $D_{1}\Phi\left(t,\varphi\right)1=\dot{x_{t}}^{\varphi}$
for all $t>1$. For all $t\geq0$ fixed, $C\ni\varphi\mapsto\Phi(t,\varphi)\in C$
is continuously differentiable, and $D_{2}\Phi(t,\varphi)\eta=v_{t}^{\eta}$,
where $v^{\eta}:\left[-1,\infty\right)\rightarrow\mathbb{R}$ is the
solution of the linear variational equation 
\begin{align}
\dot{v}(t) & =-v(t)+f'\left(x^{\varphi}\left(t-1\right)\right)v\left(t-1\right)\label{var eq}
\end{align}
with $v_{0}^{\eta}=\eta$. So the restriction of $\Phi$ to the open
set $\left(1,\infty\right)\times C$ is continuously differentiable. 
\begin{prop}
\label{prop:uniqueness of solutions}Suppose that $\eta\in C$, $b:\mathbb{R}\rightarrow\mathbb{R}$
is positive, and the problem 
\[
\begin{cases}
\dot{v}(t) & =-v(t)+b(t)v\left(t-1\right)\\
v_{0} & =\eta
\end{cases}
\]
has a solution $v^{\eta}$ either on $\left[t_{0}-1,\infty\right)$
with $t_{0}\leq0$ or on $\mathbb{R}$ (i.e., there is a continuous
function $v^{\eta}:\left[t_{0}-1,\infty\right)\rightarrow\mathbb{R}$
with $v_{0}^{\eta}=\eta$ that is differentiable and satisfies the
equation for $t>t_{0}$, or there exists a differentiable function
$v^{\eta}:\mathbb{R}\rightarrow\mathbb{R}$ with $v_{0}^{\eta}=\eta$
satisfying the equation for all real $t$, respectively). Then $v^{\eta}$
is unique.\end{prop}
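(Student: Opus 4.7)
The plan is to reduce the statement to the homogeneous case and then treat forward and backward uniqueness separately. By linearity, the difference $w=v_{1}^{\eta}-v_{2}^{\eta}$ of two solutions of the same initial value problem satisfies the same linear equation with $w_{0}\equiv 0$, so it suffices to show that any such $w$ vanishes identically on its domain.

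Forward uniqueness on $[0,\infty)$ is the standard method of steps. On $[0,1]$ the delayed term $w(t-1)$ is prescribed by $w_{0}$ and hence vanishes, so $w$ satisfies $\dot w(t)=-w(t)$ with $w(0)=0$; therefore $w\equiv 0$ on $[0,1]$. Iterating on $[n,n+1]$ for $n\geq 1$ extends the vanishing to all of $[0,\infty)$.

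For backward uniqueness the positivity of $b$ is decisive, because it allows the equation to be rewritten as
\[
w(t-1)=\frac{\dot w(t)+w(t)}{b(t)}\qquad (t>t_{0}),
\]
which expresses the delayed value in terms of later data. I would show by induction on $n\geq 1$ that $w$ vanishes on $[\max(t_{0}-1,-n),0]$. The base case $n=1$ is immediate from $w_{0}\equiv 0$. For the inductive step, assuming $w\equiv 0$ on $[-n,0]$ (the only nontrivial case), we have $\dot w\equiv 0$ on $(-n,0)$, so the equation at any $t\in(\max(t_{0},-n),-n+1]$ forces $b(t)w(t-1)=0$. Since $b(t)>0$, this yields $w(t-1)=0$, and continuity of $w$ at the left endpoint extends the vanishing interval to $[\max(t_{0}-1,-n-1),0]$. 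Finitely many iterations cover $[t_{0}-1,0]$; in the case of a solution on $\mathbb{R}$, every finite negative $t$ is reached after finitely many steps.

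The only real obstacle is the bookkeeping between the interval $(t_{0},\infty)$ on which the DDE is assumed to hold and the interval on which $w$ is already known to vanish; this becomes slightly delicate when $t_{0}\in(-1,0]$, in which case the backward argument reduces to a single application of the inversion identity on $(t_{0},0]$, and no induction is needed.
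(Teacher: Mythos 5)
Your proposal is correct and takes essentially the same route as the paper: forward uniqueness by the method of steps (which is what the paper's appeal to a variation-of-constants formula amounts to), and backward uniqueness by solving the equation for the delayed value, $w(t-1)=(\dot w(t)+w(t))/b(t)$, which is valid precisely because $b>0$. You merely unwind the paper's one-line backward argument into an explicit step-by-step induction, whose endpoint bookkeeping you correctly flag as the only delicate point.
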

\begin{proof}
As the solution on $\left[0,\infty\right)$ is determined by a variation-of-constants
formula analogous to \eqref{eq:variation-of-constants formula}, the
uniqueness in forward time is clear. For $t<0$, the uniqueness follows
from $v\left(t-1\right)=\left(\dot{v}\left(t\right)+v\left(t\right)\right)/b\left(t\right)$. 
\end{proof}
In particular, the solution operator $D_{2}\Phi(t,\varphi)$ corresponding
to the variational equation \eqref{var eq} is injective for all $\varphi\in C$
and $t\geq0$. 

A function \emph{$\hat{\xi}\in C$} is an equilibrium point\emph{
}(or stationary point) of $\Phi$ if and only if $\hat{\xi}\left(s\right)=\xi$
for all $-1\leq s\leq0$ with $\xi\in\mathbb{R}$ satisfying $-\xi+f\left(\xi\right)=0$.
Then $x^{\hat{\xi}}\left(t\right)=\xi$ for all $t\in\mathbb{R}$.
As it is described in Chapter 2 of \cite{Krisztin-Walther-Wu}, condition
$f'\left(\xi\right)<1$ implies that $\hat{\xi}$ is stable and locally
attractive. If $f'\left(\xi\right)>1$, then $\hat{\xi}$ is unstable.
So hypothesis (H1) with $\mu=1$ implies that $\hat{\xi}_{-2},$ $\hat{\xi}_{0}$
and $\hat{\xi}_{2}$ are stable, and $\hat{\xi}_{-1}$ and $\hat{\xi}_{1}$
are unstable.

\subsection*{Limit sets.}

If $\varphi\in C$ and $x^{\varphi}:\left[-1,\infty\right)\rightarrow\mathbb{R}$
is a bounded solution of Eq.\,\eqref{eq:eq_general}, then the\emph{
$\omega$-}limit set 
\begin{align*}
\omega\left(\varphi\right)= & \left\{ \psi\in C:\,\mbox{there exists a sequence }\left(t_{n}\right)_{0}^{\infty}\mbox{ in }\left[0,\infty\right)\right.\\
 & \left.\mbox{ with }t_{n}\rightarrow\infty\mbox{ and }\Phi\left(t_{n},\varphi\right)\rightarrow\psi\mbox{ as }n\rightarrow\infty\right\} 
\end{align*}
is nonempty, compact, connected and invariant. For a solution $x:\mathbb{R}\rightarrow\mathbb{R}$
such that $x|_{\left(-\infty,0\right]}$ is bounded, the \emph{$\alpha$}-limit
set
\begin{align*}
\alpha\left(x\right)= & \left\{ \psi\in C:\,\mbox{there exists a sequence }\left(t_{n}\right)_{0}^{\infty}\mbox{ in }\mathbb{R}\right.\\
 & \left.\mbox{ with }t_{n}\rightarrow-\infty\mbox{ and }x_{t_{n}}\rightarrow\psi\mbox{ as }n\rightarrow\infty\right\} 
\end{align*}
is also nonempty, compact, connected and invariant. 

According to the Poincar\'e--Bendixson theorem of Mallet-Paret and
Sell \cite{Mallet-Paret_Sell2}, for all 
\[
\varphi\in C_{-2,2}=\left\{ \varphi\in C:\,\xi_{-2}\leq\varphi\left(s\right)\leq\xi_{2}\mbox{ for all }s\in\left[-1,0\right]\right\} ,
\]
the set $\omega\left(\varphi\right)$ is either a single nonconstant
periodic orbit, or for each $\psi\in\omega\left(\varphi\right)$,
\[
\alpha\left(x^{\psi}\right)\cup\omega\left(\psi\right)\subseteq\left\{ \hat{\xi}_{-2},\hat{\xi}_{-1},\hat{\xi}_{0},\hat{\xi}_{1},\hat{\xi}_{2}\right\} .
\]
An analogous result holds for $\alpha\left(x\right)$ in case $x$
is defined on $\mathbb{R}$ and $\left\{ x_{t}:\ t\leq0\right\} \subset C_{-2,2}$. 

By Theorem 4.1 in Chapter 5 of \cite{Smith}, there is an open and
dense set of initial functions in $C_{-2,2}$ so that the corresponding
solutions converge to equilibria.

Note that there is no homoclinic orbit to $\hat{\xi}_{j}$, $j\in\left\{ -2,0,2\right\} $,
as these equilibria are stable. It follows from Proposition 3.1 in
\cite{Krisztin-3} that there exists no homoclinic orbits to the unstable
equilibria $\hat{\xi}_{-1}$ and $\hat{\xi}_{1}$.

\subsection*{The global attractor.}

The\emph{ }global attractor $\mathcal{A}$ of the restriction $\Phi|_{\left[0,\infty\right)\times C_{-2,2}}$
is a nonempty, compact set in $C$, that is invariant in the sense
that $\Phi\left(t,\mathcal{A}\right)=\mathcal{A}$ for all $t\geq0$,
and that attracts bounded sets in the sense that for every bounded
set $B\subset C_{-2,2}$ and for every open set $U\supset\mathcal{A}$,
there exists $t\geq0$ with $\Phi\left(\left[t,\infty\right)\times B\right)\subset U$.
Global attractors are uniquely determined \cite{Hale-1}. It can be
shown that 
\begin{align*}
\mathcal{A}= & \left\{ \varphi\in C_{-2,2}:\mbox{ there is a bounded solution }x:\mathbb{R}\rightarrow\mathbb{R}\right.\\
 & \left.\mbox{ of Eq.\,}\eqref{eq:eq_general}\mbox{ so that }\varphi=x_{0}\right\} ,
\end{align*}
see \cite{Krisztin-Walther,Mallet-Paret,Polner}. 

The compactness of $\mathcal{A}$, its invariance property and the
injectivity of the maps $\Phi\left(t,\cdot\right):C\rightarrow C$,
$t\geq0$, combined permit to verify that the map 
\[
\left[0,\infty\right)\times\mathcal{A}\ni\left(t,\varphi\right)\mapsto\Phi\left(t,\varphi\right)\in\mathcal{A}
\]
extends to a continuous flow $\Phi_{\mathcal{A}}:\mathbb{R}\times\mathcal{A}\rightarrow\mathcal{A}$;
for every $\varphi\in\mathcal{A}$ and for all $t\in\mathbb{R}$ we
have $\Phi_{\mathcal{A}}\left(t,\varphi\right)=x_{t}^{\varphi}$ with
the uniquely determined solution $x^{\varphi}:\mathbb{R}\rightarrow\mathbb{R}$
of Eq.\,\eqref{eq:eq_general} satisfying $x_{0}^{\varphi}=\varphi$.

Note that we have $\mathcal{A}=\Phi\left(1,\mathcal{A}\right)\subset C^{1}$;
$\mathcal{A}$ is a closed subset of $C^{1}$. Using the flow $\Phi_{\mathcal{A}}$
and the continuity of the map
\[
C\ni\varphi\mapsto\Phi\left(1,\varphi\right)\in C^{1},
\]
one obtains that $C$ and $C^{1}$ define the same topology on $\mathcal{A}$.

\subsection*{A discrete Lyapunov functional\emph{.}}

Following Mallet-Paret and Sell in \cite{Mallet-Paret_Sell}, we use
a discrete Lyapunov functional $V:C\setminus\left\{ \hat{0}\right\} \rightarrow2\mathbb{N}\cup\left\{ \infty\right\} $.
For $\varphi\in C\setminus\left\{ \hat{0}\right\} ,$ set $sc\left(\varphi\right)=0$
if $\varphi\geq\hat{0}$ or $\varphi\leq\hat{0}$ (i.e., $\varphi\left(s\right)\geq0$
for all $s\in\left[-1,0\right]$ or $\varphi\left(s\right)\leq0$
for all $s\in\left[-1,0\right]$, respectively), otherwise define
\[
sc\left(\varphi\right)=\textrm{sup}\Bigl\{ k\in\mathbb{N}\setminus\left\{ 0\right\} :\mbox{ there exist a strictly increasing sequence}
\]
\[
\left.\left(s_{i}\right)_{0}^{k}\subseteq\left[-1,0\right]\textrm{ with }\varphi\left(s_{i-1}\right)\varphi\left(s_{i}\right)<0\textrm{ for }i\in\left\{ 1,2,..,k\right\} \right\} .
\]
 Then set
\[
V\left(\varphi\right)=\left\{ \begin{array}{ll}
sc\left(\varphi\right), & \textrm{if }sc\left(\varphi\right)\textrm{ is even or }\infty,\\
sc\left(\varphi\right)+1, & \textrm{if }sc\left(\varphi\right)\textrm{ is odd}.
\end{array}\right.
\]

Also define 
\begin{eqnarray*}
R & = & \left\{ \varphi\in C^{1}:\,\varphi\left(0\right)\neq0\mbox{ or }\dot{\varphi}\left(0\right)\varphi\left(-1\right)>0,\right.\\
 &  & \,\varphi\left(-1\right)\neq0\mbox{ or }\dot{\varphi}\left(-1\right)\varphi\left(0\right)<0,\left.\mbox{all zeros of }\varphi\mbox{ are simple}\right\} .
\end{eqnarray*}

$V$ has the following lower semi-continuity and continuity property
(for a proof, see \cite{Krisztin-Walther-Wu,Mallet-Paret_Sell}). 
\begin{lem}
\label{lem: continuity_of_V} For each $\varphi\in C\setminus\left\{ \hat{0}\right\} $
and $\left(\varphi_{n}\right)_{0}^{\infty}\subset C\setminus\left\{ \hat{0}\right\} $
with $\varphi_{n}\rightarrow\varphi$ as $n\rightarrow\infty$, $V\left(\varphi\right)\leq\liminf_{n\rightarrow\infty}V\left(\varphi_{n}\right)$.
For each $\varphi\in R$ and $\left(\varphi_{n}\right)_{0}^{\infty}\subset C^{1}\setminus\left\{ \hat{0}\right\} $
with $\left\Vert \varphi_{n}-\varphi\right\Vert _{C^{1}}\rightarrow0$
as $n\rightarrow\infty$, $V\left(\varphi\right)=\lim_{n\rightarrow\infty}V\left(\varphi_{n}\right)<\infty$. 
\end{lem}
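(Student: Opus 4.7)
My plan is to handle the two assertions separately and in order of increasing difficulty. The first (lower semi-continuity under uniform convergence) is a soft fact, while the second (two-sided continuity at elements of $R$ in the $C^1$-norm) relies on the rigidity built into the definition of $R$.

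For the first part I argue by contradiction. If $\liminf_{n\to\infty}V(\varphi_n)=\infty$ the inequality is trivial, so I may pass to a subsequence along which $V(\varphi_{n_k})=m$ for some fixed even $m\ge 0$. Assume $V(\varphi)>m$; since $V$ only takes even values or $\infty$, this forces $V(\varphi)\ge m+2$ and hence $sc(\varphi)\ge m+1$. Pick witnesses $-1\le s_0<s_1<\dots<s_{m+1}\le 0$ with $\varphi(s_{i-1})\varphi(s_i)<0$ for $i=1,\dots,m+1$. These are $m+1$ strict inequalities that are open conditions under uniform convergence, so they persist for $\varphi_{n_k}$ once $k$ is large, yielding $sc(\varphi_{n_k})\ge m+1$ and therefore $V(\varphi_{n_k})\ge m+2$, contradicting $V(\varphi_{n_k})=m$.

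For the second part the previous step already gives $V(\varphi)\le\liminf V(\varphi_n)$, so I only need $\limsup V(\varphi_n)\le V(\varphi)<\infty$. Because every zero of $\varphi\in R$ is simple and $\varphi\in C^1$, the zero set of $\varphi$ in $[-1,0]$ is a finite list $t_1<\dots<t_m$. I choose $\epsilon>0$ small enough that the sets $I_j=[t_j-\epsilon,t_j+\epsilon]\cap[-1,0]$ are pairwise disjoint, $|\varphi|$ is uniformly positive on $K=[-1,0]\setminus\bigcup_j(t_j-\epsilon,t_j+\epsilon)$, and $\dot\varphi$ has constant nonzero sign on each $I_j$. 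For $n$ large, $\|\varphi_n-\varphi\|_{C^1}$ is small enough that $\varphi_n$ has the same sign as $\varphi$ on $K$ and is strictly monotone on every $I_j$; in particular $\varphi_n$ has at most one zero per $I_j$. For an interior $t_j\in(-1,0)$, the two endpoints of $I_j$ lie in $K$ with opposite signs of $\varphi$, so the intermediate value theorem supplies exactly one zero of $\varphi_n$ in $I_j$, which contributes one sign change, exactly as $t_j$ did for $\varphi$.

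The main technical difficulty is the boundary case in which $t_j\in\{-1,0\}$: then $\varphi_n$ may or may not retain a zero in $I_j$, depending on the sign that $\varphi_n(-1)$ or $\varphi_n(0)$ assumes. The conditions $\dot\varphi(0)\varphi(-1)>0$ (when $\varphi(0)=0$) and $\dot\varphi(-1)\varphi(0)<0$ (when $\varphi(-1)=0$) in the definition of $R$ are tuned precisely so that, regardless of which sign $\varphi_n$ takes at the boundary, the contribution of $I_j$ to $sc(\varphi_n)$ agrees with the contribution of $t_j$ to $sc(\varphi)$: a boundary zero of $\varphi$ that disappears for $\varphi_n$ is compensated by a matching endpoint sign, and a new zero appearing just inside $[-1,0]$ never produces an extra sign change because the adjacent endpoint value already enforces the correct sign. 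Checking this reduces to a short case analysis over the combinations of which endpoint is a zero and the sign of the relevant derivative, following the bookkeeping in Appendix VI of \cite{Krisztin-Walther-Wu} and in \cite{Mallet-Paret_Sell}. Once this is settled, $sc(\varphi_n)=sc(\varphi)$ and hence $V(\varphi_n)=V(\varphi)$ for all large $n$, completing the proof.
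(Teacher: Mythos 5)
The paper itself does not prove this lemma; it refers to \cite{Krisztin-Walther-Wu,Mallet-Paret_Sell}, so your argument stands on its own. Your proof of the lower semicontinuity (first assertion) is correct, and your framework for the second assertion---covering the finitely many simple zeros of $\varphi$ by disjoint intervals $I_j$, using the $C^1$-smallness of $\varphi_n-\varphi$ to keep $\varphi_n$ sign-definite on the complement $K$ and strictly monotone on each $I_j$---is the standard and correct setup.

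However, your final bookkeeping claim, that $sc(\varphi_n)=sc(\varphi)$ for large $n$, is false, and the place where it fails is precisely the boundary case you flag as the main difficulty. Take $\varphi\in R$ with $\varphi(-1)>0$, interior simple zeros $t_1<\dots<t_{m-1}$, and $\varphi(0)=0$ with $\dot\varphi(0)>0$ (so $\dot\varphi(0)\varphi(-1)>0$ holds and $\varphi<0$ on $(t_{m-1},0)$). Then $sc(\varphi)=m-1$. A $C^1$-small perturbation with $\varphi_n(0)>0$ produces a zero of $\varphi_n$ in $(t_{m-1},0)$ and a positive subinterval to its right, so $sc(\varphi_n)=m$. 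Your statement that ``a new zero appearing just inside $[-1,0]$ never produces an extra sign change'' is thus incorrect. What saves the conclusion is a parity argument, which is exactly what the $R$ conditions encode: when there is a boundary zero, the $m$ sign-alternating subintervals of $[-1,0]\setminus\{t_1,\dots,t_m\}$ begin and end with \emph{opposite} signs (this is precisely what $\dot\varphi(0)\varphi(-1)>0$, resp. $\dot\varphi(-1)\varphi(0)<0$, forces), so $m$ is even, $sc(\varphi)=m-1$ is odd, and $V(\varphi)=m$. Whichever of $sc(\varphi_n)\in\{m-1,m\}$ occurs for large $n$, the even rounding gives $V(\varphi_n)=m=V(\varphi)$. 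So the correct conclusion of your case analysis should be $sc(\varphi_n)\in\{sc(\varphi),\,sc(\varphi)+1\}$ together with the parity constraint imposed by $R$, not $sc(\varphi_n)=sc(\varphi)$; it is $V$, not $sc$, that is locally constant at points of $R$, and this distinction is the entire reason the definition of $R$ looks the way it does.
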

The next result explains why $V$ is called a Lyapunov functional
(for a proof, see \cite{Krisztin-Walther-Wu,Mallet-Paret_Sell} again).
For an interval $J\subset\mathbb{R}$, we use the notation 
\[
J+\left[-1,0\right]=\left\{ t\in\mathbb{R}:\, t=t_{1}+t_{2}\mbox{ with }t_{1}\in J,\, t_{2}\in\left[-1,0\right]\right\} .
\]

\begin{lem}
\noindent \label{lem:4_properties_of_V} Assume that $\mu\geq0$,
$J\subset\mathbb{R}$ is an interval, $a:J\rightarrow\mathbb{R}$
is positive and continuous, $z:J+\left[-1,0\right]\rightarrow\mathbb{R}$
is continuous, $z\left(t\right)\neq0$ for some $t\in J+\left[-1,0\right]$,
and $z$ is differentiable on $J$. Suppose that
\begin{equation}
\dot{z}\left(t\right)=-\mu z\left(t\right)+a\left(t\right)z\left(t-1\right)\label{Lyapunov-eq}
\end{equation}
 holds for all $t>\inf J$ in $J$. Then the following statements
hold.

\noindent (i) If $t_{1},t_{2}\in J$ with $t_{1}<t_{2}$ , then $V\left(z_{t_{1}}\right)\geq V\left(z_{t_{2}}\right)$.

\noindent (ii) If $t,t-2\in J$, $z\left(t-1\right)=z\left(t\right)=0$,
then either $V\left(z_{t}\right)=\infty$ or $V\left(z_{t-2}\right)>V\left(z_{t}\right)$.

\noindent (iii) If $t\in J$, $t-3\in J$, and $V\left(z_{t-3}\right)=V\left(z_{t}\right)<\infty$,
then $z_{t}\in R$.
\end{lem}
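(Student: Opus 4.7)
The plan is to track the zeros of $z$ as $t$ varies, exploiting the positivity of $a$ together with a variation-of-constants representation on successive unit intervals. On $[t-1, t]$ one has
\[
z(t) = e^{-\mu(t-t_0)} z(t_0) + \int_{t_0}^{t} e^{-\mu(t-s)} a(s)\, z(s-1)\,ds
\]
for $t_0 \in J$ and $t \in [t_0, t_0+1]$, so that $z$ on $[t-1, t]$ depends smoothly on $z(t-1)$ and on $z|_{[t-2,\,t-1]}$. Since $a > 0$, any zero of $z$ at $s = 0$ with $z(t-1) \neq 0$ is automatically simple, because $\dot{z}(t) = a(t) z(t-1) \neq 0$; more generally, interior zeros of $z_t$ in $(-1, 0)$ depend continuously on $t$ and can be created or destroyed only by passing through the window endpoints $s = -1$ and $s = 0$.

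For (i), I would let $t$ increase continuously and monitor the change in $V(z_t)$. The only events that could add a sign change to $z_t$ are a zero entering the interior of $(-1,0)$ from the right endpoint $s = 0$ (namely $z(\tau)=0$ at some critical $\tau$) or from the left endpoint $s = -1$ (namely $z(\tau-1)=0$). A case analysis using the simplicity of boundary zeros in the nondegenerate case, together with the even-rounding convention in the definition of $V$, shows that each such event either preserves $V$ or strictly decreases it. For (ii), assume $z(t-1) = z(t) = 0$; then
\[
0 = \int_{t-1}^{t} e^{-\mu(t-s)} a(s)\, z(s-1)\,ds,
\]
and because $a > 0$ this forces either $z \equiv 0$ on $[t-2, t-1]$, which by Proposition \ref{prop:uniqueness of solutions} propagates to $z \equiv 0$ throughout $J + [-1,0]$ and hence $V(z_t) = \infty$, or else $z|_{[t-2, t-1]}$ has at least one interior sign change. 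In the latter case, combining this interior sign change with the two boundary zeros of $z_{t-2}$ at $s = -1$ and $s = 0$ and applying the rounding rule yields the strict inequality $V(z_{t-2}) > V(z_t)$.

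For (iii), suppose $V(z_{t-3}) = V(z_t) < \infty$ and $z_t \notin R$. Any violation of the conditions defining $R$ at $z_t$ (a non-simple interior zero, or a boundary zero with the wrong sign product) can be localised, via the equation and the argument in (ii), at some $\tau \in [t-1, t]$ with $z(\tau-1) = z(\tau) = 0$; part (ii) then gives $V(z_{\tau-2}) > V(z_\tau)$, and since $\tau - 2 \geq t-3$ monotonicity from (i) yields
\[
V(z_{t-3}) \geq V(z_{\tau-2}) > V(z_\tau) \geq V(z_t),
\]
contradicting the equality. The main obstacle lies in (i): the bookkeeping of sign changes at moments when zeros enter or leave the window $[t-1, t]$, under the round-odd-up-to-even convention in the definition of $V$, requires a patient case-by-case analysis to confirm that every potential increase of $V$ is in fact neutralised. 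Once (i) is in place, (ii) and (iii) follow from the variation-of-constants identity, the positivity of $a$, and Proposition \ref{prop:uniqueness of solutions}.
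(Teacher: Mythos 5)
This lemma is not proved in the paper; the text explicitly refers the reader to the cited references of Krisztin, Walther and Wu and of Mallet-Paret and Sell, so there is no in-paper argument to compare against. Your sketch uses the right tools (the variation-of-constants identity, the positivity of $a$, and the automatic simplicity of a zero at $s=0$ when $z(t-1)\ne 0$, since $\dot z(t)=a(t)z(t-1)$), but each part has a genuine gap. For (i), you defer the case-by-case bookkeeping of how $sc(z_\tau)$ behaves as zeros cross the endpoints of the moving window; that bookkeeping, including the degenerate cases where $z(\tau)=z(\tau-1)=0$ or where interior zeros accumulate, is the technical heart of the monotonicity under the even-rounding convention and cannot be left as a remark. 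For (ii), "the two boundary zeros of $z_{t-2}$" is a slip: $z_{t-2}(-1)=z(t-3)$ and $z_{t-2}(0)=z(t-2)$ are not assumed to vanish; the boundary zeros $z(t-1)=z(t)=0$ are those of $z_t$. More importantly, the step that "combines" the interior sign change of $z$ on $(t-2,t-1)$ with the double zero of $z_t$ to force $V(z_{t-2})>V(z_t)$ is asserted rather than argued; that sign change lies in the domain of $z_{t-1}$, not of $z_t$, and the parity comparison of $sc$ across the window shift under the rounding convention is precisely what has to be proved.

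For (iii), the central claim that every violation of $z_t\in R$ can be localized at some $\tau\in[t-1,t]$ with $z(\tau-1)=z(\tau)=0$ is false in one case. From the equation one does get a double zero in two of the three cases: a nonsimple zero $z(\sigma)=\dot z(\sigma)=0$ forces $z(\sigma-1)=0$, and $z(t)=0$ with $\dot z(t)z(t-1)\le 0$ forces $z(t-1)=0$ because $\dot z(t)z(t-1)=a(t)z(t-1)^2$. But the remaining violation, $z(t-1)=0$ with $\dot z(t-1)z(t)\ge 0$, reads $a(t-1)z(t-2)z(t)\ge 0$ and can hold with $z(t-2)$ and $z(t)$ both nonzero and of the same sign, in which case there is no double zero anywhere in $[t-1,t]$. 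Your reduction therefore does not cover this case, and the contradiction there needs a separate argument tracking $sc(z_\tau)$ as $\tau$ passes $t$: the simple zero at $t-1$ leaves the window through $s=-1$ without a compensating sign change entering at $s=0$ (since $z(t)\ne 0$ and $z$ is continuous), which, combined with the parity information from $\dot z(t-1)=a(t-1)z(t-2)\ne 0$ and the constancy of $V$ on $[t-3,t]$, yields the contradiction.
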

If $f$ is a $C^{1}$-smooth function with $f'>0$ on $\mathbb{R}$,
$x,\hat{x}:J+\left[-1,0\right]\rightarrow\mathbb{R}$ are solutions
of Eq.\,\eqref{eq:eq_general} and $c\in\mathbb{R}\setminus\left\{ 0\right\} $,
then Lemma \ref{lem:4_properties_of_V} can be applied for $z=\left(x-\hat{x}\right)/c$
with the positive continuous function
\[
a:J\ni t\mapsto\int_{0}^{1}f'\left(sx\left(t-1\right)+\left(1-s\right)\hat{x}\left(t-1\right)\right)\mbox{d}s\in\left[0,\infty\right).
\]

\subsection*{Further notations and preliminary results.}

A solution $x$ is oscillatory around an equilibrium $\hat{\xi}$
if $x^{-1}\left(\xi\right)$ is not bounded from above, and it is
slowly oscillatory around $\hat{\xi}$ if $t\rightarrow x\left(t\right)-\xi$
has one or two sign changes on each interval of length $1$. 

$B\left(\varphi,r\right)$, $\varphi\in C$ , $r>0$, denotes the
open ball in $C$ with center $\varphi$ and radius $r$.

We use the notation $S_{\mathbb{C}}^{1}$ for the set $\left\{ z\in\mathbb{C}:\,\left|z\right|=1\right\} $.

For a simple closed curve $c:\left[a,b\right]\rightarrow\mathbb{R}^{2}$,
$\mbox{int}\left(c\left[a,b\right]\right)$ and $\mbox{ext}\left(c\left[a,b\right]\right)$
denote the interior and exterior, i.e., the bounded and unbounded
components of $\mathbb{R}^{2}\setminus c\left(\left[a,b\right]\right)$,
respectively. We use the same notations for closed curves $c:\left[a,b\right]\rightarrow G_{2}$,
where $G_{2}$ is any two-dimensional real Banach space. 

We say $\varphi\leq\psi$ for $\varphi,\psi\in C$ if $\varphi\left(s\right)\leq\psi\left(s\right)$
for all $s\in[-1,0]$. Relation $\varphi<\psi$ holds if $\varphi\leq\psi$
and $\varphi\neq\psi$. In addition, $\varphi\ll\psi$ if $\varphi\left(s\right)<\psi\left(s\right)$
for all $s\in[-1,0]$. Relations ``$\geq$'', ``$>$'' and ``$\gg$''
are defined analogously.

The semiflow $\Phi$ is monotone in the following sense. 
\begin{prop}
\label{pro:monotone_dynamical_system} If $\varphi,\psi\in C$ with
$\varphi\leq\psi$ $\left(\varphi\geq\psi\right)$, then $x_{t}^{\varphi}\leq x_{t}^{\psi}$
$\left(x_{t}^{\varphi}\geq x_{t}^{\psi}\right)$ for all $t\geq0$.
If $\varphi<\psi$ $\left(\varphi>\psi\right)$, then $x_{t}^{\varphi}\ll x_{t}^{\psi}$
$\left(x_{t}^{\varphi}\gg x_{t}^{\psi}\right)$ for all $t\geq2$.
If $\varphi\ll\psi$ $\left(\varphi\gg\psi\right)$, then $x_{t}^{\varphi}\ll x_{t}^{\psi}$
$\left(x_{t}^{\varphi}\gg x_{t}^{\psi}\right)$ for all $t\geq0$. 
\end{prop}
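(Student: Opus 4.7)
The plan is to study the difference $z(t) = x^{\psi}(t) - x^{\varphi}(t)$ and to exploit the strict monotonicity of $f$. Assume $\varphi \leq \psi$ (the reversed inequalities are symmetric). Writing $f(x^{\psi}(t-1)) - f(x^{\varphi}(t-1))$ as $b(t)\,z(t-1)$ with
\[
b(t) = \int_{0}^{1} f'\!\left(s\,x^{\psi}(t-1) + (1-s)\,x^{\varphi}(t-1)\right)\mathrm{d}s > 0,
\]
the function $z$ satisfies $\dot z(t) = -z(t) + b(t)\,z(t-1)$ for all $t > 0$, which via the variation-of-constants formula \eqref{eq:variation-of-constants formula} gives, for $t \in [t_0, t_0+1]$ with $t_0 \geq 0$,
\[
z(t) = e^{t_0 - t} z(t_0) + \int_{t_0}^{t} e^{s-t}\, b(s)\, z(s-1)\,\mathrm{d}s.
\]

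First I would handle the weak inequality: if $\varphi \leq \psi$, then $z_0 \geq 0$, so on $[0,1]$ the argument $z(s-1)$ in the integrand is non-negative, and together with $z(0) \geq 0$ and $b>0$ this forces $z \geq 0$ on $[0,1]$. Iterating the formula on successive intervals $[n, n+1]$ yields $z(t) \geq 0$ for all $t \geq 0$, i.e.\ $x_t^{\varphi} \leq x_t^{\psi}$.

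Next, assume $\varphi \ll \psi$, so $z_0(s) > 0$ for every $s \in [-1,0]$. Then both summands in the formula on $[0,1]$ are strictly positive (the first because $z(0)>0$, the second because the continuous integrand is strictly positive on $(0,1)$), giving $z(t)>0$ on $[0,1]$. Combining this with the assumption on $[-1,0]$ gives $z_t \gg 0$ for every $t \in [0,1]$, and a straightforward induction extends the conclusion to all $t \geq 0$.

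The remaining case $\varphi < \psi$ is the only place requiring a little care, and it is the main (though still mild) obstacle. Here $z_0 \geq 0$ with $z_0(s^{*}) > 0$ for some $s^{*} \in [-1,0]$; by continuity $z_0 > 0$ on a nontrivial subinterval. Applying the variation-of-constants formula on $[0,1]$, I would argue that the integral term is strictly positive whenever the integration variable $s-1$ sweeps through this subinterval, hence $z(t) > 0$ for all $t$ in some nonempty subinterval of $[0,1]$, in particular $z(1) > 0$. The weak-inequality step already gives $z(t) \geq 0$ for all $t \geq 1$; plugging this back into the formula on $[1,2]$ with $z(1) > 0$ shows that $z(t) > 0$ for every $t \in [1,2]$. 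Consequently $z_2 \gg 0$, and the previous case applied from time $t=2$ onward yields $z_t \gg 0$ for all $t \geq 2$, which is exactly $x_t^{\varphi} \ll x_t^{\psi}$ on that range.
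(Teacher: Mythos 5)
Your proof is correct, and it takes precisely the route the paper alludes to: the paper states that the assertion ``follows easily from the variation-of-constant formula'' and then defers the details to Smith's monograph, while you carry out the iteration over unit intervals explicitly and split into the three cases. The only step that needs the modicum of care you gave it is the case $\varphi<\psi$, where you correctly use the integral term in the variation-of-constants formula on $[0,1]$ to obtain $z(1)>0$ and then bootstrap via the exponential term on $[1,2]$ to reach $z_2\gg\hat 0$ before invoking the $\varphi\ll\psi$ case; this matches the standard argument.
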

The assertion follows easily from the variation-of-constant formula.
For a proof we refer to \cite{Smith}. Note that Proposition \ref{pro:monotone_dynamical_system}
guarantees the positive invariance of $C_{-2,0}$, $C_{0,2}$ and
$C_{-2,2}$. 

The periodic solutions have nice monotone properties (see Theorem
7.1 in \cite{Mallet-Paret_Sell2}) as follows.
\begin{prop}
\label{pro: monotone_type}Suppose $r:\mathbb{R}\rightarrow\mathbb{R}$
is a periodic solution of Eq.~\eqref{eq:eq_general} with minimal
period $\omega>0$. Then $r$ is of monotone type in the following
sense: if $t_{0}<t_{1}<t_{0}+\omega$ are fixed so that $r\left(t_{0}\right)=\min_{t\mathbb{\in R}}r(t)$
and $r\left(t_{1}\right)=\max_{t\mathbb{\in R}}r(t)$, then $\dot{r}\left(t\right)>0$
for $t\in\left(t_{0},t_{1}\right)$ and $\dot{r}\left(t\right)<0$
for $t\in\left(t_{1},t_{0}+\omega\right)$.
\end{prop}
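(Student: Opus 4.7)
The plan is to apply the discrete Lyapunov functional $V$ to the derivative $z:=\dot r$. By bootstrapping Eq.\,\eqref{eq:eq_general}, $r$ is $C^\infty$-smooth, and $z$ satisfies the linear delay equation $\dot z(t)=-\mu z(t)+a(t)z(t-1)$ with the positive $\omega$-periodic coefficient $a(t)=f'(r(t-1))$ (by (H1)). Minimality of $\omega$ forces $r$ non-constant, so $z\not\equiv 0$; moreover $z_t\ne\hat 0$ for every $t$, for otherwise $r$ would be constant on $[t-1,t]$, and Proposition \ref{prop:uniqueness of solutions} combined with the variation-of-constants formula \eqref{eq:variation-of-constants formula} would force $r$ constant on $\mathbb R$, contradicting $\omega>0$. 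Consequently $V(z_t)$ is defined throughout $\mathbb R$, and Lemma \ref{lem:4_properties_of_V}(i) together with the $\omega$-periodicity of $z$ yields that $t\mapsto V(z_t)$ is constant, say $V(z_t)\equiv N\in 2\mathbb N\cup\{\infty\}$. Since $r$ attains its minimum at $t_0$ and maximum at $t_1$, $z$ changes sign on both $(t_0,t_1)$ and $(t_1,t_0+\omega)$, so $N\ge 2$.

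The heart of the proof is the matching bound $N=2$. Following Mallet-Paret and Sell, I would compare $\dot r$ with the finite-difference family $y^h(t):=r(t+h)-r(t)$ for $h\in(0,\omega)$. Each $y^h$ is a non-trivial $\omega$-periodic solution of the same linear form, $\dot y^h(t)=-\mu y^h(t)+b_h(t)y^h(t-1)$ with positive coefficient $b_h(t)=\int_0^1 f'\bigl(sr(t+h-1)+(1-s)r(t-1)\bigr)\,ds$, so by the argument just given $V(y^h_t)\equiv N_h$ is constant in $t$. The map $h\mapsto N_h$ is integer-valued and locally constant on the open set where $y^h_t\in R$, because of the $C^1$-continuity of $V$ on $R$ in Lemma \ref{lem: continuity_of_V}; evaluating at a convenient reference value (for instance for $h$ near $\omega/2$, where the sign structure of $y^h$ can be read directly from the assumed min/max layout of $r$ between $t_0$ and $t_1$) gives $N_h=2$. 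Passing $h\to 0^+$ with the $C^1$-convergence $y^h/h\to\dot r$ and invoking Lemma \ref{lem: continuity_of_V} once more yields $N=2$.

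With $N=2$ finite, Lemma \ref{lem:4_properties_of_V}(iii) gives $\dot r_t\in R$ for every $t$, so the zeros of $\dot r$ are simple and isolated. In a fundamental period their number is then an even integer $2m\ge 2$, and $m=1$ follows from $V(\dot r_t)=2$ via the same comparison with $y^h$: additional min--max pairs would be detectable as extra simple sign changes of $y^h$ for $h$ small, inflating $N_h$ strictly above $2$. Hence $t_0,t_1$ are the only critical points of $r$ in a period; $\dot r$ has constant sign on each of $(t_0,t_1)$ and $(t_1,t_0+\omega)$, and the extremum conditions at $t_0$ and $t_1$ fix these signs to be $+$ and $-$ respectively. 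The principal obstacle is the upper bound $N\le 2$: controlling the integer-valued invariant $N_h$ along the homotopy $h\mapsto y^h$ requires simultaneous use of the lower semi-continuity of $V$ on $C$ and its full $C^1$-continuity on $R$ from Lemma \ref{lem: continuity_of_V}, together with compactness of the periodic orbit in $C^1$, in order to rule out jumps of $N_h$ at the exceptional values of $h$ where $y^h_t$ momentarily leaves $R$.
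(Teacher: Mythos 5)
The paper does not contain a proof of this proposition: it is quoted as Theorem 7.1 of Mallet-Paret and Sell \cite{Mallet-Paret_Sell2}, so the only question here is whether your sketch is internally sound. The outer scaffolding is close to the right idea --- set $z=\dot r$, compare with $y^h=r(\cdot+h)-r(\cdot)$, use constancy of $V$ along periodic trajectories, and note that $h\mapsto N_h$ is an integer that is locally (hence globally) constant on $(0,\omega)$. But the step that pins this constant down to $2$ is circular. You propose to read off the sign pattern of $y^{\omega/2}$ from ``the assumed min/max layout of $r$ between $t_0$ and $t_1$''. The hypothesis only places the global minimum at $t_0$ and the global maximum at $t_1$; it says nothing about what $r$ does in between. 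Possible interior local extrema of $r$ on $(t_0,t_1)$ would change the zero structure of $y^{\omega/2}$, and ruling them out is precisely the content of the conclusion. So this step presupposes what is to be proved, and your argument establishes a lower bound $N\ge 2$ but no upper bound.

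The final deduction ``$V(\dot r_t)\equiv 2$ implies a single max--min pair per period'' is also unjustified as written. The functional $V$ bounds sign changes of $\dot r_t$ on the unit window $[t-1,t]$, not over a full period. The constraint $V(\dot r_t)\equiv 2$ forces consecutive zeros of $\dot r$ to be at distance $<1$ and any three consecutive zeros to span length $\ge 1$; when $\omega\ge 2$ this is entirely compatible with $\dot r$ having four or more zeros per period, and your remark that extra extrema ``inflate $N_h$ strictly above $2$'' for small $h$ does not hold: as $h\to 0^+$ one has $y^h/h\to\dot r$ in $C^1$, so $N_h$ is the \emph{same} $2$, however many zeros per period $\dot r$ has. (In this paper's applications one has $\omega\in(1,2)$, which would close the gap, but that bound on $\omega$ is itself \emph{derived} from this proposition, so it cannot be presupposed, and the statement carries no restriction on $\omega$.) The missing ingredient is an independent control of the number of zeros of $\dot r$ per period. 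Mallet-Paret and Sell obtain this from the fact that $t\mapsto(r(t),r(t-1))$ is a simple closed planar curve --- a consequence of $V(r_{t_1}-r_{t_2})$ being finite and of Lemma \ref{lem:4_properties_of_V}(ii), which forbids a self-intersection of this projection --- together with the nonvanishing of $(\dot r(t),\dot r(t-1))$. Without that geometric input, the $V$-counting alone does not suffice. A minor point: bootstrapping gives $r\in C^2$, not $C^\infty$, since $f$ is only assumed $C^1$; $C^2$ is all you need.
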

We also need the next technical results. The first one is the direct
consequence of Lemmas VI.4, VI.5 and VI.6 in \cite{Krisztin-Walther-Wu}.
\begin{lem}
\noindent \label{lem:technical result}Let $\mu\geq0$, $\alpha_{0}>0$
and $\alpha_{1}\geq\alpha_{0}$. Let sequences of continuous real
functions $a^{n}$ on $\mathbb{R}$ and continuously differentiable
real functions $z^{n}$ on $\mathbb{R}$, $n\geq0$, be given such
that for all $n\geq0$, $\alpha_{0}\leq a^{n}\left(t\right)\leq\alpha_{1}$
for all $t\in\mathbb{R}$, $z^{n}\left(t\right)\neq0$ for some $t\in\mathbb{R}$,
$V\left(z_{t}^{n}\right)\leq2$ for all $t\in\mathbb{R}$, and $z^{n}$
satisfies 
\[
\dot{z}^{n}\left(t\right)=-\mu z^{n}\left(t\right)+a^{n}\left(t\right)z^{n}\left(t-1\right)
\]
 on $\mathbb{R}$. Let a further continuous real function $a$ on
$\mathbb{R}$ be given so that $a^{n}\rightarrow a$ as $n\rightarrow\infty$
uniformly on compact subsets of $\mathbb{R}$. Then a continuously
differentiable function $z:\mathbb{R}\rightarrow\mathbb{R}$ and a
subsequence $\left(z^{n_{k}}\right)_{k=0}^{\infty}$ of $\left(z^{n}\right)_{n=0}^{\infty}$
can be given such that $z^{n_{k}}\rightarrow z$ and $\dot{z}^{n_{k}}\rightarrow\dot{z}$
as $k\rightarrow\infty$ uniformly on compact subsets of $\mathbb{R}$,
moreover
\[
\dot{z}\left(t\right)=-\mu z\left(t\right)+a\left(t\right)z\left(t-1\right)
\]
for all $t\in\mathbb{R}$.
\end{lem}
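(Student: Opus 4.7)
The plan is a standard Arzelà--Ascoli plus diagonal extraction argument; the substantive content is to derive an $n$-uniform local bound on $z^{n}$, which is where the Lyapunov hypothesis $V(z_{t}^{n})\leq 2$ is indispensable. Before starting, I would note that the hypothesis ``$z^{n}(t)\neq 0$ for some $t$'' actually forces $z_{s}^{n}\neq\hat{0}$ for every $s\in\mathbb{R}$: if $z_{s}^{n}=\hat{0}$, then on $[s,s+1]$ the equation reduces to $\dot{z}^{n}=-\mu z^{n}$ with zero data, so $z^{n}\equiv 0$ on $[s,\infty)$; backward, the identity $a^{n}(t)z^{n}(t-1)=\dot{z}^{n}(t)+\mu z^{n}(t)$ together with $a^{n}\geq\alpha_{0}>0$ propagates the zero to $(-\infty,s]$. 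Hence $V(z_{t}^{n})$ is well defined for all $t$, and by Lemma~\ref{lem:4_properties_of_V}(i) the bound $V\leq 2$ is preserved in time. Since the equation is linear, I may normalize once and for all so that $\|z_{0}^{n}\|=1$.

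Second, I would set up the uniform bound on compacta. Forward from $t=0$, the variation-of-constants formula
\[
z^{n}(t)=e^{-\mu t}z^{n}(0)+\int_{0}^{t}e^{\mu(s-t)}a^{n}(s)z^{n}(s-1)\,ds
\]
combined with $a^{n}\leq\alpha_{1}$ and Gronwall's inequality gives $|z^{n}(t)|\leq M_{N}^{+}$ on $[0,N]$, with $M_{N}^{+}$ independent of $n$. The backward direction is harder: solving for $z^{n}(t-1)=(\dot{z}^{n}(t)+\mu z^{n}(t))/a^{n}(t)$ requires bounds on $\dot{z}^{n}$, and without additional structure the backward problem is ill-posed. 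The Lyapunov constraint $V(z_{t}^{n})\leq 2$ is exactly the oscillation control that forbids unbounded backward growth; combined with the two-sided bound $\alpha_{0}\leq a^{n}\leq\alpha_{1}$, it yields an $n$-independent bound $|z^{n}(t)|\leq M_{N}^{-}$ on $[-N,0]$. This is precisely the quantitative content of the three lemmas VI.4--VI.6 in \cite{Krisztin-Walther-Wu}, which I would invoke at this point.

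With uniform local bounds on $z^{n}$ in hand, the equation $\dot{z}^{n}(t)=-\mu z^{n}(t)+a^{n}(t)z^{n}(t-1)$ together with the local uniform bounds on $a^{n}$ yields a uniform bound on $\dot{z}^{n}$ on every compact interval, hence equicontinuity of $(z^{n})$ there. Arzelà--Ascoli on each $[-N,N]$ and a Cantor diagonal extraction then produce a subsequence $(z^{n_{k}})$ and a continuous function $z:\mathbb{R}\to\mathbb{R}$ with $z^{n_{k}}\to z$ uniformly on compact subsets of $\mathbb{R}$. Since $a^{n_{k}}\to a$ uniformly on compacts (by hypothesis) and $z^{n_{k}}(\cdot-1)\to z(\cdot-1)$ uniformly on compacts, the right-hand side $-\mu z^{n_{k}}(t)+a^{n_{k}}(t)z^{n_{k}}(t-1)$ converges uniformly on compacts to $-\mu z(t)+a(t)z(t-1)$. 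Because this right-hand side equals $\dot{z}^{n_{k}}(t)$, the derivatives converge uniformly on compacts to a continuous limit, so $z$ is $C^{1}$, satisfies $\dot{z}(t)=-\mu z(t)+a(t)z(t-1)$, and $\dot{z}^{n_{k}}\to\dot{z}$ uniformly on compacts, as required.

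The main obstacle is the $n$-uniform backward bound in the second paragraph. Forward boundedness and the passage to the limit are routine linear-ODE arguments, but backward in time a linear delay equation need not have any a priori bound on $|z^{n}(t)|$ for $t\ll 0$; solutions may be reconstructed backward only by division by $a^{n}$, and successive applications could in principle amplify. The Lyapunov bound $V(z_{t}^{n})\leq 2$ caps the spatial complexity of each segment to at most two sign changes, and this is the ingredient that turns the otherwise delicate backward extrapolation into a uniform estimate. I would therefore treat the bulk of the proof as an application of the cited lemmas VI.4--VI.6, rather than reproving these quantitative oscillation estimates from scratch.
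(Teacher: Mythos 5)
Your proof matches the paper's approach: the paper proves this lemma by a bare citation to Lemmas VI.4--VI.6 of \cite{Krisztin-Walther-Wu}, and you reproduce the standard Arzel\`{a}--Ascoli-plus-diagonal scaffolding around the same citation, with the backward a priori estimate you correctly isolate as the crux also recorded in this paper as Lemma~\ref{lem:technical result 2} (iterated once per unit of time, it gives the $n$-uniform bound on $[-N,0]$ once $\left\Vert z_{0}^{n}\right\Vert $ is controlled). One small caution: the normalization $\left\Vert z_{0}^{n}\right\Vert =1$ is not literally free for the lemma as written -- if $\left\Vert z_{0}^{n}\right\Vert \rightarrow\infty$ no subsequence of $z^{n}$ can converge uniformly on compacta, so the conclusion fails outright -- and a uniform bound on $\left\Vert z_{0}^{n}\right\Vert $ should therefore be read as a tacit hypothesis (it does hold in every application in the paper). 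With that bound in hand your reduction is sound: pass to a subsequence along which $\left\Vert z_{0}^{n}\right\Vert $ converges, run your argument on the normalized sequence, and rescale.
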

The subsequent result shows that Lyapunov functionals can be used
effectively to show that solutions of linear equations cannot decay
too fast at $\infty$. For a proof, see Lemma VI.3 in \cite{Krisztin-Walther-Wu}.
\begin{lem}
\label{lem:technical result 2}Let $\mu\geq0$, $\alpha_{0}>0$ and
$\alpha_{1}\geq\alpha_{0}$. Assume that $t_{0}\in\mathbb{R}$, $a:\left[t_{0}-5,t_{0}\right]\rightarrow\mathbb{R}$
is continuous with $\alpha_{0}\leq a\left(t\right)\leq\alpha_{1}$
for all $t\in\left[t_{0}-5,t_{0}\right]$, $z:\left[t_{0}-6,t_{0}\right]\rightarrow\mathbb{R}$
is continuous, differentiable for $t_{0}-5<t\leq t_{0}$ and satisfies
\eqref{Lyapunov-eq} for $t_{0}-5<t\leq t_{0}$. In addition, assume
that $z_{t_{0}-5}\neq0$ and $V\left(z_{t_{0}-5}\right)\leq2$. Then
there exists $K=K\left(\mu,\alpha_{0},\alpha_{1}\right)>0$ such that
\[
\left\Vert z_{t_{0}-1}\right\Vert \leq K\left\Vert z_{t_{0}}\right\Vert .
\]

\end{lem}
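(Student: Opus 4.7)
\emph{Plan.} I would prove Lemma~\ref{lem:technical result 2} by contradiction, combining a compactness extraction of the kind supplied by Lemma~\ref{lem:technical result} with backward propagation of zero along the linear delay equation; the bound $V(z_{t_0-5})\le 2$ plays the essential role of constraining the sign-change structure needed for the compactness to go through.

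Suppose the conclusion fails. Then for each $n\ge 1$ there exist data $(t_0^n,a^n,z^n)$ satisfying all the hypotheses but with $\|z^n_{t_0^n-1}\|/\|z^n_{t_0^n}\|>n$. Translating so that $t_0^n=0$ and using the linearity of the equation in $z$, I normalize to $\|z^n_{-1}\|=1$, so that $\|z^n_0\|<1/n\to 0$. To produce a compact limit, I extend each $z^n$ and $a^n$ to the whole real line in a way compatible with the hypotheses of Lemma~\ref{lem:technical result}: forward by solving the equation, which preserves the bound $V(z^n_t)\le 2$ from $t=-5$ onward by Lemma~\ref{lem:4_properties_of_V}(i); and backward by prescribing $z^n$ below $-5$ (and a continuous $a^n$) so that $V(z^n_t)\le 2$ remains valid for all $t\in\mathbb{R}$ (for instance, by matching to a fixed reference configuration with a prescribed sign pattern). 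Once this is done, Lemma~\ref{lem:technical result} yields a subsequence along which $a^n\to a^\infty$ uniformly on compact sets and $z^n\to z^\infty$, $\dot z^n\to\dot z^\infty$ uniformly on compact sets, with $\dot z^\infty(t)=-\mu z^\infty(t)+a^\infty(t)z^\infty(t-1)$ on $\mathbb{R}$, $\alpha_0\le a^\infty\le\alpha_1$, and $V(z^\infty_{-5})\le 2$ by Lemma~\ref{lem: continuity_of_V}.

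Passing the normalization to the limit, $\|z^\infty_{-1}\|=1$ and $\|z^\infty_0\|=0$, so $z^\infty\equiv 0$ on $[-1,0]$. I then read the limit equation backward in unit steps: on $(-1,0]$, $a^\infty(t)z^\infty(t-1)=\dot z^\infty(t)+\mu z^\infty(t)=0$, and $a^\infty\ge\alpha_0>0$ forces $z^\infty\equiv 0$ on $[-2,-1]$. But this yields $\|z^\infty_{-1}\|=0$, contradicting $\|z^\infty_{-1}\|=1$, and completes the proof.

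The main obstacle is the compactness extraction, specifically the backward extension of $z^n$ to $(-\infty,-5)$ with $V\le 2$ preserved: the equation cannot be solved uniquely backward from the merely continuous ``initial data'' on $[-6,-5]$, and the extension must be arranged by hand so that the hypotheses of Lemma~\ref{lem:technical result} hold globally. This is a standard but non-trivial technical step, carried out essentially as in the proof of Lemma VI.3 in \cite{Krisztin-Walther-Wu} cited for Lemma~\ref{lem:technical result 2}; once in place, the short backward-propagation argument above produces the contradiction in a few lines.
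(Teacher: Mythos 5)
The paper does not prove this lemma itself (it cites Lemma VI.3 of \cite{Krisztin-Walther-Wu}), so there is no in-paper proof to compare against; but your reconstruction, while in the right spirit (normalize, extract a limit, backward-propagate a zero by the equation and $a^\infty\geq\alpha_{0}>0$), has gaps beyond the extension issue you flag. First, you treat the convergence $a^{n}\rightarrow a^{\infty}$ as a \emph{conclusion} of Lemma~\ref{lem:technical result}, but in that lemma the uniform convergence of $a^{n}$ to a continuous $a$ on compacta is a \emph{hypothesis}; the lemma only extracts a subsequence of the $z^{n}$. In your contradiction sequence the $a^{n}$ are arbitrary continuous functions with only an $L^{\infty}$ bound; such a family is not equicontinuous, so there is no uniformly convergent subsequence and Lemma~\ref{lem:technical result} cannot be used to produce the limiting equation. (One can pass to a weak-$*$ limit in $L^{\infty}$ and observe that the limit coefficient is still $\geq\alpha_{0}$ a.e., which suffices for the backward-propagation step, but then you are no longer invoking Lemma~\ref{lem:technical result} and the convergence of $\dot{z}^{n}$ must be argued differently.)

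Second, and more fundamentally, the content of the lemma is a uniform backward bound, and your compactness step quietly assumes one. The normalization $\left\Vert z_{-1}^{n}\right\Vert =1$ together with the equation controls $z^{n}$ and $\dot{z}^{n}$ \emph{forward} of $t=-1$, but gives no a priori bound on $[-6,-1]$; there is thus no equicontinuity backward and no way to apply Arzel\`{a}--Ascoli there. (Relatedly, Lemma~\ref{lem:technical result} as stated must carry an implicit normalization on the $z^{n}$, since it is scale-invariant in $z^{n}$ and would otherwise be false; an arbitrary backward extension of your $z^{n}$ would not satisfy it.) Ruling out the ``spike'' scenario---$z^{n}$ uniformly small on $[-1,0]$ yet with $\left\Vert z_{-1}^{n}\right\Vert =1$ attained near an interior point of $(-2,-1)$---is exactly where the hypothesis $V(z_{t_{0}-5})\leq2$ and the five-unit back-margin must enter, via the monotonicity of $V$, Lemma~\ref{lem:4_properties_of_V}\,(ii)--(iii) producing membership of $z_{t}$ in the regular set $R$, and the second continuity clause of Lemma~\ref{lem: continuity_of_V}. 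Your proposal invokes none of these ingredients, and without a quantitative use of the $V$-bound the argument is essentially circular: the backward estimate you need in order to extract a limit is precisely what the lemma asserts.
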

The last result of this section is Lemma I.8 in \cite{Krisztin-Walther-Wu}.
It will be used to abbreviate proofs of smoothness of submanifolds.
\begin{prop}
\label{prop:smoothness of submanifolds}Let $g$ be a $C^{1}$-map
from an $m$-dimensional $C^{1}$-manifold $M$ into a $C^{1}$-manifold
$N$ modeled over a Banach space. If for some $p\in M$, the derivative
$Dg\left(p\right)$ of $g$ at $p$ is injective, then $p$ has an
open neighborhood $U$ in $M$ so that $g\left(U\right)$ is an $m$-dimensional
$C^{1}$-submanifold of $N$. 
\end{prop}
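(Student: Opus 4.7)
The plan is to prove this as a standard immersion theorem: an injective derivative at a point forces the map to be a local embedding, and hence the image of a sufficiently small neighborhood is a submanifold. Since $M$ is finite-dimensional and $N$ is modeled on a Banach space $X$, this reduces to a local statement in $\mathbb{R}^{m}$ and $X$ via charts, to which the inverse function theorem can be applied.

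Concretely, I would first pass to charts. Pick a chart $\left(U_{0},\phi\right)$ of $M$ around $p$ with $\phi\left(p\right)=0$ and $\phi\left(U_{0}\right)$ open in $\mathbb{R}^{m}$, and a chart $\left(V_{0},\psi\right)$ of $N$ around $g\left(p\right)$ with $\psi\left(g\left(p\right)\right)=0$ and $\psi\left(V_{0}\right)$ open in $X$. After shrinking $U_{0}$ so that $g\left(U_{0}\right)\subset V_{0}$, consider the coordinate representation
\[
\tilde{g}=\psi\circ g\circ\phi^{-1}:\phi\left(U_{0}\right)\rightarrow X,
\]
which is $C^{1}$ and whose derivative $A=D\tilde{g}\left(0\right):\mathbb{R}^{m}\rightarrow X$ is injective by hypothesis (since passing through charts preserves injectivity of derivatives).

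The key algebraic observation is that $A\left(\mathbb{R}^{m}\right)$ is a finite-dimensional, hence closed subspace of $X$, and therefore admits a topological complement: there is a closed subspace $F\subset X$ with $X=A\left(\mathbb{R}^{m}\right)\oplus F$. Using this splitting I would define a $C^{1}$-map
\[
H:\phi\left(U_{0}\right)\times F\rightarrow X,\qquad H\left(u,f\right)=\tilde{g}\left(u\right)+f.
\]
Its derivative at $\left(0,0\right)$ is $\left(v,w\right)\mapsto Av+w$, which is a bounded linear bijection $\mathbb{R}^{m}\times F\rightarrow X$, and hence a topological isomorphism by the open mapping theorem (or directly, since the splitting is topological). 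The inverse function theorem then yields open neighborhoods of $\left(0,0\right)$ in $\mathbb{R}^{m}\times F$ and of $0$ in $X$ on which $H$ is a $C^{1}$-diffeomorphism.

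Choosing an open neighborhood $U\subset U_{0}$ of $p$ small enough that $\tilde{g}\left(\phi\left(U\right)\right)$ lies in this neighborhood, the map $H^{-1}$ provides a chart on $N$ in which $g\left(U\right)$ corresponds exactly to the graph $\left\{ \left(u,0\right):u\in\phi\left(U\right)\right\} $ of the zero map from an open subset of $\mathbb{R}^{m}$ into $F$. This realizes $g\left(U\right)$ as an $m$-dimensional $C^{1}$-submanifold of $N$ in the sense of Lang. I do not anticipate a serious obstacle: the only two non-formal ingredients are the existence of the topological complement $F$ (automatic because $A\left(\mathbb{R}^{m}\right)$ is finite-dimensional) and the inverse function theorem in Banach spaces, both of which are classical. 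The main point to state carefully is that injectivity of $Dg\left(p\right)$ between tangent spaces of manifolds translates, via charts, into injectivity of the honest derivative $A$, which is what allows the splitting of $X$ to be set up.
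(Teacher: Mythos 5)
Your proof is correct, and it is the standard argument for the local immersion theorem. Note that the paper does not actually supply a proof of this proposition; it is stated as a citation of Lemma I.8 of the Krisztin--Walther--Wu monograph, so there is no alternative in-paper argument to compare against. Your argument contains all the essential ingredients: reduction to charts, the observation that the finite-dimensional range $A(\mathbb{R}^{m})$ is closed and hence complemented in $X$, the auxiliary map $H(u,f)=\tilde g(u)+f$ whose derivative at the origin is the bounded linear bijection $(v,w)\mapsto Av+w$, the inverse function theorem in Banach spaces, and the identification of $g(U)$ with a slice $\phi(U)\times\{0\}$ in the resulting chart. One very small point worth stating explicitly: after pulling $g(U)$ back to $\phi(U)\times\{0\}$ inside the chart domain $W_{1}\times W_{2}$, the set $\phi(U)$ need not equal $W_{1}$, so to exhibit the submanifold charts at an arbitrary point $s\in g(U)$ you should shrink to a product $W_{1}'\times W_{2}'$ with $W_{1}'\subset\phi(U)$ around $H^{-1}(\psi(s))$; since $\phi(U)$ is open in $\mathbb{R}^{m}$ this is routine, but it is the step that actually verifies Lang's definition at every point of $g(U)$, not just at $g(p)$.
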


\section{Floquet multipliers and a Poincar\'e return map\label{sec:Floquet-multipliers and invariant manifolds}}

In this section we give a brief introduction to the Floquet theory
regarding periodic solutions which are slowly oscillatory around an
equilibrium. Then we define a Poincar\'e map and collect the most
important properties of its local invariant manifolds. At last we
apply these results to $p$, $q$, $x^{1}$ and $x^{-1}$. The section
is closed by showing that the unstable space of the monodromy operator
corresponding to the periodic orbit $\mathcal{O}_{k}$ is one-dimensional
for both $k\in\left\{ -1,1\right\} $.

\subsection{Floquet multipliers}

Suppose $r:\mathbb{R}\rightarrow\mathbb{R}$ is a periodic solution
of Eq.~\eqref{eq:eq_general} with minimal period $\omega>0$. If
$r$ is slowly oscillatory around an equilibrium (as $p$, $q$, $x^{1}$
or $x^{-1}$ are), then Proposition \ref{pro: monotone_type} implies
that $\omega\in\left(1,2\right)$. Assume that this is the case.

Consider the period map $Q=\Phi\left(\omega,\cdot\right)$ with fixed
point $r_{0}$ and its derivative $M=D_{2}\Phi\left(\omega,r_{0}\right)$
at $r_{0}$. Then $M\varphi=u_{\omega}^{\varphi}$ for all $\varphi\in C$,
where $u^{\varphi}:\left[-1,\infty\right)\rightarrow\mathbb{R}$ is
the solution of the linear variational equation 
\begin{align}
\dot{u}(t) & =-u(t)+f'\left(r\left(t-1\right)\right)u\left(t-1\right)\label{eq:lin.eq.}
\end{align}
with $u_{0}^{\varphi}=\varphi$. $M$ is called the monodromy operator. 

$M$ is a compact operator, $0$ belongs to its spectrum $\sigma=\sigma\left(M\right)$,
and its eigenvalues of finite multiplicity -- the so called Floquet
multipliers -- form $\sigma\left(M\right)\backslash\left\{ 0\right\} $.
The importance of $M$ lies in the fact that we obtain information
about the stability properties of the orbit $\mathcal{O}_{r}=\left\{ r_{t}:\, t\in\mathbb{R}\right\} $
from $\sigma\left(M\right)$.

As $\dot{r}$ is a nonzero solution of the variational equation \eqref{eq:lin.eq.},
$1$ is a Floquet multiplier with eigenfunction $\dot{r}_{0}$. The
periodic orbit $\mathcal{O}_{r}$ is said to be hyperbolic if the
generalized eigenspace of $M$ corresponding to the eigenvalue $1$
is one-dimensional, furthermore there are no Floquet multipliers on
the unit circle besides $1$. 

The paper \cite{Mallet-Paret_Sell} of Mallet-Paret and Sell and Appendix
VII of the monograph \cite{Krisztin-Walther-Wu} of Krisztin, Walther
and Wu confirm the subsequent properties. $\mathcal{O}_{r}$ has a
real Floquet multiplier $\lambda_{1}>1$ with a strictly positive
eigenvector $v_{1}$. The realified generalized eigenspace $C_{<\lambda_{1}}$
associated with the spectral set $\left\{ z\in\sigma:\,\left|z\right|<\lambda_{1}\right\} $
satisfies 
\begin{equation}
C_{<\lambda_{1}}\cap V^{-1}\left(0\right)=\emptyset.\label{C_<lambda_1}
\end{equation}
 Let $C_{\leq\rho}$, $\rho>0$, denote the realified generalized
eigenspace of $M$  associated with the spectral set $\left\{ z\in\sigma:\,\left|z\right|\leq\rho\right\} $.
The set 
\[
\left\{ \rho\in\left(0,\infty\right):\,\sigma\left(M\right)\cap\rho S_{\mathbb{C}}^{1}\neq\emptyset,\, C_{\leq\rho}\cap V^{-1}\left(\left\{ 0,2\right\} \right)=\emptyset\right\} 
\]
is nonempty and has a maximum $r_{M}$. Then 
\begin{equation}
C_{\leq r_{M}}\cap V^{-1}\left(\left\{ 0,2\right\} \right)=\emptyset,\quad C_{r_{M}<}\setminus\left\{ \hat{0}\right\} \subset V^{-1}\left(\left\{ 0,2\right\} \right)\mbox{ and }\mbox{dim}C_{r_{M}<}\leq3,\label{C_r_M<}
\end{equation}
 where $C_{r_{M}<}$ is the realified generalized eigenspace of $M$
associated with the nonempty spectral set $\left\{ z\in\sigma:\,\left|z\right|>r_{M}\right\} $.
It will easily follow from the results of this paper that $\mbox{dim}C_{r_{M}<}=3$
for the periodic solutions $p,q,x^{-1}$ and $x^{1}$, see Remark
\ref{remark}. Recently Mallet-Paret and Nussbaum have shown that
the equality $\mbox{dim}C_{r_{M}<}=3$ holds in general \cite{mallet-paret_nussbaum}.

Let $C_{s}$, $C_{c}$ and $C_{u}$ be the closed subspaces of $C$
chosen so that $C=C_{s}\oplus C_{c}\oplus C_{u}$, $C_{s}$, $C_{c}$
and $C_{u}$ are invariant under $M$, and the spectra $\sigma_{s}\left(M\right)$,
$\sigma_{c}\left(M\right)$ and $\sigma_{u}\left(M\right)$ of the
induced maps $C_{s}\ni x\mapsto Mx\in C_{s}$, $C_{c}\ni x\mapsto Mx\in C_{c}$,
and $C_{u}\ni x\mapsto Mx\in C_{u}$ are contained in $\left\{ \mu\in\mathbb{C}:\,\left|\mu\right|<1\right\} $,
$\left\{ \mu\in\mathbb{C}:\,\left|\mu\right|=1\right\} $ and $\left\{ \mu\in\mathbb{C}:\,\left|\mu\right|>1\right\} $,
respectively. 

As $\mathcal{O}_{r}$ has a real Floquet multiplier $\lambda_{1}>1$,
$C_{u}$ is nontrivial. 

$C_{c}$ is also nontrivial because $\dot{r}_{0}\in C_{c}$. It is
easy to see that the monotone property of $r$ described in Proposition
\ref{pro: monotone_type} and $\omega\in\left(1,2\right)$ imply the
existence of $t\in\mathbb{R}$ with $V\left(\dot{r}_{t}\right)=2$.
As $\mathbb{R}\ni t\rightarrow\dot{r}_{t}\in C$ is periodic, and
$\mathbb{R}\ni t\rightarrow V\left(\dot{r}_{t}\right)$ is monotone
decreasing by Lemma \ref{lem:4_properties_of_V}, it follows that
$V\left(\dot{r}_{t}\right)=2$ for all real $t$. In particular, $V\left(\dot{r}_{0}\right)=2$.
Hence \eqref{C_r_M<} gives that $r_{M}<1$, moreover \eqref{C_<lambda_1}
and \eqref{C_r_M<} together give that $C_{c}\setminus\left\{ \hat{0}\right\} \subset V^{-1}\left(2\right)$.
The nontriviality of $C_{u}$ and $\mbox{dim}C_{r_{M}<}\leq3$ in
addition imply that $C_{c}$ is at most two-dimensional in our case:
\[
C_{c}=\left\{ \begin{array}{ll}
\mathbb{R}\dot{r}_{0}, & \mbox{if }\mathcal{O}_{r}\mbox{ is hyperbolic,}\\
\mathbb{R}\dot{r}_{0}\oplus\mathbb{R}\xi, & \mbox{otherwise,}
\end{array}\right.
\]
where $\xi\in C_{c}\setminus\mathbb{R}\dot{r}_{0}$ provided that
$\mathcal{O}_{r}$ is nonhyperbolic.

\subsection{A Poincar\'e return map}

As above, let $r:\mathbb{R}\rightarrow\mathbb{R}$ be any periodic
solution of Eq.~\eqref{eq:eq_general} which oscillates slowly around
an equilibrium, and let $\omega\in\left(1,2\right)$ denote its minimal
period. 

Fix a $\xi\in C_{c}\setminus\mathbb{R}\dot{r}_{0}$ in case $O_{r}$
is nonhyperbolic and define 
\[
Y=\left\{ \begin{array}{ll}
C_{s}\oplus C_{u}, & \mbox{if }\mathcal{O}_{r}\mbox{ is hyperbolic,}\\
C_{s}\oplus\mathbb{R}\xi\oplus C_{u}, & \mbox{\mbox{if }\ensuremath{\mathcal{O}_{r}}\mbox{ is nonhyperbolic}.}
\end{array}\right.
\]
Then $Y\subset C$ is a hyperplane with codimension $1$. Choose $e^{*}$
to be a continuous linear functional with null space $\left(e^{*}\right)^{-1}\left(0\right)=Y$.
The Hahn--Banach theorem guarantees the existence of $e^{*}$. As
$D_{1}\Phi\left(\omega,r_{0}\right)1=\dot{r}_{0}\notin Y$, and thus
$e^{*}\left(D_{1}\Phi\left(\omega,r_{0}\right)1\right)\neq0$, the
implicit function theorem can be applied to the map $ $
\[
\left(t,\varphi\right)\mapsto e^{*}\left(\Phi\left(t,\varphi\right)-r_{0}\right)
\]
 in a neighborhood of $\left(\omega,r_{0}\right)$. It yields a convex
bounded open neighborhood $N$ of $r_{0}$ in $C$, $\varepsilon\in\left(0,\omega\right)$
and a $C^{1}$-map $\gamma:N\rightarrow\left(\omega-\varepsilon,\omega+\varepsilon\right)$
with $\gamma\left(r_{0}\right)=\omega$ so that for each $\left(t,\varphi\right)\in\left(\omega-\varepsilon,\omega+\varepsilon\right)\times N$,
the segment $x_{t}^{\varphi}$ belongs to $r_{0}+Y$ if and only if
$t=\gamma(\varphi)$ (see \cite{Diekmann et al.}, Appendix I in \cite{Krisztin-Walther-Wu},
\cite{Lani-Wayda}). In addition, by continuity we may assume that
$D_{1}\Phi\left(\gamma(\varphi),\varphi\right)1\notin Y$ for all
$\varphi\in N$. The Poincar\'e return map $P_{Y}$ is defined by
\[
P_{Y}:N\cap\left(r_{0}+Y\right)\ni\varphi\mapsto\Phi\left(\gamma(\varphi),\varphi\right)\in r_{0}+Y.
\]
 Then $P_{Y}$ is continuously differentiable with fixed point $r_{0}$. 

It is convenient to have a formula not only for the derivative $DP_{Y}\left(\varphi\right)$
of $P_{Y}$ at $\varphi\in N\cap\left(r_{0}+Y\right)$, but also for
the derivatives of the iterates of $P_{Y}$. For all $\varphi$ in
the domain of $P_{Y}^{j}$, $j\geq1$, set 
\[
\gamma_{j}\left(\varphi\right)=\Sigma_{k=0}^{j-1}\gamma\left(P_{Y}^{k}\left(\varphi\right)\right).
\]
Then 
\[
DP_{Y}^{j}\left(\varphi\right)\eta=D_{1}\Phi\left(\gamma_{j}(\varphi),\varphi\right)\gamma_{j}'\left(\varphi\right)\eta+D_{2}\Phi\left(\gamma_{j}(\varphi),\varphi\right)\eta
\]
for all $\eta\in Y$. Differentiation of the equation $e^{*}\left(\Phi\left(\gamma_{j}(\varphi),\varphi\right)-r_{0}\right)=0$
yields that 
\[
\gamma_{j}'\left(\varphi\right)\eta=-\frac{e^{*}\left(D_{2}\Phi\left(\gamma_{j}(\varphi),\varphi\right)\eta\right)}{e^{*}\left(D_{1}\Phi\left(\gamma_{j}(\varphi),\varphi\right)1\right)},
\]
and therefore
\begin{equation}
DP_{Y}^{j}\left(\varphi\right)\eta=D_{2}\Phi\left(\gamma_{j}(\varphi),\varphi\right)\eta-\frac{e^{*}\left(D_{2}\Phi\left(\gamma_{j}(\varphi),\varphi\right)\eta\right)}{e^{*}\left(D_{1}\Phi\left(\gamma_{j}(\varphi),\varphi\right)1\right)}D_{1}\Phi\left(\gamma_{j}(\varphi),\varphi\right)1\label{derivativse of iteratives of P_Y}
\end{equation}
for all $\eta\in Y$.

Let $\sigma\left(P_{Y}\right)$ and $\sigma\left(M\right)$ denote
the spectra of $DP_{Y}\left(r_{0}\right):Y\rightarrow Y$ and the
monodromy operator, respectively. We obtain the following result from
Theorem XIV.4.5 in \cite{Diekmann et al.}. 
\begin{lem}
~

\noindent (i) $\sigma\left(P_{Y}\right)\setminus\left\{ 0,1\right\} =\sigma\left(M\right)\setminus\left\{ 0,1\right\} $,
and for every $\lambda\in\sigma\left(M\right)\setminus\left\{ 0,1\right\} $,
the projection along $\mathbb{R}\dot{r}_{0}$ onto $Y$ defines an
isomorphism from the realified generalized eigenspace of $\lambda$
and $M$ onto the realified generalized eigenspace of $\lambda$ and
$DP_{Y}\left(r_{0}\right)$.

\noindent (ii) If the generalized eigenspace $G\left(1,M\right)$
associated with $1$ and $M$ is one-dimensional, then $1\notin\sigma\left(P_{Y}\right)$.

\noindent (iii) If $\dim G\left(1,M\right)>1$, then $1\in\sigma\left(P_{Y}\right)$,
and the realified generalized eigenspaces $G_{\mathbb{R}}\left(1,M\right)$
and $G_{\mathbb{R}}\left(1,P_{Y}\right)$ associated with $1$ and
$M$ and with $1$ and $DP_{Y}\left(r_{0}\right)$, respectively,
satisfy 
\[
G_{\mathbb{R}}\left(1,P_{Y}\right)=Y\cap G_{\mathbb{R}}\left(1,M\right)\quad\mbox{and}\quad G_{\mathbb{R}}\left(1,M\right)=\mathbb{R}\dot{r}_{0}\oplus G_{\mathbb{R}}\left(1,P_{Y}\right).
\]

\end{lem}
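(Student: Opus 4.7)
The plan is to put $M$ into upper-triangular block form with respect to the splitting $C=\mathbb{R}\dot{r}_{0}\oplus Y$ and read all three assertions off from this structure. To begin, I would verify the identity $M\dot{r}_{0}=\dot{r}_{0}$: differentiating the DDE satisfied by $r$ shows that $\dot{r}$ solves the variational equation \eqref{eq:lin.eq.}, so $D_{2}\Phi(t,r_{0})\dot{r}_{0}=\dot{r}_{t}$, and specializing to $t=\omega$ together with the $\omega$-periodicity of $r$ gives $M\dot{r}_{0}=\dot{r}_{\omega}=\dot{r}_{0}$. Applying formula \eqref{derivativse of iteratives of P_Y} at $\varphi=r_{0}$, $j=1$, where $\gamma_{1}(r_{0})=\omega$ and $D_{1}\Phi(\omega,r_{0})1=\dot{r}_{0}\notin Y$, yields
\[
DP_{Y}(r_{0})\eta=M\eta-\frac{e^{*}(M\eta)}{e^{*}(\dot{r}_{0})}\dot{r}_{0}=\pi_{Y}(M\eta)\qquad(\eta\in Y),
\]
where $\pi_{Y}\colon C\to Y$ is the bounded linear projection onto $Y$ along $\mathbb{R}\dot{r}_{0}$. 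Setting $B:=DP_{Y}(r_{0})$ and $a^{*}(\eta):=e^{*}(M\eta)/e^{*}(\dot{r}_{0})$, the operator $M$ therefore acts via
\[
M(\alpha\dot{r}_{0}+\eta)=\bigl(\alpha+a^{*}(\eta)\bigr)\dot{r}_{0}+B\eta\qquad(\alpha\in\mathbb{R},\ \eta\in Y),
\]
i.e.\ it has block form $\bigl(\begin{smallmatrix}1 & a^{*}\\ 0 & B\end{smallmatrix}\bigr)$. Compactness of $M$ (from $\omega>1$) combined with boundedness of $\pi_{Y}$ makes $B$ compact as well, so the nonzero spectra of both operators consist of eigenvalues of finite multiplicity.

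For part (i) I would argue purely algebraically from this block form. A simple induction in the complexification gives, for $\lambda\in\mathbb{C}$ and $k\geq1$,
\[
(M-\lambda I)^{k}(\alpha\dot{r}_{0}+\eta)=\bigl((1-\lambda)^{k}\alpha+c_{k,\lambda}^{*}(\eta)\bigr)\dot{r}_{0}+(B-\lambda I)^{k}\eta
\]
for suitable bounded linear functionals $c_{k,\lambda}^{*}\colon Y\to\mathbb{C}$ defined by the recursion $c_{1,\lambda}^{*}=a^{*}$, $c_{k+1,\lambda}^{*}=(1-\lambda)c_{k,\lambda}^{*}+a^{*}\circ(B-\lambda I)^{k}$. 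For $\lambda\neq1$, triangularity makes $M-\lambda I$ invertible iff $B-\lambda I$ is, giving the spectral equality $\sigma(M)\setminus\{1\}=\sigma(P_{Y})\setminus\{1\}$. Moreover $(M-\lambda I)^{k}v=0$ with $v=\alpha\dot{r}_{0}+\eta$ is equivalent to $(B-\lambda I)^{k}\eta=0$ together with $\alpha=-c_{k,\lambda}^{*}(\eta)/(1-\lambda)^{k}$, which uniquely recovers $\alpha$ from $\eta$ (and the recursion guarantees consistency as $k$ is enlarged). Thus $\pi_{Y}$ maps $G(\lambda,M)$ bijectively onto $G(\lambda,B)$: surjectivity is provided by the explicit lift just described, and injectivity follows from $\mathbb{R}\dot{r}_{0}\subset G(1,M)$ together with $G(\lambda,M)\cap G(1,M)=\{0\}$ for $\lambda\neq1$. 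Finite-dimensionality turns this linear bijection into an isomorphism, and taking real parts of the complexified eigenspaces delivers (i).

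For (ii) and (iii) I would repeat the computation at $\lambda=1$. There the formula collapses to
\[
(M-I)^{k}(\alpha\dot{r}_{0}+\eta)=a^{*}\bigl((B-I)^{k-1}\eta\bigr)\dot{r}_{0}+(B-I)^{k}\eta,
\]
so $v=\alpha\dot{r}_{0}+\eta\in G(1,M)$ iff some $k$ annihilates both summands. Given $\eta\in G(1,B)$ with $(B-I)^{j}\eta=0$ for minimal $j\geq0$, the choice $k=j+1$ simultaneously kills $(B-I)^{k-1}\eta$, so \emph{any} $\alpha\in\mathbb{R}$ produces an element of $G(1,M)$; conversely $(M-I)^{k}v=0$ forces $(B-I)^{k}\eta=0$. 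This yields the internal direct sum $G(1,M)=\mathbb{R}\dot{r}_{0}\oplus G(1,B)$ inside $C=\mathbb{R}\dot{r}_{0}\oplus Y$, from which (ii) and (iii) follow at once: $\dim G(1,M)=1$ is equivalent to $G(1,B)=\{0\}$ and hence, by compactness of $B$, to $1\notin\sigma(P_{Y})$; while $\dim G(1,M)>1$ is equivalent to $G(1,B)\neq\{0\}$, i.e.\ $1\in\sigma(P_{Y})$, and passing to real parts produces the displayed identities $G_{\mathbb{R}}(1,P_{Y})=Y\cap G_{\mathbb{R}}(1,M)$ and $G_{\mathbb{R}}(1,M)=\mathbb{R}\dot{r}_{0}\oplus G_{\mathbb{R}}(1,P_{Y})$. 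The sole technical obstacle is the inductive verification of the block shape of $(M-\lambda I)^{k}$ above; beyond that, the whole lemma is a formal consequence of the invariance $M\dot{r}_{0}=\dot{r}_{0}$ and the identity $DP_{Y}(r_{0})=\pi_{Y}M|_{Y}$.
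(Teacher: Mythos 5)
Your proof is correct. It is worth noting that the paper does not prove this lemma at all: it simply invokes Theorem~XIV.4.5 of Diekmann, van Gils, Verduyn Lunel, and Walther and records the conclusion. You have instead supplied a complete, self-contained argument. The mechanism is the standard one: $\dot r$ solves the variational equation, hence $M\dot r_{0}=\dot r_{0}$, and the identity $DP_{Y}(r_{0})=\pi_{Y}M|_{Y}$ (read off from the paper's formula for $DP_{Y}^{j}$ at $j=1$, $\varphi=r_{0}$) puts $M$ in block upper-triangular form with respect to $C=\mathbb{R}\dot r_{0}\oplus Y$, with diagonal blocks $1$ and $B=DP_{Y}(r_{0})$. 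From there the three assertions are a formal computation. Your handling of the delicate points is sound: the compactness of $B$ (so that nonzero spectral points are eigenvalues of finite ascent), the consistency of the lift $\alpha=-c_{k,\lambda}^{*}(\eta)/(1-\lambda)^{k}$ as $k$ grows, the injectivity of $\pi_{Y}$ on $G(\lambda,M)$ via $G(\lambda,M)\cap G(1,M)=\{0\}$ for $\lambda\neq1$, and the collapse at $\lambda=1$ where $c_{k,1}^{*}=a^{*}\circ(B-I)^{k-1}$ gives $G(1,M)=\mathbb{R}\dot r_{0}\oplus G(1,B)$. The passage to the realified eigenspaces via complexification at the end is also correctly flagged. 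What your version buys is transparency: the cited theorem is a black box, whereas your block-triangular derivation exposes exactly why $1$ always sits in $\sigma(M)$ with $\dot r_{0}$ as eigenvector, why it drops out of $\sigma(P_{Y})$ precisely when $G(1,M)$ is one-dimensional, and why $\pi_{Y}$ is the right map to conjugate the two spectral pictures.
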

In our case, the special choice of $Y$ implies the following corollary. 
\begin{cor}
\label{cor:spectral corespondences}~

\noindent (i) $C_{s}$ and $C_{u}$ are invariant under $DP_{Y}\left(r_{0}\right)$,
and the spectra $\sigma_{s}\left(P_{Y}\right)$ and $\sigma_{u}\left(P_{Y}\right)$
of the induced maps $C_{s}\ni x\mapsto DP_{Y}\left(r_{0}\right)x\in C_{s}$
and $C_{u}\ni x\mapsto DP_{Y}\left(r_{0}\right)x\in C_{u}$ are contained
in $\left\{ \mu\in\mathbb{C}:\,\left|\mu\right|<1\right\} $ and $\left\{ \mu\in\mathbb{C}:\,\left|\mu\right|>1\right\} $,
respectively.

\noindent (ii) If $M$ has an eigenfunction $v$ corresponding to
a simple eigenvalue $\lambda\in\sigma\left(M\right)\setminus\left\{ 0,1\right\} $,
then $v$ is an eigenfunction of $DP_{Y}\left(r_{0}\right)$ corresponding
to the same eigenvalue. 

\noindent (iii) If $\mathcal{O}_{r}$ is nonhyperbolic, then $\xi$
is an eigenfunction of $DP_{Y}\left(r_{0}\right)$, and it corresponds
to an eigenvalue with absolute value $1$.
\end{cor}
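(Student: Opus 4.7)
The plan is to deduce all three assertions from the preceding lemma by exploiting the structural fact that, by the very construction of $Y$, each of $C_{s}$, $C_{u}$ and (in the nonhyperbolic case) $\mathbb{R}\xi$ already lies inside $Y$. Consequently the projection $\pi_{Y}$ along $\mathbb{R}\dot{r}_{0}$ onto $Y$ that features in part (i) of the lemma restricts to the identity on each of these subspaces, so that the canonical isomorphism between a realified generalized eigenspace of $M$ and that of $DP_{Y}(r_{0})$ collapses here to the plain equality $G_{\mathbb{R}}(\lambda,M)=G_{\mathbb{R}}(\lambda,DP_{Y}(r_{0}))$ for every $\lambda\in\sigma(M)\setminus\{0,1\}$ whose generalized eigenspace happens to sit inside $Y$.

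To prove (i) I would fix any $\lambda\in\sigma(M)\setminus\{0,1\}$ with $|\lambda|<1$ and note that $G_{\mathbb{R}}(\lambda,M)\subset C_{s}\subset Y$; the preceding lemma then yields $G_{\mathbb{R}}(\lambda,M)=G_{\mathbb{R}}(\lambda,DP_{Y}(r_{0}))$. Since $DP_{Y}(r_{0})$ is compact, its nonzero spectrum is discrete and accumulates only at $0$, so summing this identification over all such $\lambda$ identifies $C_{s}$ with the spectral subspace of $DP_{Y}(r_{0})$ attached to $\sigma(P_{Y})\cap\{\mu\in\mathbb{C}:|\mu|<1\}$; this gives invariance of $C_{s}$ under $DP_{Y}(r_{0})$ together with $\sigma_{s}(P_{Y})\subset\{\mu\in\mathbb{C}:|\mu|<1\}$, and the verbatim argument with $C_{u}$ and $\{\mu\in\mathbb{C}:|\mu|>1\}$ produces the unstable half. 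Assertion (ii) then follows at once: simplicity of $\lambda$ makes $G_{\mathbb{R}}(\lambda,M)=\mathbb{R}v$ one-dimensional and forces this eigenspace to lie in $C_{s}$, in $C_{u}$, or (only possible in the nonhyperbolic case) in $\mathbb{R}\xi$ after choosing $\xi$ as a generator of the one-dimensional summand of $C_{c}$ complementary to $\mathbb{R}\dot{r}_{0}$; in every case $v\in Y$, and the identification of the first paragraph promotes $v$ to a generator of the one-dimensional $G_{\mathbb{R}}(\lambda,DP_{Y}(r_{0}))$, hence to an eigenfunction of $DP_{Y}(r_{0})$ for $\lambda$.

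For (iii) I would first run a dimension count. Since $V(\dot{r}_{0})=2$ has already been shown to give $r_{M}<1$, and $\dim C_{r_{M}<}\leq 3$ combined with $C_{u}\neq\{\hat{0}\}$ caps $\dim C_{c}$ at $2$, nonhyperbolicity of $\mathcal{O}_{r}$ amounts to $\dim C_{c}=2$, and only two structural possibilities can realise this: either $M$ carries a $2\times 2$ Jordan block at the eigenvalue $1$ with $C_{c}=G_{\mathbb{R}}(1,M)$ and no further Floquet multiplier on $S_{\mathbb{C}}^{1}$, or $1$ is a simple eigenvalue of $M$ and a second, necessarily real, eigenvalue $\mu\neq 1$ lies on $S_{\mathbb{C}}^{1}$ (a non-real complex pair would add $2$ real dimensions on top of $\mathbb{R}\dot{r}_{0}$ and contradict $\dim C_{r_{M}<}\leq 3$). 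In the Jordan case, part (iii) of the preceding lemma gives $G_{\mathbb{R}}(1,DP_{Y}(r_{0}))=Y\cap G_{\mathbb{R}}(1,M)=\mathbb{R}\xi$, a one-dimensional generalized eigenspace that therefore equals the eigenspace, so $DP_{Y}(r_{0})\xi=\xi$; in the second case, part (ii) of the corollary, now at our disposal, supplies $DP_{Y}(r_{0})\xi=\mu\xi$ with $|\mu|=1$. I expect the main obstacle to be precisely this classification step: one has to use the dimensional constraints to rule out complex Floquet pairs on the unit circle, and one must carefully distinguish generalized from actual eigenvectors in the Jordan scenario, where invoking part (iii) of the lemma (rather than part (i)) is essential.
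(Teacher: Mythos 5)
Your argument for part~(i) has a genuine gap. You write that ``summing this identification over all such $\lambda$ identifies $C_{s}$ with the spectral subspace of $DP_{Y}(r_{0})$ attached to $\sigma(P_{Y})\cap\{|\mu|<1\}$.'' But $C_{s}$ is infinite-dimensional and is \emph{not} the closed span of the finite-dimensional realified generalized eigenspaces $G_{\mathbb{R}}(\lambda,M)$ with $0<|\lambda|<1$; it is a closed complement in the spectral decomposition, and can contain vectors lying in no such eigenspace. The identification from the preceding Lemma therefore accounts only for a (generally dense but proper) subspace of $C_{s}$, and does not by itself show that $C_{s}$ is $DP_{Y}(r_{0})$-invariant. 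One could salvage the argument by first computing $Y_{u}$ and $Y_{c}$ (the unstable/center spectral parts of $DP_{Y}(r_{0})$ in $Y$), showing $Y_{u}=C_{u}$ and $Y_{c}=\{\hat 0\}$ or $\mathbb{R}\xi$, and then concluding $Y_{s}=C_{s}$ by complementarity using the special form of $Y$ --- but you do not carry out this step, and as written the proof of (i) does not go through.

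The route the paper intends is more direct, and simpler. Taking $j=1$ in formula \eqref{derivativse of iteratives of P_Y} at $\varphi=r_0$ and using $\gamma(r_0)=\omega$, $D_2\Phi(\omega,r_0)=M$, $D_1\Phi(\omega,r_0)1=\dot r_0$, one has for every $\eta\in Y$
\[
DP_Y(r_0)\eta \;=\; M\eta-\frac{e^{*}(M\eta)}{e^{*}(\dot r_0)}\,\dot r_0 .
\]
Since $C_s$ and $C_u$ are $M$-invariant and both lie in $Y=(e^{*})^{-1}(0)$, one has $e^{*}(M\eta)=0$ for $\eta\in C_s\cup C_u$, hence $DP_Y(r_0)$ \emph{coincides} with $M$ on $C_s$ and on $C_u$. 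This immediately yields all of (i): invariance and the spectral inclusions. For (ii), the eigenfunction $v$ lies in one of $C_s$, $C_u$ or (in the nonhyperbolic case) $\mathbb{R}\xi$, so $v,Mv=\lambda v\in Y$ and the formula gives $DP_Y(r_0)v=Mv=\lambda v$ at once. For (iii), write $M\xi=\alpha\dot r_0+\beta\xi$ (possible since $C_c=\mathbb{R}\dot r_0\oplus\mathbb{R}\xi$ is $M$-invariant and $M\dot r_0=\dot r_0$); then $e^{*}(M\xi)=\alpha e^{*}(\dot r_0)$ and the formula gives $DP_Y(r_0)\xi=\beta\xi$, with $|\beta|=1$ because $\beta$ is an eigenvalue of $M|_{C_c}$. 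This handles (iii) for \emph{any} admissible choice of $\xi\in C_c\setminus\mathbb{R}\dot r_0$, whereas your proof of (iii) implicitly re-chooses $\xi$ to be an $M$-eigenvector in the ``$\mu=-1$'' case, which is not what the setup allows, and invokes a classification (Jordan block vs.\ a second unit-modulus eigenvalue) that the direct computation makes entirely unnecessary.

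Your arguments for (ii) and (iii) do reach correct conclusions, and your observation that the projection along $\mathbb{R}\dot r_0$ onto $Y$ is the identity on $C_s$, $C_u$ and $\mathbb{R}\xi$ is exactly the right structural fact. But before the proof is complete you must close the gap in (i), and you would do well to replace the eigenspace-by-eigenspace reasoning by the one-line computation above, which is what ``the special choice of $Y$'' is referring to.
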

In particular, if $\lambda_{1}$ is a simple Floquet multiplier, then
the strictly positive eigenfunction $v_{1}$ of $M$ corresponding
to $\lambda_{1}$ is also an eigenfunction of $DP_{Y}\left(r_{0}\right)$
corresponding to $\lambda_{1}$.

In case $\mathcal{O}_{r}$ is hyperbolic, then according to Theorem
I.3 in Appendix I of \cite{Krisztin-Walther-Wu}, there exist convex
open neighborhoods $N_{s}$, $N_{u}$ of $\hat{0}$ in $C_{s}$, $C_{u}$,
respectively, and a $C^{1}$-map $w_{u}:N_{u}\rightarrow C_{s}$ with
range in $N_{s}$ so that $w_{u}\left(\hat{0}\right)=\hat{0}$, $Dw_{u}\left(\hat{0}\right)=0$,
and the submanifold 
\[
\mathcal{W}_{loc}^{u}\left(P_{Y},r_{0}\right)=\left\{ r_{0}+\chi+w_{u}\left(\chi\right):\,\chi\in N_{u}\right\} 
\]
of $r_{0}+Y$ is equal to the set 
\begin{eqnarray*}
\left\{ \varphi\in r_{0}+N_{s}+N_{u}:\,\mbox{there is a trajectory }\left(\varphi_{n}\right)_{-\infty}^{0}\mbox{ of }P_{Y}\mbox{ with }\varphi_{0}=\varphi\mbox{ such that }\right.\\
\left.\varphi_{n}\in r_{0}+N_{s}+N_{u}\mbox{ for all }n\leq0\mbox{ and }\varphi_{n}\rightarrow r_{0}\mbox{ as }n\rightarrow-\infty\right\} .
\end{eqnarray*}
$\mathcal{W}_{loc}^{u}\left(P_{Y},r_{0}\right)$ is called a local
unstable manifold of $P_{Y}$ at $r_{0}$.

The unstable set of the orbit $\mathcal{O}_{r}$ is defined as the
forward extension of $\mathcal{W}_{loc}^{u}\left(P_{Y},r_{0}\right)$
in time: 
\begin{equation}
\mathcal{W}^{u}\left(\mathcal{O}_{r}\right)=\Phi\left(\left[0,\infty\right)\times\mathcal{W}_{loc}^{u}\left(P_{Y},r_{0}\right)\right).\label{unstable set is forwad extension of unstable manifold-1}
\end{equation}
 If $\mathcal{O}_{r}$ is hyperbolic, then 
\[
\mathcal{W}^{u}\left(\mathcal{O}_{r}\right)=\left\{ x_{0}:\ x:\mathbb{R}\rightarrow\mathbb{R}\mbox{ is a solution of \eqref{eq:eq_general}, }\alpha\left(x\right)\mbox{ exists and }\alpha\left(x\right)=\mathcal{O}_{r}\right\} .
\]

If $\mathcal{O}_{r}$ is hyperbolic, then by Theorem I.2 in \cite{Krisztin-Walther-Wu},
there are convex open neighborhoods $N_{s}$, $N_{u}$ of $\hat{0}$
in $C_{s}$, $C_{u}$, respectively, and a $C^{1}$-map $w_{s}:N_{s}\rightarrow C_{u}$
with range in $N_{u}$ such that $w_{s}\left(\hat{0}\right)=\hat{0}$,
$Dw_{s}\left(\hat{0}\right)=0$, and 
\[
\mathcal{W}_{loc}^{s}\left(P_{Y},r_{0}\right)=\left\{ r_{0}+\chi+w_{s}\left(\chi\right):\,\chi\in N_{s}\right\} 
\]
 is equal to 
\begin{eqnarray*}
\left\{ \varphi\in r_{0}+N_{s}+N_{u}:\,\mbox{there is a trajectory }\left(\varphi_{n}\right)_{0}^{\infty}\mbox{ of }P_{Y}\mbox{ in }\right.\\
\left.r_{0}+N_{s}+N_{u}\mbox{ with }\varphi_{0}=\varphi\mbox{ and }\varphi_{n}\rightarrow r_{0}\mbox{ as }n\rightarrow\infty\right\} .
\end{eqnarray*}
 $\mathcal{W}_{loc}^{s}\left(P_{Y},r_{0}\right)$ is a local stable
manifold of $P_{Y}$ at $r_{0}$. It is a $C^{1}$-submanifold of
$r_{0}+Y$ with codimension $\mbox{dim}C_{u}$, and it is a $C^{1}$-submanifold
of $C$ with codimension $\mbox{dim}C_{u}+1$.

In case $\mathcal{O}_{r}$ is nonhyperbolic, we need a local center-stable
manifold $\mathcal{W}_{loc}^{sc}\left(P_{Y},r_{0}\right)$ of $P_{Y}$
at $r_{0}$. According to Theorem II.1 in \cite{Krisztin-Walther-Wu},
there exist convex open neighborhoods $N_{sc}$ and $N_{u}$ of $\hat{0}$
in $C_{s}\oplus\mathbb{R}\xi$ and $C_{u}$, respectively, and a $C^{1}$-map
$w_{sc}:N_{sc}\rightarrow C_{u}$ such that $w_{sc}\left(\hat{0}\right)=\hat{0}$,
$Dw_{sc}\left(\hat{0}\right)=0$, $w_{sc}\left(N_{sc}\right)\subset N_{u}$
and the local center-stable manifold 
\[
\mathcal{W}_{loc}^{sc}\left(P_{Y},r_{0}\right)=\left\{ r_{0}+\chi+w_{sc}\left(\chi\right):\,\chi\in N_{sc}\right\} 
\]
satisfies 
\[
\bigcap_{n=0}^{\infty}P_{Y}^{-1}\left(r_{0}+N_{sc}+N_{u}\right)\subset\mathcal{W}_{loc}^{sc}\left(P_{Y},r_{0}\right).
\]
Note that $\mathcal{W}_{loc}^{sc}\left(P_{Y},r_{0}\right)$ is also
a $C^{1}$-submanifold of $r_{0}+Y$ with codimension $\mbox{dim}C_{u}$,
and it is a $C^{1}$-submanifold of $C$ with codimension $\mbox{dim}C_{u}+1$.
\begin{prop}
\label{qvarphi dot near r_0}One may choose the neighborhoods $N_{s}$
and $N_{sc}$ so small in the definitions of $\mathcal{W}_{loc}^{s}\left(P_{Y},r_{0}\right)$,
$\mathcal{W}_{loc}^{sc}\left(P_{Y},r_{0}\right)$, respectively, such
that for all $\varphi$ in $\mathcal{W}_{loc}^{s}\left(P_{Y},r_{0}\right)\cap\mathcal{A}$
and in $\mathcal{W}_{loc}^{sc}\left(P_{Y},r_{0}\right)\cap\mathcal{A}$,
$\dot{\varphi}\notin Y$ and $V\left(\dot{\varphi}\right)\geq2$.
Analogously, one may suppose that $\dot{\varphi}\notin Y$ for all
$\varphi\in\mathcal{W}_{loc}^{u}\left(P_{Y},r_{0}\right)\cap\mathcal{A}$.\end{prop}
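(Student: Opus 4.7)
My strategy is to settle all three assertions by a single contradiction argument that combines two local facts at $r_{0}$ with the coincidence of the $C$- and $C^{1}$-topologies on $\mathcal{A}$.

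First I would collect what is already known at $r_{0}$ itself. By the very construction of $Y$, the functional $e^{*}$ satisfies $e^{*}(\dot{r}_{0}) \neq 0$, so $\dot{r}_{0} \notin Y$. Moreover, the Floquet discussion above establishes $V(\dot{r}_{t}) = 2$ for every $t \in \mathbb{R}$; in particular $V(\dot{r}_{0}) = 2$. These two facts are the only input beyond the general machinery.

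Next I would argue by contradiction for $\mathcal{W}_{loc}^{s}(P_{Y}, r_{0})$. If the claim fails, then for shrinking neighborhoods $N_{s}^{(n)} \subset C_{s}$ and $N_{u}^{(n)} \subset C_{u}$ one can find points $\varphi^{n} \in \mathcal{W}_{loc}^{s}(P_{Y}, r_{0}) \cap \mathcal{A}$ (built from them) such that either $\dot{\varphi}^{n} \in Y$ or $V(\dot{\varphi}^{n}) < 2$ holds for every $n$. Since $w_{s}$ is continuous at $\hat{0}$ with $w_{s}(\hat{0}) = \hat{0}$, shrinking $N_{s}^{(n)}$ to $\{\hat{0}\}$ forces $\varphi^{n} \to r_{0}$ in $C$. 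Because $\varphi^{n} \in \mathcal{A}$ and because $C$ and $C^{1}$ induce the same topology on $\mathcal{A}$, this upgrades to $\varphi^{n} \to r_{0}$ in $C^{1}$, hence $\dot{\varphi}^{n} \to \dot{r}_{0}$ in $C$, and in particular $\dot{\varphi}^{n} \neq \hat{0}$ for large $n$. If $\dot{\varphi}^{n} \in Y$ along a subsequence, then continuity of $e^{*}$ yields $e^{*}(\dot{r}_{0}) = \lim_{n \to \infty} e^{*}(\dot{\varphi}^{n}) = 0$, contradicting the first local fact. If instead $V(\dot{\varphi}^{n}) < 2$ along a subsequence, then $V(\dot{\varphi}^{n}) = 0$ on that subsequence, since $0$ is the only even value below $2$; the lower semicontinuity half of Lemma~\ref{lem: continuity_of_V} then forces $V(\dot{r}_{0}) \leq \liminf_{n \to \infty} V(\dot{\varphi}^{n}) = 0$, contradicting $V(\dot{r}_{0}) = 2$.

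The local center-stable case runs verbatim with $N_{sc}^{(n)} \subset C_{s} \oplus \mathbb{R}\xi$ and $w_{sc}$ replacing $N_{s}^{(n)}$ and $w_{s}$. The local unstable case needs only the first half of the dichotomy above, since only $\dot{\varphi} \notin Y$ is asserted there. I do not foresee any serious obstacle; the only delicate point is to ensure that shrinking the neighborhoods really yields $\varphi^{n} \to r_{0}$ in $C$, which is why the argument is phrased in terms of the continuous graph maps $w_{s}$, $w_{sc}$ vanishing at $\hat{0}$, rather than merely in terms of the inclusion of the local manifolds in $r_{0} + N_{s} + N_{u}$.
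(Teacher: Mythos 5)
Your proof is correct and rests on exactly the same three ingredients as the paper's own one-paragraph proof: the equivalence of the $C$- and $C^{1}$-topologies on $\mathcal{A}$, the local facts $\dot{r}_{0}\notin Y$ and $V(\dot{r}_{0})=2$, and the lower semicontinuity of $V$. The only difference is presentational, you phrase it as a sequential contradiction while the paper states it directly as a property of small $C$-neighborhoods, so there is nothing substantive to add.
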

\begin{proof}
Recall that the $C$-norm and the $C^{1}$-norm are equivalent on
the global attractor $\mathcal{A}$. Hence for all $\varphi\in\mathcal{A}$
with small $\left\Vert \varphi-r_{0}\right\Vert $, $\dot{\varphi}\notin Y$
follows from $\dot{r}_{0}\notin Y$, furthermore $V\left(\dot{\varphi}\right)\geq2$
follows from $V\left(\dot{r}_{0}\right)=2$ and the lower semicontinuity
of $V$.
\end{proof}
The next result is an immediate consequence of Proposition I.7 in
\cite{Krisztin-Walther-Wu} combined with characterizations of the
local stable and center-stable manifolds given by Theorems I.2 and
II.1 in \cite{Krisztin-Walther-Wu}.
\begin{prop}
\label{prop:trajectory of P_Y} Let $\mathcal{W}$ denote a local
stable manifold $\mathcal{W}_{loc}^{s}\left(P_{Y},r_{0}\right)$ if
$\mathcal{O}_{r}$ is hyperbolic, and let $\mathcal{W}$ be a local
center-stable manifold $\mathcal{W}_{loc}^{sc}\left(P_{Y},r_{0}\right)$
otherwise. Let $\varphi\in C$ be given such that $\Phi\left(t,\varphi\right)\rightarrow\mathcal{O}_{r}$
as $t\rightarrow\infty$. Then there exist $T\geq0$ and a trajectory
$\left(\varphi^{n}\right)_{n=0}^{\infty}$ of $P_{Y}$ in $\mathcal{W}$
such that $\varphi^{0}=\Phi\left(T,\varphi\right)$ and $\varphi^{n}\rightarrow r_{0}$
as $n\rightarrow\infty$.
\end{prop}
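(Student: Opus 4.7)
The plan is to construct the $P_Y$-trajectory directly from the forward orbit $\{\Phi(t,\varphi):t\geq 0\}$ by recording its successive crossings of the Poincar\'e section $r_0+Y$, and then to invoke the characterizations of $\mathcal{W}_{loc}^{s}(P_Y,r_0)$ and $\mathcal{W}_{loc}^{sc}(P_Y,r_0)$ given just before the statement.

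First I would exploit the transversality $\dot{r}_0\notin Y$ together with the continuous dependence of $\Phi$ on its arguments. Since $\Phi(t,\varphi)\to\mathcal{O}_r$ as $t\to\infty$, for every neighborhood $U$ of $r_0$ in $C$ there is $t^*\geq 0$ such that the trajectory $\Phi(t,\varphi)$ for $t\geq t^*$ stays in an $\omega$-tubular neighborhood of $\mathcal{O}_r$. By choosing $U\subset N$ small enough (where $N$ is the neighborhood on which the return time $\gamma$ was constructed) and invoking the implicit function theorem reasoning that produced $\gamma$, I obtain an increasing sequence of times $T=t_0<t_1<t_2<\cdots$ with $t_{n+1}-t_n\in(\omega-\varepsilon,\omega+\varepsilon)$, such that $\varphi^n:=\Phi(t_n,\varphi)\in r_0+Y$ for all $n\geq 0$, and $\varphi^n\to r_0$ as $n\to\infty$. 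By construction $t_{n+1}-t_n=\gamma(\varphi^n)$, so $\varphi^{n+1}=\Phi(\gamma(\varphi^n),\varphi^n)=P_Y(\varphi^n)$, i.e.\ $(\varphi^n)_{n=0}^{\infty}$ is a forward $P_Y$-trajectory converging to the fixed point $r_0$. This construction is essentially the content of Proposition~I.7 in \cite{Krisztin-Walther-Wu}, and I would cite it rather than reproving the crossing-time lemma.

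With the trajectory $(\varphi^n)$ in hand, I would replace $T$ by $t_N$ for a sufficiently large $N$, so that $\varphi^n\in r_0+N_s+N_u$ (resp.\ $\varphi^n\in r_0+N_{sc}+N_u$) for every $n\geq 0$. In the hyperbolic case, the characterization of $\mathcal{W}_{loc}^{s}(P_Y,r_0)$ quoted above says precisely that a point of $r_0+N_s+N_u$ whose forward $P_Y$-orbit stays in this neighborhood and converges to $r_0$ belongs to $\mathcal{W}_{loc}^{s}(P_Y,r_0)$; hence each $\varphi^n$ lies in $\mathcal{W}=\mathcal{W}_{loc}^{s}(P_Y,r_0)$. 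In the nonhyperbolic case, the analogous inclusion $\bigcap_{n\geq 0}P_Y^{-n}(r_0+N_{sc}+N_u)\subset\mathcal{W}_{loc}^{sc}(P_Y,r_0)$ from Theorem~II.1 of \cite{Krisztin-Walther-Wu} yields $\varphi^n\in\mathcal{W}=\mathcal{W}_{loc}^{sc}(P_Y,r_0)$.

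The only subtle point is the very first step: making sure that one can actually produce the crossing sequence in the regime where $\Phi(t,\varphi)$ is merely known to approach $\mathcal{O}_r$, not to settle on a single nearby phase. This is where the monotone-type property of $r$ (Proposition~\ref{pro: monotone_type}) and the fact that $D_1\Phi(\gamma(\psi),\psi)1\notin Y$ for all $\psi\in N$ are crucial: they guarantee that in a sufficiently small neighborhood of $\mathcal{O}_r$ the flow crosses $r_0+Y$ transversally once per near-period, so the times $t_n$ are well defined and uniformly spaced. Everything else is a routine consequence of the standing invariant-manifold results cited in the excerpt.
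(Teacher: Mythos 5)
Your proposal is correct and follows essentially the same route as the paper: cite Proposition I.7 of Krisztin--Walther--Wu to obtain a forward $P_Y$-trajectory along the crossings of the section $r_0+Y$, and then invoke the characterizations of $\mathcal{W}_{loc}^{s}(P_Y,r_0)$ (Theorem I.2) and $\mathcal{W}_{loc}^{sc}(P_Y,r_0)$ (Theorem II.1) to place the tail of that trajectory inside $\mathcal{W}$. The paper simply states the result as an immediate consequence of those cited facts; your write-up supplies the same argument with a bit more narrative detail.
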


\subsection{Examples}

Consider the case when $r$ is the LSOP solution $p$ given by Theorem
A. Theorem A states that $\mathcal{O}_{p}$ is hyperbolic, and has
two real and simple Floquet multipliers outside the unit circle. Hence
$C_{c}=\mathbb{R}\dot{p}_{0}$ and 
\[
C_{u}=\left\{ c_{1}v_{1}+c_{2}v_{2}:\, c_{1},c_{2}\in\mathbb{R}\right\} ,
\]
where $v_{1}$ is a positive eigenfunction corresponding to $M$ and
the leading real eigenvalue $\lambda_{1}>1$, and $v_{2}$ is an eigenfunction
corresponding to $M$ and the eigenvalue $\lambda_{2}$ with $1<\lambda_{2}<\lambda_{1}$.
For the solution $u^{v_{2}}:\left[-1,\infty\right)\rightarrow\mathbb{R}$
of the linear variational equation \eqref{eq:lin.eq.} with initial
segment $v_{2}$, $V\left(u_{t}^{v_{2}}\right)=2$ for all $t\geq0$.
For both $i\in\left\{ 1,2\right\} $, $\lambda_{i}$ is an eigenvalue
of $DP_{Y}\left(p_{0}\right)$ with the eigenvector $v_{i}$. 

The local unstable manifold $\mathcal{W}_{loc}^{u}\left(P_{Y},p_{0}\right)$
of the Poincar\'e map $P_{Y}$ at $p_{0}$ is a two-dimensional $C^{1}$-submanifold
of $p_{0}+Y$.

We will use the subsequent technical result.
\begin{prop}
\label{pro: properties of local unstable manifold at r_0}One may
choose $N_{u}$ so small that the tangent space $T_{\varphi}\mathcal{W}_{loc}^{u}\left(P_{Y},p_{0}\right)$
has a strictly positive element for all $\varphi\in\mathcal{W}_{loc}^{u}\left(P_{Y},p_{0}\right)$.\end{prop}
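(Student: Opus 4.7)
The plan is to exploit two ingredients: first, that the tangent space at the base point $p_{0}$ (i.e., at $\chi=\hat{0}$) is exactly $C_{u}$, which already contains the strictly positive eigenfunction $v_{1}$; and second, that strict positivity is an open condition on $C$, so any sufficiently small $C^{1}$-perturbation of $v_{1}$ remains strictly positive. Writing the local unstable manifold as the graph of the $C^{1}$-map $w_{u}:N_{u}\to C_{s}$, any point $\varphi\in\mathcal{W}_{loc}^{u}(P_{Y},p_{0})$ has the form $\varphi=p_{0}+\chi+w_{u}(\chi)$ for a unique $\chi\in N_{u}$, and its tangent space is the graph of $Dw_{u}(\chi):C_{u}\to C_{s}$, namely
\[
T_{\varphi}\mathcal{W}_{loc}^{u}(P_{Y},p_{0})=\bigl\{\eta+Dw_{u}(\chi)\eta:\eta\in C_{u}\bigr\}.
\]
In particular, at $\chi=\hat{0}$ we have $Dw_{u}(\hat{0})=0$, so $v_{1}\in T_{p_{0}}\mathcal{W}_{loc}^{u}(P_{Y},p_{0})$.

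Next, I would transfer this strict positivity to nearby tangent spaces by a continuity argument. Since $v_{1}$ is continuous on the compact interval $[-1,0]$ and $v_{1}\gg\hat{0}$, the quantity $\delta=\min_{s\in[-1,0]}v_{1}(s)$ is strictly positive. Because $w_{u}$ is $C^{1}$ on $N_{u}$, the map
\[
N_{u}\ni\chi\mapsto Dw_{u}(\chi)v_{1}\in C_{s}\subset C
\]
is continuous and vanishes at $\chi=\hat{0}$. Consequently there exists a convex open neighborhood $\tilde{N}_{u}\subseteq N_{u}$ of $\hat{0}$ in $C_{u}$ such that $\|Dw_{u}(\chi)v_{1}\|<\delta/2$ for every $\chi\in\tilde{N}_{u}$. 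For every such $\chi$, the tangent vector
\[
\eta_{\chi}:=v_{1}+Dw_{u}(\chi)v_{1}\in T_{\varphi}\mathcal{W}_{loc}^{u}(P_{Y},p_{0})
\]
satisfies $\eta_{\chi}(s)\geq v_{1}(s)-\|Dw_{u}(\chi)v_{1}\|\geq\delta-\delta/2=\delta/2>0$ for all $s\in[-1,0]$, so $\eta_{\chi}\gg\hat{0}$.

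Finally, replacing $N_{u}$ by $\tilde{N}_{u}$ (and, if needed, shrinking $N_{s}$ accordingly so that $w_{u}(\tilde{N}_{u})\subset N_{s}$, which is automatic by continuity of $w_{u}$) yields a new convex open neighborhood of $\hat{0}$ in $C_{u}$ for which the local unstable manifold $\mathcal{W}_{loc}^{u}(P_{Y},p_{0})$ still satisfies all the properties listed in Theorem I.3 of \cite{Krisztin-Walther-Wu}, since shrinking the chart only restricts the manifold to a smaller relatively open piece around $p_{0}$. After this shrinking, every $\varphi\in\mathcal{W}_{loc}^{u}(P_{Y},p_{0})$ has the strictly positive element $\eta_{\chi}\in T_{\varphi}\mathcal{W}_{loc}^{u}(P_{Y},p_{0})$, which is the desired conclusion. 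There is no substantial obstacle in this argument; the only mildly delicate point is verifying that shrinking $N_{u}$ does not destroy any structural property invoked elsewhere, but this is immediate because $\mathcal{W}_{loc}^{u}(P_{Y},p_{0})$ is described as a graph over $N_{u}$ and its defining characterization is preserved under restriction to a smaller $N_{u}$.
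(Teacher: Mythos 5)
Your argument is correct and follows essentially the same route as the paper: both reduce the task to shrinking $N_{u}$ so that $v_{1}+Dw_{u}(\chi)v_{1}\gg\hat{0}$ for all $\chi\in N_{u}$, using $Dw_{u}(\hat{0})=0$ and continuity of $Dw_{u}$. You spell out in more detail why the shrinking is possible (via the positive lower bound $\delta=\min_{s\in[-1,0]}v_{1}(s)$), while the paper instead spells out why $v_{1}+Dw_{u}(\chi)v_{1}$ is a tangent vector (by exhibiting the $C^{1}$-curve $t\mapsto p_{0}+\chi+tv_{1}+w_{u}(\chi+tv_{1})$); these are complementary elaborations of the same proof.
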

\begin{proof}
By decreasing $N_{u}$ if necessary, we can achieve that $v_{1}+Dw_{u}\left(\chi\right)v_{1}\gg\hat{0}$
for all $\chi\in N_{u}$, where $v_{1}$ is a fixed positive eigenfunction
corresponding to the leading eigenvalue $\lambda_{1}$ of $DP_{Y}\left(p_{0}\right)$.
Let $\varphi\in\mathcal{W}_{loc}^{u}\left(P_{Y},p_{0}\right)$ be
arbitrary and choose $\chi^{\varphi}\in N_{u}$ with $\varphi=p_{0}+\chi^{\varphi}+w_{u}\left(\chi^{\varphi}\right)$.
Then for all $t$ in an open interval $I\subset\mathbb{R}$ containing
$0$, $\gamma\left(t\right)=p_{0}+\chi^{\varphi}+tv_{1}+w_{u}\left(\chi^{\varphi}+tv_{1}\right)$
is defined. Moreover, $\gamma:I\rightarrow\mathcal{W}_{loc}^{u}\left(P_{Y},p_{0}\right)$
is a $C^{1}$-curve with $\gamma\left(0\right)=\varphi$ and 
\[
T_{\varphi}\mathcal{W}_{loc}^{u}\left(P_{Y},p_{0}\right)\ni\gamma'\left(0\right)=v_{1}+Dw_{u}\left(\chi^{\varphi}\right)v_{1}\gg\hat{0}.
\]

\end{proof}
We plan to consider other periodic orbits oscillating slowly around
an equilibrium, but keep the same notations for simplicity ($\omega$
for the minimal period, $P_{Y}$ for the Poincar\'e map, $\lambda_{i}$,
$i\geq1$, for the Floquet multipliers, $v_{i}$, $i\geq1$, for eigenvectors,
and so on). It will be clear from the context which periodic orbit
we refer to. 

Theorem A gives a second LSOP solution $q:\mathbb{R}\rightarrow\mathbb{R}$.
$\mathcal{O}_{q}$ is hyperbolic, and it has exactly one simple Floquet
multiplier outside the unit circle, which is real and greater than
$1$. This leading eigenvalue will be also denoted by $\lambda_{1}$,
but it differs from the leading Floquet multiplier of $\mathcal{O}_{p}$.
To $\lambda_{1}$ there corresponds a positive eigenfunction $v_{1}$
(different from the previous $v_{1}$). Hence for $r=q$, $C_{c}=\mathbb{R}\dot{q}_{0}$
and $C_{u}=\mathbb{R}v_{1}$.  The local stable manifold $\mathcal{W}_{loc}^{s}\left(P_{Y},q_{0}\right)$
of $P_{Y}$ at $q_{0}$ is a $C^{1}$-submanifold of $q_{0}+Y$ with
codimension $1$, and a $C^{1}$-submanifold of $C$ with codimension
$2$. We have the tangent space $T_{q_{0}}\mathcal{W}_{loc}^{s}\left(P_{Y},q_{0}\right)=C_{s}$
at $q_{0}$ in $q_{0}+Y$. 

Recall that there exist periodic solutions $x^{1}:\mathbb{R}\rightarrow\mathbb{R}$
and $x^{-1}:\mathbb{R}\rightarrow\mathbb{R}$ of Eq.\,\eqref{eq:eq_general}
oscillating slowly around $\xi_{1}$ and $\xi_{-1}$ with ranges in
$\left(0,\xi_{2}\right)$ and $\left(\xi_{-2},0\right)$, respectively,
so that the ranges $x^{1}(\mathbb{R})$ and $x^{-1}(\mathbb{R})$
are maximal in the sense that $x^{1}(\mathbb{R})\supset x(\mathbb{R})$
for all periodic solutions $x$ oscillating slowly around $\xi_{1}$
with ranges in $(0,\xi_{2})$; and analogously for $x^{-1}$. We do
not know whether the corresponding periodic orbits, $\mathcal{O}_{1}$
and $\mathcal{O}_{-1}$, are hyperbolic or not.
\begin{prop}
\label{dimC_u=00003D1}For both periodic orbits $\mathcal{O}_{1}$
and $\mathcal{O}_{-1}$, $\dim C_{u}=1$.\end{prop}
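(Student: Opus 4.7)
The plan is to argue by contradiction: assuming $\dim C_u\geq 2$ for $\mathcal{O}_1$, I would construct a periodic solution oscillating slowly around $\xi_1$ whose range properly contains $x^1(\mathbb{R})$, contradicting maximality. The argument for $\mathcal{O}_{-1}$ is identical after substituting $(x^{-1},\xi_{-1},C_{-2,0})$ for $(x^1,\xi_1,C_{0,2})$.

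First, since $C_{r_M<}=C_c\oplus C_u$, $\dim C_{r_M<}\leq 3$ by \eqref{C_r_M<}, and $\dot x_0^1\in C_c$, the assumption $\dim C_u\geq 2$ forces $\dim C_c=1$ and $\dim C_u=2$; in particular $\mathcal{O}_1$ is hyperbolic. Positivity of $f'$ makes the monodromy $M$ eventually strongly positive on the cone $C^+=\{\varphi\in C:\varphi\geq\hat 0\}$, so the Krein--Rutman theorem yields that $\lambda_1>1$ is a simple real Floquet multiplier with eigenfunction $v_1\gg\hat 0$, and no other eigenvalue of $M$ admits a signed eigenfunction. Hence any $v_2\in C_u$ independent of $v_1$ satisfies $V(v_2)\neq 0$, and $v_2\in C_{r_M<}$ together with \eqref{C_r_M<} then forces $V(v_2)=2$.

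Next, I would use the two-dimensional local unstable manifold $\mathcal{W}_{loc}^u(P_Y,x_0^1)$ from Theorem I.3 in \cite{Krisztin-Walther-Wu}. For small $t>0$ set $\varphi_t=x_0^1+tv_1+w_u(tv_1)$; then $\varphi_t\gg x_0^1$, and the backward extension $y=x^{\varphi_t}:\mathbb{R}\to\mathbb{R}$ satisfies $\alpha(y)=\mathcal{O}_1$ and $y_s\gg x_s^1$ for $s\geq 0$ by Proposition \ref{pro:monotone_dynamical_system}. To extend this to all $s\in\mathbb{R}$, I would analyze $z=y-x^1$, which satisfies $\dot z(s)=-z(s)+a(s)z(s-1)$ with $a(s)>0$: the decay $z_s\to\hat 0$ as $s\to-\infty$ occurs along the $v_1$-direction (the $v_2$-component of $\varphi_t-x_0^1$ vanishing by construction), so Lemma \ref{lem:4_properties_of_V} and the lower semi-continuity of $V$ (Lemma \ref{lem: continuity_of_V}) force $V(z_s)\equiv 0$ throughout $\mathbb{R}$; thus $y>x^1$ on all of $\mathbb{R}$. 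Since $y>\min x^1>0$ and $y\in\mathcal{A}\subseteq C_{-2,2}$, the forward orbit of $y$ lies in $\mathcal{A}_{0,2}$ and the Mallet-Paret--Sell Poincar\'e--Bendixson theorem applies to $\omega(y)$. The strict bound $y>\min x^1>0$ excludes $\hat{\xi}_0\in\omega(y)$; slow oscillation of $x^1$ around $\xi_1$ yields $\max_{s\in[-1,0]}x^1(\sigma+s)>\xi_1$ for every $\sigma\in\mathbb{R}$, which combined with $y_s\geq x_s^1$ and compactness of $\mathcal{O}_1$ excludes $\hat{\xi}_1\in\omega(y)$. In the periodic case $\omega(y)=\tilde{\mathcal{O}}$, the inherited bound $V(y_s-\hat{\xi}_1)\leq 2$ forces slow oscillation around $\xi_1$; a monotonicity argument propagates $\tilde x\geq x^1$ (up to phase) along $\Phi$, and since the only nonnegative periodic solution of the linearized variational equation lies in the one-dimensional center subspace $\mathbb{R}\dot x_0^1$ (which is either sign-changing on $[-1,0]$ or corresponds only to phase shifts of $x^1$), either $\tilde{\mathcal{O}}=\mathcal{O}_1$ or $\tilde{\mathcal{O}}$ has range strictly containing $x^1(\mathbb{R})$, the latter contradicting maximality.

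The remaining cases are thus that $y$ is homoclinic to $\mathcal{O}_1$ or $\omega(y)=\{\hat{\xi}_2\}$; these are the principal technical obstacles. Homoclinicity is ruled out by transversality: in the hyperbolic case the local stable and unstable manifolds of $P_Y$ at $x_0^1$ in $r_0+Y=r_0+C_s\oplus C_u$ are tangent to $C_s$ and $C_u$ respectively and meet transversally at $x_0^1$, so $\varphi_t$ is not a homoclinic point for $t>0$ small. To rule out $\omega(y)=\{\hat{\xi}_2\}$ I would exploit the second unstable direction $v_2$ (with $V(v_2)=2$): parameterize a small closed curve around $x_0^1$ in the two-dimensional local unstable manifold and study the forward $\omega$-limit continuously along it. The monotone comparisons at the antipodes $x_0^1\pm tv_1+o(t)$ produce different $\omega$-limits (namely $\{\hat{\xi}_2\}$ on the $+v_1$ side and a set in $\{\hat{\xi}_0\}\cup\{\hat{\xi}_1\}\cup(\text{periodic orbits in }C_{0,2})$ on the $-v_1$ side, by the symmetric analysis); by continuity and Poincar\'e--Bendixson, at some intermediate parameter $\omega(y)$ must be a nonconstant periodic orbit in $\mathcal{A}_{0,2}$ different from $\mathcal{O}_1$, and the ordering argument of the previous paragraph then yields a slowly oscillatory periodic orbit around $\xi_1$ with range strictly containing $x^1(\mathbb{R})$, contradicting maximality. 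This deformation-continuity step on the two-dimensional unstable manifold is the hardest part of the proof.
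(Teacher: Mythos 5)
Your proposal takes a genuinely different route from the paper. The paper does not argue by contradiction or try to exhibit a wider periodic orbit; instead it uses the heteroclinic connection from $\mathcal{O}_p$ to $\mathcal{O}_1$ supplied by Theorem B, extracts from it (via Proposition~\ref{prop:trajectory of P_Y}, Lemma~\ref{lem:technical result}, and Proposition 8.3 of \cite{Krisztin-Vas}) a nonzero entire solution $z$ of the linearized equation with $z_0\in T_{x_0^1}\mathcal{W}\subset Y$ and $V(z_0)\le 2$, and then notes that $z_0$ and $\dot x_0^1$ are two linearly independent elements of $(C_s\oplus C_c)\cap V^{-1}(\{0,2\})$, so that \eqref{C_r_M<} together with $\dim C_{r_M<}\le 3$ leaves room for only a one-dimensional $C_u$. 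That argument is short, direct, and does not need to discuss $\omega$-limits of orbits at all.

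Your approach has several genuine gaps, the most serious of which I think is unfixable as stated. You set $\varphi_t=x_0^1+tv_1+w_u(tv_1)$ and claim that the backward orbit $y_s=x^{\varphi_t}_s$ approaches $\mathcal{O}_1$ \emph{along the $v_1$-direction} because ``the $v_2$-component of $\varphi_t-x_0^1$ vanishes by construction''. This is false: the coordinate axis $\{x_0^1+\chi+w_u(\chi):\chi\in\mathbb{R}v_1\cap N_u\}$ is not invariant under $P_Y$, and after one backward step the unstable coordinate of $P_Y^{-1}(\varphi_t)$ generally has a nonzero $v_2$-component. Since $\lambda_1>\lambda_2>1$, the \emph{backward} decay rate is $1/\lambda_1$ along $v_1$ and $1/\lambda_2$ along $v_2$, so the slow (generic) approach direction as $s\to-\infty$ is $v_2$, not $v_1$. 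Consequently $V(z_s)\to V(v_2)=2$ as $s\to-\infty$, not $0$, and your conclusion $V(z_s)\equiv 0$, hence $y>x^1$ on all of $\mathbb{R}$, does not follow. To salvage this step you would have to pass to the one-dimensional strong unstable manifold tangent to $v_1$ at $x_0^1$, not the $v_1$-coordinate line, and justify a sign condition there; none of this is in your write-up.

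Beyond that, the two steps you yourself flag as ``principal technical obstacles'' are real gaps, not mere technicalities. The deformation-continuity argument along a small closed curve in $\mathcal{W}^u_{\mathrm{loc}}(P_Y,x_0^1)$ needs to cope with the fact that $\varphi\mapsto\omega(\varphi)$ is not continuous and that the intermediate $\omega$-limit could again be a single equilibrium or $\mathcal{O}_1$ itself; nothing in the proposal prevents that. And even if you produce a nonconstant periodic orbit $\tilde{\mathcal{O}}\neq\mathcal{O}_1$ in $\mathcal{A}_{0,2}$, the ``ordering argument'' that its range strictly contains $x^1(\mathbb{R})$ is not given: a periodic orbit ``above'' $\mathcal{O}_1$ in the phase-space ordering could in principle have range entirely above $\max x^1$, disjoint from $x^1(\mathbb{R})$, so maximality of $x^1(\mathbb{R})$ would not be violated. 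In short, the contradiction with maximality is never actually reached. I recommend switching to the paper's strategy: take any $\eta\in\mathcal{W}^u(\mathcal{O}_p)$ with $x_t^\eta\to\mathcal{O}_1$, normalize the differences $x_{t_n}^\eta-x_0^1$, pass to a limiting entire solution $z$ of the variational equation with $z_0\in T_{x_0^1}\mathcal{W}$, and read off the dimension bound from $V(z_0)\le 2$, $V(\dot x_0^1)=2$, and $\dim C_{r_M<}\le 3$.
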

\begin{proof}
We give a proof for $\mathcal{O}_{1}$. As $\mathcal{O}_{1}$ has
a Floquet multiplier $\lambda_{1}>1$, it is clear that $\dim C_{u}\geq1.$

Let $\mathcal{W}$ denote the local stable manifold $\mathcal{W}_{loc}^{s}\left(P_{Y},x_{0}^{1}\right)$
if $\mathcal{O}_{1}$ is hyperbolic, and let $\mathcal{W}$ be the
local center-stable manifold $\mathcal{W}_{loc}^{sc}\left(P_{Y},x_{0}^{1}\right)$
otherwise. Then $\mathcal{W}$ is a $C^{1}$-submanifold of $x_{0}^{1}+Y$
with $T_{x_{0}^{1}}\mathcal{W}=C_{s}$ if $\mathcal{O}_{1}$ is hyperbolic,
and with $T_{x_{0}^{1}}\mathcal{W}=C_{s}\oplus\mathbb{R}\xi$ if $\mathcal{O}_{1}$
is nonhyperbolic.

By Theorem B, there exists $\eta\in\mathcal{W}^{u}\left(\mathcal{O}_{p}\right)$
so that $x_{t}^{\eta}\rightarrow\mathcal{O}_{1}$ as $t\rightarrow\infty$.
Then Proposition \ref{prop:trajectory of P_Y} guarantees the existence
of a sequence $\left(t_{n}\right)_{n=0}^{\infty}$ in $\mathbb{R}$
with $t_{n}\rightarrow\infty$ as $n\rightarrow\infty$ such that
$x_{t_{n}}^{\eta}\in\mathcal{W}\setminus\left\{ x_{0}^{1}\right\} $
for all $n\geq0$ and $ $$x_{t_{n}}^{\eta}\rightarrow x_{0}^{1}$
as $n\rightarrow\infty$. 

We introduce the notation $y^{n}:\mathbb{R}\rightarrow\mathbb{R}$,
$n\geq0$, for the function obtained from $x^{\eta}$ by time shift
so that $y_{0}^{n}=x_{t_{n}}^{\eta}$. Then $y^{n}\left(t\right)\rightarrow x^{1}\left(t\right)$
as $n\rightarrow\infty$ for all $t\in\mathbb{R}$ by the continuity
of the flow $\Phi_{\mathcal{A}}$. Since $x^{\eta}$ is a bounded
solution of Eq.\,\eqref{eq:eq_general}, the solutions $y^{n}$ are
uniformly bounded on $\mathbb{R}$, and Eq.\,\eqref{eq:eq_general}
gives a uniform bound for their derivatives. By applying the Arzel\`{a}\textendash{}Ascoli
theorem successively on the intervals $\left[-j,j\right]$, $j\geq1$,
we obtain strictly increasing maps $\chi_{j}:\mathbb{N}\rightarrow\mathbb{N}$,
$1\leq j\in\mathbb{N}$, so that for every integer $j\geq1$, the
subsequence $\left(y^{\chi_{1}\circ\ldots\circ\chi_{j}\left(k\right)}\right)_{k=0}^{\infty}$
converges uniformly on $\left[-j,j\right]$. By diagonalization, set
$\chi\left(k\right)=\chi_{1}\circ\ldots\circ\chi_{k}\left(k\right)$
and consider the subsequence $\left(y^{n_{k}}\right)_{k=0}^{\infty}=\left(y^{\chi\left(k\right)}\right)_{k=0}^{\infty}$.
Then $y^{n_{k}}\rightarrow x^{1}$ as $k\rightarrow\infty$ uniformly
on all compact subsets of $\mathbb{R}$. 

Define
\[
z^{k}\left(t\right)=\frac{y^{n_{k}}\left(t\right)-x^{1}\left(t\right)}{\left\Vert x_{t_{n_{k}}}^{\eta}-x_{0}^{1}\right\Vert }\quad\mbox{for all }k\geq0\mbox{ and }t\in\mathbb{R}.
\]
Then $z^{k}$, $k\geq0$, satisfies the equation $\dot{z}^{k}\left(t\right)=-z^{k}\left(t\right)+a_{k}\left(t\right)z^{k}\left(t-1\right)$
on $\mathbb{R}$, where the coefficient function $a_{k}$ is defined
by 
\[
a_{k}:\mathbb{R}\ni t\mapsto\int_{0}^{1}f'\left(sy^{n_{k}}\left(t-1\right)+\left(1-s\right)x^{1}\left(t-1\right)\right)\mbox{d}s\in\mathbb{R}^{+},\quad k\geq0.
\]
Note that there are constants $\alpha_{1}\geq\alpha_{0}>0$ independent
of $k$ and $t$ such that $\alpha_{0}\leq a_{k}\left(t\right)\leq\alpha_{1}$
for all $ $$k\geq0$ and $t\in\mathbb{R}$, moreover, $a_{k}\rightarrow a$
as $k\rightarrow\infty$ uniformly on compact subsets of $\mathbb{R}$,
where 
\[
a:\mathbb{R}\ni t\mapsto f'\left(x^{1}\left(t-1\right)\right)\in\mathbb{R}^{+}.
\]
In addition, observe that for all $ $$k\geq0$ and $t\in\mathbb{R}$,
$z_{t}^{k}\neq\hat{0}$ because $y_{0}^{n_{k}}=x_{t_{n_{k}}}^{\eta}\neq x_{0}^{1}$
and the flow $\Phi_{\mathcal{A}}$ is injective. Hence $V\left(z_{t}^{k}\right)$
is defined and equals $2$ for all $ $$k\geq0$ and $t\in\mathbb{R}$
by Proposition 8.3 in \cite{Krisztin-Vas}. Lemma \ref{lem:technical result}
then implies the existence of a continuously differentiable function
$z:\mathbb{R}\rightarrow\mathbb{R}$ and a subsequence $\left(z^{k_{l}}\right)_{l=0}^{\infty}$
of $\left(z^{k}\right)_{k=0}^{\infty}$ such that $z^{k_{l}}\rightarrow z$
and $\dot{z}^{k_{l}}\rightarrow\dot{z}$ as $k\rightarrow\infty$
uniformly on compact subsets of $\mathbb{R}$, moreover 
\begin{equation}
\dot{z}\left(t\right)=-z\left(t\right)+a\left(t\right)z\left(t-1\right)\label{eq:*-1}
\end{equation}
 for all real $t$.

We claim that $z_{0}\neq\hat{0}$ and
\[
z_{0}\in T_{x_{0}^{1}}\mathcal{W}=\left\{ \begin{array}{ll}
C_{s}, & \mbox{if }\mathcal{O}_{r}\mbox{ is hyperbolic,}\\
C_{s}\oplus\mathbb{R}\xi, & \mbox{otherwise.}
\end{array}\right.
\]
Consider the map $w=w_{s}$ if $\mathcal{O}_{1}$ is hyperbolic, and
the map $w=w_{sc}$ otherwise. Choose $\chi^{l}\in T_{x_{0}^{1}}\mathcal{W}$,
$l\geq0$, with $\chi^{l}\rightarrow\hat{0}$ as $l\rightarrow\infty$
so that $x_{t_{n_{k_{l}}}}^{\eta}=x_{0}^{1}+\chi^{l}+w\left(\chi^{l}\right)$
for all $l\geq0$. Then 
\[
z_{0}=\lim_{l\rightarrow\infty}z_{0}^{k_{l}}=\lim_{l\rightarrow\infty}\frac{x_{t_{n_{k_{l}}}}^{\eta}-x_{0}^{1}}{\left\Vert x_{t_{n_{k_{l}}}}^{\eta}-x_{0}^{1}\right\Vert }=\lim_{l\rightarrow\infty}\frac{\chi^{l}+w\left(\chi^{l}\right)}{\left\Vert \chi^{l}+w\left(\chi^{l}\right)\right\Vert }.
\]
As $z_{0}$ is the limit of unit vectors, it is clearly nontrivial.
$Dw\left(\hat{0}\right)=0$ implies that $\lim_{l\rightarrow\infty}w\left(\chi^{l}\right)/\left\Vert \chi^{l}\right\Vert =\hat{0}$
and thus 
\[
\lim_{l\rightarrow\infty}\frac{w\left(\chi^{l}\right)}{\left\Vert \chi^{l}+w\left(\chi^{l}\right)\right\Vert }=\lim_{l\rightarrow\infty}\frac{\frac{w\left(\chi^{l}\right)}{\left\Vert \chi^{l}\right\Vert }}{\left\Vert \frac{\chi^{l}}{\left\Vert \chi^{l}\right\Vert }+\frac{w\left(\chi^{l}\right)}{\left\Vert \chi^{l}\right\Vert }\right\Vert }=\hat{0}
\]
and $ $
\[
\lim_{l\rightarrow\infty}\frac{\left\Vert \chi^{l}\right\Vert }{\left\Vert \chi^{l}+w\left(\chi^{l}\right)\right\Vert }=\lim_{l\rightarrow\infty}\frac{1}{\left\Vert \frac{\chi^{l}}{\left\Vert \chi^{l}\right\Vert }+\frac{w\left(\chi^{l}\right)}{\left\Vert \chi^{l}\right\Vert }\right\Vert }=1.
\]
We obtain that

\begin{align*} 
\underset{{\normalsize z_0}}{\underbrace{\frac{\chi^{l}+w\left(\chi^{l}\right)}{\left\Vert \chi^{l}+w\left(\chi^{l}\right)\right\Vert }}_{\downarrow}} & =\frac{\chi^{l}}{\left\Vert \chi^{l}\right\Vert }  \underset{1}{\underbrace{\frac{\left\Vert \chi^{l}\right\Vert }{\left\Vert \chi^{l}+w\left(\chi^{l}\right)\right\Vert }}_{\downarrow}}  +  \underset{0}{\underbrace{\frac{w\left(\chi^{l}\right)}{\left\Vert \chi^{l}+w\left(\chi^{l}\right)\right\Vert }}_{\downarrow}}
\end{align*}as $l\rightarrow\infty$. Then the limit $\lim_{l\rightarrow\infty}\chi^{l}/\left\Vert \chi^{l}\right\Vert $
necessarily exists too, and $ $ 
\[
z_{0}=\lim_{l\rightarrow\infty}\frac{\chi^{l}+w\left(\chi^{l}\right)}{\left\Vert \chi^{l}+w\left(\chi^{l}\right)\right\Vert }=\lim_{l\rightarrow\infty}\frac{\chi^{l}}{\left\Vert \chi^{l}\right\Vert }\in T_{x_{0}^{1}}\mathcal{W}\subset Y.
\]

Since $V\left(z_{0}^{k_{l}}\right)=2$ for all $l\geq0$, the lower-semicontinuity
of $V$ proved in Lemma \ref{lem: continuity_of_V} implies that $V\left(z_{0}\right)\leq\liminf_{l\rightarrow\infty}V\left(z_{0}^{k_{l}}\right)=2$.
Recall that $\dot{x}_{0}^{1}\in C_{c}$ also belongs to $V^{-1}\left(\left\{ 0,2\right\} \right)$,
moreover, $\dot{x}_{0}^{1}\notin Y$. Thus $\dot{x}_{0}^{1}$ and
$z_{0}$ are linearly independent elements of $\left(C_{s}\oplus C_{c}\right)\cap V^{-1}\left(\left\{ 0,2\right\} \right)$.
In consequence, result \eqref{C_r_M<} gives that $C_{u}$ is at most
one-dimensional. 

The proof is analogous for $\mathcal{O}_{-1}$.
\end{proof}
The previous result implies that if $\mathcal{O}_{k}$, $k\in\left\{ -1,1\right\} $,
is hyperbolic, then the local stable manifold $\mathcal{W}_{loc}^{s}\left(P_{Y},x_{0}^{k}\right)$
of $P_{Y}$ at $x_{0}^{k}$ is a $C^{1}$-submanifold of $x_{0}^{k}+Y$
with codimension $1$ and with tangent space $T_{x_{0}^{k}}\mathcal{W}_{loc}^{s}\left(P_{Y},x_{0}^{k}\right)=C_{s}$
at $x_{0}^{k}$. It is a $C^{1}$-submanifold of $C$ with codimension
$2$.

Similarly, if $\mathcal{O}_{k}$, $k\in\left\{ -1,1\right\} $, is
nonhyperbolic, then the local center-stable manifold $\mathcal{W}_{loc}^{sc}\left(P_{Y},x_{0}^{k}\right)$
of $P_{Y}$ at $x_{0}^{k}$ is a $C^{1}$-submanifold of $x_{0}^{k}+Y$
with codimension $1$ and with tangent space $T_{x_{0}^{k}}\mathcal{W}_{loc}^{sc}\left(P_{Y},x_{0}^{k}\right)=C_{s}\oplus\mathbb{R}\xi$
at $x_{0}^{k}$. It is also a $C^{1}$-submanifold of $C$ with codimension
$2$.
\begin{rem}
\label{remark} We see from the proof of Proposition \ref{dimC_u=00003D1}
that for $r=x^{k}$, $k\in\left\{ -1,1\right\} $, $C_{r_{M}<}$ admits
at least three linearly independent elements: $v_{1}\in C_{u}$, $\dot{x}_{0}^{k}\in C_{c}$
and $z_{0}\in C_{s}\oplus C_{c}$. As $C_{r_{M}}$ is at most three-dimensional
by \eqref{C_r_M<}, we conclude that $\dim C_{r_{M}<}=3$. A similar
reasoning confirms the same equality for $r=q$. It is obviuos that
the dimension of $C_{r_{M}<}$ is maximal also in the case $r=p$,
as $\mathcal{O}_{p}$ has two Floquet-multipliers outside the unit
circle. These observations are in accordance with the recent result
\cite{mallet-paret_nussbaum} of Mallet-Paret and Nussbaum stating
that $\dim C_{r_{M}<}=3$ in more general situations.
\end{rem}

\section{The Proof of Theorem \ref{main_theorem_3}}

Note that each $\varphi$ in the unstable set $\mathcal{W}^{u}\left(\mathcal{O}_{p}\right)$
arises in the form $\varphi=\Phi\left(t,\psi\right)$, where $\psi\in\mathcal{W}_{loc}^{u}\left(P_{Y},p_{0}\right)$
and $t>1$. Indeed, 
\[
\mathcal{W}^{u}\left(\mathcal{O}_{r}\right)=\Phi\left(\left[0,\infty\right)\times\mathcal{W}_{loc}^{u}\left(P_{Y},r_{0}\right)\right),\tag{3.5}
\]
and from each $\psi\in\mathcal{W}_{loc}^{u}\left(P_{Y},p_{0}\right)$
we can start a backward trajectory $\left(\psi^{n}\right)_{-\infty}^{0}$
of $P_{Y}$ in $\mathcal{W}_{loc}^{u}\left(P_{Y},p_{0}\right)$ converging
to $p_{0}$ as $n\rightarrow-\infty$. As the first part of the proof
of Theorem \ref{main_theorem_3}, we are going to show in Proposition
\ref{prop:certain subsets are submanifolds} that for all $t>1$ and
$\psi\in\mathcal{W}_{loc}^{u}\left(P_{Y},p_{0}\right)$, $\varphi=\Phi\left(t,\psi\right)$
belongs to a subset $W_{t,\psi,\varepsilon}$ of $\mathcal{W}^{u}\left(\mathcal{O}_{p}\right)$
that is a three-dimensional submanifold of $C$. This implies that
$\mathcal{W}^{u}\left(\mathcal{O}_{p}\right)$ is an immersed submanifold
of $C$. The proof of Proposition \ref{prop:certain subsets are submanifolds}
is based on \eqref{unstable set is forwad extension of unstable manifold-1},
the differentiabilty of $\Phi|_{\left(1,\infty\right)\times C}$ and
the injectivity of $D_{2}\Phi\left(t,\varphi\right)$ for $t\geq0$. 

However, it does not follow immediately that $ $$\mathcal{W}^{u}\left(\mathcal{O}_{p}\right)$
is an embedded $C^{1}$-submani$\-$fold of $C$. We also need to
show for any $\varphi\in\mathcal{W}^{u}\left(\mathcal{O}_{p}\right)$
the existence of a ball $B$ in $C$ centered at $\varphi$ such that
\begin{equation}
\mathcal{W}^{u}\left(\mathcal{O}_{p}\right)\cap B=W_{t,\psi,\varepsilon}\cap B.\label{equality with ball B}
\end{equation}
To do this, we will give a sequence of further auxiliary results right
after Proposition \ref{prop:certain subsets are submanifolds}. We
will introduce a projection $\pi_{3}$ from $C$ into $\mathbb{R}^{3}$,
and use the special properties of the Lyapunov fuctional $V$ to show
that $\pi_{3}$ is injective on $\mathcal{W}^{u}\left(\mathcal{O}_{p}\right)$
and on the tangent spaces of $W_{t,\psi,\varepsilon}$. These results
will easily imply \eqref{equality with ball B}.

Afterwards we offer a smooth global graph representation for $\mathcal{W}^{u}\left(\mathcal{O}_{p}\right)$
in order to indicate the simplicity of its structure. The smoothness
of the sets $C_{-2}^{p},\, C_{0}^{p}$ and $C_{2}^{p}$ then follows
at once because they are open subsets of $\mathcal{W}^{u}\left(\mathcal{O}_{p}\right)$.
At last we show that the semiflow induced by the solution operator
$\Phi$ extends to a $C^{1}$-flow on $\mathcal{W}^{u}\left(\mathcal{O}_{p}\right)$.
This property will be applied later in the proof of Theorem \ref{main_theorem_4}.
\begin{prop}
\label{prop:certain subsets are submanifolds} To each $\psi\in\mathcal{W}_{loc}^{u}\left(P_{Y},p_{0}\right)$
and $t>1$, there corresponds an $\varepsilon=\varepsilon\left(\psi,t\right)\in\left(0,t-1\right)$
so that the subset 
\[
W_{t,\psi,\varepsilon}=\Phi\left(\left(t-\varepsilon,t+\varepsilon\right)\times\left(\mathcal{W}_{loc}^{u}\left(P_{Y},p_{0}\right)\cap B\left(\psi,\varepsilon\right)\right)\right)
\]
 of \textup{$\mathcal{W}^{u}\left(\mathcal{O}_{p}\right)$} is$ $
a three-dimensional $C^{1}$-submanifold of $C$.\end{prop}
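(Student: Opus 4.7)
The plan is to realise $W_{t,\psi,\varepsilon}$ as the image of a three-dimensional $C^{1}$-manifold under a smooth map whose derivative at the basepoint is injective, and then apply Proposition \ref{prop:smoothness of submanifolds}. Since $\mathcal{O}_{p}$ has two Floquet multipliers outside the unit circle (Theorem A), we have $\dim C_{u}=2$, so $\mathcal{W}_{loc}^{u}(P_{Y},p_{0})$ is a two-dimensional $C^{1}$-submanifold of $p_{0}+Y$. Fixing $\delta\in(0,t-1)$, the product
\[
U_{\delta}=(t-\delta,t+\delta)\times\bigl(\mathcal{W}_{loc}^{u}(P_{Y},p_{0})\cap B(\psi,\delta)\bigr)
\]
is a three-dimensional $C^{1}$-manifold, and the evaluation map $g:U_{\delta}\ni(s,\varphi)\mapsto\Phi(s,\varphi)\in C$ is $C^{1}$ because $\Phi$ is $C^{1}$ on $(1,\infty)\times C$ and $t-\delta>1$. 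Everything then hinges on injectivity of $Dg(t,\psi)$.

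For $(s,\eta)\in\mathbb{R}\times T_{\psi}\mathcal{W}_{loc}^{u}(P_{Y},p_{0})$ the chain rule gives
\[
Dg(t,\psi)(s,\eta)=s\,\dot{x}_{t}^{\psi}+D_{2}\Phi(t,\psi)\eta.
\]
The crucial observation is that differentiating Eq.~\eqref{eq:eq_general} shows $\dot{x}^{\psi}$ itself solves the variational equation \eqref{var eq} along $x^{\psi}$ with initial segment $\dot{\psi}$, so $\dot{x}_{t}^{\psi}=D_{2}\Phi(t,\psi)\dot{\psi}$ and the vanishing $Dg(t,\psi)(s,\eta)=0$ reduces to
\[
D_{2}\Phi(t,\psi)\bigl(s\dot{\psi}+\eta\bigr)=0.
\]
Since $D_{2}\Phi(t,\psi)$ is injective (the remark following Proposition \ref{prop:uniqueness of solutions}), this forces $s\dot{\psi}+\eta=0$ in $C$. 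But $\mathcal{W}_{loc}^{u}(P_{Y},p_{0})\subset p_{0}+Y$ implies $\eta\in T_{\psi}\mathcal{W}_{loc}^{u}(P_{Y},p_{0})\subset Y$, whereas Proposition \ref{qvarphi dot near r_0}---applicable because the local unstable manifold is contained in the global attractor $\mathcal{A}$---guarantees $\dot{\psi}\notin Y$. As $Y$ is a linear subspace, $s\neq 0$ would give $\eta=-s\dot{\psi}\notin Y$, a contradiction. Hence $s=0$, and then $\eta=0$.

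With the injectivity of $Dg(t,\psi)$ established, Proposition \ref{prop:smoothness of submanifolds} produces an open neighbourhood of $(t,\psi)$ in $U_{\delta}$ whose image under $g$ is a three-dimensional $C^{1}$-submanifold of $C$; shrinking this neighbourhood to a product $(t-\varepsilon,t+\varepsilon)\times\bigl(\mathcal{W}_{loc}^{u}(P_{Y},p_{0})\cap B(\psi,\varepsilon)\bigr)$ with $\varepsilon\in(0,\delta)$ yields the set $W_{t,\psi,\varepsilon}$ in the statement. The only genuine obstacle is the derivative computation; what makes it clean is the identity $\dot{x}_{t}^{\psi}=D_{2}\Phi(t,\psi)\dot{\psi}$, which converts the question of independence between the ``flow direction'' $\dot{x}_{t}^{\psi}$ and the image of the unstable tangent space into the transparent question of whether $\dot{\psi}$ lies in the hyperplane $Y$---answered negatively by Proposition \ref{qvarphi dot near r_0}.
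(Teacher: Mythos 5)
Your proof is correct and follows essentially the same route as the paper's: you use the identity $\dot{x}_{t}^{\psi}=D_{2}\Phi(t,\psi)\dot{\psi}$, the injectivity of $D_{2}\Phi(t,\psi)$, and the fact that $T_{\psi}\mathcal{W}_{loc}^{u}(P_{Y},p_{0})\subset Y$ while $\dot{\psi}\notin Y$ (Proposition \ref{qvarphi dot near r_0}), then invoke Proposition \ref{prop:smoothness of submanifolds}. The only cosmetic difference is that you argue injectivity by showing the kernel is trivial, whereas the paper shows the image of a basis spans a three-dimensional space; these are the same argument.
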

\begin{proof}
It is clear from \eqref{unstable set is forwad extension of unstable manifold-1}
that $W_{t,\psi,\varepsilon}$ defined as above is a subset of $\mathcal{W}^{u}\left(\mathcal{O}_{p}\right)$
for all $\varepsilon\in\left(0,t-1\right).$

Consider the three-dimensional $C^{1}$-submanifold $\left(1,\infty\right)\times\mathcal{W}_{loc}^{u}\left(P_{Y},p_{0}\right)$
of $\mathbb{R}\times C$ and the continuously differentiable map 
\[
\Sigma:\left(1,\infty\right)\times\mathcal{W}_{loc}^{u}\left(P_{Y},p_{0}\right)\ni\left(s,\varphi\right)\mapsto\Phi\left(s,\varphi\right)\in C.
\]
 It suffices to show by Proposition \ref{prop:smoothness of submanifolds}
that for all $\psi\in\mathcal{W}_{loc}^{u}\left(P_{Y},p_{0}\right)$
and $t>1$, the derivative $D\Sigma\left(t,\psi\right)$ is injective
on the tangent space $T_{\left(t,\psi\right)}\left(\left(1,\infty\right)\times\mathcal{W}_{loc}^{u}\left(P_{Y},p_{0}\right)\right)=\mathbb{R}\times T_{\psi}\mathcal{W}_{loc}^{u}\left(P_{Y},p_{0}\right)$.
This space is spanned by the tangent vectors of the following curves
at $0$: 
\[
\left(-1,1\right)\ni s\mapsto\left(t+s,\psi\right)\quad\mbox{and}\quad\left(-1,1\right)\ni s\mapsto\left(t,\gamma_{i}\left(s\right)\right),\, i\in\left\{ 1,2\right\} ,
\]
where 
\[
\gamma_{i}:\left(-1,1\right)\rightarrow\mathcal{W}_{loc}^{u}\left(P_{Y},p_{0}\right)\mbox{ is a }C^{1}\mbox{-curve,}
\]
\[
\gamma_{i}\left(0\right)=\psi\mbox{ and }\mbox{ }D\gamma_{i}\left(0\right)=\eta_{i}\mbox{ for both }i\in\left\{ 1,2\right\} ,
\]
with $\eta_{1}$ and $\eta_{2}$ forming a basis of the two-dimensional
tangent space $T_{\psi}\mathcal{W}_{loc}^{u}\left(P_{Y},p_{0}\right)$.
As $\eta_{1}\in Y$, $\eta_{2}\in Y$ and $\dot{\psi}\notin Y$ by
Proposition \ref{qvarphi dot near r_0}, the vectors $\eta_{1}$,
$\eta_{2}$ and $\dot{\psi}$ are linearly independent. Clearly, 
\[
\frac{\mbox{d}}{\mbox{d}s}\Sigma\left(t+s,\psi\right)|_{s=0}=\frac{\mbox{d}}{\mbox{d}s}\Phi\left(t+s,\psi\right)|_{s=0}=D_{1}\Phi\left(t,\psi\right)1=\dot{x}_{t}^{\psi}=D_{2}\Phi\left(t,\psi\right)\dot{\psi}
\]
and 
\[
\frac{\mbox{d}}{\mbox{d}s}\Sigma\left(t,\gamma_{i}\left(s\right)\right)|_{s=0}=\frac{\mbox{d}}{\mbox{d}s}\Phi\left(t,\gamma_{i}\left(s\right)\right)|_{s=0}=D_{2}\Phi\left(t,\psi\right)\eta_{i},\qquad i\in\left\{ 1,2\right\} .
\]
As $D_{2}\left(t,\psi\right):C\rightarrow C$ is injective (see Section
2) and $\eta_{1}$, $\eta_{2}$ and $\dot{\psi}$ are linearly independent,
we deduce that the range $D\Sigma\left(t,\psi\right)\left(\mathbb{R}\times T_{\psi}\mathcal{W}_{loc}^{u}\left(P_{Y},p_{0}\right)\right)$
is three-dimensional, and thus $D\Sigma\left(t,\psi\right)$ is injective.
\end{proof}
Next we characterize $\mathcal{W}^{u}\left(\mathcal{O}_{p}\right)$
and its tangent vectors in terms of oscillation frequencies.
\begin{prop}
\label{prop: V on unstable set}For all $\varphi\in\mathcal{W}^{u}\left(\mathcal{O}_{p}\right)$
and $\psi\in\mathcal{W}^{u}\left(\mathcal{O}_{p}\right)$ with $\varphi\neq\psi$,
$V\left(\psi-\varphi\right)\leq2$.\end{prop}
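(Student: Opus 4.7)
The plan is to set $z(t) = x^{\psi}(t) - x^{\varphi}(t)$ for $t \in \mathbb{R}$, where $x^{\varphi}, x^{\psi} : \mathbb{R} \to \mathbb{R}$ are the backward-extended solutions determined by $\varphi, \psi \in \mathcal{W}^{u}(\mathcal{O}_{p})$, and to exploit that $z$ satisfies a linear delay equation with positive coefficient. Writing
\[
f\bigl(x^{\psi}(t-1)\bigr) - f\bigl(x^{\varphi}(t-1)\bigr) = a(t)\, z(t-1), \quad a(t) = \int_{0}^{1} f'\bigl(s\, x^{\psi}(t-1) + (1-s)\, x^{\varphi}(t-1)\bigr)\, ds > 0,
\]
one obtains $\dot{z}(t) = -z(t) + a(t)\, z(t-1)$ on $\mathbb{R}$. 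Because $\varphi \neq \psi$ and $a > 0$, backward uniqueness gives $z_{t} \neq \hat{0}$ for every $t$, so $V(z_{t})$ is defined throughout $\mathbb{R}$ and is non-increasing in $t$ by Lemma \ref{lem:4_properties_of_V}.(i). It therefore suffices to produce a single $t_{0} \leq 0$ with $V(z_{t_{0}}) \leq 2$, since monotonicity then yields $V(\psi - \varphi) = V(z_{0}) \leq V(z_{t_{0}}) \leq 2$.

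Since $\alpha(x^{\varphi}) = \alpha(x^{\psi}) = \mathcal{O}_{p}$, I would choose $t_{n} \to -\infty$ along which, after passing to a subsequence, $x^{\varphi}_{t_{n}} \to p_{c}$ and $x^{\psi}_{t_{n}} \to p_{c'}$ for some $c, c' \in \mathbb{R}$; the coincidence of the $C$- and $C^{1}$-topologies on $\mathcal{A}$ upgrades the convergence to $C^{1}$. In the case $p_{c} \neq p_{c'}$, so that $z_{t_{n}} \to \zeta_{0} := p_{c'} - p_{c} \neq \hat{0}$ in $C^{1}$, the $\omega$-periodic function $\zeta(t) := p(t+c') - p(t+c)$ satisfies a linear equation of the form treated in Lemma \ref{lem:4_properties_of_V} with positive coefficient $b(t) = \int_{0}^{1} f'\bigl(s\,p(t+c'-1) + (1-s)\,p(t+c-1)\bigr)\, ds$. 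Periodicity forces $t \mapsto V(\zeta_{t})$ to be constant, so Lemma \ref{lem:4_properties_of_V}.(iii) gives $\zeta_{t} \in R$ for every $t$. A direct inspection using the monotone structure of $p$ (Proposition \ref{pro: monotone_type}) and the bound $\omega \in (1,2)$ yields $V(\zeta_{0}) = 2$, and the $C^{1}$-continuity of $V$ on $R$ (Lemma \ref{lem: continuity_of_V}) propagates this to $V(z_{t_{n}}) \to 2$, so $V(z_{t_{n}}) \leq 2$ for all large $n$.

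In the complementary case $p_{c} = p_{c'}$, so $z_{t_{n}} \to \hat{0}$, a renormalization is needed. Define $w^{n}(s) := z(t_{n} + s)/\|z_{t_{n}}\|$. Each $w^{n}$ satisfies $\dot{w}^{n}(s) = -w^{n}(s) + a^{n}(s)\, w^{n}(s-1)$ with $a^{n}(s) = a(t_{n} + s)$ uniformly bounded between positive constants $\alpha_{0}, \alpha_{1}$ and converging, uniformly on compacta, to $a^{\ast}(s) := f'(p(c+s-1))$, while $\|w^{n}_{0}\| = 1$. An Arzel\`{a}--Ascoli argument, supplemented by Lemma \ref{lem:technical result 2} to propagate uniform bounds on $\|w^{n}_{s}\|$ backwards along compact time intervals, produces a subsequence $w^{n_{k}}$ converging together with its derivative, uniformly on compact subsets of $\mathbb{R}$, to a $C^{1}$-function $w : \mathbb{R} \to \mathbb{R}$ solving the variational equation \eqref{eq:lin.eq.} along $p(\cdot + c)$ with $w_{0} \neq \hat{0}$. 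Because $w$ is a bounded nontrivial solution of this variational equation, $w_{0}$ lies in the realified generalized eigenspace $C_{r_{M}<}$ of the monodromy operator at $p_{c}$, so the Floquet inclusion \eqref{C_r_M<} yields $V(w_{0}) \in \{0, 2\}$. Lemma \ref{lem:4_properties_of_V}.(iii) applied to $w$ (whose $V(w_{s})$ is constant once finite) gives $w_{s} \in R$ for some $s$; after a harmless time shift we take $w_{0} \in R$, and the $C^{1}$-continuity of $V$ on $R$ yields $V(w^{n_{k}}_{0}) = V(w_{0}) \leq 2$ for large $k$, i.e., $V(z_{t_{n_{k}}}) \leq 2$.

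The hard part of the plan is the compactness step in the degenerate case $p_{c} = p_{c'}$. Since $\|z_{t_{n}}\| \to 0$ while $\|z_{t_{n}+s}\|$ need not vanish at the same rate, a uniform bound on $\|w^{n}_{s}\|$ over compact backward intervals is not immediate; Lemma \ref{lem:technical result 2}, the natural tool for such bounds, requires $V \leq 2$ along relevant segments, which is precisely the conclusion being proved. The resolution is an iterative bootstrap analogous to the argument in the proof of Proposition \ref{dimC_u=00003D1}: on forward intervals the bound on $\|w^{n}_{s}\|$ follows from Gronwall, which suffices to extract $w$ and to recognize $w_{0} \in C_{r_{M}<}$ by Floquet theory; one then applies Lemma \ref{lem:technical result 2} at the level of the limit $w$, for which $V(w_{s}) \leq 2$ is now guaranteed by the Floquet inclusion, to bootstrap the convergence backwards and identify the limit throughout $\mathbb{R}$.
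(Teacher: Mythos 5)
Your overall strategy --- form $z = x^{\psi}-x^{\varphi}$, note that $z$ satisfies a linear equation with positive coefficient, and reduce to exhibiting one $t_0$ with $V(z_{t_0})\le 2$ so that monotonicity of $V$ finishes --- is sound and matches the paper's. Your non-degenerate case $p_{c}\ne p_{c'}$ also tracks the paper: from $V(p_{c'}-p_{c})=2$, $p_{c'}-p_{c}\in R$ (via Lemma \ref{lem:4_properties_of_V}.(iii)), the equivalence of the $C$- and $C^{1}$-topologies on $\mathcal{A}$, and the $C^{1}$-continuity of $V$ on $R$ (Lemma \ref{lem: continuity_of_V}), one gets $V(z_{t_{n}})\to 2$.

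The degenerate case $p_{c}=p_{c'}$, however, has a genuine gap, and you partly flagged it yourself. With only $\|w^{n}_{0}\|=1$ and forward Gronwall estimates, you can at best extract a $C^{1}$-limit $w$ on $[-1,\infty)$; there are no uniform backward bounds, so $w$ is not constructed on $(-\infty,-1]$. The Floquet step then fails on two counts. First, the forward Gronwall bound grows with the length of the interval, so $w$ need not be bounded on $[0,\infty)$ at all. Second, even granting boundedness, a forward-bounded solution of a periodic linear equation has its initial segment in $C_{\le 1}=C_{s}\oplus C_{c}$, not in $C_{r_{M}<}=C_{c}\oplus C_{u}$; a unit vector in $C_{s}$ is a perfectly admissible limit of $w^{n}_{0}$ on the face of it, and $C_{s}$ gives no control on $V$. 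The tool that actually places a segment in $C_{r_{M}<}\cap C_{\le 1}$ is Proposition \ref{prop:technical result 2}.(i), and it needs the solution defined on all of $\mathbb{R}$ and satisfying $V(w_{t})=2$ for every $t\in\mathbb{R}$ --- exactly what is still missing. Consequently your bootstrap is circular: to define $w$ on $(-\infty,-1]$ you need a uniform backward bound on $\|w^{n}_{s}\|$, which Lemma \ref{lem:technical result 2} delivers only if $V(z_{t})\le 2$ along the relevant segments, and that is the statement under proof.

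The paper avoids the renormalization entirely. In the degenerate case $\sigma=\tau$ it shifts $\psi$ slightly in time: for fixed $\varepsilon>0$ the function $t\mapsto x^{\psi}(t+\varepsilon)-x^{\varphi}(t)$ still satisfies a linear equation of the form in Lemma \ref{lem:4_properties_of_V} with positive coefficient, its segments at $t_{n}$ converge in $C^{1}$ to $p_{\sigma+\varepsilon}-p_{\sigma}\ne\hat 0$, and the already-settled non-degenerate argument yields $V\bigl(x^{\psi}_{t+\varepsilon}-x^{\varphi}_{t}\bigr)\le 2$ for all $t$. Lower semicontinuity of $V$ as $\varepsilon\to 0^{+}$ then gives $V(\psi-\varphi)\le 2$ with no compactness argument on a blown-up sequence. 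If you insist on the normalization route, you would essentially have to take $(t_{n})$ along Poincar\'e return times so that $x^{\varphi}_{t_{n}}$ and $x^{\psi}_{t_{n}}$ both lie on $\mathcal{W}^{u}_{\mathrm{loc}}(P_{Y},p_{0})$ and then argue, as in the proof of Proposition \ref{dimC_u=00003D1}, that $w_{0}$ lies in the tangent space $C_{u}$; but the $\varepsilon$-shift is shorter and needs none of that bookkeeping.
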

\begin{proof}
We distinguish three cases:

(i) both $\varphi\in\mathcal{O}_{p}$ and $\psi\in\mathcal{O}_{p}$;

(ii) $\varphi\in\mathcal{O}_{p}$ and $\psi\in\mathcal{W}^{u}\left(\mathcal{O}_{p}\right)\setminus\mathcal{O}_{p}$
(or vice verse);

(iii) both $\varphi\in\mathcal{W}^{u}\left(\mathcal{O}_{p}\right)\setminus\mathcal{O}_{p}$
and $\psi\in\mathcal{W}^{u}\left(\mathcal{O}_{p}\right)\setminus\mathcal{O}_{p}$
.

Let $\omega>1$ denote the minimal period of $p$. It is easy to deduce
from Proposition \ref{pro: monotone_type} that 
\begin{equation}
V\left(p_{\tau}-p_{\sigma}\right)=2\mbox{ for all }\tau\in\left[0,\omega\right)\mbox{ and }\sigma\in\left[0,\omega\right)\mbox{ with }\tau\neq\sigma.\label{*}
\end{equation}
 Hence the statement holds in case (i).

Case (ii). By definition, there exist $\sigma\in\left[0,\omega\right)$
and $\left(t_{n}\right)_{0}^{\infty}\subset\mathbb{R}$ so that $t_{n}\rightarrow-\infty$
and $x_{t_{n}}^{\psi}\rightarrow p_{\sigma}$ as $n\rightarrow\infty$.
As $x_{t_{n}}^{\varphi}\in\mathcal{O}_{p}$ for all $n\geq0,$ we
may also assume by compactness that $x_{t_{n}}^{\varphi}\rightarrow p_{\tau}$
as $n\rightarrow\infty$ for some $\tau\in\left[0,\omega\right)$.
As the $C$-norm and $C^{1}$-norm are equivalent on the global attractor,
$x_{t_{n}}^{\psi}\rightarrow p_{\sigma}$ and $x_{t_{n}}^{\varphi}\rightarrow p_{\tau}$
as $n\rightarrow\infty$ also in $C^{1}$-norm.

By Lemma \ref{lem:4_properties_of_V} (iii) and property \eqref{*},
$p_{\sigma}-p_{\tau}\in R$ for all $\tau\in\left[0,\omega\right)$
and $\sigma\in\left[0,\omega\right)$ with $\tau\neq\sigma$. Hence
if $\sigma\neq\tau$, then Lemma \ref{lem: continuity_of_V} implies
that 
\[
2=V\left(p_{\sigma}-p_{\tau}\right)=\lim_{n\rightarrow\infty}V\left(x_{t_{n}}^{\psi}-x_{t_{n}}^{\varphi}\right).
\]
By the monotonicity of $V$ we conclude that $ $ $V\left(x_{t}^{\psi}-x_{t}^{\varphi}\right)\leq2$
for all real $t$. If $\sigma=\tau$, then for all $\varepsilon>0$
small, $\sigma+\varepsilon\neq\tau$ and $x_{t_{n}+\varepsilon}^{\psi}\rightarrow p_{\sigma+\varepsilon}$
as $n\rightarrow\infty$ both in $C$-norm and $C^{1}$-norm. Therefore
by Lemma \ref{lem: continuity_of_V} and by our previous reasoning,
\[
V\left(x_{t}^{\psi}-x_{t}^{\varphi}\right)\leq\liminf_{\varepsilon\rightarrow0+}V\left(x_{t+\varepsilon}^{\psi}-x_{t}^{\varphi}\right)\leq2
\]
 for all $t\in\mathbb{R}$. In particular, $V\left(\psi-\varphi\right)\leq2$.

We omit the proof of case (iii), as it is analogous to the one given
for (ii).
\end{proof}
As it is stated in the next proposition, the tangent vectors of $\mathcal{W}^{u}\left(\mathcal{O}_{p}\right)$
have at most two sign changes. This result is a direct consequence
of Proposition \ref{prop: V on unstable set}.
\begin{prop}
\label{prop:V_on_tangent_vectors}Assume $\varphi\in\mathcal{W}^{u}\left(\mathcal{O}_{p}\right)$,
$\gamma:\left(-1,1\right)\rightarrow C$ is a $C^{1}$-curve with
$\gamma\left(0\right)=\varphi$, and $\left(s_{n}\right)_{0}^{\infty}$
is a sequence in $\left(-1,1\right)\backslash\left\{ 0\right\} $
so that $s_{n}\rightarrow0$ as $n\rightarrow\infty$ and $\gamma\left(s_{n}\right)\in\mathcal{W}^{u}\left(\mathcal{O}_{p}\right)$
for all $n\geq0$. Also assume that $\gamma'\left(0\right)\neq\hat{0}$.
Then $V\left(\gamma'\left(0\right)\right)\leq2$. \end{prop}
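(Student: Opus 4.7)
The plan is to derive the bound on $V(\gamma'(0))$ by a direct limiting argument, using Proposition~\ref{prop: V on unstable set} for the finite differences and the lower semicontinuity of $V$ to pass to the limit.

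First I would observe that since $\gamma'(0) \neq \hat{0}$, the estimate $\gamma(s_n) - \varphi = s_n \gamma'(0) + o(s_n)$ as $n\to\infty$ guarantees that $\gamma(s_n) \neq \varphi$ for all sufficiently large $n$; by dropping finitely many terms we may assume $\gamma(s_n) \neq \varphi$ for every $n$. Since both $\gamma(s_n)$ and $\varphi$ lie in $\mathcal{W}^u(\mathcal{O}_p)$ and are distinct, Proposition~\ref{prop: V on unstable set} gives
\[
V\bigl(\gamma(s_n) - \varphi\bigr) \leq 2 \qquad \text{for all } n \geq 0.
\]

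Next I would exploit the scale invariance of $V$: from its definition, $V(c\psi) = V(\psi)$ for every $c \in \mathbb{R}\setminus\{0\}$ and $\psi \in C\setminus\{\hat{0}\}$. Applying this with $c = 1/s_n$ yields
\[
V\!\left(\frac{\gamma(s_n) - \varphi}{s_n}\right) = V\bigl(\gamma(s_n) - \varphi\bigr) \leq 2
\]
for every $n$. Since $\gamma$ is $C^1$ at $0$, the sequence $(\gamma(s_n) - \varphi)/s_n$ converges in $C$ to $\gamma'(0)$, which is nonzero by hypothesis. The lower semicontinuity statement in Lemma~\ref{lem: continuity_of_V} is therefore applicable and gives
\[
V\bigl(\gamma'(0)\bigr) \;\leq\; \liminf_{n\to\infty} V\!\left(\frac{\gamma(s_n) - \varphi}{s_n}\right) \;\leq\; 2,
\]
which is the desired bound.

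I do not expect any real obstacle: the only points worth checking are that $\gamma(s_n) \neq \varphi$ for large $n$ (so that Proposition~\ref{prop: V on unstable set} applies) and the scale invariance of $V$, both of which are immediate. The substance of the argument is entirely inherited from Proposition~\ref{prop: V on unstable set}.
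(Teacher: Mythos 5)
Your proof is correct and follows the same route as the paper: apply Proposition~\ref{prop: V on unstable set} to the difference quotients and pass to the limit by the lower semicontinuity of $V$ in Lemma~\ref{lem: continuity_of_V}. The only difference is that you spell out two minor points the paper leaves implicit (that $\gamma(s_n)\neq\varphi$ for large $n$, and the scale invariance $V(c\psi)=V(\psi)$), which is harmless and arguably an improvement in exposition.
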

\begin{proof}
By Proposition \ref{prop: V on unstable set}, 
\[
V\left(\frac{\gamma\left(s_{n}\right)-\gamma\left(0\right)}{s_{n}}\right)\leq2\quad\mbox{for all }n\geq0.
\]
Since $\left(\gamma\left(s_{n}\right)-\gamma\left(0\right)\right)/s_{n}\rightarrow\gamma'\left(0\right)$
in $C$ as $n\rightarrow\infty$, the statement follows from the lower
semi-continuity property of $V$ presented by Lemma \ref{lem: continuity_of_V}. 
\end{proof}
In order to get more information on the unstable set $\mathcal{W}^{u}\left(\mathcal{O}_{p}\right)$,
we project it into the three-dimensional Euclidean space. Introduce
the linear map 
\[
\pi_{3}:C\ni\varphi\mapsto\left(\varphi\left(0\right),\varphi\left(-1\right),\mathcal{I}\left(\varphi\right)\right)\in\mathbb{R}^{3},
\]
where $\mathcal{I}\left(\varphi\right)=\intop_{-1}^{0}\varphi\left(s\right)\mbox{d}s$.
The next statement can be obtained also from Proposition \ref{prop: V on unstable set}.
\begin{prop}
\label{prop:Pi3 on unstable set}$\pi_{3}$ is injective on \textup{$\mathcal{W}^{u}\left(\mathcal{O}_{p}\right)$.}\end{prop}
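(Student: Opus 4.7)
The plan is to argue by contradiction: assume $\varphi,\psi\in\mathcal{W}^{u}(\mathcal{O}_{p})$ are distinct with $\pi_{3}\varphi=\pi_{3}\psi$, and set $z(t)=x^{\psi}(t)-x^{\varphi}(t)$. Every point of $\mathcal{W}^{u}(\mathcal{O}_{p})$ lies on a bounded solution defined on all of $\mathbb{R}$, so $z$ is defined on $\mathbb{R}$ and satisfies
\[
\dot{z}(t)=-z(t)+a(t)z(t-1),\qquad a(t)=\int_{0}^{1}f'\bigl(sx^{\psi}(t-1)+(1-s)x^{\varphi}(t-1)\bigr)\,ds>0.
\]
The three scalar equalities encoded in $\pi_{3}\varphi=\pi_{3}\psi$ translate into
\[
z_{0}(0)=z_{0}(-1)=0\qquad\text{and}\qquad\int_{-1}^{0}z_{0}(s)\,ds=0.
\]

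Next I would bound the Lyapunov functional along the entire trajectory. Invariance of $\mathcal{W}^{u}(\mathcal{O}_{p})$ under $\Phi_{\mathcal{A}}$ together with injectivity of the solution operator ensures that $x_{t}^{\psi}$ and $x_{t}^{\varphi}$ are distinct points of $\mathcal{W}^{u}(\mathcal{O}_{p})$ for every $t\in\mathbb{R}$, so Proposition \ref{prop: V on unstable set} yields $V(z_{t})\le 2$ on all of $\mathbb{R}$. Since $V$ takes values in $2\mathbb{N}\cup\{\infty\}$, one gets $V(z_{t})\in\{0,2\}$, and by Lemma \ref{lem:4_properties_of_V}(i) the map $t\mapsto V(z_{t})$ is nonincreasing.

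I would then split on the value $V(z_{0})$. If $V(z_{0})=0$, then $z_{0}$ is sign-definite on $[-1,0]$, and combined with the two boundary zeros and $\int_{-1}^{0}z_{0}=0$ this forces $z_{0}\equiv\hat{0}$ by continuity, contradicting $\varphi\ne\psi$. If $V(z_{0})=2$, then monotonicity of $V$ and the global upper bound give $V(z_{-3})=V(z_{0})=2<\infty$, so Lemma \ref{lem:4_properties_of_V}(iii) applied with $t=0$ and $J=\mathbb{R}$ places $z_{0}$ in the set $R$; but the defining requirements of $R$ at the endpoints ($z_{0}(0)\ne0$ or $\dot{z}_{0}(0)z_{0}(-1)>0$, and $z_{0}(-1)\ne0$ or $\dot{z}_{0}(-1)z_{0}(0)<0$) both fail once $z_{0}(0)=z_{0}(-1)=0$, producing the desired contradiction.

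The only step I expect to require real care is establishing the global bound $V(z_{t})\le 2$ on all of $\mathbb{R}$; this rests on the fact that orbits through $\mathcal{W}^{u}(\mathcal{O}_{p})$ stay inside $\mathcal{W}^{u}(\mathcal{O}_{p})$ under $\Phi_{\mathcal{A}}$ in both time directions, so that Proposition \ref{prop: V on unstable set} is applicable at every time. Once this is in hand, the rest is a clean pairing of the discrete Lyapunov functional machinery with the three linear constraints $\pi_{3}$ encodes.
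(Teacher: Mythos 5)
Your proof is correct and follows the same overall strategy as the paper: establish $V(z_t)\le 2$ on all of $\mathbb{R}$ via Proposition \ref{prop: V on unstable set} and the invariance/injectivity of the flow, then exploit the two boundary zeros $z_{0}(0)=z_{0}(-1)=0$ and the vanishing integral. The one place you diverge is in how the boundary zeros are used: the paper applies Lemma \ref{lem:4_properties_of_V}(ii) (with $z(0)=z(-1)=0$) to force the strict drop $V(z_{0})<V(z_{-2})\le 2$, hence $V(z_{0})=0$ immediately, and then only needs the integral constraint; you instead split on the value of $V(z_{0})$ and, in the case $V(z_{0})=2$, use Lemma \ref{lem:4_properties_of_V}(iii) together with the definition of the set $R$ to reach a contradiction. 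Both variants are valid; the paper's use of part (ii) is slightly more economical because it avoids the case split, while your route via part (iii) and $R$ is an equally legitimate application of the same discrete-Lyapunov machinery.
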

\begin{proof}
Suppose that there exist $\varphi\in\mathcal{W}^{u}\left(\mathcal{O}_{p}\right)$
and $\psi\in\mathcal{W}^{u}\left(\mathcal{O}_{p}\right)$ so that
$\varphi\neq\psi$ and $\pi_{3}\varphi=\pi_{3}\psi$. Consider the
solutions $x^{\varphi}:\mathbb{R}\rightarrow\mathbb{R}$ and $x^{\psi}:\mathbb{R}\rightarrow\mathbb{R}$
of Eq.\,\eqref{eq:eq_general}. The segments $x_{t}^{\varphi}$ and
$x_{t}^{\psi}$ belong to $\mathcal{W}^{u}\left(\mathcal{O}_{p}\right)$,
and the injectivity of the semiflow $\Phi$ implies that $x_{t}^{\varphi}\neq x_{t}^{\psi}$
for all $t\in\mathbb{R}$. Hence $V\left(x_{t}^{\varphi}-x_{t}^{\psi}\right)\leq2$
for all $t\in\mathbb{R}$ by Proposition \ref{prop: V on unstable set}.
Since $\varphi\left(0\right)-\psi\left(0\right)=\varphi\left(-1\right)-\psi\left(-1\right)=0$,
Lemma \ref{lem:4_properties_of_V} (ii) gives that 
\[
V\left(\varphi-\psi\right)<V\left(x_{-2}^{\varphi}-x_{-2}^{\psi}\right)\leq2,
\]
that is $V\left(\varphi-\psi\right)=0$ and $\varphi\leq\psi$ or
$\psi\leq\varphi$. Using $\mathcal{I}\left(\varphi\right)=\mathcal{I}\left(\psi\right)$
we conclude that $\varphi=\psi$, which contradicts our initial assumption.
\end{proof}
We also need to know how $\pi_{3}$ acts on the tangent vectors of
$\mathcal{W}^{u}\left(\mathcal{O}_{p}\right).$
\begin{prop}
\label{prop:Pi3 on tangent vectors}If $\gamma:\left(-1,1\right)\rightarrow C$
is a $C^{1}$-curve with range in $\mathcal{W}^{u}\left(\mathcal{O}_{p}\right)$
and $\gamma'\left(0\right)\neq\hat{0}$, then $\pi_{3}\gamma'\left(0\right)\neq\left(0,0,0\right)$.\end{prop}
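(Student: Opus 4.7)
The plan is to derive a contradiction by transporting $\eta = \gamma'(0)$ backward in time along the flow on $\mathcal{A}$ and then invoking the Lyapunov dichotomy of Lemma~\ref{lem:4_properties_of_V}~(ii). Proposition~\ref{prop:V_on_tangent_vectors} applied to $\gamma$ with any sequence $s_n\to 0$ gives at once $V(\eta)\leq 2$. Assume for contradiction that $\pi_3\eta = (0,0,0)$, so $\eta(0)=\eta(-1)=0$ and $\mathcal{I}(\eta)=0$; since $V$ is even-valued, only $V(\eta)\in\{0,2\}$ need be ruled out.

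If $V(\eta)=0$, then $\eta$ has constant sign on $[-1,0]$, and $\mathcal{I}(\eta)=0$ together with continuity force $\eta=\hat{0}$, contradicting $\gamma'(0)\neq\hat{0}$. In the main case $V(\eta)=2$, I would use the backward flow on $\mathcal{W}^u(\mathcal{O}_p)\subset\mathcal{A}$: each $\gamma(s)$ extends to a bounded solution $x^{\gamma(s)}:\mathbb{R}\to\mathbb{R}$ of Eq.~\eqref{eq:eq_general}, and for any sequence $s_n\to 0$ with $s_n\neq 0$ the scaled differences
\[
z^n(t) = \frac{x^{\gamma(s_n)}(t)-x^{\gamma(0)}(t)}{s_n},\qquad t\in\mathbb{R},
\]
solve a linear equation $\dot{z}^n(t) = -z^n(t) + a_n(t)z^n(t-1)$ whose coefficient
\[
a_n(t) = \int_0^1 f'\bigl(u\,x^{\gamma(s_n)}(t-1) + (1-u)\,x^{\gamma(0)}(t-1)\bigr)\,\mathrm{d}u
\]
is bounded above and below by positive constants uniformly in $n$ (using compactness of $\mathcal{A}$ and $f'>0$) and converges to $a(t)=f'(x^{\gamma(0)}(t-1))$ uniformly on compact subsets of $\mathbb{R}$. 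Proposition~\ref{prop: V on unstable set}, the positive-scaling invariance of $V$ and the injectivity of $\Phi_{\mathcal{A}}$ combine to give $V(z^n_t)\leq 2$ for every $t\in\mathbb{R}$ and every $n$, and Lemma~\ref{lem:technical result} then extracts a $C^1$ subsequential limit $z:\mathbb{R}\to\mathbb{R}$ of the variational equation along $x^{\gamma(0)}$ with $z_0=\eta$.

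It remains to close the contradiction. Since $z(-1)=\eta(-1)=0$, $z(0)=\eta(0)=0$ and $V(z_0)=2<\infty$, Lemma~\ref{lem:4_properties_of_V}~(ii) applied at $t=0$ forces $V(z_{-2})>2$, hence $V(z_{-2})\geq 4$. On the other hand $z^{n_k}_{-2}\to z_{-2}$ in $C$ with $V(z^{n_k}_{-2})\leq 2$, so the lower semi-continuity from Lemma~\ref{lem: continuity_of_V} yields $V(z_{-2})\leq 2$, the desired contradiction. I expect the main obstacle to be precisely this construction of the backward tangent field $z$: at this stage of the paper no $C^1$ backward flow on $\mathcal{W}^u(\mathcal{O}_p)$ is yet available, so $z$ must be produced as a subsequential limit of scaled orbit differences, with the uniform Lyapunov bound from Proposition~\ref{prop: V on unstable set} serving as the crucial compactness input for Lemma~\ref{lem:technical result}.
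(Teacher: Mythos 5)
Your proof is correct and follows essentially the same route as the paper: both construct a backward solution $z$ of the variational equation along $x^{\gamma(0)}$ from the scaled orbit differences $z^n = (x^{\gamma(s_n)} - x^{\gamma(0)})/s_n$ using Lemma~\ref{lem:technical result}, with the crucial compactness input $V(z_t^n)\leq 2$ coming from Proposition~\ref{prop: V on unstable set}, and then exploit the strict drop in $V$ from Lemma~\ref{lem:4_properties_of_V}~(ii) at $t=0$ forced by $z(-1)=z(0)=0$. Your finish is slightly more economical than the paper's: where the paper's steps~2--3 first prove that $s\mapsto\Phi_{\mathcal{A}}(-2,\gamma(s))$ is differentiable at $0$ with derivative $y_{-2}$ and then invoke Proposition~\ref{prop:V_on_tangent_vectors} to get $V(y_{-2})\leq 2$, you obtain the same bound $V(z_{-2})\leq 2$ by applying the lower semi-continuity from Lemma~\ref{lem: continuity_of_V} directly to $z^{n_k}_{-2}\to z_{-2}$, bypassing the differentiability claim entirely; this is a legitimate and clean shortcut.
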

\begin{proof}
Let $\gamma:\left(-1,1\right)\rightarrow C$ be a $C^{1}$-curve with
range in $\mathcal{W}^{u}\left(\mathcal{O}_{p}\right)$ and with $\gamma'\left(0\right)\neq\hat{0}$.
Let $x:\mathbb{R}\rightarrow\mathbb{R}$ be the unique solution of
Eq.~\eqref{eq:eq_general} with $x_{0}=\gamma\left(0\right)\in\mathcal{W}^{u}\left(\mathcal{O}_{p}\right)$,
and set $a:\mathbb{R}\ni t\mapsto f'\left(x\left(t-1\right)\right)\in\mathbb{R}^{+}.$

1. We claim that the problem 
\[
\begin{cases}
\dot{y}\left(t\right)=-y\left(t\right)+a\left(t\right)y\left(t-1\right), & t\in\mathbb{R},\\
y_{0}=\gamma'\left(0\right)
\end{cases}
\]
has a unique solution $y:\mathbb{R}\rightarrow\mathbb{R}$.

Fix a sequence $\left(s_{n}\right)_{n=0}^{\infty}$ in $\left(-1,1\right)\setminus\left\{ 0\right\} $
with $s_{n}\rightarrow0$ as $n\rightarrow\infty$. As $\gamma'\left(0\right)\neq\hat{0}$,
we may assume that $\gamma\left(s_{n}\right)\neq\gamma\left(0\right)$
for all $n\geq0$. Consider the solutions $x^{n}=x^{\gamma\left(s_{n}\right)}:\mathbb{R}\rightarrow\mathbb{R}$.
Then $x_{t}^{n}\in\mathcal{W}^{u}\left(\mathcal{O}_{p}\right)$ for
all $n\geq0$ and $t\in\mathbb{R}$, furthermore $x^{n}\left(t\right)\rightarrow x\left(t\right)$
as $n\rightarrow\infty$ for all $t\in\mathbb{R}$ by the continuity
of the flow $\Phi_{\mathcal{A}}$. Since all their segments belong
to the bounded global attractor, the solutions $x^{n}$ are uniformly
bounded on $\mathbb{R}$, and Eq.\,\eqref{eq:eq_general} gives a
uniform bound for their derivatives. Therefore by applying the Arzel\`{a}\textendash{}Ascoli
theorem successively on the intervals $\left[-j,j\right]$, $j\geq1$,
and by using a diagonalization process, we obtain that $\left(x^{n}\right)_{n=0}^{\infty}$
has a subsequence $\left(x^{n_{k}}\right)_{k=0}^{\infty}$ such that
the convergence $x^{n_{k}}\rightarrow x$ is uniform on all compact
subsets of $\mathbb{R}$. Set 
\[
y^{k}\left(t\right)=\frac{x^{n_{k}}\left(t\right)-x\left(t\right)}{s_{n_{k}}}\quad\mbox{for all }k\geq0\mbox{ and }t\in\mathbb{R}.
\]
Then for all $ $$k\geq0$ and $t\in\mathbb{R}$, $y_{t}^{k}\neq\hat{0}$
by the injectivity of the flow $\Phi_{\mathcal{A}}$, and $V\left(y_{t}^{k}\right)\leq2$
by Proposition \ref{prop: V on unstable set}. In addition, $y^{k}$,
$k\geq0$, satisfies the equation $\dot{y}^{k}\left(t\right)=-y^{k}\left(t\right)+a_{k}\left(t\right)y^{k}\left(t-1\right)$
on $\mathbb{R}$, where 
\[
a_{k}:\mathbb{R}\ni t\mapsto\int_{0}^{1}f'\left(sx^{n_{k}}\left(t-1\right)+\left(1-s\right)x\left(t-1\right)\right)\mbox{d}s\in\mathbb{R}^{+},\quad k\geq0.
\]
It is clear that there are constants $\alpha_{1}\geq\alpha_{0}>0$
independent of $k$ and $t$ such that $\alpha_{0}\leq a_{k}\left(t\right)\leq\alpha_{1}$
for all $ $$k\geq0$ and $t\in\mathbb{R}$. Also note that $a_{k}\rightarrow a$
as $k\rightarrow\infty$ uniformly on compact subsets of $\mathbb{R}$.
Therefore by Lemma \ref{lem:technical result}, there exist a continuously
differentiable function $y:\mathbb{R}\rightarrow\mathbb{R}$ and a
subsequence $\left(y^{k_{l}}\right)_{l=0}^{\infty}$ of $\left(y^{k}\right)_{k=0}^{\infty}$
such that $y^{k_{l}}\rightarrow y$ and $\dot{y}^{k_{l}}\rightarrow\dot{y}$
as $k\rightarrow\infty$ uniformly on compact subsets of $\mathbb{R}$,
moreover 
\begin{equation}
\dot{y}\left(t\right)=-y\left(t\right)+a\left(t\right)y\left(t-1\right)\label{eq:*}
\end{equation}
 for all real $t$. It is clear from the construction that 
\[
y_{0}=\lim_{l\rightarrow\infty}\frac{x_{0}^{n_{k_{l}}}-x_{0}}{s_{n_{k_{l}}}}=\lim_{l\rightarrow\infty}\frac{\gamma\left(s_{n_{k_{l}}}\right)-\gamma\left(0\right)}{s_{n_{k_{l}}}}=\gamma'\left(0\right).
\]
The uniqueness of $y$ is guaranteed by Proposition \ref{prop:uniqueness of solutions}.

2. Next we claim that $\left(-1,1\right)\ni s\mapsto\Phi_{\mathcal{A}}\left(-2,\gamma\left(s\right)\right)$
is differentiable at $s=0$, and 
\[
\frac{\mbox{d}}{\mbox{d}s}\Phi_{\mathcal{A}}\left(-2,\gamma\left(s\right)\right)|_{s=0}=y_{-2}.
\]
If this is not true, then there exists a sequence $\left(s_{n}\right)_{n=0}^{\infty}$
in $\left(-1,1\right)\setminus\left\{ 0\right\} $ with $s_{n}\rightarrow0$
as $n\rightarrow\infty$ such that for all $n\geq0$, 
\[
\frac{\Phi_{\mathcal{A}}\left(-2,\gamma\left(s_{n}\right)\right)-\Phi_{\mathcal{A}}\left(-2,\gamma\left(0\right)\right)}{s_{n}}
\]
remains outside a fixed neighborhood of $y_{-2}$ in $C$. So to verify
the claim, it suffices to show that any sequence $\left(s_{n}\right)_{n=0}^{\infty}$
in $\left(-1,1\right)\setminus\left\{ 0\right\} $ with $s_{n}\rightarrow0$
as $n\rightarrow\infty$ admits a subsequence $\left(s_{n_{l}}\right)_{l=0}^{\infty}$
for which $ $ 
\[
\frac{\Phi_{\mathcal{A}}\left(-2,\gamma\left(s_{n_{l}}\right)\right)-\Phi_{\mathcal{A}}\left(-2,\gamma\left(0\right)\right)}{s_{n_{l}}}\rightarrow y_{-2}\quad\mbox{as }l\rightarrow\infty.
\]
Indeed, by repeating the reasoning in the first part of the proof
word by word, one can show that the sequence $\left(x^{n}\right)_{n=0}^{\infty}$
formed by the solutions $x^{n}=x^{\gamma\left(s_{n}\right)}:\mathbb{R}\rightarrow\mathbb{R}$,
$n\geq0$, has a subsequence $\left(x^{n_{l}}\right)_{l=0}^{\infty}$
such that $\left(x^{n_{l}}-x\right)/s_{n_{l}}\rightarrow y$ as $l\rightarrow\infty$
uniformly on compact subsets of $\mathbb{R}$. In particular, 
\[
y_{-2}=\lim_{l\rightarrow\infty}\frac{x_{-2}^{n_{_{l}}}-x_{-2}}{s_{n_{l}}}=\lim_{l\rightarrow\infty}\frac{\Phi_{\mathcal{A}}\left(-2,\gamma\left(s_{n_{l}}\right)\right)-\Phi_{\mathcal{A}}\left(-2,\gamma\left(0\right)\right)}{s_{n_{_{l}}}}.
\]

3. So $y_{-2}$ is a tangent vector of $\mathcal{W}^{u}\left(\mathcal{O}_{p}\right)$
at $x_{-2}$, and thus $V\left(y_{-2}\right)\leq2$ by Proposition
\ref{prop:V_on_tangent_vectors}. 

4. To prove the assertion indirectly, suppose that 
\[
\gamma'\left(0\right)\left(0\right)=\gamma'\left(0\right)\left(-1\right)=\mathcal{I}\left(\gamma'\left(0\right)\right)=0.
\]
 Then as $ $$y\left(0\right)=\gamma'\left(0\right)\left(0\right)=0$
and $y\left(-1\right)=\gamma'\left(0\right)\left(-1\right)=0$, $V\left(\gamma'\left(0\right)\right)<V\left(y_{-2}\right)\leq2$
by Lemma \ref{lem:4_properties_of_V} (ii). So $V\left(\gamma'\left(0\right)\right)=0$,
that is $\gamma'\left(0\right)\geq\hat{0}$ or $\gamma'\left(0\right)\leq\hat{0}$.
As we have also assumed that $\mathcal{I}\left(\gamma'\left(0\right)\right)=0$,
necessarily $\gamma'\left(0\right)=\hat{0}$ follows, a contradiction.
The proof is complete.
\end{proof}
Now we can verify Theorem \ref{main_theorem_3}.

\begin{proof}[Proof of Theorem \ref{main_theorem_3}]~\\
\emph{1.The proof of the assertion that $\mathcal{W}^{u}\left(\mathcal{O}_{p}\right)$
is a} \emph{three-dimensional $C^{1}$-submanifold of $C$}. All $\varphi\in\mathcal{W}^{u}\left(\mathcal{O}_{p}\right)$
can be written in form $\varphi=\Phi\left(t,\psi\right)$, where $t>1$
and $\psi\in\mathcal{W}_{loc}^{u}\left(P_{Y},p_{0}\right)$. This
property follows from relation \eqref{unstable set is forwad extension of unstable manifold-1}
and the fact that to each $\psi\in\mathcal{W}_{loc}^{u}\left(P_{Y},p_{0}\right)$,
there corresponds a trajectory $\left(\psi^{n}\right)_{-\infty}^{0}$
of $P_{Y}$ in $\mathcal{W}_{loc}^{u}\left(P_{Y},p_{0}\right)$ with
$\psi^{0}=\psi$ and $\psi^{n}\rightarrow p_{0}$ as $n\rightarrow-\infty$.
Hence Proposition \ref{prop:certain subsets are submanifolds} guarantees
the existence of $\varepsilon>0$ so that the subset 
\[
W_{t,\psi,\varepsilon}=\Phi\left(\left(t-\varepsilon,t+\varepsilon\right)\times\left(\mathcal{W}_{loc}^{u}\left(P_{Y},p_{0}\right)\cap B\left(\psi,\varepsilon\right)\right)\right)
\]
of $\mathcal{W}^{u}\left(\mathcal{O}_{p}\right)$ containing $\varphi$
is a three-dimensional $C^{1}$-submanifold of $C$. 

To show that $\mathcal{W}^{u}\left(\mathcal{O}_{p}\right)$ is a\emph{
}three-dimensional $C^{1}$-submanifold of $C$, it suffices to exclude
for all $t>1$ and $\psi\in\mathcal{W}_{loc}^{u}\left(P_{Y},p_{0}\right)$
the existence of a sequence $\left(\varphi^{n}\right)_{n=0}^{\infty}$
in $ $$\mathcal{W}^{u}\left(\mathcal{O}_{p}\right)$ so that $\varphi^{n}\notin W_{t,\psi,\varepsilon}$
for $n\geq0$ and $\varphi^{n}\rightarrow\varphi=\Phi\left(t,\psi\right)$
as $n\rightarrow\infty$. According to Proposition \ref{prop:Pi3 on tangent vectors},
$D\pi_{3}\left(\varphi\right)=\pi_{3}$ is injective on the three-dimensional
tangent space $T_{\varphi}W_{t,\psi,\varepsilon}$, i.e. it defines
an isomorphism from $T_{\varphi}W_{t,\psi,\varepsilon}$ onto $\mathbb{R}^{3}$.
Thus the inverse mapping theorem yields a constant $\delta>0$ such
that the restriction of $\pi_{3}$ to $W_{t,\psi,\varepsilon}\cap B\left(\varphi,\delta\right)$
is a diffeomorphism from $W_{t,\psi,\varepsilon}\cap B\left(\varphi,\delta\right)$
onto an open set $U$ in $\mathbb{R}^{3}$. If a sequence $\left(\varphi^{n}\right)_{n=0}^{\infty}$
in $ $$\mathcal{W}^{u}\left(\mathcal{O}_{p}\right)$ converges to
$\varphi$ as $n\rightarrow\infty$, then $\pi_{3}\varphi^{n}\rightarrow\pi_{3}\varphi$
as $n\rightarrow\infty$, and $\pi_{3}\varphi^{n}\in U$ for all sufficiently
large $n$. The injectivity of $\pi_{3}$ on $\mathcal{W}^{u}\left(\mathcal{O}_{p}\right)$
verified in Proposition \ref{prop:Pi3 on unstable set} then implies
that $\varphi^{n}\in W_{t,\psi,\varepsilon}$.

\emph{2. Graph representation for }$\mathcal{W}^{u}\left(\mathcal{O}_{p}\right)$\emph{.}
Choose $\varphi_{j}\in C$ such that $\pi_{3}\varphi_{j}=e_{j}$,
$j\in\left\{ 1,2,3\right\} $, where $e_{1}=\left(1,0,0\right)$,
$e_{2}=\left(0,1,0\right)$ and $e_{3}=\left(0,0,1\right)$. This
is possible as  $\pi_{3}:C\ni\varphi\mapsto\left(\varphi\left(0\right),\varphi\left(-1\right),\mathcal{I}\left(\varphi\right)\right)\in\mathbb{R}^{3}$
is injective on the $3$-dimensional tangent spaces of $\mathcal{W}^{u}\left(\mathcal{O}_{p}\right)$,
and hence it is surjective. Clearly $\varphi_{1}$, $\varphi_{2}$
and $\varphi_{3}$ are linearly independent. 

Let $J_{3}:\mathbb{R}^{3}\rightarrow C$ be the injective linear map
for which $J_{3}e_{j}=\varphi_{j}$, $j\in\left\{ 1,2,3\right\} $,
and let $P_{3}=J_{3}\circ\pi_{3}$. Then $P_{3}:C\rightarrow C$ is
continuous, linear and $P_{3}\varphi_{j}=\varphi_{j}$ for all $j\in\left\{ 1,2,3\right\} $.
In consequence, $P_{3}\circ P_{3}=P_{3}$, which means that $P_{3}$
is a projection. The space 
\[
G_{3}=P_{3}C=\left\{ c_{1}\varphi_{1}+c_{2}\varphi_{2}+c_{3}\varphi_{3}:\, c_{1},c_{2},c_{3}\in\mathbb{R}\right\} 
\]
 is $3$-dimensional, and with $E=P_{3}^{-1}\left(0\right)$, we have
$C=G_{3}\oplus E$. As the restriction of $P_{3}$ to $\mathcal{W}^{u}\left(\mathcal{O}_{p}\right)$
is injective, the inverse $P_{3}^{-1}$ of the map $\mathcal{W}^{u}\left(\mathcal{O}_{p}\right)\ni\varphi\mapsto P_{3}\varphi\in G_{3}$
exists. At last, introduce the map 
\[
w:P_{3}\mathcal{W}^{u}\left(\mathcal{O}_{p}\right)\ni\chi\mapsto\left(\mbox{id}-P_{3}\right)\circ P_{3}^{-1}\left(\chi\right)\in E.
\]
Then 
\[
\mathcal{W}^{u}\left(\mathcal{O}_{p}\right)=\left\{ \chi+w\left(\chi\right):\,\chi\in P_{3}\mathcal{W}^{u}\left(\mathcal{O}_{p}\right)\right\} .
\]

It remains to show that $U_{3}=P_{3}\mathcal{W}^{u}\left(\mathcal{O}_{p}\right)$
is open in $G_{3}$ and $w$ is $C^{1}$-smooth. Let $\chi\in P_{3}\mathcal{W}^{u}\left(\mathcal{O}_{p}\right)$
be arbitrary. Then $\chi=P_{3}\varphi$ with some $\varphi\in\mathcal{W}^{u}\left(\mathcal{O}_{p}\right)$.
As the restriction of $\pi_{3}$ to $T_{\varphi}\mathcal{W}\left(\mathcal{O}_{p}\right)$
is injective, $DP_{3}\left(\varphi\right)=P_{3}$ defines an isomorphism
from $T_{\varphi}\mathcal{W}\left(\mathcal{O}_{p}\right)$ to $G_{3}$.
Consequently the inverse mapping theorem implies that an $\varepsilon>0$
can be given such that $P_{3}$ maps $\mathcal{W}\left(\mathcal{O}_{p}\right)\cap B\left(\varphi,\varepsilon\right)$
one-to-one onto an open neighborhood $U\subset U_{3}$ of $\chi$
in $G_{3}$, $P_{3}$ is invertible on $\mathcal{W}\left(\mathcal{O}_{p}\right)\cap B\left(\varphi,\varepsilon\right)$,
and the inverse $\tilde{P}_{3}^{-1}$ of the map 
\[
\mathcal{W}\left(\mathcal{O}_{p}\right)\cap B\left(\varphi,\varepsilon\right)\ni\varphi\mapsto P_{3}\varphi\in U
\]
 is $C^{1}$-smooth. As 
\[
w\left(\chi\right)=\left(\mbox{id}-P_{3}\right)\circ P_{3}^{-1}\left(\chi\right)=\left(\mbox{id}-P_{3}\right)\circ\tilde{P}_{3}^{-1}\left(\chi\right)
\]
 for all $\chi\in U$, the restriction of $w$ to $U$ is $C^{1}$-smooth.

\emph{3. The characterization of $C_{j}^{p}$, $j\in\left\{ -2,0,2\right\} $.
}Since the basin of attraction of a stable equilibrium is open in
$C$, the connecting set $C_{j}^{p}$, $j\in\left\{ -2,0,2\right\} $,
is an open subset of $\mathcal{W}^{u}\left(\mathcal{O}_{p}\right)$.
It follows immediately that $C_{j}^{p}$, $j\in\left\{ -2,0,2\right\} $,
is a three-dimensional $C^{1}$-submanifold of $C$ and 
\[
C_{j}^{p}=\left\{ \chi+w\left(\chi\right):\,\chi\in P_{3}C_{j}^{p}\right\} 
\]
for all $j\in\left\{ -2,0,2\right\} $.\end{proof}

As $\mathcal{W}^{u}\left(\mathcal{O}_{p}\right)$ is a $C^{1}$-submanifold
of $C$, it makes sense to investigate the differentiability of the
map 
\[
\Phi_{\mathcal{W}^{u}\left(\mathcal{O}_{p}\right)}:\mathbb{R}\times\mathcal{W}^{u}\left(\mathcal{O}_{p}\right)\ni\left(t,\varphi\right)\mapsto\Phi_{\mathcal{A}}\left(t,\varphi\right)\in\mathcal{W}^{u}\left(\mathcal{O}_{p}\right).
\]

Suppose that $\eta_{1}$ $\eta_{2}$ and $\eta_{3}$ form a basis
of the three-dimensional tangent space $T_{\varphi}\mathcal{W}^{u}\left(\mathcal{O}_{p}\right)$
of $\mathcal{W}^{u}\left(\mathcal{O}_{p}\right)$ at some $\varphi\in\mathcal{W}^{u}\left(\mathcal{O}_{p}\right)$.
Then for all $t\in\mathbb{R}$, the tangent space $T_{\left(t,\varphi\right)}\left(\mathbb{R}\times\mathcal{W}^{u}\left(\mathcal{O}_{p}\right)\right)$
of $\mathbb{R}\times\mathcal{W}^{u}\left(\mathcal{O}_{p}\right)$
at $(t,\varphi)$ is spanned by the tangent vectors of the following
curves at $0$: 
\[
\left(-1,1\right)\ni s\mapsto\left(t+s,\varphi\right)\quad\mbox{and}\quad\left(-1,1\right)\ni s\mapsto\left(t,\gamma_{i}\left(s\right)\right),\, i\in\left\{ 1,2,3\right\} ,
\]
where $\gamma_{i}:\left(-1,1\right)\rightarrow\mathcal{W}^{u}\left(\mathcal{O}_{p}\right)$
is a $C^{1}$-curve with $\gamma_{i}\left(0\right)=\varphi$ and $D\gamma_{i}\left(0\right)=\eta_{i}$
for all $i\in\left\{ 1,2,3\right\} $.

We are going to apply the following assertion in the proof of Theorem
\ref{main_theorem_4}.(ii).
\begin{prop}
\label{the flow is continuously differentiable} The flow $\Phi_{\mathcal{W}^{u}\left(\mathcal{O}_{p}\right)}$
is $C^{1}$-smooth. For all $t\in\mathbb{R}$ and $\varphi\in\mathcal{W}^{u}\left(\mathcal{O}_{p}\right)$,
\begin{equation}
\frac{\mbox{d}}{\mbox{d}s}\Phi_{\mathcal{W}^{u}\left(\mathcal{O}_{p}\right)}\left(t+s,\varphi\right)|_{s=0}=\dot{x}_{t}^{\varphi}.\label{derivative with respect to the first variable}
\end{equation}
For all $\varphi\in\mathcal{W}^{u}\left(\mathcal{O}_{p}\right)$ and
$\eta\in T_{\varphi}\mathcal{W}^{u}\left(\mathcal{O}_{p}\right)$,
the variational equation 
\begin{align*}
\dot{v}(t) & =-v(t)+f'\left(x^{\varphi}\left(t-1\right)\right)v\left(t-1\right)\tag{2.2}
\end{align*}
has a unique solution $v^{\eta}:\mathbb{R}\rightarrow\mathbb{R}$
with $v_{0}^{\eta}=\eta$. If $t\in\mathbb{R}$ and $\gamma:\left(-1,1\right)\rightarrow\mathcal{W}^{u}\left(\mathcal{O}_{p}\right)$
is a $C^{1}$-curve with $\gamma\left(0\right)=\varphi$ and $\gamma'\left(0\right)=\eta,$
then 
\begin{equation}
\frac{\mbox{d}}{\mbox{d}s}\Phi_{\mathcal{W}^{u}\left(\mathcal{O}_{p}\right)}\left(t,\gamma\left(s\right)\right)|_{s=0}=v_{t}^{\eta}.\label{derivative with respect to the second variable}
\end{equation}
 \end{prop}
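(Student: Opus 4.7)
The plan is to first upgrade the forward map $\Phi(T,\cdot)\colon C \to C$, known to be $C^1$ on all of $C$, to a $C^1$-diffeomorphism $\Psi_T$ of $\mathcal{W}^u(\mathcal{O}_p)$ onto itself for every $T > 0$, and then to deduce the joint $C^1$-smoothness of $\Phi_{\mathcal{W}^u(\mathcal{O}_p)}$ on $\mathbb{R} \times \mathcal{W}^u(\mathcal{O}_p)$ from the known joint $C^1$-smoothness of $\Phi$ on $(1,\infty) \times C$ by means of the flow identity $\Phi_{\mathcal{W}^u(\mathcal{O}_p)}(t,\varphi) = \Phi(t+T,\Psi_T^{-1}(\varphi))$, where $T$ is chosen so large that $t+T > 1$ holds throughout a neighborhood of the point of interest.

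For the diffeomorphism property of $\Psi_T$, I would observe that the restriction of the $C^1$-map $\Phi(T,\cdot)$ to the three-dimensional $C^1$-submanifold $\mathcal{W}^u(\mathcal{O}_p)$ (Theorem \ref{main_theorem_3}) takes values in $\mathcal{W}^u(\mathcal{O}_p)$ by invariance, and its derivative at $\varphi$ is the restriction of $D_2\Phi(T,\varphi)$ to the tangent space $T_\varphi \mathcal{W}^u(\mathcal{O}_p)$. This restriction is injective because $D_2\Phi(T,\varphi)$ is injective on all of $C$ (see the remark following Proposition \ref{prop:uniqueness of solutions}); since both $T_\varphi \mathcal{W}^u(\mathcal{O}_p)$ and $T_{\Psi_T(\varphi)}\mathcal{W}^u(\mathcal{O}_p)$ are three-dimensional, the injective linear map between them is an isomorphism, and the inverse function theorem on manifolds makes $\Psi_T$ a local $C^1$-diffeomorphism. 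Globally, $\Psi_T$ is injective (inherited from $\Phi(T,\cdot)$) and surjective (each $\varphi \in \mathcal{W}^u(\mathcal{O}_p) \subset \mathcal{A}$ admits a full backward solution $x^\varphi$ on $\mathbb{R}$, so $\Psi_T(x^\varphi_{-T}) = \varphi$), and hence a global $C^1$-diffeomorphism with inverse $\Phi_{\mathcal{W}^u(\mathcal{O}_p)}(-T,\cdot)$. Substituting this into the flow identity yields the claimed $C^1$-smoothness.

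Formula \eqref{derivative with respect to the first variable} follows at once from $x^\varphi \in C^1(\mathbb{R},\mathbb{R})$ (a consequence of $\mathcal{A} \subset C^1$), which makes $s \mapsto x^\varphi_{t+s}$ a $C^1$-curve in $C$ with derivative $\dot{x}^\varphi_{t+s}$. For \eqref{derivative with respect to the second variable}, I first need existence and uniqueness of $v^\eta$ on $\mathbb{R}$: uniqueness is Proposition \ref{prop:uniqueness of solutions} applied with $b(t) = f'(x^\varphi(t-1)) > 0$, while existence comes directly from the construction in Step 1 of the proof of Proposition \ref{prop:Pi3 on tangent vectors}---starting from a $C^1$-curve $\gamma$ with $\gamma(0) = \varphi$ and $\gamma'(0) = \eta$, the subsequential limit of the rescaled differences $(x^{\gamma(s_n)} - x^\varphi)/s_n$ produced by Lemma \ref{lem:technical result} is a $C^1$-solution on $\mathbb{R}$ with initial segment $\eta$. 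Identification of $v^\eta_t$ with the directional derivative on the left-hand side of \eqref{derivative with respect to the second variable} for every $t \in \mathbb{R}$ then follows by repeating Step 2 of that same proof at arbitrary $t$: each subsequence of rescaled differences has a further subsequence converging to a solution of the variational equation with initial segment $\eta$, which by uniqueness must be $v^\eta$, so the full limit exists and equals $v^\eta_t$.

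The key subtlety, and what I expect to be the main obstacle, is recognizing that the three-dimensionality of the tangent spaces of $\mathcal{W}^u(\mathcal{O}_p)$ from Theorem \ref{main_theorem_3} is precisely what converts the algebraic injectivity of $D_2\Phi(T,\varphi)$ into an isomorphism in a finite-dimensional setting, and hence what enables the inverse function theorem to extend the semiflow $\Phi$ to a genuine $C^1$-flow on the unstable set. The Mallet-Paret--Sell compactness argument encoded in Lemma \ref{lem:technical result} is what carries the variational equation back in time; once it is in hand, the remaining identifications reduce to routine subsequence arguments.
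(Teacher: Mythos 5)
Your proof of the $C^1$-smoothness of $\Phi_{\mathcal{W}^u(\mathcal{O}_p)}$ and of formula \eqref{derivative with respect to the first variable} matches the paper's approach: the paper introduces $A_t = \Phi_{\mathcal{A}}(t,\cdot)|_{\mathcal{W}^u(\mathcal{O}_p)}$, shows $A_t$ is a $C^1$-diffeomorphism via the injectivity of $D_2\Phi$, the finite dimensionality of the tangent spaces, and the inverse function theorem, and then writes $\Phi_{\mathcal{A}}(s,\varphi) = \Phi(s+1-t, A_{t-1}(\varphi))$ to conjugate arbitrary time into the domain of smoothness of $\Phi$ --- your $\Psi_T$ and the flow identity with $t+T>1$ play exactly the same role.

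For formula \eqref{derivative with respect to the second variable}, however, you take a genuinely different route. The paper exploits the diffeomorphism structure already in hand: for $t<0$ it sets $\chi = DA_t(\varphi)\eta$ and then observes that $\eta = DA_{-t}(\Phi_{\mathcal{A}}(t,\varphi))\chi = D_2\Phi(-t,\Phi_{\mathcal{A}}(t,\varphi))\chi = u^\chi_{-t}$, so that $v^\eta$ is obtained by a time translate of a forward-time variational solution, and uniqueness then gives a single solution $v^\eta$ on all of $\mathbb{R}$ satisfying $v^\eta_t = DA_t(\varphi)\eta$ for every $t$. You instead re-run the Arzel\`a--Ascoli/Mallet-Paret--Sell compactness machinery of Lemma \ref{lem:technical result}, along the lines of Proposition \ref{prop:Pi3 on tangent vectors}, to produce the backward variational solution directly from rescaled differences of nearby orbits, and then identify the directional derivative with $v^\eta_t$ by a subsequence-plus-uniqueness argument. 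Both are correct; the paper's chain-rule argument is shorter and avoids re-invoking the compactness lemma, while yours is more self-contained in the sense that it extracts the backward solution from the dynamics without appealing to the already-established inverse $A_{-t}$. One small point to make explicit in your version: the compactness construction needs $\gamma(s_n) \neq \gamma(0)$, which holds for small $s_n$ when $\eta \neq \hat{0}$; the case $\eta = \hat{0}$ is trivial with $v^\eta \equiv 0$. With that remark your argument is complete.
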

\begin{proof}
1. To prove the smoothness of $\Phi_{\mathcal{W}^{u}\left(\mathcal{O}_{p}\right)}$,
it is sufficient to show that for all $t\in\mathbb{R}$, the map 
\begin{equation}
\left(t,\infty\right)\times\mathcal{W}^{u}\left(\mathcal{O}_{p}\right)\ni\left(s,\varphi\right)\mapsto\Phi_{\mathcal{A}}\left(s,\varphi\right)\in\mathcal{W}^{u}\left(\mathcal{O}_{p}\right)\label{a  map}
\end{equation}
 is continuously differentiable. 

Let $t\in\mathbb{R}$ be given, and introduce the map 
\[
A_{t}:\mathcal{W}^{u}\left(\mathcal{O}_{p}\right)\ni\varphi\mapsto\Phi_{\mathcal{A}}\left(t,\varphi\right)\in\mathcal{W}^{u}\left(\mathcal{O}_{p}\right).
\]
For $t\geq0$, $A_{t}$ is clearly $C^{1}$-smooth as $\Phi\left(t,\cdot\right)$
is $C^{1}$-smooth and maps $\mathcal{W}^{u}\left(\mathcal{O}_{p}\right)$
into $\mathcal{W}^{u}\left(\mathcal{O}_{p}\right)$. For $t<0$, the
smoothness of $A_{t}$ follows from the smoothness of the map $\Phi\left(-t,\cdot\right)$,
the injectivity of its derivative, the inclusion $\Phi\left(-t,\mathcal{W}^{u}\left(\mathcal{O}_{p}\right)\right)\subset\mathcal{W}^{u}\left(\mathcal{O}_{p}\right)$
and the inverse mapping theorem. 

For all $\left(s,\varphi\right)\in\left(t,\infty\right)\times\mathcal{W}^{u}\left(\mathcal{O}_{p}\right)$,
\[
\Phi_{\mathcal{A}}\left(s,\varphi\right)=\Phi\left(s+1-t,\Phi_{\mathcal{A}}\left(t-1,\varphi\right)\right)=\Phi\left(s+1-t,A_{t-1}\left(\varphi\right)\right).
\]
So the $C^{1}$-smoothness of the maps $\Phi|_{\left(1,\infty\right)\times C}$
and 
\[
\left(t,\infty\right)\times\mathcal{W}^{u}\left(\mathcal{O}_{p}\right)\ni\left(s,\varphi\right)\mapsto\left(s+1-t,A_{t-1}\left(\varphi\right)\right)\in\left(1,\infty\right)\times C
\]
 guarantee that \eqref{a  map} is also continuously differentiable. 

2. Relation \eqref{derivative with respect to the first variable}
is already known for $t>1$. It can be easily obtained for $t\leq1$
from the definition of the Fréchet derivative.

3. We already now that initial value problems corresponding to the
variational equation $(2.2)$ exist and are unique in forward time,
moreover relation \eqref{derivative with respect to the second variable}
holds for $t\geq0$. 

Fix $t<0$. Note that if $\gamma:\left(-1,1\right)\rightarrow\mathcal{W}^{u}\left(\mathcal{O}_{p}\right)$
is a $C^{1}$-curve with $\gamma\left(0\right)=\varphi$ and $\gamma'\left(0\right)=\eta,$
then 
\[
\frac{\mbox{d}}{\mbox{d}s}\Phi_{\mathcal{W}^{u}\left(\mathcal{O}_{p}\right)}\left(t,\gamma\left(s\right)\right)|_{s=0}=DA_{t}\left(\varphi\right)\eta.
\]

By part $1$, the map $A_{t}$ is a $C^{1}$-diffeomorphism with the
inverse $A_{t}^{-1}=A_{-t}$. Hence for all $\eta\in T_{\varphi}\mathcal{W}^{u}\left(\mathcal{O}_{p}\right)$,
$\chi=DA_{t}\left(\varphi\right)\eta$ exists and belongs to $T_{\Phi_{\mathcal{A}}\left(t,\varphi\right)}\mathcal{W}^{u}\left(\mathcal{O}_{p}\right)$.
Then 
\[
\eta=DA_{t}^{-1}\left(\Phi_{\mathcal{A}}\left(t,\varphi\right)\right)\chi=DA_{-t}\left(\Phi_{\mathcal{A}}\left(t,\varphi\right)\right)\chi=D_{2}\Phi\left(-t,\Phi_{\mathcal{A}}\left(t,\varphi\right)\right)\chi=u_{-t}^{\chi},
\]
where $u^{\chi}:\left[-1,\infty\right)\rightarrow\mathbb{R}$ is the
solution of 
\begin{align*}
\dot{u}\left(s\right) & =-u\left(s\right)+f'\left(x^{\Phi_{\mathcal{A}}\left(t,\varphi\right)}\left(s-1\right)\right)u\left(s-1\right)\\
 & =-u\left(s\right)+f'\left(x^{\varphi}\left(t+s-1\right)\right)u\left(s-1\right)
\end{align*}
with $u_{0}^{\chi}=\chi$. With transformation $v\left(s\right)=u\left(s-t\right)$
we obtain that the problem 
\begin{equation}
\begin{cases}
\dot{v}\left(s\right)=-v\left(s\right)+f'\left(x^{\varphi}\left(s-1\right)\right)v\left(s-1\right)\\
v_{0}=\eta
\end{cases}\label{shifted var eq}
\end{equation}
has a solution $v^{\eta}$ on $\left[t-1,\infty\right)$ satisfying
$v_{t}^{\eta}=\chi=DA_{t}\left(\varphi\right)\eta$. As this reasoning
holds for any $t<0$, \eqref{shifted var eq} admits a solution $v^{\eta}:\mathbb{R}\rightarrow\mathbb{R}$
with $v_{t}^{\eta}=DA_{t}\left(\varphi\right)\eta$ for any $t<0$.
By Proposition \ref{prop:uniqueness of solutions}, $v^{\eta}$ is
unique. Relation \eqref{derivative with respect to the second variable}
follows.
\end{proof}
The uniqueness of $v^{\eta}$ and formula \eqref{derivative with respect to the second variable}
guarantee the subsequent corollary. 
\begin{cor}
\label{injective derivative}For each fixed $t\in\mathbb{R}$, the
derivative of the map 
\[
\mathcal{W}^{u}\left(\mathcal{O}_{p}\right)\ni\varphi\mapsto\Phi_{\mathcal{W}^{u}\left(\mathcal{O}_{p}\right)}\left(t,\varphi\right)\in\mathcal{W}^{u}\left(\mathcal{O}_{p}\right)
\]
 at any $\varphi\in\mathcal{W}^{u}\left(\mathcal{O}_{p}\right)$ is
injective on $T_{\varphi}\mathcal{W}^{u}\left(\mathcal{O}_{p}\right)$
.
\end{cor}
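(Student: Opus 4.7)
The plan is to deduce injectivity directly from the formula for the derivative established in Proposition~\ref{the flow is continuously differentiable} together with the uniqueness statement of Proposition~\ref{prop:uniqueness of solutions}. Fix $t\in\mathbb{R}$ and $\varphi\in\mathcal{W}^{u}(\mathcal{O}_{p})$, and write $A_{t}(\psi)=\Phi_{\mathcal{W}^{u}(\mathcal{O}_{p})}(t,\psi)$. First I would recall that by formula \eqref{derivative with respect to the second variable}, $DA_{t}(\varphi)\eta = v_{t}^{\eta}$ for every $\eta\in T_{\varphi}\mathcal{W}^{u}(\mathcal{O}_{p})$, where $v^{\eta}:\mathbb{R}\to\mathbb{R}$ is the unique solution of the variational equation
\[
\dot{v}(s)=-v(s)+f'\bigl(x^{\varphi}(s-1)\bigr)v(s-1),\qquad s\in\mathbb{R},
\]
satisfying $v_{0}^{\eta}=\eta$.

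Next, assume $DA_{t}(\varphi)\eta=\hat{0}$, i.e.\ $v_{t}^{\eta}\equiv0$ on $[-1,0]$. The identically zero function $w:\mathbb{R}\to\mathbb{R}$, $w(s)\equiv 0$, is obviously a solution of the same variational equation on $\mathbb{R}$ with $w_{t}=\hat{0}$. Set $\tilde{v}(s)=v^{\eta}(s+t)$; then $\tilde{v}$ solves the shifted variational equation on $\mathbb{R}$ with coefficient $b(s)=f'(x^{\varphi}(s+t-1))$, which is strictly positive since $f'>0$, and satisfies $\tilde{v}_{0}=v_{t}^{\eta}=\hat{0}$. Proposition~\ref{prop:uniqueness of solutions} then forces $\tilde{v}\equiv0$ on $\mathbb{R}$, and therefore $v^{\eta}\equiv0$. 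In particular $\eta=v_{0}^{\eta}=\hat{0}$, proving injectivity.

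There is no real obstacle here: the positivity $f'>0$ is exactly what makes the variational equation uniquely solvable backwards in time via the identity $v(s-1)=(\dot{v}(s)+v(s))/b(s)$ used in Proposition~\ref{prop:uniqueness of solutions}, so the step from $v_{t}^{\eta}=\hat{0}$ to $v^{\eta}\equiv0$ is immediate whether $t\geq 0$ or $t<0$. The entire argument is a two-line corollary of the previous proposition and should be presented as such.
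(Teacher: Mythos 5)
Your proof is correct and follows exactly the route the paper intends: it combines formula \eqref{derivative with respect to the second variable} (the derivative of $A_{t}$ is $\eta\mapsto v_{t}^{\eta}$) with the uniqueness of $v^{\eta}$ guaranteed by Proposition~\ref{prop:uniqueness of solutions}, which is precisely what the paper cites right before stating the corollary. The shift $\tilde{v}(s)=v^{\eta}(s+t)$ to reduce to initial data at time $0$ is the clean way to invoke the uniqueness proposition, and the backward-solvability afforded by $f'>0$ is indeed the essential ingredient.
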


\section{The Proof of Theorem \ref{main_theorem_4}}

Fix index $k\in\left\{ -1,1\right\} $ in the rest of the paper and
consider the sets $C_{q}^{p}$, $C_{k}^{p}$ and $S_{k}=C_{k}^{p}\cup\mathcal{O}_{p}\cup C_{q}^{p}$. 

~

\noindent \begin{center}
\textit{5.1 Preliminary results on $\overline{S_{k}}$}
\par\end{center}

\smallskip{}

In this subsection we define a projection $\pi_{2}$ from $C$ into
$\mathbb{R}^{2}$ and show that $\pi_{2}$ is injective on the closure
$\overline{S_{k}}$ of $S_{k}$ in $C$, see Proposition \ref{prop:Pi2 on S_k}).
The proof of this assertion is based on the special properties of
the discrete Lyapunov functional $V$. The injectivity of $\pi_{2}|_{\overline{S_{k}}}$
enables us to give a graph representation for $\overline{S_{k}}$
(without smoothness properties): there is a linear isomorphism $J_{2}:\mathbb{R}^{2}\rightarrow C$
such that $P_{2}=J_{2}\circ\pi_{2}:C\rightarrow C$ is a projection
onto a two-dimensional subspace $G_{2}$ of $C$, and a map $w_{k}:P_{2}\overline{S_{k}}\rightarrow P_{2}^{-1}\left(0\right)$
can be defined such that 
\[
\overline{S_{k}}=\left\{ \chi+w_{k}\left(\chi\right):\,\chi\in P_{2}\overline{S_{k}}\right\} ,
\]
 see Proposition \ref{prop:graph representation for the closure of S_k}.
The differentiability of $w_{k}$ and the properties of its domain
$P_{2}\overline{S_{k}}\subset G_{2}$ are studied only in Subsections
5.3 and 5.5. We also show at the end of this subsection that $\pi_{2}|_{\overline{S_{k}}}$
is a homeomorphism onto its image (see Proposition \ref{prop:inverse of Pi_2 is Lip-cont}),
moreover $\pi_{2}$ is injective on the tangent spaces of $\overline{S_{k}}$
(see Proposition \ref{prop:Pi_2 on tangent vectors}). 

Clearly, $S_{k}$ is invariant under $\Phi_{\mathcal{A}}$. Then it
easily follows that $\overline{S_{k}}$ is invariant too. Indeed,
let $\varphi\in\overline{S_{k}}\setminus S_{k}$ be arbitrary and
choose a sequence $\left(\varphi_{n}\right)_{n=0}^{\infty}$ in $S_{k}$
converging to $\varphi$ as $n\rightarrow\infty$. As the global attractor
$\mathcal{A}$ is closed, $\varphi\in\mathcal{A}$. By the continuity
of the flow $\Phi_{\mathcal{A}}$ on $\mathbb{R}\times\mathcal{A}$,
$S_{k}\ni x_{t}^{\varphi_{n}}\rightarrow x_{t}^{\varphi}$ as $n\rightarrow\infty$
for all $t\in\mathbb{R}$, which means that $\overline{S_{k}}$ is
invariant under $\Phi_{\mathcal{A}}$.

By Theorem B, 
\[
S_{k}=\left\{ \varphi\in\mathcal{W}^{u}\left(\mathcal{O}_{p}\right):\, x^{\varphi}\mbox{ oscillates around }\xi_{k}\right\} .\tag{1.2}
\]

Note that if $x^{\varphi}$ is nonoscillatory around $\xi_{k}$ for
some $\varphi\in C$ (i.e. there exists $T\geq0$ so that $x_{T}^{\varphi}\gg\hat{\xi}_{k}$
or $x_{T}^{\varphi}\ll\hat{\xi}_{k}$ ), then $\varphi$ has an open
neighborhood $U_{\varphi}$ in $C$ such that for all $\psi\in U_{\varphi}$,
$x^{\psi}$ is nonoscillatory around $\xi_{k}$. Hence it comes immediately
from \eqref{S_k} that for all $\varphi\in\overline{S_{k}}$, $x^{\varphi}$
oscillates around $\xi_{k}$.

The next result states that the stable set of the unstable equilibrium
$\hat{\xi}_{k}$ contains only nonordered elements with respect to
the pointwise ordering. The proof follows the first part of the proof
of Proposition 3.1 in \cite{Krisztin-Walther-Wu}.
\begin{prop}
\label{prop:solutions converging to xi_k}There exist no $\varphi\in C$
and $\psi\in C$ with $\varphi\ll\psi$ such that $x_{t}^{\varphi}$
and $x_{t}^{\psi}$ both converge to $\hat{\xi}_{k}$ as $t\rightarrow\infty$.\end{prop}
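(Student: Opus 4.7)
The plan is to argue by contradiction, exploiting the monotonicity of the semiflow together with the instability of $\hat{\xi}_k$ encoded in $f'(\xi_k)>1$. Suppose $\varphi\ll\psi$ satisfy $x_t^{\varphi}\to\hat{\xi}_k$ and $x_t^{\psi}\to\hat{\xi}_k$ as $t\to\infty$. Proposition \ref{pro:monotone_dynamical_system} then yields $x_t^{\varphi}\ll x_t^{\psi}$ for every $t\geq 0$, so the difference $y(t)=x^{\psi}(t)-x^{\varphi}(t)$ is strictly positive on $[-1,\infty)$ and, by the mean value theorem applied to $f$, satisfies the scalar linear delay equation
\[
\dot{y}(t)=-y(t)+b(t)\,y(t-1),\qquad t>0,
\]
with strictly positive continuous coefficient
\[
b(t)=\int_{0}^{1}f'\bigl(s\,x^{\psi}(t-1)+(1-s)\,x^{\varphi}(t-1)\bigr)\,\mbox{d}s.
\]

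Next I would use $x^{\varphi}(t),x^{\psi}(t)\to\xi_k$ and continuity of $f'$ to conclude $b(t)\to f'(\xi_k)$ as $t\to\infty$. Since (H1) with $\mu=1$ gives $f'(\xi_k)>1$ (which is exactly why $\hat{\xi}_k$ is unstable), one can fix a constant $c\in\bigl(1,f'(\xi_k)\bigr)$ and $T>0$ such that $b(t)\geq c$ for all $t\geq T$. Let $\lambda_{0}>0$ be the unique positive real root of the scalar equation $\lambda+1=c\,e^{-\lambda}$, which exists precisely because $c>1$. The crux of the argument will be the exponential lower bound $y(t)\geq M\,e^{\lambda_{0}t}$ on $[T-1,\infty)$ for some $M>0$; this immediately contradicts $y(t)\to 0$ and completes the proof.

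To establish the bound I would set $M=\min_{s\in[T-1,T]}y(s)\,e^{-\lambda_{0}s}$, which is strictly positive as $y$ is continuous and positive on $[T-1,T]$, and consider the auxiliary function $w(t)=y(t)-M\,e^{\lambda_{0}t}$. By construction $w\geq 0$ on $[T-1,T]$. A direct computation using $\lambda_{0}+1=c\,e^{-\lambda_{0}}$ and $b(t)\geq c$ for $t\geq T$ gives
\[
\dot{w}(t)+w(t)\geq c\,w(t-1)\qquad(t\geq T),
\]
equivalently $\tfrac{\mbox{d}}{\mbox{d}t}\bigl[e^{t}w(t)\bigr]\geq c\,e^{t}w(t-1)$. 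A step-by-step induction along successive unit intervals $[T+n,T+n+1]$, $n\in\mathbb{N}$, then propagates the sign of $w$ and yields $w(t)\geq 0$ for all $t\geq T$, hence $y(t)\geq M\,e^{\lambda_{0}t}\to\infty$. The only delicate ingredient is the comparison argument itself, but it is precisely the mechanism used in the first part of the proof of Proposition 3.1 in \cite{Krisztin-Walther-Wu}, so no essential new obstacle arises.
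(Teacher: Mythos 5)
Your proof is correct and follows essentially the same strategy as the paper: form the positive difference $y=x^{\psi}-x^{\varphi}$, note that it satisfies a linear delay equation with coefficient $b(t)\to f'(\xi_k)>1$, and compare against an exponential subsolution to force unbounded growth, contradicting $y(t)\to 0$. The only minor difference is in the comparison step, where you pick $c\in(1,f'(\xi_k))$ and use the positive root of $\lambda+1=ce^{-\lambda}$ directly, while the paper first exploits the monotonicity of $e^{t}y(t)$ to trade the lowered coefficient $f'(\xi_k)-\varepsilon$ on $y(t-1)$ for a larger damping constant $1+\varepsilon e$ before comparing with $\delta e^{\lambda t}$ — your route is slightly more streamlined but otherwise equivalent.
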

\begin{proof}
Suppose  that $\varphi\in C$, $\psi\in C$, $\varphi\ll\psi$ and
both $x_{t}^{\varphi}$, $x_{t}^{\psi}$ converge to $\hat{\xi}_{k}$
as $t\rightarrow\infty$. Then $y:=x^{\psi}-x^{\varphi}$ is positive
on $\left[-1,\infty\right)$ by Proposition \ref{pro:monotone_dynamical_system},
it satisfies 
\[
\dot{y}(t)=-y(t)+b\left(t\right)y(t-1)
\]
for all $t>0,$ where 
\[
b:\left[0,\infty\right)\ni t\mapsto\int_{0}^{1}f'\left(sx^{\psi}\left(t-1\right)+\left(1-s\right)x^{\varphi}\left(t-1\right)\right)\mbox{d}s\in\left(0,\infty\right),
\]
furthermore $b\left(t\right)\rightarrow f'\left(\xi_{k}\right)$ as
$t\rightarrow\infty$. Since $f'\left(\xi_{k}\right)>1$ by hypothesis
(H1), the number $\varepsilon=\left(f'\left(\xi_{k}\right)-1\right)e^{-1}/2$
is positive. So there exists $T\geq0$ such that $b\left(t\right)\geq f'\left(\xi_{k}\right)-\varepsilon$
for all $t\geq T$. Observe that the positivity of $y$ and $b$ implies
that 
\[
\frac{\mbox{d}}{\mbox{d}t}\left(e^{t}y\left(t\right)\right)=e^{t}b\left(t\right)y\left(t-1\right)>0\quad\mbox{for all }t>0.
\]
For this reason, $e^{t-1}y\left(t-1\right)<e^{t}y\left(t\right)$
for $t\geq1$, and 
\begin{align*}
\dot{y}\left(t\right) & \geq-y\left(t\right)+\left(f'\left(\xi_{k}\right)-\varepsilon\right)y\left(t-1\right)\\
 & \geq-\left(1+\varepsilon e\right)y\left(t\right)+f'\left(\xi_{k}\right)y\left(t-1\right)
\end{align*}
for all $t\geq T+1$. The choice of $\varepsilon$ ensures that 
\[
1+\varepsilon e=\frac{1}{2}+\frac{1}{2}f'\left(\xi_{k}\right)<f'\left(\xi_{k}\right).
\]
Hence the equation
\[
\lambda+\left(1+\varepsilon e\right)=f'\left(\xi_{k}\right)e^{-\lambda}
\]
has a positive real solution $\lambda$. Choose $\delta>0$ so that
$y(t)>\delta e^{\lambda t}$ on $[T,T+1]$. Function $z(t)=\delta e^{\lambda t}$
is a solution of the equation 
\[
\dot{z}\left(t\right)=-\left(1+\varepsilon e\right)z\left(t\right)+f'\left(\xi_{k}\right)z\left(t-1\right)
\]
on $\mathbb{R}$. Set $u=y-z.$ Then $u_{T+1}\gg\hat{0}$ and 
\[
\dot{u}(t)\geq-\left(1+\varepsilon e\right)u(t)+f'\left(\xi_{k}\right)u(t-1)\textrm{ for all }t\geq T+1.
\]
If there existed $t^{*}>T+1$ so that $u\left(t^{*}\right)=0$ and
$u$ is positive on $\left[T,t^{*}\right)$, then $\dot{u}\left(t^{*}\right)$
would be nonpositive. On the other hand, the inequality for $u$ combined
with $u\left(t^{*}\right)=0$ and $u\left(t^{*}-1\right)>0$ would
yield that $\dot{u}\left(t^{*}\right)>0$. So $u(t)=y(t)-z(t)=y(t)-\delta e^{\lambda t}>0$
for all $t\geq T$, which contradicts the boundedness of $y$. 
\end{proof}
The next proposition is the analogue of Proposition 3.1 in \cite{Krisztin-Walther-Wu}.
\begin{prop}
\label{prop:nonordeing of S_k}(Nonordering of $\overline{S_{k}}$)
For all $\varphi,\psi\in C$ with $\varphi<\psi$$ $, either $\varphi\in C\backslash\overline{S_{k}}$
or $ $$\psi\in C\backslash\overline{S_{k}}$.\end{prop}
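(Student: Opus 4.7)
The plan is to argue by contradiction. Suppose $\varphi,\psi\in\overline{S_{k}}$ with $\varphi<\psi$. By Proposition \ref{pro:monotone_dynamical_system} applied at $t=2$, one has $\Phi(2,\varphi)\ll\Phi(2,\psi)$; since $\overline{S_{k}}$ is invariant under $\Phi_{\mathcal{A}}$ (as verified at the beginning of Subsection 5.1), both segments remain in $\overline{S_{k}}$. Replacing $\varphi,\psi$ with these, I may assume $\varphi\ll\psi$. Moreover, the set of $\chi\in C$ whose solution is nonoscillatory around $\xi_{k}$ equals $\bigcup_{T\geq 0}\{\chi\in C:x_{T}^{\chi}\gg\hat{\xi}_{k}\text{ or }x_{T}^{\chi}\ll\hat{\xi}_{k}\}$, which is a union of open sets, hence open. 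Its complement is closed; by Theorem B and \eqref{S_k} it contains $S_{k}$, so it contains $\overline{S_{k}}$, and therefore both $x^{\varphi}$ and $x^{\psi}$ oscillate around $\xi_{k}$.

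The heart of the argument is to pass to the forward limit sets. Using the compactness of $\mathcal{A}$, I choose a common sequence $t_{n}\to\infty$ with $x_{t_{n}}^{\varphi}\to\eta\in\omega(\varphi)$ and $x_{t_{n}}^{\psi}\to\eta'\in\omega(\psi)$. Taking the limit in $x_{t_{n}}^{\varphi}\ll x_{t_{n}}^{\psi}$ yields $\eta\leq\eta'$; if $\eta=\eta'$ then the injectivity of the flow $\Phi_{\mathcal{A}}$ on $\mathcal{A}$ would force $\varphi=\psi$, contradicting $\varphi\ll\psi$. Hence $\eta<\eta'$, and a further application of Proposition \ref{pro:monotone_dynamical_system} inside the invariant set $\overline{S_{k}}$ upgrades this to $\eta\ll\eta'$. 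By the Mallet-Paret and Sell Poincar\'e-Bendixson theorem recalled in Section \ref{sec:Prelimimaries}, each of $\omega(\varphi),\omega(\psi)$ is either a single nonconstant periodic orbit or has all its points' $\alpha$- and $\omega$-limits in $\{\hat{\xi}_{-2},\dots,\hat{\xi}_{2}\}$. Because every segment of $\overline{S_{k}}$ oscillates around $\xi_{k}$, the only admissible equilibrium in these limit sets is $\hat{\xi}_{k}$, and any periodic orbit they contain must itself oscillate around $\xi_{k}$. If both limits collapse to $\{\hat{\xi}_{k}\}$, then the trajectories of $\eta$ and $\eta'$ both tend to $\hat{\xi}_{k}$ with $\eta\ll\eta'$, contradicting Proposition \ref{prop:solutions converging to xi_k}. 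If $\omega(\varphi)$ is a single periodic orbit, then $\eta\ll\eta'$ inside that orbit is impossible: forward monotonicity iterated along the periodic orbit produces, via compactness, a pointwise-strict order of a periodic point with itself, a contradiction.

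The main obstacle is the remaining case in which $\omega(\varphi)$ and $\omega(\psi)$ are two distinct periodic orbits of slowly oscillatory type around $\xi_{k}$, which a priori can be pointwise-ordered. I plan to handle this by iterating the reduction on the pair $(\eta,\eta')$, exploiting that each $\omega$-limit is itself invariant in $\overline{S_{k}}$ and that the Mallet-Paret-Sell Lyapunov functional $V$ from Section \ref{sec:Prelimimaries} forces $V\bigl((x^{\eta'}-x^{\eta})_{t}\bigr)=0$ along the positive difference while $V$ is non-trivial on the directions tangent to each periodic orbit; combining these with Lemma \ref{lem:4_properties_of_V} and the nonordering of a single periodic orbit established above, the iteration either collapses the two periodic orbits onto a single one (returning to the previous contradiction) or drives the limit sets toward $\hat{\xi}_{k}$, where Proposition \ref{prop:solutions converging to xi_k} concludes the proof.
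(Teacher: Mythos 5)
Your proposal has a genuine gap, and it also contains a wrong step near the beginning.

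First, the step claiming that if $\eta=\eta'$ then ``the injectivity of the flow $\Phi_{\mathcal{A}}$ on $\mathcal{A}$ would force $\varphi=\psi$'' is false. Injectivity at a fixed time tells you $\Phi(t,\varphi)=\Phi(t,\psi)\Rightarrow\varphi=\psi$; it says nothing about limits $\lim_{n}\Phi(t_{n},\varphi)=\lim_{n}\Phi(t_{n},\psi)$ with $t_{n}\to\infty$. Two distinct initial segments can perfectly well converge to the same point (e.g.\ an equilibrium) along a common time sequence. So you cannot conclude $\eta\neq\eta'$, and without strict inequality the appeal to Proposition \ref{prop:solutions converging to xi_k} and to nonordering of a periodic orbit collapses.

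Second, and more importantly, the ``main obstacle'' you identify — two distinct, possibly pointwise-ordered periodic orbits as $\omega$-limit sets — is left unresolved: the sketched iteration using $V$ is not an argument, and it is not clear it terminates or in which sense it ``drives the limit sets toward $\hat{\xi}_{k}$.'' Even the single-periodic-orbit subcase needs $\eta,\eta'$ to lie on the \emph{same} orbit, which you have not established: $\omega(\varphi)$ and $\omega(\psi)$ may be different orbits even when $\varphi\ll\psi$.

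The paper avoids all of this by not passing to $\omega(\varphi),\omega(\psi)$ directly. After reducing to $\varphi\ll\psi$ in $\overline{S_{k}}$, it uses the density theorem (Theorem 4.1 in Chapter 5 of \cite{Smith}) to insert two elements $\varphi^{*},\psi^{*}$ with $\varphi\ll\varphi^{*}\ll\psi^{*}\ll\psi$ whose solutions are known \emph{a priori} to converge to equilibria. Monotonicity squeezes $x_{t}^{\varphi^{*}},x_{t}^{\psi^{*}}$ between $x_{t}^{\varphi},x_{t}^{\psi}$, and since the outer two oscillate around $\xi_{k}$, the only admissible equilibrium for $\omega(\varphi^{*})$ and $\omega(\psi^{*})$ is $\hat{\xi}_{k}$. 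That yields two strictly ordered initial conditions both converging to $\hat{\xi}_{k}$, contradicting Proposition \ref{prop:solutions converging to xi_k}. The insertion of $\varphi^{*},\psi^{*}$ is exactly the idea missing from your argument: it guarantees the limit sets are equilibria rather than periodic orbits, so the case you could not handle never arises.
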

\begin{proof}
If there are $\tilde{\varphi}\in\overline{S_{k}}$ and $\tilde{\psi}\in\overline{S_{k}}$
satisfying $\tilde{\varphi}<\tilde{\psi}$, then by Proposition \ref{pro:monotone_dynamical_system}
and the invariance of $\overline{S_{k}}$, $\varphi=x_{2}^{\tilde{\varphi}}\in\overline{S_{k}}$,
$\psi=x_{2}^{\tilde{\psi}}\in\overline{S_{k}}$ and $\varphi\ll\psi$.
Theorem 4.1 in Chapter 5 of \cite{Smith} proves that  there is an
open and dense set of initial functions in $C_{-2,2}$ so that the
corresponding solutions converge to equilibria. Hence there exist
$\varphi^{*}\in C$ and $\psi^{*}\in C$ with $\varphi\ll\varphi^{*}\ll\psi^{*}\ll\psi$
such that both $x_{t}^{\varphi^{*}}$ and $x_{t}^{\psi^{*}}$ tend
to equilibria as $t\rightarrow\infty$. 

If $x_{t}^{\psi^{*}}\rightarrow\hat{\xi}$ as $t\rightarrow\infty$,
where $\hat{\xi}$ is any equilibrium with $\xi>\xi_{k}$, then there
exists $T>0$ such that $\hat{\xi}_{k}\ll x_{T}^{\psi^{*}}$. Then
$\hat{\xi}_{k}\ll x_{T}^{\psi^{*}}\ll x_{T}^{\psi}$ by Proposition
\ref{pro:monotone_dynamical_system}, which contradicts the fact that
the elements of $\overline{S_{k}}$ oscillate around $\xi_{k}$. If
$x_{t}^{\psi^{*}}\rightarrow\hat{\xi}\ll\hat{\xi}_{k}$ as $t\rightarrow\infty$,
and there exists $T>0$ with $x_{T}^{\psi^{*}}\ll\hat{\xi}_{k}$,
then $x_{T}^{\varphi}\ll x_{T}^{\psi^{*}}\ll\hat{\xi}_{k}$, which
contradicts $\varphi\in\overline{S}_{k}$. Therefore, $\omega\left(\psi^{*}\right)=\left\{ \hat{\xi}_{k}\right\} .$
Similarly, $\omega\left(\varphi^{*}\right)=\left\{ \hat{\xi}_{k}\right\} $.
This is a contradiction to Proposition \ref{prop:solutions converging to xi_k}.\end{proof}
\begin{prop}
\label{prop:V on S_k}If $\varphi\in\overline{S_{k}}$, $\psi\in\overline{S_{k}}$
and $\varphi\neq\psi$, then $V\left(\psi-\varphi\right)=2$.\end{prop}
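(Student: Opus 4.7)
The plan is to prove the equality by establishing the matching inequalities $V(\psi-\varphi) \leq 2$ and $V(\psi-\varphi) \geq 2$ separately. Both inequalities rely on results that have already been assembled: Proposition \ref{prop: V on unstable set} (which gives $V \leq 2$ on differences of distinct points of $\mathcal{W}^{u}(\mathcal{O}_p)$), Proposition \ref{prop:nonordeing of S_k} (nonordering of $\overline{S_k}$), and the semi-continuity statement of Lemma \ref{lem: continuity_of_V}.

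For the upper bound, I would argue by approximation. Let $\varphi,\psi\in\overline{S_k}$ with $\varphi\neq\psi$. Choose sequences $(\varphi_n)_0^\infty$ and $(\psi_n)_0^\infty$ in $S_k$ with $\varphi_n\to\varphi$ and $\psi_n\to\psi$ in $C$ as $n\to\infty$ (if $\varphi$ or $\psi$ already lies in $S_k$, take the corresponding constant sequence). Since $\psi-\varphi\neq\hat{0}$, we have $\psi_n-\varphi_n\neq\hat{0}$ for all sufficiently large $n$. Because $S_k\subset\mathcal{W}^{u}(\mathcal{O}_p)$, Proposition \ref{prop: V on unstable set} applies to each such pair and yields $V(\psi_n-\varphi_n)\leq 2$. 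The lower semi-continuity of $V$ from Lemma \ref{lem: continuity_of_V} then gives
\[
V(\psi-\varphi)\;\leq\;\liminf_{n\to\infty}V(\psi_n-\varphi_n)\;\leq\;2.
\]

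For the lower bound, invoke the nonordering property. Proposition \ref{prop:nonordeing of S_k} forbids $\varphi<\psi$ as well as $\psi<\varphi$ for distinct $\varphi,\psi\in\overline{S_k}$. Consequently $\psi-\varphi$ is neither everywhere nonnegative nor everywhere nonpositive on $[-1,0]$, so by the very definition of $sc$ we have $sc(\psi-\varphi)\geq 1$, and hence $V(\psi-\varphi)\geq 2$. Combining both inequalities yields the desired equality $V(\psi-\varphi)=2$.

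I do not expect any serious obstacle here, since the two inputs (the bound on $\mathcal{W}^{u}(\mathcal{O}_p)$ and the nonordering) are already in place; the only point worth checking carefully is that $\psi_n-\varphi_n$ is nonzero eventually, which is immediate from $\varphi\neq\psi$ and the continuity of subtraction in $C$.
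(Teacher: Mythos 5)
Your proposal is correct and follows essentially the same two-step strategy as the paper's own proof: the upper bound $V(\psi-\varphi)\leq 2$ is obtained from Proposition \ref{prop: V on unstable set} on $S_k\subset\mathcal{W}^{u}(\mathcal{O}_p)$ together with the lower semi-continuity of $V$ (Lemma \ref{lem: continuity_of_V}), and the lower bound is obtained by ruling out $V(\psi-\varphi)=0$ via the nonordering of $\overline{S_k}$ (Proposition \ref{prop:nonordeing of S_k}). Your version is simply a touch more explicit about the approximating sequences and the eventual nonvanishing of $\psi_n-\varphi_n$, but the logic is identical.
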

\begin{proof}
If $\varphi,\psi\in S_{k}$ and $\varphi\neq\psi$, then $V\left(\psi-\varphi\right)\leq2$
by Proposition \ref{prop: V on unstable set}. The lower-semicontinuity
of $V$ (see Lemma \ref{lem: continuity_of_V}) hence implies that
$V\left(\psi-\varphi\right)\leq2$ for all $\varphi,\psi\in\overline{S_{k}}$
satisfying $\varphi\neq\psi$. If $V\left(\psi-\varphi\right)=0$,
then $\varphi<\psi$ or $\psi<\varphi$, which contradicts Proposition
\ref{prop:nonordeing of S_k}.
\end{proof}
The role of $\pi_{3}$ in the proof of Theorem \ref{main_theorem_3}
is now taken over by the linear map 
\[
\pi_{2}:C\ni\varphi\mapsto\left(\varphi\left(0\right),\varphi\left(-1\right)\right)\in\mathbb{R}^{2}.
\]
The next assertion is analogous to Proposition \ref{prop:Pi3 on unstable set},
and it will be used several times in the subsequent proofs. 
\begin{prop}
\label{prop:Pi2 on S_k}$\pi_{2}$ is injective on \textup{$\overline{S_{k}}$.} \end{prop}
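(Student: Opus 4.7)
The plan is to mimic the argument for Proposition \ref{prop:Pi3 on unstable set} (injectivity of $\pi_3$ on $\mathcal{W}^u(\mathcal{O}_p)$), but with a cleaner combinatorial bookkeeping because the extra information here is that $V(\psi-\varphi)$ is \emph{exactly} $2$ on $\overline{S_k}$ (Proposition \ref{prop:V on S_k}), not merely at most $2$. Because $\pi_2$ records only two coordinates instead of three, I will not need the integral mean $\mathcal{I}$ as a tiebreaker; the full strength of the Mallet-Paret--Sell Lyapunov functional does the job.

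Suppose for contradiction that $\varphi,\psi\in\overline{S_k}$ with $\varphi\neq\psi$ and $\pi_2\varphi=\pi_2\psi$, i.e.\ $\varphi(0)=\psi(0)$ and $\varphi(-1)=\psi(-1)$. Since $\overline{S_k}\subset\mathcal{A}$ is invariant under the flow $\Phi_{\mathcal{A}}$, the solutions $x^\varphi,x^\psi\colon\mathbb{R}\to\mathbb{R}$ exist and satisfy $x_t^\varphi,x_t^\psi\in\overline{S_k}$ for every $t\in\mathbb{R}$. Injectivity of $\Phi_{\mathcal{A}}$ (and of $\Phi(t,\cdot)$ for $t\geq 0$) gives $x_t^\varphi\neq x_t^\psi$ for all $t$. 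Applying Proposition \ref{prop:V on S_k} at every time then yields
\[
V\bigl(x_t^\varphi-x_t^\psi\bigr)=2\qquad\text{for all }t\in\mathbb{R}.
\]

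Now set $z=x^\varphi-x^\psi$. Writing $f(x^\varphi(t-1))-f(x^\psi(t-1))=a(t)z(t-1)$ with the positive continuous coefficient
\[
a(t)=\int_0^1 f'\bigl(sx^\varphi(t-1)+(1-s)x^\psi(t-1)\bigr)\,\mathrm{d}s,
\]
one has $\dot z(t)=-z(t)+a(t)z(t-1)$ on $\mathbb{R}$, so Lemma \ref{lem:4_properties_of_V} is applicable to $z$. The hypothesis $\pi_2\varphi=\pi_2\psi$ means $z(0)=z(-1)=0$; taking $t=0$ in Lemma \ref{lem:4_properties_of_V}(ii) gives either $V(z_0)=\infty$ or $V(z_{-2})>V(z_0)$, that is,
\[
V(\varphi-\psi)=\infty\qquad\text{or}\qquad V\bigl(x_{-2}^\varphi-x_{-2}^\psi\bigr)>V(\varphi-\psi).
\]
Both alternatives contradict the already established equality $V(\varphi-\psi)=V(x_{-2}^\varphi-x_{-2}^\psi)=2$, completing the proof.

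There is no real obstacle here: the only delicate point is making sure that Proposition \ref{prop:V on S_k} applies at the shifted time $t=-2$, which is why I first extend the equality $x_t^\varphi,x_t^\psi\in\overline{S_k}$ to all real $t$ using the invariance of $\overline{S_k}$ and the injectivity of the flow. Everything else is an application of the drop-property of $V$ at double zeros (Lemma \ref{lem:4_properties_of_V}(ii)) and the fact that $V$ takes the value $2$ on all nonzero differences of elements of $\overline{S_k}$.
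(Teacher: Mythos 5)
Your proof is correct and follows the same route as the paper: extend to solutions on $\mathbb{R}$ using invariance and injectivity, invoke Proposition \ref{prop:V on S_k} to fix $V(x_t^\varphi-x_t^\psi)=2$ for all $t$, then apply the drop property of $V$ at the double zero at $t=0$ via Lemma \ref{lem:4_properties_of_V}(ii) to get a contradiction. The only difference is cosmetic: you spell out the two alternatives ($V=\infty$ or strict drop) that the lemma offers, both of which are incompatible with $V\equiv 2$, whereas the paper suppresses the $\infty$ case since it is trivially excluded.
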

\begin{proof}
Suppose  that there exist $\varphi\in\overline{S_{k}}$ and $\psi\in\overline{S_{k}}$
so that $\varphi\neq\psi$ and $\pi_{2}\varphi=\pi_{2}\psi$. Consider
the solutions $x^{\varphi}:\mathbb{R}\rightarrow\mathbb{R}$ and $x^{\psi}:\mathbb{R}\rightarrow\mathbb{R}$.
The invariance of $\overline{S_{k}}$ implies that $x_{t}^{\varphi}\in\overline{S_{k}}$
and $x_{t}^{\psi}\in\overline{S_{k}}$ for all $t\in\mathbb{R}$,
and the the injectivity of the semiflow guarantees that $x_{t}^{\varphi}\neq x_{t}^{\psi}$
for all $t\in\mathbb{R}$. Hence $V\left(x_{t}^{\varphi}-x_{t}^{\psi}\right)=2$
for all real $t$ by Proposition \ref{prop:V on S_k}. The initial
assumption $\varphi\left(0\right)-\psi\left(0\right)=\varphi\left(-1\right)-\psi\left(-1\right)=0$
and Lemma \ref{lem:4_properties_of_V} (ii) however yield that 
\[
V\left(\varphi-\psi\right)<V\left(x_{-2}^{\varphi}-x_{-2}^{\psi}\right),
\]
 which is a contradiction.
\end{proof}
The injectiviy of $\pi_{2}|_{\overline{S_{k}}}$ is sufficient to
give a graph representation for $\overline{S_{k}}$. 
\begin{prop}
\label{prop:graph representation for the closure of S_k}$\overline{S_{k}}$
has a global graph representation: there exist a projection $P_{2}$
from $C$ onto a two-dimensional subspace $G_{2}$ of $C$ and a map
$w_{k}:P_{2}\overline{S_{k}}\rightarrow P_{2}^{-1}\left(0\right)$
so that
\begin{equation}
\overline{S_{k}}=\left\{ \chi+w_{k}\left(\chi\right):\,\chi\in P_{2}\overline{S_{k}}\right\} .\label{repr for the closure of S_k}
\end{equation}
\end{prop}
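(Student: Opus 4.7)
The plan is to carry out the two-dimensional analogue of Step 2 of the proof of Theorem \ref{main_theorem_3}, with $\pi_{2}$ playing the role of $\pi_{3}$. Thanks to Proposition \ref{prop:Pi2 on S_k}, the map $\pi_{2}|_{\overline{S_{k}}}$ is injective, and this alone is enough to produce the graph representation by purely linear-algebraic manipulations. No smoothness of $w_{k}$ or openness of its domain needs to be checked at this stage; those more delicate properties are deferred to later subsections.

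First I would choose $\varphi_{1},\varphi_{2}\in C$ with $\pi_{2}\varphi_{j}=e_{j}$ for $j\in\{1,2\}$, where $e_{1}=(1,0)$ and $e_{2}=(0,1)$; for instance, $\varphi_{1}(s)=1+s$ and $\varphi_{2}(s)=-s$ will do. These two elements of $C$ are linearly independent, so the unique linear map $J_{2}:\mathbb{R}^{2}\to C$ determined by $J_{2}e_{j}=\varphi_{j}$ is injective. Setting $P_{2}=J_{2}\circ\pi_{2}$, I obtain a continuous linear endomorphism of $C$ satisfying $P_{2}\varphi_{j}=\varphi_{j}$ for both $j$, hence $P_{2}\circ P_{2}=P_{2}$. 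Therefore $P_{2}$ is a continuous projection onto the two-dimensional subspace
\[
G_{2}=P_{2}C=\mathbb{R}\varphi_{1}\oplus\mathbb{R}\varphi_{2},
\]
and $C=G_{2}\oplus P_{2}^{-1}(0)$.

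Since $\pi_{2}|_{\overline{S_{k}}}$ is injective and $J_{2}$ is injective, the restriction of $P_{2}$ to $\overline{S_{k}}$ is injective as well. Consequently the map $\overline{S_{k}}\ni\varphi\mapsto P_{2}\varphi\in G_{2}$ possesses an inverse defined on its image $P_{2}\overline{S_{k}}\subset G_{2}$. I would then simply set
\[
w_{k}:P_{2}\overline{S_{k}}\ni\chi\mapsto(\mathrm{id}-P_{2})\bigl((P_{2}|_{\overline{S_{k}}})^{-1}(\chi)\bigr)\in P_{2}^{-1}(0),
\]
and verify the representation as follows: for any $\varphi\in\overline{S_{k}}$, taking $\chi=P_{2}\varphi$ gives $(P_{2}|_{\overline{S_{k}}})^{-1}(\chi)=\varphi$, so that
\[
\chi+w_{k}(\chi)=P_{2}\varphi+(\mathrm{id}-P_{2})\varphi=\varphi,
\]
which establishes \eqref{repr for the closure of S_k}.

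This argument is entirely routine once Proposition \ref{prop:Pi2 on S_k} is in hand, so there is no real obstacle; the substantive content of Theorem \ref{main_theorem_4} concerning $\overline{S_{k}}$---namely, that $P_{2}\overline{S_{k}}$ is topologically a closed annulus, that $w_{k}$ is $C^{1}$-smooth on the interior, and that it extends smoothly to a neighborhood of the boundary---is postponed to the subsequent subsections.
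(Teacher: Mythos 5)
Your argument is essentially identical to the paper's: define $J_{2}$ on a basis of $\mathbb{R}^{2}$ mapping to elements $\varphi_{1},\varphi_{2}$ with $\pi_{2}\varphi_{j}=e_{j}$, set $P_{2}=J_{2}\circ\pi_{2}$, observe it is a projection, and invoke the injectivity of $\pi_{2}|_{\overline{S_{k}}}$ from Proposition \ref{prop:Pi2 on S_k} to build $w_{k}$. The only cosmetic difference is that the paper reuses the $\varphi_{1},\varphi_{2}$ already fixed in the proof of Theorem \ref{main_theorem_3} (so that $G_{2}\subset G_{3}$), whereas you pick explicit functions; both choices are valid and the rest is the same.
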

\begin{proof}
Let $e_{1}=\left(1,0,0\right)$ and $e_{2}=\left(0,1,0\right)$.\emph{
}Let\emph{ }$\varphi_{1}$ and $\varphi_{2}$ be the linearly independent
elements of $C$ fixed in the proof of Theorem \ref{main_theorem_3}
with the property that $\pi_{3}\varphi_{j}=e_{j}$ for $j\in\left\{ 1,2\right\} $.
Define $J_{2}:\mathbb{R}^{2}\rightarrow C$ to be the injective linear
map for which $J_{2}\left(1,0\right)=\varphi_{1}$ and $J_{2}\left(0,1\right)=\varphi_{2}$,
and set $P_{2}=J_{2}\circ\pi_{2}:C\rightarrow C$. Then $P_{2}$ is
continuous, linear and $P_{2}\varphi_{j}=\varphi_{j}$ for both $j\in\left\{ 1,2\right\} $.
Hence $P_{2}\circ P_{2}=P_{2}$, and $P_{2}$ is a projection. The
$2$-dimensional image space
\[
G_{2}=P_{2}C=\left\{ c_{1}\varphi_{1}+c_{2}\varphi_{2}:\, c_{1},c_{2}\in\mathbb{R}\right\} 
\]
is a subspace of $G_{3}$ and $C=G_{2}\oplus P_{2}^{-1}\left(0\right)$.
(Note that $P_{2}$ and $G_{2}$ are both independent of $k$.) As
the restriction of $P_{2}$ to $\overline{S_{k}}$ is injective by
Proposition \ref{prop:Pi2 on S_k}, the inverse $\left(P_{2}|_{\overline{S_{k}}}\right)^{-1}$
of the map $\overline{S_{k}}\ni\varphi\mapsto P_{2}\varphi\in G_{2}$
exists. With the map 
\[
w_{k}:P_{2}\overline{S_{k}}\ni\chi\mapsto\left(\mbox{id}-P_{2}\right)\circ\left(P_{2}|_{\overline{S_{k}}}\right)^{-1}\left(\chi\right)\in P_{2}^{-1}\left(0\right)
\]
we have \eqref{repr for the closure of S_k}. 
\end{proof}
The smoothness of this representation will be verified later. Observe
that 
\[
w_{-1}|_{P_{2}\left(\overline{S_{-1}}\cap\overline{S_{1}}\right)}=w_{1}|_{P_{2}\left(\overline{S_{-1}}\cap\overline{S_{1}}\right)}.
\]
Also note that now we have a global graph representation for any subset
$W$ of $\overline{S_{k}}$: 
\[
W=\left\{ \chi+w_{k}\left(\chi\right):\,\chi\in P_{2}W\right\} .
\]

Let $\pi_{2}^{-1}:\pi_{2}\left(\overline{S_{k}}\right)\rightarrow C$
be the inverse of the injective map $\overline{S_{k}}\ni\varphi\mapsto\pi_{2}\varphi\in\mathbb{R}^{2}$. 
\begin{prop}
\label{prop:inverse of Pi_2 is Lip-cont}$\pi_{2}^{-1}$ is Lipschitz-continuous.\end{prop}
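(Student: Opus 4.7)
The plan is to argue by contradiction via a normalization and compactness argument. Suppose $\pi_2^{-1}$ is not Lipschitz; then there exist sequences $(\varphi_n),(\psi_n)\subset\overline{S_k}$ with $\varphi_n\neq\psi_n$ and
\[
\frac{\|\varphi_n-\psi_n\|}{\|\pi_2\varphi_n-\pi_2\psi_n\|}\to\infty.
\]
Since $\overline{S_k}$ is a closed subset of the compact global attractor $\mathcal{A}$, by passing to subsequences I may assume $\varphi_n\to\varphi$ and $\psi_n\to\psi$ in $\overline{S_k}$. The case $\varphi\neq\psi$ would force $\|\pi_2\varphi_n-\pi_2\psi_n\|\to 0$ while $\|\varphi_n-\psi_n\|$ is bounded away from $0$, hence $\pi_2\varphi=\pi_2\psi$, which contradicts Proposition \ref{prop:Pi2 on S_k}. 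Therefore $\varphi=\psi$ and $\|\varphi_n-\psi_n\|\to 0$.

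Next I would normalize: set $z^n(t)=(x^{\varphi_n}(t)-x^{\psi_n}(t))/\|\varphi_n-\psi_n\|$, so $\|z^n_0\|=1$. Each $z^n$ solves a linear variational equation $\dot z^n(t)=-z^n(t)+a_n(t)z^n(t-1)$ whose coefficients are uniformly bounded between positive constants $\alpha_0,\alpha_1$ that depend only on $f'$ and on the attractor (hence are independent of $n$), and moreover $a_n\to a:=f'(x^\varphi(\cdot-1))$ uniformly on compacta by continuity of $\Phi_\mathcal{A}$. Because $x^{\varphi_n}_t$ and $x^{\psi_n}_t$ are distinct points of $\overline{S_k}$, Proposition \ref{prop:V on S_k} gives $V(z^n_t)=2$ for every $t\in\mathbb{R}$. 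Lemma \ref{lem:technical result} then furnishes a subsequence for which $z^n$ and $\dot z^n$ converge uniformly on compact subsets of $\mathbb{R}$ to a nontrivial $C^1$-solution $z:\mathbb{R}\to\mathbb{R}$ of $\dot z(t)=-z(t)+a(t)z(t-1)$, and $\|z_0\|=1$. The hypothesis $\pi_2(z^n_0)=\pi_2(\varphi_n-\psi_n)/\|\varphi_n-\psi_n\|\to(0,0)$ translates into $z(0)=z(-1)=0$.

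The final step is to derive a contradiction from these properties of $z$. Lower semicontinuity of $V$ gives $V(z_t)\leq 2$ for every $t$, and applying Lemma \ref{lem:4_properties_of_V}(ii) at $t=0$ (using $z(0)=z(-1)=0$ and $V(z_0)<\infty$) yields $V(z_{-2})>V(z_0)$; since $V$ takes only even values and $V(z_{-2})\leq 2$, this forces $V(z_0)=0$. Thus $z_0\geq\hat{0}$ (or $z_0\leq\hat{0}$, after changing sign), and since $z_0\neq\hat{0}$ but vanishes at both endpoints, $z_0$ is strictly positive on some open subinterval of $(-1,0)$. A straightforward variation-of-constants estimate together with the positivity of $a$ then shows that $z(t)>0$ for all $t\geq 1$, so $z_t$ belongs to the regular set $R$ with $V(z_t)=0$ for every $t\geq 2$. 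The main obstacle—and its resolution—is reconciling this with $V(z^n_t)=2$: the uniform convergence $z^n\to z$ and $\dot z^n\to\dot z$ on compacta implies $z^n_t\to z_t$ in the $C^1$-norm, whereupon the continuity property of $V$ on $R$ from Lemma \ref{lem: continuity_of_V} forces $V(z_t)=\lim_{n\to\infty}V(z^n_t)=2$, contradicting $V(z_t)=0$ and completing the proof.
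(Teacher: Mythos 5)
Your proof is correct and follows essentially the same strategy as the paper: normalize the difference quotient, pass to a limiting solution $z$ of the linear equation via Lemma \ref{lem:technical result}, observe $z(0)=z(-1)=0$, use Lemma \ref{lem:4_properties_of_V}(ii) together with lower semicontinuity of $V$ to conclude $V(z_0)=0$, place a later segment $z_t$ in the regular set $R$, and then contradict $V(z^n_t)=2$ via the $C^1$-continuity of $V$ on $R$. The only variation is cosmetic: for the final step you argue directly that $z>0$ on $(1,\infty)$ by a variation-of-constants positivity propagation (so $z_t\in R$ for $t\geq 2$), whereas the paper applies the monotonicity of $V$ and Lemma \ref{lem:4_properties_of_V}(iii) with $V(z_0)=V(z_3)=0$ to deduce $z_3\in R$; both routes are valid and lead to the same contradiction.
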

\begin{proof}
Suppose that $\pi_{2}^{-1}$ is not Lipschitz-continuous, i.e., there
are sequences of solutions $x^{n}:\mathbb{R}\rightarrow\mathbb{R}$
and $y^{n}:\mathbb{R}\rightarrow\mathbb{R}$, $n\in\mathbb{N}$, so
that $x_{0}^{n}\neq y_{0}^{n}$ for all $n\geq0$, $x_{0}^{n},y_{0}^{n}\in\overline{S_{k}}$
for all $n\geq0$, and 
\[
\frac{\left|\pi_{2}\left(x_{0}^{n}-y_{0}^{n}\right)\right|_{\mathbb{R}^{2}}}{\left\Vert x_{0}^{n}-y_{0}^{n}\right\Vert }\rightarrow0\mbox{\quad}\mbox{as }n\rightarrow\infty.
\]
By the compactness of $\overline{S_{k}}$, the solutions $x^{n}$
and $y^{n}$ are uniformly bounded, and Eq.\,\eqref{eq:eq_general}
gives a uniform bound for their derivatives. Therefore we can use
the Arzel\`{a}\textendash{}Ascoli theorem successively on the intervals
$\left[-j,j\right]$, $j\geq1$, and apply a diagonalization process
to get subsequences $\left(x^{n_{m}}\right)_{m=0}^{\infty}$, $\left(y^{n_{m}}\right)_{m=0}^{\infty}$
and continuous functions $x:\mathbb{R}\rightarrow\mathbb{R}$, $y:\mathbb{R}\rightarrow\mathbb{R}$
so that $x^{n_{m}}\rightarrow x$ and $y^{n_{m}}\rightarrow y$ as
$m\rightarrow\infty$ uniformly on compact subsets of $\mathbb{R}$. 

Set functions 
\[
z^{m}:\mathbb{R}\ni t\mapsto\frac{x^{n_{m}}\left(t\right)-y^{n_{m}}\left(t\right)}{\left\Vert x_{0}^{n_{m}}-y_{0}^{n_{m}}\right\Vert }\in\mathbb{R},\quad m\in\mathbb{N}.
\]
Then $V\left(z_{t}^{m}\right)=2$ for all $m\geq0$ and $t\in\mathbb{R}$
by Proposition \ref{prop:V on S_k}, $\left\Vert z_{0}^{m}\right\Vert =1$
for all $m\geq0$, and 
\[
\left|\pi_{2}z_{0}^{m}\right|_{\mathbb{R}^{2}}=\frac{\left|\pi_{2}\left(x_{0}^{n_{m}}-y_{0}^{n_{m}}\right)\right|_{\mathbb{R}^{2}}}{\left\Vert x_{0}^{n_{m}}-y_{0}^{n_{m}}\right\Vert }\rightarrow0\mbox{\quad}\mbox{as }m\rightarrow\infty.
\]
In addition, $\dot{z}^{m}\left(t\right)=-z^{m}\left(t\right)+a_{m}\left(t\right)z^{m}\left(t-1\right)$
for all $m\geq0$ and $t\in\mathbb{R}$, where the coefficient functions
\[
a_{m}:\mathbb{R}\ni t\mapsto\int_{0}^{1}f'\left(sx^{n_{m}}\left(t-1\right)+\left(1-s\right)y^{n_{m}}\left(t-1\right)\right)\mbox{d}s\in\mathbb{R}^{+},\quad m\geq0,
\]
converge to 
\[
a:\mathbb{R}\ni t\mapsto\int_{0}^{1}f'\left(sx\left(t-1\right)+\left(1-s\right)y\left(t-1\right)\right)\mbox{d}s\in\mathbb{R}^{+}
\]
uniformly on compact subsets of $\mathbb{R}$. It is also obvious
that there are constants $\alpha_{1}\geq\alpha_{0}>0$ such that $\alpha_{0}\leq a_{m}\left(t\right)\leq\alpha_{1}$
for all $m\geq0$ and $t\in\mathbb{R}$. 

Therefore Lemma \ref{lem:technical result} guarantees the existence
of a subsequence $\left(z^{m_{l}}\right)_{l=0}^{\infty}$ of $\left(z^{m}\right)_{m=0}^{\infty}$
and a continuously differentiable function $z:\mathbb{R}\rightarrow\mathbb{R}$
such that $z^{m_{l}}\rightarrow z$ and $\dot{z}^{m_{l}}\rightarrow\dot{z}$
as $l\rightarrow\infty$ uniformly on compact subsets of $\mathbb{R}$,
and $z$ satisfies 
\begin{align*}
\dot{z}\left(t\right) & =-z\left(t\right)+a\left(t\right)z\left(t-1\right)\quad\mbox{for all }t\in\mathbb{R}.
\end{align*}
It is clear that $\left\Vert z_{0}\right\Vert =1$, and thus $z_{0}\neq\hat{0}$.
In addition, $\pi_{2}z_{0}=\left(0,0\right)$.

By Lemma \ref{lem: continuity_of_V}, 
\[
V\left(z_{t}\right)\leq\liminf_{l\rightarrow\infty}V\left(z_{t}^{m_{l}}\right)=2\quad\mbox{for all real }t.
\]
Hence Lemma \ref{lem:4_properties_of_V} (ii) and property $\pi_{2}z_{0}=\left(0,0\right)$
together give that $V\left(z_{0}\right)=0$. As $t\mapsto V\left(z_{t}\right)$
is monotone nonincreasing, $V\left(z_{3}\right)=0$. Lemma \ref{lem:4_properties_of_V}
(iii) then implies that $z_{3}$ belongs to the function class $R$,
and $ $the second statement of Lemma \ref{lem: continuity_of_V}
gives that 
\[
0=V\left(z_{3}\right)=\lim_{l\rightarrow\infty}V\left(z_{3}^{m_{l}}\right),
\]
which contradicts $V\left(z_{3}^{m_{l}}\right)=2$. 
\end{proof}
We get the next result as a consequence, it is analogous to Proposition
\ref{prop:Pi3 on tangent vectors}.
\begin{prop}
\label{prop:Pi_2 on tangent vectors}Suppose that $\varphi\in\overline{S_{k}}$,
$\gamma:\left(-1,1\right)\rightarrow C$ is a $C^{1}$-curve with
$\gamma\left(0\right)=\varphi$, and $\left(s_{n}\right)_{0}^{\infty}$
is a sequence in $\left(-1,1\right)\backslash\left\{ 0\right\} $
so that $s_{n}\rightarrow0$ as $n\rightarrow\infty$ and $\gamma\left(s_{n}\right)\in\overline{S_{k}}$
for all $n\geq0$. If $\gamma'\left(0\right)\neq\hat{0}$, then $\pi_{2}\gamma'\left(0\right)\neq\left(0,0\right)$.\end{prop}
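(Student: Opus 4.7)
The plan is to adapt the proof of Proposition~\ref{prop:Pi3 on tangent vectors}, replacing Proposition~\ref{prop: V on unstable set} ($V\le 2$ on differences in $\mathcal{W}^{u}(\mathcal{O}_p)$) with the stronger Proposition~\ref{prop:V on S_k} ($V=2$ on differences in $\overline{S_k}$). Because $\pi_{2}$ carries two coordinates instead of three, the concluding step of Proposition~\ref{prop:Pi3 on tangent vectors} — where $\mathcal{I}(\gamma'(0))=0$ was used to force $\gamma'(0)=\hat{0}$ — is no longer available. Instead I would close the argument using the rigid value $V=2$, in the spirit of the proof of Proposition~\ref{prop:inverse of Pi_2 is Lip-cont}.

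I argue by contradiction: assume $\gamma'(0)\neq\hat{0}$ but $\gamma'(0)(0)=\gamma'(0)(-1)=0$. Following the first part of the proof of Proposition~\ref{prop:Pi3 on tangent vectors} verbatim, I pass to a subsequence so that the solutions $x^{n_{k}}=x^{\gamma(s_{n_{k}})}$ converge to $x=x^{\varphi}$ uniformly on compact subsets of $\mathbb{R}$, set $y^{k}(t)=(x^{n_{k}}(t)-x(t))/s_{n_{k}}$, and note that each $y^{k}$ satisfies $\dot{y}^{k}(t)=-y^{k}(t)+a_{k}(t)y^{k}(t-1)$ with coefficients $a_{k}$ bounded uniformly away from $0$ and $\infty$ and converging locally uniformly to $a(t)=f'(x(t-1))$. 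The crucial new input is that $\overline{S_{k}}$ is invariant under $\Phi_{\mathcal{A}}$ and the flow is injective on $\mathcal{A}$, so $x^{n_{k}}_{t}$ and $x_{t}$ are distinct elements of $\overline{S_{k}}$ for every $t\in\mathbb{R}$; Proposition~\ref{prop:V on S_k} then forces $V(y^{k}_{t})=2$ for all $t$ and $k$. Lemma~\ref{lem:technical result} next supplies a further subsequence $y^{k_{l}}$ converging in $C^{1}$ on compacta to a continuously differentiable $y\colon\mathbb{R}\to\mathbb{R}$ solving $\dot{y}(t)=-y(t)+a(t)y(t-1)$ with $y_{0}=\gamma'(0)\neq\hat{0}$.

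The contradiction then comes from the rigidity of $V$. Lower semi-continuity (Lemma~\ref{lem: continuity_of_V}) gives $V(y_{t})\le 2$ for all $t\in\mathbb{R}$. The assumption yields $y(0)=y(-1)=0$, so Lemma~\ref{lem:4_properties_of_V}(ii) produces $V(y_{0})<V(y_{-2})\le 2$, whence $V(y_{0})=0$. Since $y_{0}\neq\hat{0}$, the monotonicity Lemma~\ref{lem:4_properties_of_V}(i) forces $V(y_{3})=0$. Lemma~\ref{lem:4_properties_of_V}(iii), applied with $V(y_{0})=V(y_{3})<\infty$, then places $y_{3}$ in the regular class $R$, so the second clause of Lemma~\ref{lem: continuity_of_V} upgrades the $C^{1}$-convergence $y^{k_{l}}_{3}\to y_{3}$ to $V(y_{3})=\lim_{l\to\infty}V(y^{k_{l}}_{3})=2$, contradicting $V(y_{3})=0$.

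The main obstacle is precisely the loss of the third coordinate: once $V(\gamma'(0))=0$ is established, one cannot conclude $\gamma'(0)=\hat{0}$ directly as in the previous proposition. The resolution — shifting the decisive comparison from time $0$ to time $3$ and playing the exact equality $V=2$ on $\overline{S_{k}}$ against the $C^{1}$-continuity of $V$ on $R$ — is the same device that powered the proof of Proposition~\ref{prop:inverse of Pi_2 is Lip-cont}, and it is available precisely because $\overline{S_{k}}$ is flow-invariant and $\Phi_{\mathcal{A}}$ is injective on the attractor.
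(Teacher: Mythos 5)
Your argument is correct, but it is considerably longer than the paper's. The paper proves this proposition in three lines: it notes that Proposition~\ref{prop:inverse of Pi_2 is Lip-cont} (Lipschitz continuity of $\pi_{2}^{-1}$) already encodes the needed estimate, and applies it directly to the difference quotients. Since $\gamma(s_{n}),\gamma(0)\in\overline{S_{k}}$, the Lipschitz bound $\left\Vert \pi_{2}^{-1}(a)-\pi_{2}^{-1}(b)\right\Vert \leq K\left|a-b\right|_{\mathbb{R}^{2}}$ yields
\[
\left\Vert \frac{\gamma\left(s_{n}\right)-\gamma\left(0\right)}{s_{n}}\right\Vert \leq K\left|\frac{\pi_{2}\gamma\left(s_{n}\right)-\pi_{2}\gamma\left(0\right)}{s_{n}}\right|_{\mathbb{R}^{2}},
\]
and letting $n\to\infty$ gives $\left\Vert \gamma'(0)\right\Vert \leq K\left|\pi_{2}\gamma'(0)\right|_{\mathbb{R}^{2}}$, which is the conclusion. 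Your approach instead re-runs the compactness and $V$-rigidity machinery from the inside of Proposition~\ref{prop:inverse of Pi_2 is Lip-cont}, adapted to difference quotients along the curve $\gamma$ as in Proposition~\ref{prop:Pi3 on tangent vectors}: you build the limiting solution $y$ of the variational-type equation with $y_{0}=\gamma'(0)$, use $V=2$ on $\overline{S_{k}}$ (Proposition~\ref{prop:V on S_k}) plus Lemma~\ref{lem:4_properties_of_V}(ii) to force $V(y_{0})=0$, then push to $t=3$, invoke Lemma~\ref{lem:4_properties_of_V}(iii) to get $y_{3}\in R$, and use the $C^{1}$-continuity clause of Lemma~\ref{lem: continuity_of_V} for a contradiction. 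Each step checks out (and the backward uniqueness in Proposition~\ref{prop:uniqueness of solutions} guarantees $y_{3}\neq\hat{0}$, which you use implicitly). What your route costs is duplication: you essentially reprove Proposition~\ref{prop:inverse of Pi_2 is Lip-cont} in a slightly different setting rather than applying its statement. The paper's route is cleaner precisely because Lipschitz continuity of $\pi_{2}^{-1}$ is the uniform, quantitative version of the injectivity you are re-establishing pointwise, so it automatically propagates to tangent vectors.
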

\begin{proof}
Let $K>0$ be a Lipschitz-constant for $\pi_{2}^{-1}$. Proposition
\ref{prop:inverse of Pi_2 is Lip-cont} guarantees that such $K$
exists. Then 
\[
\left\Vert \frac{\gamma\left(s_{n}\right)-\gamma\left(0\right)}{s_{n}}\right\Vert \leq K\left|\frac{\pi_{2}\gamma\left(s_{n}\right)-\pi_{2}\gamma\left(0\right)}{s_{n}}\right|_{\mathbb{R}^{2}}
\]
for all $n\geq0$. Letting $ $$n\rightarrow\infty$ we obtain that
$\left\Vert \gamma'\left(0\right)\right\Vert \leq K\left|\pi_{2}\gamma'\left(0\right)\right|_{\mathbb{R}^{2}}$.
Therefore if $\gamma'\left(0\right)\neq\hat{0}$, then $\pi_{2}\gamma'\left(0\right)\neq\left(0,0\right)$.
\end{proof}
\noindent \textbf{~}

\noindent \begin{center}
\textit{5.2 The structure of $\overline{S_{k}}$}
\par\end{center}

\smallskip{}

It is obvious from the definition of $S_{k}$ that $\left(\mathcal{O}_{k}\cup S_{k}\cup\mathcal{O}_{q}\right)\subset\overline{S_{k}}$.
The converse inclusion is proved in this subsection based on the property
that $\pi_{2}$ maps $\overline{S_{k}}$ injectively into $\mathbb{R}^{2}$.
Then it will follow easily that $\overline{C_{q}^{p}}=\mathcal{O}_{p}\cup C_{q}^{p}\cup\mathcal{O}_{q}$
and $\overline{C_{k}^{p}}=\mathcal{O}_{p}\cup C_{k}^{p}\cup\mathcal{O}_{k}$. 

Proposition \ref{prop:Pi2 on S_k} implies that $\pi_{2}$ maps periodic
orbits with segments in $\overline{S_{k}}$ into simple closed curves
in $\mathbb{R}^{2}$, and the images of different periodic orbits
are disjoint curves in $\mathbb{R}^{2}$. So 
\[
\mathbb{R}\ni t\mapsto\pi_{2}p_{t}\in\mathbb{R}^{2},\ \mathbb{R}\ni t\mapsto\pi_{2}q_{t}\in\mathbb{R}^{2}
\]
and 
\[
\mathbb{R}\ni t\mapsto\pi_{2}x_{t}^{k}\in\mathbb{R}^{2}
\]
are pairwise disjoint simple closed curves. From $p\left(\mathbb{R}\right)\subsetneq q\left(\mathbb{R}\right)\subset\left(\xi_{-2},\xi_{2}\right)$
it follows that $\pi_{2}\mathcal{O}_{q}\subset\mbox{ext}\left(\pi_{2}\mathcal{O}_{p}\right)$
and $\pi_{2}\hat{\xi}_{-2}$, $\pi_{2}\hat{\xi}_{2}$ belong to $\mbox{ext}\left(\pi_{2}\mathcal{O}_{q}\right)$.
It is also obvious that $\pi_{2}\hat{0}\in\mbox{int}\left(\pi_{2}\mathcal{O}_{p}\right)$
and $\pi_{2}\mathcal{O}_{k}\in\mbox{int}\left(\pi_{2}\mathcal{O}_{p}\right)$.
For the image of the unstable equilibrium $\hat{\xi}_{k}$, we have
$\pi_{2}\hat{\xi}_{k}\in\mbox{int}\left(\pi_{2}\mathcal{O}_{k}\right)$.
See Fig.~6.

Let 
\[
A_{k}^{p}=\mbox{ext}\left(\pi_{2}\mathcal{O}_{k}\right)\cap\mbox{int}\left(\pi_{2}\mathcal{O}_{p}\right),\quad A_{q}^{p}=\mbox{ext}\left(\pi_{2}\mathcal{O}_{p}\right)\cap\mbox{int}\left(\pi_{2}\mathcal{O}_{q}\right)
\]
and 
\[
A_{k,q}=\mbox{ext}\left(\pi_{2}\mathcal{O}_{k}\right)\cap\mbox{int}\left(\pi_{2}\mathcal{O}_{q}\right),
\]
see Fig.~6. Then by the Sch\"onflies theorem \cite{Schonflies},
$A_{k}^{p}$, $A_{q}^{p}$ and $A_{k,q}$ are homeomorphic to the
open annulus $A^{\left(1,2\right)}=\left\{ u\in\mathbb{R}^{2}:\,1<\left|u\right|<2\right\} $.
For the closures $\overline{A_{k}^{p}},$ $\overline{A_{q}^{p}}$
and $\overline{A_{k,q}}$ of $A_{k}^{p}$, $A_{q}^{p}$ and $A_{k,q}$
in $\mathbb{R}^{2}$, respectively, we have $ $ 
\[
\overline{A_{k}^{p}}=A_{k}^{p}\cup\pi_{2}\mathcal{O}_{k}\cup\pi_{2}\mathcal{O}_{p},\quad\overline{A_{q}^{p}}=A_{q}^{p}\cup\pi_{2}\mathcal{O}_{p}\cup\pi_{2}\mathcal{O}_{q}
\]
and 
\[
\overline{A_{k,q}}=A_{k,q}\cup\pi_{2}\mathcal{O}_{k}\cup\pi_{2}\mathcal{O}_{q}.
\]

Observe that for all $\varphi\in C_{q}^{p}$, $\pi_{2}\varphi\in A_{q}^{p}$
because $t\mapsto\pi_{2}x_{t}^{\varphi}$ is continuous, $\pi_{2}x_{t}^{\varphi}\rightarrow\pi_{2}\mathcal{O}_{p}$
as $t\rightarrow-\infty$, $\pi_{2}x_{t}^{\varphi}\rightarrow\pi_{2}\mathcal{O}_{q}$
as $t\rightarrow\infty$, $\mathcal{O}_{p}\cup C_{q}^{p}\cup\mathcal{O}_{q}\subset\overline{S_{k}}$,
and $\pi_{2}$ is injective on $\overline{S_{k}}$. For the same reason,
$\pi_{2}C_{k}^{p}\subseteq A_{k}^{p}$. Then it is clear that $\pi_{2}\overline{C_{q}^{p}}=\overline{\pi_{2}C_{q}^{p}}\subseteq\overline{A_{q}^{p}}$
and $\pi_{2}\overline{C_{k}^{p}}=\overline{\pi_{2}C_{k}^{p}}\subseteq\overline{A_{k}^{p}}$.
As $\mathcal{O}_{p}\subseteq\overline{C_{q}^{p}}\cap\overline{C_{k}^{p}}$,
we conclude that 
\[
\pi_{2}\mathcal{O}_{p}\subseteq\pi_{2}\left(\overline{C_{q}^{p}}\cap\overline{C_{k}^{p}}\right)\subseteq\pi_{2}\overline{C_{q}^{p}}\cap\pi_{2}\overline{C_{k}^{p}}\subseteq\overline{A_{q}^{p}}\cap\overline{A_{k}^{p}}=\pi_{2}\mathcal{O}_{p},
\]
that is, $ $$\pi_{2}\mathcal{O}_{p}=\pi_{2}\left(\overline{C_{q}^{p}}\cap\overline{C_{k}^{p}}\right)$.
The injectivity of $\pi_{2}$ on $\overline{S_{k}}$ then implies
that 
\begin{equation}
\mathcal{O}_{p}=\overline{C_{q}^{p}}\cap\overline{C_{k}^{p}}.\label{the intersection of the closures of two connecting orbits}
\end{equation}

We also obtain from $\pi_{2}C_{q}^{p}\subseteq A_{q}^{p}$ and $\pi_{2}C_{k}^{p}\subseteq A_{k}^{p}$
that 
\[
\pi_{2}S_{k}=\pi_{2}C_{k}^{p}\cup\pi_{2}\mathcal{O}_{p}\cup\pi_{2}C_{q}^{p}\subseteq A_{k}^{p}\cup\pi_{2}\mathcal{O}_{p}\cup A_{q}^{p}=A_{k,q},
\]
and hence $\pi_{2}\overline{S_{k}}=\overline{\pi_{2}S_{k}}\subseteq\overline{A_{k,q}}$.
Note that this means that $\hat{\xi}_{k}\notin\overline{S_{k}}$.

\begin{center}
\begin{figure}[h]
\begin{centering}
\includegraphics[scale=0.6]{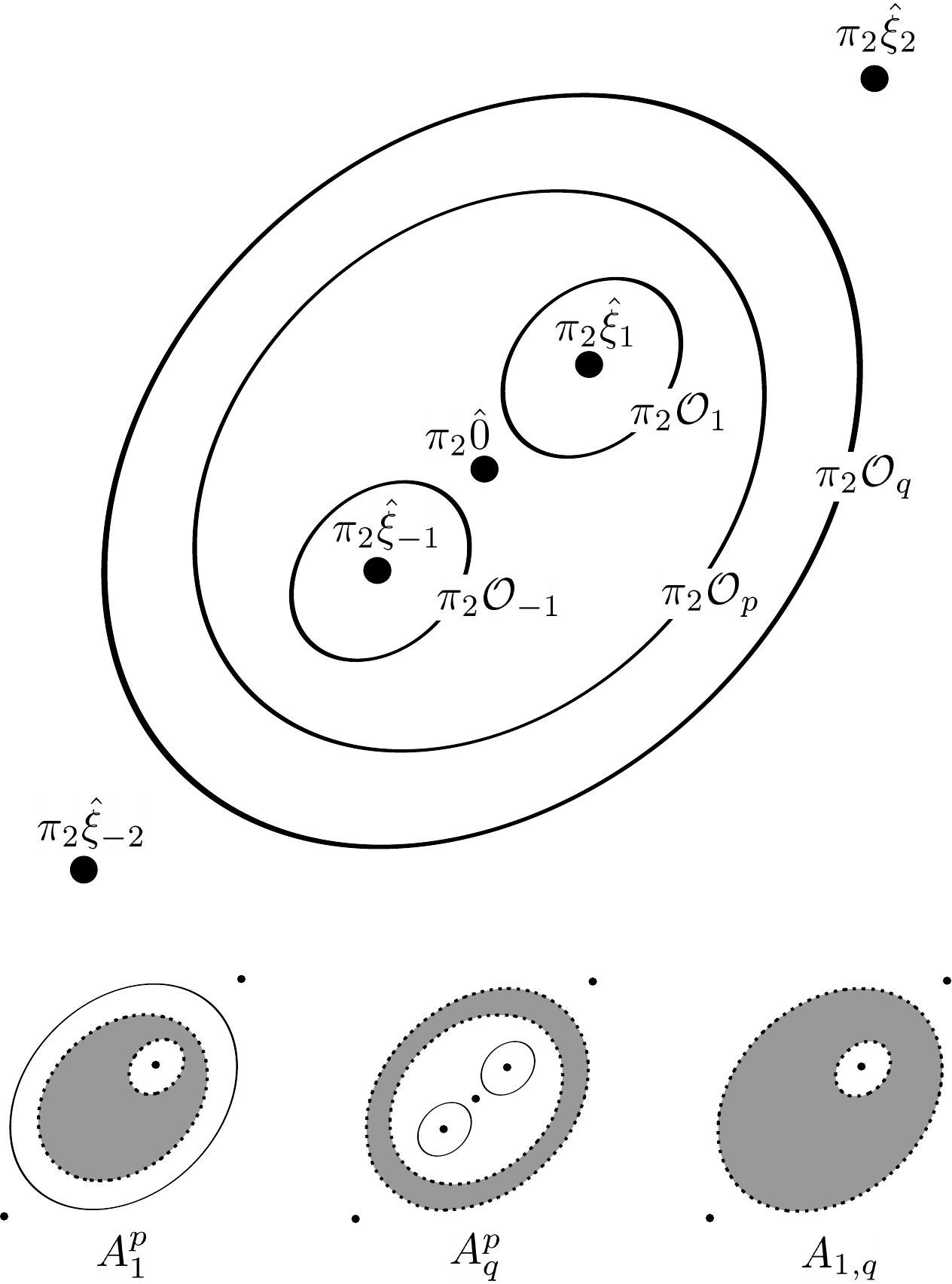} 
\par\end{centering}

\caption{The images of the equilibria and the periodic orbits under $\pi_{2}$,
and the definitions of the open sets $A_{1}^{p}$, $A_{q}^{p}$ and
$A_{1,q}$. }
\end{figure}

\par\end{center}

It has been already verified that for all $\varphi\in\overline{S_{k}}$,
$x^{\varphi}$ oscillates around $\xi_{k}$. We claim that this oscillation
is slow.
\begin{prop}
$V\left(\varphi-\hat{\xi}_{k}\right)=2$ for all $\varphi\in\overline{S_{k}}$. \end{prop}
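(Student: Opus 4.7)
\medskip
\noindent\emph{Proof plan.} I fix $\varphi\in\overline{S_k}$ and set $z(t):=x^\varphi(t)-\xi_k$. Since $\hat\xi_k\notin\overline{S_k}$ and $\overline{S_k}$ is $\Phi_{\mathcal{A}}$--invariant, $x_t^\varphi\neq\hat\xi_k$ for every $t\in\mathbb{R}$; hence $z_t\neq\hat 0$ and $V(z_t)$ is well defined. Because $z$ satisfies $\dot z(t)=-z(t)+a(t)z(t-1)$ with $a(t)=\int_0^1 f'\bigl(sx^\varphi(t-1)+(1-s)\xi_k\bigr)\,ds>0$, Lemma \ref{lem:4_properties_of_V}.(i) makes $t\mapsto V(z_t)$ nonincreasing.

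I will obtain the lower bound $V(\varphi-\hat\xi_k)\geq 2$ by contradiction. If the value were $0$, then $\varphi>\hat\xi_k$ or $\varphi<\hat\xi_k$ (strictly, since $\varphi\neq\hat\xi_k$), and the strong monotonicity of the semiflow (Proposition \ref{pro:monotone_dynamical_system}) would force $x_t^\varphi\gg\hat\xi_k$ or $x_t^\varphi\ll\hat\xi_k$ for all $t\geq 2$, contradicting the oscillation of $x^\varphi$ around $\xi_k$, a property already known on $\overline{S_k}$.

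For the upper bound $V(\varphi-\hat\xi_k)\leq 2$ I approximate: choose $\varphi_n\in S_k$ with $\varphi_n\to\varphi$ in $C$, establish $V(\varphi_n-\hat\xi_k)=2$ for every $n$, and conclude via the lower semi-continuity of $V$ (Lemma \ref{lem: continuity_of_V}). For a fixed $\varphi_n\in S_k\subset\mathcal{W}^u(\mathcal{O}_p)$, the $\alpha$-limit is $\mathcal{O}_p$, so I extract $t_m\to-\infty$ and some $\sigma$ with $x_{t_m}^{\varphi_n}\to p_\sigma$ in $C$, hence in $C^1$ (the two topologies coinciding on $\mathcal{A}$). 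As a subsidiary claim I verify $V(p_t-\hat\xi_k)=2$ for all $t$: the monotone type of $p$ (Proposition \ref{pro: monotone_type}) together with $\omega\in(1,2)$ and the range of $p$ strictly containing $[\xi_{-1},\xi_1]$ yields exactly two sign changes of $p-\xi_k$ per period, so monotonicity plus periodicity of $V(p_{\cdot}-\hat\xi_k)$ combined with the contradiction argument of the lower bound applied to $p$ itself pins the constant value at $2$. Lemma \ref{lem:4_properties_of_V}.(iii) then places $p_\sigma-\hat\xi_k$ in the class $R$, and the continuity of $V$ on $R$ (Lemma \ref{lem: continuity_of_V}) together with the monotonicity of $V(z_{n,\cdot})$ delivers $V(\varphi_n-\hat\xi_k)\leq V(x_{t_m}^{\varphi_n}-\hat\xi_k)\to V(p_\sigma-\hat\xi_k)=2$.

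The main obstacle will be the subsidiary claim $V(p_t-\hat\xi_k)=2$, which requires simultaneously exploiting the monotone-type geometry of the LSOP solution $p$, the period estimate $\omega\in(1,2)$, and Lyapunov monotonicity to pin $V$ at $2$; everything else is a routine assembly of the Lyapunov-functional and monotone-dynamical-system machinery already compiled in Section \ref{sec:Prelimimaries}.
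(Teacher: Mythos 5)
Your proposal is correct and follows essentially the same route as the paper: approximate by elements of $S_k$, pull back along the flow to a segment of $p$ in $C^1$-norm, invoke the $R$-class continuity of $V$ and Lyapunov monotonicity for the bound $\leq 2$, use lower semicontinuity to pass to $\overline{S_k}$, and exclude the value $0$ via strong monotonicity against oscillation around $\xi_k$. The only variation is cosmetic: you split into explicit lower/upper bounds and supply a self-contained argument for the subsidiary fact $V(p_t-\hat\xi_k)\equiv 2$ (slow oscillation of $p$ around $\xi_k$), which the paper simply cites from Proposition~8.2 of \cite{Krisztin-Vas}.
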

\begin{proof}
1. First we prove the assertion for the elements of $S_{k}$. Choose
an arbitrary element $\varphi\in S_{k}$ and a sequence $\left(t_{n}\right)_{n=0}^{\infty}$
with $t_{n}\rightarrow-\infty$ as $n\rightarrow\infty$ such that
$x_{t_{n}}^{\varphi}\rightarrow p_{0}$ as $n\rightarrow\infty$.
As the $C$-norm and $C^{1}$-norm are equivalent on the global attractor,
$x_{t_{n}}^{\varphi}\rightarrow p_{0}$ as $n\rightarrow\infty$ also
in $C^{1}$-norm. Note that $p$ is slowly oscillatory around $\xi_{k}$
(see Proposition 8.2 in \cite{Krisztin-Vas}), i.e., $V\left(p_{t}-\hat{\xi}_{k}\right)=2$
for all real $t.$ Hence Lemma \ref{lem:4_properties_of_V}.(iii)
gives that $p_{0}-\hat{\xi}_{k}\in R$, and Lemma \ref{lem: continuity_of_V}
implies that 
\[
2=V\left(p_{0}-\hat{\xi}_{k}\right)=\lim_{n\rightarrow\infty}V\left(x_{t_{n}}^{\varphi}-\hat{\xi}_{k}\right).
\]
Then by the monotonicity of $V$ (see Lemma \ref{lem:4_properties_of_V}.(i)),
$V\left(x_{t}^{\varphi}-\hat{\xi}_{k}\right)\leq2$ for all $t\in\mathbb{R}$.
If $V\left(\varphi-\hat{\xi}_{k}\right)=0$ and $\varphi<\hat{\xi}_{k}$
or $\varphi>\hat{\xi}_{k}$, then $x_{2}^{\varphi}\ll\hat{\xi}_{k}$
or $x_{2}^{\varphi}\gg\hat{\xi}_{k}$ by Proposition \ref{pro:monotone_dynamical_system},
which contradicts the fact that $x^{\varphi}$ oscillates around $\xi_{k}$.

2. Now choose any $\varphi\in\overline{S_{k}}$ and fix a sequence
$\left(\varphi_{n}\right)_{n=0}^{\infty}$ in $S_{k}$ with $\varphi_{n}\rightarrow\varphi$
as $n\rightarrow\infty$. Since $\hat{\xi}_{k}\notin\overline{S_{k}}$,
$V\left(\varphi-\hat{\xi}_{k}\right)$ is defined. The lower semi-continuity
of $V$ (see Lemma \ref{lem: continuity_of_V}) and part 1 yield that
$V\left(\varphi-\hat{\xi}_{k}\right)\leq\liminf_{n\rightarrow\infty}V\left(\varphi_{n}-\hat{\xi}_{k}\right)=2$.
Observe that assumption $V\left(\varphi-\hat{\xi}_{k}\right)=0$ would
lead to a contradiction just as in the previous step. So $V\left(\varphi-\hat{\xi}_{k}\right)=2$
for all $\varphi\in\overline{S_{k}}$.
\end{proof}
Now we are ready to confirm the equalities regarding $\overline{C_{k}^{p}}$,
$\overline{C_{q}^{p}}$ and $\overline{S_{k}}$ in Theorem \ref{main_theorem_4}.(ii). 
\begin{prop}
\label{prop:the structure of the closure of S_k} $\overline{S_{k}}=\mathcal{O}_{k}\cup S_{k}\cup\mathcal{O}_{q}=\mathcal{O}_{k}\cup C_{k}^{p}\cup\mathcal{O}_{p}\cup C_{q}^{p}\cup\mathcal{O}_{q}$. \end{prop}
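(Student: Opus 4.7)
The second equality is immediate from the definition $S_{k}=C_{k}^{p}\cup\mathcal{O}_{p}\cup C_{q}^{p}$, so only the first equality requires proof. The inclusion $\mathcal{O}_{k}\cup S_{k}\cup\mathcal{O}_{q}\subseteq\overline{S_{k}}$ is clear: $S_{k}\subseteq\overline{S_{k}}$ by definition, and both $\mathcal{O}_{k}$ and $\mathcal{O}_{q}$ arise as $\omega$-limit sets of solutions starting in $C_{k}^{p}\cup C_{q}^{p}\subseteq S_{k}$, hence lie in the closed invariant set $\overline{S_{k}}$.

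For the reverse inclusion, fix $\varphi\in\overline{S_{k}}$ with its full bounded solution $x^{\varphi}$. The first step is to observe that $\overline{S_{k}}$ contains no equilibrium: $\hat{\xi}_{k}$ is excluded by the preceding proposition ($V(\psi-\hat{\xi}_{k})=2$ on $\overline{S_{k}}$), $\hat{\xi}_{-k}$ is excluded because $\hat{\xi}_{-k}-\hat{\xi}_{k}$ has constant sign ($V=0$), while $\hat{\xi}_{\pm 2}$ and $\hat{0}$ are excluded by combining $\pi_{2}\overline{S_{k}}\subseteq\overline{A_{k,q}}$ with the explicit locations of their $\pi_{2}$-images. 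The Mallet-Paret--Sell Poincar\'e--Bendixson theorem applied within the compact invariant set $\overline{S_{k}}\subseteq C_{-2,2}$ then forces both $\omega(\varphi)$ and $\alpha(x^{\varphi})$ to be single nonconstant periodic orbits $\mathcal{O}_{\omega},\mathcal{O}_{\alpha}\subseteq\overline{S_{k}}$.

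The decisive step is the claim that the only periodic orbits contained in $\overline{S_{k}}$ are $\mathcal{O}_{p}$, $\mathcal{O}_{k}$, $\mathcal{O}_{q}$. Given a candidate $\mathcal{O}_{r}\subseteq\overline{S_{k}}$, Theorem B locates $\mathcal{O}_{r}$ in $\mathcal{A}_{-2,0}\cup\mathcal{A}_{0,2}\cup\mathcal{W}^{u}(\mathcal{O}_{p})\cup\mathcal{W}^{u}(\mathcal{O}_{q})$. Within $\mathcal{W}^{u}(\mathcal{O}_{p})$ and $\mathcal{W}^{u}(\mathcal{O}_{q})$, Theorem B leaves only $\mathcal{O}_{p}$ and $\mathcal{O}_{q}$. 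The spindle summand on the side of $0$ opposite $\xi_{k}$ (e.g.\ $\mathcal{A}_{-2,0}$ when $k=1$) forces $r-\xi_{k}$ to have constant sign and so $V(r_{0}-\hat{\xi}_{k})=0$, contradicting membership in $\overline{S_{k}}$. The delicate case is the spindle summand containing $\xi_{k}$: maximality of $x^{k}$ forces $r(\mathbb{R})\subseteq x^{k}(\mathbb{R})$; injectivity of $\pi_{2}|_{\overline{S_{k}}}$ and the monotone-type property of periodic solutions (Proposition \ref{pro: monotone_type}) make $\pi_{2}\mathcal{O}_{r}$ and $\pi_{2}\mathcal{O}_{k}$ disjoint Jordan curves both encircling $\pi_{2}\hat{\xi}_{k}$, hence nested. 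Inner nesting $\pi_{2}\mathcal{O}_{r}\subseteq\mbox{int}(\pi_{2}\mathcal{O}_{k})$ contradicts $\pi_{2}\mathcal{O}_{r}\subseteq\overline{A_{k,q}}$, while outer nesting $\pi_{2}\mathcal{O}_{k}\subseteq\mbox{int}(\pi_{2}\mathcal{O}_{r})$ forces the open bounded component of $\mathbb{R}^{2}\setminus\pi_{2}\mathcal{O}_{r}$ to contain a boundary point of the closed box $x^{k}(\mathbb{R})\times x^{k}(\mathbb{R})$, which is impossible since that bounded component is trapped inside the box. This planar nesting argument, combined with the maximality of $x^{k}$, is the main obstacle.

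Having the classification, the conclusion follows by elimination. If $\mathcal{O}_{\alpha}=\mathcal{O}_{q}$, then $\varphi\in\mathcal{W}^{u}(\mathcal{O}_{q})=\mathcal{O}_{q}\cup C_{-2}^{q}\cup C_{2}^{q}$; since elements of $C_{\pm 2}^{q}$ have $\omega$-limit $\hat{\xi}_{\pm 2}\notin\overline{S_{k}}$ while $\omega(\varphi)\subseteq\overline{S_{k}}$, necessarily $\varphi\in\mathcal{O}_{q}$. If $\mathcal{O}_{\alpha}=\mathcal{O}_{k}$, then $\varphi\in\mathcal{W}^{u}(\mathcal{O}_{k})$ stays in the positively invariant spindle containing $\xi_{k}$, and $\omega(\varphi)$ lies in this spindle intersected with $\overline{S_{k}}$; the classification forces $\omega(\varphi)=\mathcal{O}_{k}$, and the spindle structure (with one-dimensional unstable direction of $\mathcal{O}_{k}$ feeding to equilibria $\hat{\xi}_{0},\hat{\xi}_{\pm 2}\notin\overline{S_{k}}$ or to smaller periodic orbits also excluded from $\overline{S_{k}}$) rules out $\mathcal{O}_{k}$-homoclinics, so $\varphi\in\mathcal{O}_{k}$. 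Finally, if $\mathcal{O}_{\alpha}=\mathcal{O}_{p}$, then $\varphi\in\mathcal{W}^{u}(\mathcal{O}_{p})$, and Theorem B gives $\omega(\varphi)\in\{\hat{\xi}_{-2},\hat{\xi}_{0},\hat{\xi}_{2},\mathcal{O}_{-1},\mathcal{O}_{1},\mathcal{O}_{q}\}$; intersecting with $\overline{S_{k}}$ (the equilibria being excluded, and $\mathcal{O}_{-k}$ excluded because $r\leq 0$ or $r\geq 0$ gives $V(r_{0}-\hat{\xi}_{k})=0$) reduces this list to $\{\mathcal{O}_{k},\mathcal{O}_{q}\}$, whence $\varphi\in\mathcal{O}_{p}\cup C_{k}^{p}\cup C_{q}^{p}=S_{k}$. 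In all cases $\varphi\in\mathcal{O}_{k}\cup S_{k}\cup\mathcal{O}_{q}$, completing the proof.
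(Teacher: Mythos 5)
Your strategy is genuinely different from the paper's: instead of directly case-analyzing a fixed $\varphi\in\overline{S_{k}}\setminus S_{k}$ through the decomposition provided by Theorem B, you first prove $\overline{S_{k}}$ is equilibrium-free, then classify the periodic orbits that can lie inside $\overline{S_{k}}$, and finally eliminate via $\alpha(x^{\varphi})$. That plan is sound and the nesting argument in your classification is correct, but there are two genuine gaps.

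First, the exclusion of $\hat{0}$ via its $\pi_{2}$-image fails. Take $k=1$: since $x^{1}(\mathbb{R})\subset(0,\xi_{2})$, the Jordan curve $\pi_{2}\mathcal{O}_{1}$ lies in the open first quadrant and does not enclose the origin, so $\pi_{2}\hat{0}\in\mbox{ext}(\pi_{2}\mathcal{O}_{1})$; together with $\pi_{2}\hat{0}\in\mbox{int}(\pi_{2}\mathcal{O}_{p})\subset\mbox{int}(\pi_{2}\mathcal{O}_{q})$ this gives $\pi_{2}\hat{0}\in A_{1,q}\subset\overline{A_{1,q}}$, so the inclusion $\pi_{2}\overline{S_{k}}\subseteq\overline{A_{k,q}}$ yields no contradiction. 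The correct exclusion is the same one you use for $\hat{\xi}_{-k}$: $V(\hat{0}-\hat{\xi}_{k})=0\neq 2$, contradicting the fact that $V(\psi-\hat{\xi}_{k})=2$ for every $\psi\in\overline{S_{k}}$.

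Second, your rule-out of $\mathcal{O}_{k}$-homoclinics appeals to "the spindle structure" of $\mathcal{A}_{0,2}$, in particular that $\mathcal{W}^{u}(\mathcal{O}_{k})$ feeds only into equilibria or smaller periodic orbits — but the spindle structure is guaranteed only under extra technical conditions on $f$ which the paper neither assumes nor verifies here, and $\mathcal{O}_{k}$ is not even known to be hyperbolic, so writing $\varphi\in\mathcal{W}^{u}(\mathcal{O}_{k})$ from $\alpha(x^{\varphi})=\mathcal{O}_{k}$ is itself unjustified. What can be obtained directly (from $\alpha(x^{\varphi})=\mathcal{O}_{k}$, the strict inclusion $x^{k}(\mathbb{R})\subset(0,\xi_{2})$, and the positive invariance of $C_{0,2}$) is that the full orbit of $\varphi$ lies in $\mathcal{A}_{0,2}$, whence by your classification $\omega(\varphi)=\mathcal{O}_{k}$ as well; but a homoclinic orbit to $\mathcal{O}_{k}$ inside $\overline{S_{k}}$ must then be excluded by some other mechanism. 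The paper does this by observing that such an orbit would make the planar curve $t\mapsto\pi_{2}x_{t}^{\varphi}$, which approaches the Jordan curve $\pi_{2}\mathcal{O}_{k}$ in both time directions while remaining on the annulus side, self-intersect, contradicting the injectivity of $\pi_{2}$ on $\overline{S_{k}}$. That is the argument you need to substitute for the spindle-structure appeal.
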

\begin{proof}
Let us fix $k=1$. It is clear from the definition of $S_{1}$ that
$\left(\mathcal{O}_{1}\cup\mathcal{O}_{q}\right)\subset\overline{S_{1}}$,
and thus we only need to verify the inclusion $\overline{S_{1}}\setminus S_{1}\subseteq\left(\mathcal{O}_{1}\cup\mathcal{O}_{q}\right)$.
Let $\varphi\in\overline{S_{1}}\setminus S_{1}$ be arbitrary.

It is an immediate consequence of the oscillation of $x^{\varphi}$
around $\xi_{1}$ that $\varphi\notin\mathcal{W}^{u}\left(\mathcal{O}_{p}\right)$,
otherwise $\varphi$ would also belong to $S_{1}$ by \eqref{S_k}.
It is also obvious that $\varphi\notin\mathcal{A}_{-2,0}$. There
are two possibilities by Theorem B: either $\varphi\in\mathcal{W}^{u}\left(\mathcal{O}_{q}\right)$
or $\varphi\in\mathcal{A}_{0,2}$. If $\varphi\in\mathcal{W}^{u}\left(\mathcal{O}_{q}\right)$,
then necessarily $\varphi\in\mathcal{O}_{q}$, otherwise $x^{\varphi}$
would converge to one of the equilibria $\hat{\xi}_{-2}$, $\hat{\xi}_{2}$
by Theorem B. So it remains to show that the relation $\varphi\in\mathcal{A}_{0,2}$
implies that $\varphi\in\mathcal{O}_{1}$. 

$\mathcal{A}_{0,2}$ is a compact and invariant subset of $C$, hence
$\varphi\in\mathcal{A}_{0,2}$ implies that $x_{t}^{\varphi}\in\mathcal{A}_{0,2}$
for all real $t$, moreover $\alpha\left(x^{\varphi}\right)$ and
$\omega\left(\varphi\right)$ are also subsets of $ $$\mathcal{A}_{0,2}$.
On the other hand, $\overline{S_{1}}$ is also compact and invariant,
so $\alpha\left(x^{\varphi}\right)\cup\omega\left(\varphi\right)\subset\overline{S_{1}}$,
and $V\left(\psi-\hat{\xi}_{1}\right)=2$ for all $\psi\in\alpha\left(x^{\varphi}\right)\cup\omega\left(\varphi\right)$
by the previous proposition. The Poincar\'e--Bendixson Theorem (see
Section \ref{sec:Prelimimaries}) then implies that $\omega\left(\varphi\right)$
is either a periodic orbit in $\mathcal{A}_{0,2}$ oscillating slowly
around $\xi_{1}$, or for each $\psi\in\omega\left(\varphi\right)$,
$\alpha\left(x^{\psi}\right)=\omega\left(\psi\right)=\left\{ \hat{\xi}_{1}\right\} .$
As there are no homoclinic orbits to $\hat{\xi}_{1}$ (see Proposition
3.1 in \cite{Krisztin-3}), $\omega\left(\varphi\right)=\left\{ \hat{\xi}_{1}\right\} $
in the latter case. Similarly, $\alpha\left(x^{\varphi}\right)$ is
either $\left\{ \hat{\xi}_{1}\right\} $ or a periodic orbit in $\mathcal{A}_{0,2}$
oscillating slowly around $\xi_{1}$. 

Recall that $x^{1}$ is defined so that the range $x^{1}(\mathbb{R})$
is maximal in the sense that $x^{1}(\mathbb{R})\supset r(\mathbb{R})$
for all periodic solutions $r$ oscillating slowly around $\xi_{1}$
with range in $(0,\xi_{2})$. So if $r:\mathbb{R}\rightarrow\mathbb{R}$
is a periodic solution with segments in $\alpha\left(x^{\varphi}\right)\cup\omega\left(\varphi\right)$,
then either $r$ is the time translation of $x^{1}$, or $\pi_{2}r_{t}\in\mbox{int}\left(\pi_{2}\mathcal{O}_{1}\right)$
for all $t\in\mathbb{R}$. Recall that $\pi_{2}\xi_{1}$ also belongs
to $\mbox{int}\left(\pi_{2}\mathcal{O}_{1}\right)$. On the other
hand, 
\[
\pi_{2}\left(\alpha\left(x^{\varphi}\right)\cup\omega\left(\varphi\right)\right)\subset\pi_{2}\overline{S_{k}}\subseteq\overline{A_{k,q}}\subset\mathbb{R}^{2}\setminus\mbox{int}\left(\pi_{2}\mathcal{O}_{1}\right).
\]
It follows that $\pi_{2}\left(\alpha\left(x^{\varphi}\right)\cup\omega\left(\varphi\right)\right)\subseteq\pi_{2}\mathcal{O}_{1}$
and thus$ $ $\alpha\left(x^{\varphi}\right)=\omega\left(\varphi\right)=\mathcal{O}_{1}$.
If $x^{\varphi}$ is not the time translation of $x^{1}$, then this
is only possible if the curve $t\rightarrow\pi_{2}x_{t}^{\varphi}$
is self-intersecting, which contradicts the injectivity of $\pi_{2}$
on $\overline{S_{1}}.$ Hence relation $\varphi\in\mathcal{A}_{0,2}$
implies that $\varphi\in\mathcal{O}_{1}$.

We have verified that each $\varphi\in\overline{S_{1}}\setminus S_{1}$
belongs to $\mathcal{O}_{1}\cup\mathcal{O}_{q}$, that is 
\[
\overline{S_{1}}=\mathcal{O}_{1}\cup C_{1}^{p}\cup\mathcal{O}_{p}\cup C_{q}^{p}\cup\mathcal{O}_{q}.
\]
 Handling the case $k=-1$ is completely analogous. \end{proof}
\begin{cor}
\label{cor: closures of connecting sets} $\overline{S_{-1}}\cap\overline{S_{1}}=\mathcal{O}_{p}\cup C_{q}^{p}\cup\mathcal{O}_{q}$,
$\overline{C_{q}^{p}}=\mathcal{O}_{p}\cup C_{q}^{p}\cup\mathcal{O}_{q}$
and $\overline{C_{k}^{p}}=\mathcal{O}_{p}\cup C_{k}^{p}\cup\mathcal{O}_{k}$. \end{cor}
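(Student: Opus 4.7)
\medskip

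\noindent \textbf{Proof proposal.} The plan is to deduce all three equalities from Proposition \ref{prop:the structure of the closure of S_k}, using only two additional ingredients: (a) the characterization of elements of $\overline{S_k}$ in terms of oscillation around $\xi_k$, and (b) the injectivity of $\pi_2$ on $\overline{S_k}$ together with the geometry of the simple closed curves $\pi_2\mathcal{O}_p,\pi_2\mathcal{O}_q,\pi_2\mathcal{O}_k$ in $\mathbb{R}^2$.

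First I would handle $\overline{S_{-1}}\cap\overline{S_{1}}$. By Proposition \ref{prop:the structure of the closure of S_k},
\[
\overline{S_{-1}}\cap\overline{S_{1}}=\bigl(\mathcal{O}_{-1}\cup C_{-1}^{p}\cup\mathcal{O}_{p}\cup C_{q}^{p}\cup\mathcal{O}_{q}\bigr)\cap\bigl(\mathcal{O}_{1}\cup C_{1}^{p}\cup\mathcal{O}_{p}\cup C_{q}^{p}\cup\mathcal{O}_{q}\bigr),
\]
so the inclusion ``$\supseteq$'' is immediate. For the reverse inclusion, recall that every $\varphi\in\overline{S_{k}}$ satisfies that $x^{\varphi}$ oscillates around $\xi_{k}$. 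If $\varphi\in\mathcal{O}_{-1}\cup C_{-1}^{p}$, then either $x^{\varphi}(\mathbb{R})\subset(\xi_{-2},0)$ or $x^{\varphi}(t)$ eventually enters and stays in a neighborhood of $x^{-1}(\mathbb{R})\subset(\xi_{-2},0)$; in either case $x^{\varphi}(t)<\xi_{1}$ for all sufficiently large $t$, so $x^{\varphi}$ does not oscillate around $\xi_{1}$ and thus $\varphi\notin\overline{S_{1}}$. Symmetrically $\mathcal{O}_{1}\cup C_{1}^{p}$ is disjoint from $\overline{S_{-1}}$, which gives the desired equality.

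Next, for $\overline{C_{q}^{p}}$, note $C_{q}^{p}\subset S_{-1}\cap S_{1}$ directly from the definition (a trajectory connecting $\mathcal{O}_p$ to $\mathcal{O}_q$ lies in $\mathcal{W}^u(\mathcal{O}_p)$ and the range of $q$ contains both $\xi_{-1}$ and $\xi_1$, so $x^\varphi$ oscillates around both). Hence $\overline{C_{q}^{p}}\subseteq\overline{S_{-1}}\cap\overline{S_{1}}=\mathcal{O}_{p}\cup C_{q}^{p}\cup\mathcal{O}_{q}$ by the first part. The reverse inclusion $\mathcal{O}_{p}\cup C_{q}^{p}\cup\mathcal{O}_{q}\subseteq\overline{C_{q}^{p}}$ follows because $\mathcal{O}_{p}$ and $\mathcal{O}_{q}$ are the $\alpha$- and $\omega$-limit sets of every trajectory in $C_q^p$, respectively.

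Finally, for $\overline{C_{k}^{p}}$, the inclusion $\mathcal{O}_{p}\cup C_{k}^{p}\cup\mathcal{O}_{k}\subseteq\overline{C_{k}^{p}}$ is clear from the $\alpha$- and $\omega$-limit analysis. For the opposite direction, apply $\pi_2$: the observation in Subsection 5.2 gives $\pi_{2}C_{k}^{p}\subseteq A_{k}^{p}$, whence
\[
\pi_{2}\overline{C_{k}^{p}}=\overline{\pi_{2}C_{k}^{p}}\subseteq\overline{A_{k}^{p}}=A_{k}^{p}\cup\pi_{2}\mathcal{O}_{k}\cup\pi_{2}\mathcal{O}_{p}.
\]
Since $\overline{A_{k}^{p}}$ and $\overline{A_{q}^{p}}$ share only the common boundary $\pi_{2}\mathcal{O}_{p}$, and $\pi_{2}\mathcal{O}_{q}\subset\overline{A_{q}^{p}}\setminus\pi_{2}\mathcal{O}_{p}$, it follows that $\pi_{2}\overline{C_{k}^{p}}$ is disjoint from $\pi_{2}(C_{q}^{p}\cup\mathcal{O}_{q})$. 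Combined with Proposition \ref{prop:Pi2 on S_k} (injectivity of $\pi_{2}$ on $\overline{S_{k}}$) and $\overline{C_k^p}\subseteq\overline{S_k}$ from Proposition \ref{prop:the structure of the closure of S_k}, this forces $\overline{C_{k}^{p}}\subseteq\mathcal{O}_{k}\cup C_{k}^{p}\cup\mathcal{O}_{p}$.

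I do not expect any serious obstacle here, since the whole corollary is essentially bookkeeping from Proposition \ref{prop:the structure of the closure of S_k} and the planar separation provided by $\pi_2$; the only small care needed is in verifying that elements of $C_{-1}^{p}$ eventually fail to oscillate around $\xi_{1}$, which uses the asymptotic behaviour $x^{\varphi}(t)\to x^{-1}(\mathbb{R})\subset(\xi_{-2},0)$.
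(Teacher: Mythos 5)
Your proof is correct, and it leans on the same two pillars as the paper's own argument: Proposition \ref{prop:the structure of the closure of S_k} (the five-piece decomposition of $\overline{S_k}$) and the planar separation coming from the injectivity of $\pi_2$ on $\overline{S_k}$ together with the annulus geometry of $\pi_2\mathcal{O}_k,\pi_2\mathcal{O}_p,\pi_2\mathcal{O}_q$. The differences are organizational. For the first equality, your oscillation argument (that elements of $\mathcal{O}_{-1}\cup C_{-1}^p$ fail to oscillate around $\xi_1$ and hence lie outside $\overline{S_1}$) is correct but more than is needed: the five constituents $\mathcal{O}_k,C_k^p,\mathcal{O}_p,C_q^p,\mathcal{O}_q$ of $\overline{S_k}$ are pairwise disjoint by construction (the $C$-sets are distinguished by their $\omega$-limit sets, and the periodic orbits $\mathcal{O}_{\pm1},\mathcal{O}_p,\mathcal{O}_q$ are distinct with none contained in $\mathcal{W}^u(\mathcal{O}_p)$), so the intersection is immediate, as the paper indicates. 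For the third equality, your $\pi_2$-argument essentially reproves the identity $\mathcal{O}_p=\overline{C_q^p}\cap\overline{C_k^p}$, which the paper has already established as \eqref{the intersection of the closures of two connecting orbits} in Subsection 5.2 and then simply cites together with the sandwich inclusions $\mathcal{O}_p\cup C_q^p\cup\mathcal{O}_q\subseteq\overline{C_q^p}\subseteq\overline{S_k}$ and $\mathcal{O}_k\cup C_k^p\cup\mathcal{O}_p\subseteq\overline{C_k^p}\subseteq\overline{S_k}$. Deriving $\overline{C_q^p}$ from the first equality is a nice shortcut that avoids a second appeal to the $\pi_2$-geometry; both routes are sound.
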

\begin{proof}
The first equality follows immediately from Proposition \ref{prop:the structure of the closure of S_k}.
The second and third equalities come from 
\[
\mathcal{O}_{p}\cup C_{q}^{p}\cup\mathcal{O}_{q}\subseteq\overline{C_{q}^{p}}\subseteq\overline{S_{k}}=\mathcal{O}_{k}\cup C_{k}^{p}\cup\mathcal{O}_{p}\cup C_{q}^{p}\cup\mathcal{O}_{q},
\]
\[
\mathcal{O}_{k}\cup C_{k}^{p}\cup\mathcal{O}_{p}\subseteq\overline{C_{k}^{p}}\subseteq\overline{S_{k}}=\mathcal{O}_{k}\cup C_{k}^{p}\cup\mathcal{O}_{p}\cup C_{q}^{p}\cup\mathcal{O}_{q}
\]
 and \eqref{the intersection of the closures of two connecting orbits}.
\end{proof}
\noindent \begin{center}
\textit{5.3 The smoothness of $ $$C_{q}^{p}$ and $C_{k}^{p}$} \smallskip{}

\par\end{center}

Suppose $r$ is one of the periodic solutions $q$ or $x^{k}$ with
minimal period $\omega>1$, and let $C_{r}^{p}$ be the heteroclinic
connection from $\mathcal{O}_{p}$ to $O_{r}=\left\{ r_{t}:\, t\in\mathbb{R}\right\} $. 

Next we confirm that $C_{r}^{p}$ is a $C^{1}$-submanifold of $\mathcal{W}^{u}\left(\mathcal{O}_{p}\right)$.
First we verify that $\mathcal{W}^{u}\left(\mathcal{O}_{p}\right)$
intersects transversally a local stable or a local center-stable manifold
of a Poincar\'e map at a point of $\mathcal{O}_{r}$. It follows
that the intersection is a one-dimensional $C^{1}$-submanifold of
$\mathcal{W}^{u}\left(\mathcal{O}_{p}\right)$. Then we apply the
injectivity of the derivative of the flow induced by the solution
operator on $\mathcal{W}^{u}\left(\mathcal{O}_{p}\right)$ (see Proposition
\ref{the flow is continuously differentiable} and Corollary \ref{injective derivative})
to confirm that each point $\varphi$ in $C_{r}^{p}$ belongs to a
``small'' subset $W_{\varphi}$ of $C_{r}^{p}$ that is a two-dimensional
$C^{1}$-submanifold of $\mathcal{W}^{u}\left(\mathcal{O}_{p}\right)$.
This means that $C_{r}^{p}$ is an immersed $C^{1}$-submanifold of
$\mathcal{W}^{u}\left(\mathcal{O}_{p}\right)$. In order to prove
that $C_{r}^{p}$ is embedded in $\mathcal{W}^{u}\left(\mathcal{O}_{p}\right)$,
we have to show that for any $\varphi$ in $C_{r}^{p}$, there is
no sequence in $C_{r}^{p}\backslash W_{\varphi}$ converging to $\varphi$.
According to results of Subsection 5.1, $ $$\pi_{2}$ is injective
on $C_{r}^{p}$ and on the tangent spaces of $C_{r}^{p}$, which implies
that $\pi_{2}W_{\varphi}$ is open in $\mathbb{R}^{2}$. If a sequence
$\left(\varphi^{n}\right)_{n=0}^{\infty}$ from the rest of the connecting
set converges to $\varphi$ as $n\rightarrow\infty$, then $\pi_{2}\varphi^{n}\rightarrow\pi_{2}\varphi$
as $n\rightarrow\infty$, and $\pi_{2}\varphi^{n}\in\pi_{2}W_{\varphi}$
for all $n$ large enough. The injectivity of $\pi_{2}$ on $\overline{S_{k}}$
then implies that $\varphi^{n}\in W_{\varphi}$, which is a contradiction.
So $C_{r}^{p}$ is embedded in $\mathcal{W}^{u}\left(\mathcal{O}_{p}\right)$.
With the projection $P_{2}$ and the map $w_{k}$ from Proposition
\ref{prop:graph representation for the closure of S_k}, 
\[
C_{r}^{p}=\left\{ \chi+w_{k}\left(\chi\right):\,\chi\in P_{2}C_{r}^{p}\right\} .
\]
Using the previously obtained result that $C_{r}^{p}$ is a $C^{1}$-submanifold
of $\mathcal{W}^{u}\left(\mathcal{O}_{p}\right)$, we prove at the
end of this subsection that $w_{k}$ is continuously differentiable
on the open set $P_{2}C_{r}^{p}$, i.e., this representation for $C_{r}^{p}$
is smooth. 

Section \ref{sec:Floquet-multipliers and invariant manifolds} has
introduced a hyperplane $Y$, a convex bounded open neighborhood $N$
of $r_{0}$ in $C$, $\varepsilon\in\left(0,\omega\right)$ and a
$C^{1}$-map $\gamma:N\rightarrow\left(\omega-\varepsilon,\omega+\varepsilon\right)$
with $\gamma\left(r_{0}\right)=\omega$ so that for each $\left(t,\varphi\right)\in\left(\omega-\varepsilon,\omega+\varepsilon\right)\times N$,
the segment $x_{t}^{\varphi}$ belongs to $r_{0}+Y$ if and only if
$t=\gamma(\varphi)$. A Poincar\'e return map $P_{Y}$ has been defined
as
\[
P_{Y}:N\cap\left(r_{0}+Y\right)\ni\varphi\mapsto\Phi\left(\gamma(\varphi),\varphi\right)\in r_{0}+Y.
\]

Let $\mathcal{W}$ denote a local stable manifold $\mathcal{W}_{loc}^{s}\left(P_{Y},r_{0}\right)$
of $P_{Y}$ at $r_{0}$ if $\mathcal{O}_{r}$ is hyperbolic, and let
$\mathcal{W}$ be a local center-stable manifold $\mathcal{W}_{loc}^{sc}\left(P_{Y},r_{0}\right)$
of $P_{Y}$ at $r_{0}$ otherwise. By Section \ref{sec:Floquet-multipliers and invariant manifolds},
$\mathcal{W}$ is a $C^{1}$-submanifold of $r_{0}+Y$ with codimension
$1$, and it is a $C^{1}$-submanifold of $C$ with codimension $2$. 

The subsequent proposition is an important step toward the proof of
the assertion that $C_{q}^{p}$ and $C_{k}^{p}$ are two-dimensional
$C^{1}$-submanifolds of $\mathcal{W}^{u}\left(\mathcal{O}_{p}\right)$. 
\begin{prop}
$\mathcal{W}^{u}\left(\mathcal{O}_{p}\right)\cap\mathcal{W}$ is a
one-dimensional $C^{1}$-submanifold of $\mathcal{W}^{u}\left(\mathcal{O}_{p}\right)$.\end{prop}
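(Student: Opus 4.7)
My strategy is to invoke the transversal intersection theorem of \cite{Abraham-Robbin}: if $M_{1}$ and $M_{2}$ are $C^{1}$-submanifolds of a Banach $C^{1}$-manifold $M$ intersecting transversally at a common point $\varphi$, meaning $T_{\varphi}M_{1}+T_{\varphi}M_{2}=T_{\varphi}M$, then $M_{1}\cap M_{2}$ is a $C^{1}$-submanifold of $M$ near $\varphi$ whose codimension in $M_{1}$ equals that of $M_{2}$ in $M$. Applied with $M=C$, $M_{1}=\mathcal{W}^{u}(\mathcal{O}_{p})$ (of dimension $3$, by Theorem \ref{main_theorem_3}) and $M_{2}=\mathcal{W}$ (of codimension $2$ in $C$), transversality at every $\varphi\in\mathcal{W}^{u}(\mathcal{O}_{p})\cap\mathcal{W}$ yields that the intersection is a one-dimensional $C^{1}$-submanifold of $C$, hence of $\mathcal{W}^{u}(\mathcal{O}_{p})$. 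The entire task therefore reduces to verifying transversality at each common point.

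For the reduction, observe that $T_{\varphi}\mathcal{W}\subseteq Y$ since $\mathcal{W}\subseteq r_{0}+Y$, while Proposition \ref{the flow is continuously differentiable} gives $\dot{\varphi}\in T_{\varphi}\mathcal{W}^{u}(\mathcal{O}_{p})$, and Proposition \ref{qvarphi dot near r_0}, after possibly shrinking $\mathcal{W}$, guarantees $\dot{\varphi}\notin Y$. Hence $\dot{\varphi}\in T_{\varphi}\mathcal{W}^{u}(\mathcal{O}_{p})\setminus T_{\varphi}\mathcal{W}$ and $C=\mathbb{R}\dot{\varphi}\oplus Y$. Letting $\Pi:C\to Y$ denote the projection along $\mathbb{R}\dot{\varphi}$, transversality $T_{\varphi}\mathcal{W}^{u}(\mathcal{O}_{p})+T_{\varphi}\mathcal{W}=C$ is equivalent to $\Pi(T_{\varphi}\mathcal{W}^{u}(\mathcal{O}_{p}))+T_{\varphi}\mathcal{W}=Y$; since $T_{\varphi}\mathcal{W}$ has codimension $1$ in $Y$ and $\Pi(T_{\varphi}\mathcal{W}^{u}(\mathcal{O}_{p}))$ is two-dimensional, this amounts to exhibiting a single $\eta\in T_{\varphi}\mathcal{W}^{u}(\mathcal{O}_{p})$ with $\eta\notin\mathbb{R}\dot{\varphi}+T_{\varphi}\mathcal{W}$.

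A natural candidate for $\eta$ is provided by the positive Floquet direction of $\mathcal{O}_{p}$: write $\varphi=\Phi(t_{0},\psi)$ with $\psi\in\mathcal{W}_{loc}^{u}(P_{Y},p_{0})$ (the Poincar\'e map attached to $\mathcal{O}_{p}$) and $t_{0}>1$; Proposition \ref{pro: properties of local unstable manifold at r_0} supplies a strictly positive $\eta_{0}\in T_{\psi}\mathcal{W}_{loc}^{u}(P_{Y},p_{0})$. Propagating $\eta_{0}$ along the linear variational equation around $x^{\psi}$ preserves strict positivity by the variation-of-constants formula together with $f'>0$ (as in Proposition \ref{pro:monotone_dynamical_system} for the nonlinear flow), yielding a strictly positive $\eta\in T_{\varphi}\mathcal{W}^{u}(\mathcal{O}_{p})$ with $V(\eta)=0$.

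The main obstacle will be ruling out $\eta\in\mathbb{R}\dot{\varphi}+T_{\varphi}\mathcal{W}$. I plan to argue by contradiction: suppose $\eta=c\dot{\varphi}+\zeta$ with $\zeta\in T_{\varphi}\mathcal{W}$. Using Proposition \ref{prop:trajectory of P_Y}, after a forward time translation $\varphi$ lies on a trajectory $(\varphi^{n})_{n=0}^{\infty}$ of $P_{Y}$ inside $\mathcal{W}$ with $\varphi^{n}\to r_{0}$, and the derivatives of the iterates $P_{Y}^{n}$ given by \eqref{derivativse of iteratives of P_Y} push $\eta$ to a sequence of tangent vectors lying in $T_{\varphi^{n}}\mathcal{W}$. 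After rescaling by their norms and extracting a subsequence through the compactness apparatus of Lemma \ref{lem:technical result} (with the decay control of Lemma \ref{lem:technical result 2}), a nontrivial limit vector is obtained in $T_{r_{0}}\mathcal{W}$, which equals $C_{s}$ in the hyperbolic case and $C_{s}\oplus\mathbb{R}\xi$ otherwise. The Floquet structure recorded in \eqref{C_<lambda_1} and \eqref{C_r_M<}, combined with the lower semi-continuity of $V$ (Lemma \ref{lem: continuity_of_V}) and the inheritance of $V$-value zero from the strict positivity of $\eta$, then produces a contradiction, since nonzero elements of the relevant spectral subspaces carry $V\geq 2$. The delicate case is the nonhyperbolic one, where the center direction $\mathbb{R}\xi$ must be controlled in parallel with $C_{s}$; here the argument is modeled on the analogous transversality proof in the monograph \cite{Krisztin-Walther-Wu} rather than obtained by a direct spectral separation. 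Once transversality is established at every common point, the transversal intersection theorem finishes the proof.
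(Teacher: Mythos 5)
Your proposal follows the paper's proof closely: reduce to transversality of $\mathcal{W}^{u}\left(\mathcal{O}_{p}\right)$ and $\mathcal{W}$, use $\dot{\varphi}\notin Y$ (Proposition~\ref{qvarphi dot near r_0}) together with a strictly positive tangent vector $\chi\in T_{\varphi}\mathcal{W}^{u}\left(\mathcal{O}_{p}\right)$ produced from Proposition~\ref{pro: properties of local unstable manifold at r_0} and the monotone variational flow, and rule out $\chi\in\mathbb{R}\dot{\varphi}+T_{\varphi}\mathcal{W}$ by iterating the Poincar\'e return map derivative on a hypothetical $c\dot{\varphi}+\chi\in T_{\varphi}\mathcal{W}$ and invoking the Floquet structure. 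The one place your sketch overcomplicates things is the worry about the nonhyperbolic case: by formula~\eqref{derivativse of iteratives of P_Y} the $c\dot{\varphi}$ contribution cancels, leaving
$DP_{Y}^{j}\left(\varphi\right)\left(c\dot{\varphi}+\chi\right)=v_{\gamma_{j}}-\frac{e^{*}\left(v_{\gamma_{j}}\right)}{e^{*}\left(\dot{x}_{\gamma_{j}}\right)}\dot{x}_{\gamma_{j}}$
with $v$ the strictly positive variational solution with $v_{0}=\chi$; normalizing by $\left\Vert v_{\gamma_{j}}\right\Vert$, Lemma~\ref{lem:technical result 2} and a compactness argument yield a strictly positive unit limit $\rho$ with $\rho-\frac{e^{*}\left(\rho\right)}{e^{*}\left(\dot{r}_{0}\right)}\dot{r}_{0}\in T_{r_{0}}\mathcal{W}$. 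Since $T_{r_{0}}\mathcal{W}\subseteq C_{\leq1}$ and $\dot{r}_{0}\in C_{\leq1}$ in \emph{both} the hyperbolic and nonhyperbolic case, one gets $\rho\in C_{\leq1}\subset C_{<\lambda_{1}}$, which contradicts~\eqref{C_<lambda_1} uniformly; no separate handling of the center direction $\mathbb{R}\xi$ or appeal to the monograph is needed.
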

\begin{proof}
1. Theorem B and Proposition \ref{prop:trajectory of P_Y} imply that
$\mathcal{W}^{u}\left(\mathcal{O}_{p}\right)\cap\mathcal{W}$ is nonempty.
It suffices to verify that the inclusion map $i:\mathcal{W}^{u}\left(\mathcal{O}_{p}\right)\ni\varphi\mapsto\varphi\in C$
and $\mathcal{W}$ are transversal. Then it follows that $i^{-1}\left(\mathcal{W}\right)=\mathcal{W}^{u}\left(\mathcal{O}_{p}\right)\cap\mathcal{W}$
is a $C^{1}$-submanifold of $\mathcal{W}^{u}\left(\mathcal{O}_{p}\right)$,
furthermore it has the same codimension in $\mathcal{W}^{u}\left(\mathcal{O}_{p}\right)$
as $\mathcal{W}$ in $C$ (see e.g. Corollary 17.2 in \cite{Abraham-Robbin}).
Accordingly we show that the inclusion map $i:\mathcal{W}^{u}\left(\mathcal{O}_{p}\right)\ni\varphi\mapsto\varphi\in C$
and $\mathcal{W}$ are transversal. This means that for all $\varphi\in\mathcal{W}^{u}\left(\mathcal{O}_{p}\right)$
with $\varphi=i\left(\varphi\right)\in\mathcal{W}$,

(i) the inverse image $\left(Di\left(\varphi\right)\right)^{-1}T_{i\left(\varphi\right)}\mathcal{W}=T_{\varphi}\mathcal{W}^{u}\left(\mathcal{O}_{p}\right)\cap T_{\varphi}\mathcal{W}$
splits in $T_{\varphi}\mathcal{W}^{u}\left(\mathcal{O}_{p}\right)$
(it has a closed complementary subspace in $T_{\varphi}\mathcal{W}^{u}\left(\mathcal{O}_{p}\right)$),
and

(ii) the space $Di\left(\varphi\right)T_{\varphi}\mathcal{W}^{u}\left(\mathcal{O}_{p}\right)=T_{\varphi}\mathcal{W}^{u}\left(\mathcal{O}_{p}\right)$
contains a closed complement to $T_{i\left(\varphi\right)}\mathcal{W}=T_{\varphi}\mathcal{W}$
in $C$.

\noindent Property (i) holds because $\mbox{dim}T_{\varphi}\mathcal{W}^{u}\left(\mathcal{O}_{p}\right)=3<\infty$.
In the following we confirm (ii). 

2. Let $\varphi\in\mathcal{W}^{u}\left(\mathcal{O}_{p}\right)\cap\mathcal{W}$.
First note that the invariance of $\mathcal{W}^{u}\left(\mathcal{O}_{p}\right)$
ensures that $\dot{\varphi}\in T_{\varphi}\mathcal{W}^{u}\left(\mathcal{O}_{p}\right)$.
On the other hand, Proposition \ref{qvarphi dot near r_0} gives that
$\dot{\varphi}\notin Y$ can be assumed. Therefore $\dot{\varphi}\in T_{\varphi}\mathcal{W}^{u}\left(\mathcal{O}_{p}\right)\setminus T_{\varphi}\mathcal{W}.$

We claim that $T_{\varphi}\mathcal{W}^{u}\left(\mathcal{O}_{p}\right)$
contains a sign-preserving element $\chi$. Let $Z$ be the hyperplane
in $C$ with $C=\mathbb{R}\dot{p}_{0}\oplus Z$ and define a Poincar\'e
map $P_{Z}$ on a neighborhood of $p_{0}$ in $p_{0}+Z$ as in Section
\ref{sec:Floquet-multipliers and invariant manifolds}. (Here we use
exceptionally the notation $Z$ and $P_{Z}$ to emphasize the difference
from the above mentioned $Y$ and $P_{Y}$.) Choose $\psi$ from a
local unstable manifold $\mathcal{W}_{loc}^{u}\left(P_{Z},p_{0}\right)$
of $P_{Z}$ such that $\varphi=\Phi\left(T,\,\psi\right)$ for some
$T\geq0$. This is possible by \eqref{unstable set is forwad extension of unstable manifold-1}.
Choose $\eta$ to be a strictly positive vector in $T_{\eta}\mathcal{W}_{loc}^{u}\left(P_{Z},p_{0}\right)$.
Proposition \ref{pro: properties of local unstable manifold at r_0}
yields that the existence of such $\eta$ may be supposed without
loss of generality. Then $D_{2}\Phi\left(T,\psi\right)\eta\in T_{\varphi}\mathcal{W}^{u}\left(\mathcal{O}_{p}\right)$,
and $D_{2}\Phi\left(T,\psi\right)\eta=v_{T}^{\eta}$, where $v^{\eta}:\left[-1,\infty\right)\rightarrow\mathbb{R}$
is the solution of the linear variational equation 
\[
\dot{v}(t)=-v(t)+f'\left(x^{\psi}\left(t-1\right)\right)v\left(t-1\right)
\]
 with $v_{0}^{\eta}=\eta$. The monotonicity of $V$ implies that
$v_{T}^{\eta}$ is also strictly positive. So set $\chi=v_{T}^{\eta}$. 

Vectors $\dot{\varphi}$ and $\chi$ are linearly independent because
$V\left(\chi\right)=0$ and we may assume by Proposition \ref{qvarphi dot near r_0}
that $V\left(\dot{\varphi}\right)\geq2$.

3. As $T_{\varphi}\mathcal{W}$ is a subspace of $C$ with codimension
$2$, it suffices to confirm that 
\[
T_{\varphi}\mathcal{W}\cap\left(\mathbb{R}\dot{\varphi}\oplus\mathbb{R}\chi\right)=\left\{ \hat{0}\right\} .
\]

Suppose  that $a\dot{\varphi}+b\chi\in T_{\varphi}\mathcal{W}\backslash\left\{ \hat{0}\right\} $
for some $a,b\in\mathbb{R}$. Then $b\neq0$ as $\dot{\varphi}\notin T_{\varphi}\mathcal{W}$
. Set $c=a/b$ and consider the vector $ $$c\dot{\varphi}+\chi\in T_{\varphi}\mathcal{W}\backslash\left\{ \hat{0}\right\} $.
Let $v:\left[-1,\infty\right)\rightarrow\mathbb{R}$ be the solution
of the linear variational equation 
\begin{align*}
\dot{v}(t) & =-v(t)+f'\left(x^{\varphi}\left(t-1\right)\right)v\left(t-1\right)\tag{2.2}
\end{align*}
 with $v_{0}=\chi$, and let $x=x^{\varphi}$. As $\varphi\in\mathcal{W}$,
$\gamma_{j}=\Sigma_{i=0}^{j-1}\gamma\left(P_{Y}^{i}\left(\varphi\right)\right)$
is defined for all $j\geq1$, and $\gamma_{j}\rightarrow\infty$ as
$j\rightarrow\infty$. Then by formula \eqref{derivativse of iteratives of P_Y},
\begin{align*}
T_{P_{Y}^{j}\left(\varphi\right)}\mathcal{W}\ni DP_{Y}^{j}\left(\varphi\right)\left(c\dot{\varphi}+\chi\right) & =c\dot{x}_{\gamma_{j}}+v_{\gamma_{j}}-\frac{e^{*}\left(c\dot{x}_{\gamma_{j}}+v_{\gamma_{j}}\right)}{e^{*}\left(\dot{x}_{\gamma_{j}}\right)}\dot{x}_{\gamma_{j}}\\
 & =v_{\gamma_{j}}-\frac{e^{*}\left(v_{\gamma_{j}}\right)}{e^{*}\left(\dot{x}_{\gamma_{j}}\right)}\dot{x}_{\gamma_{j}}.
\end{align*}

An application of Lemma \ref{lem:technical result 2} to the equation
\eqref{var eq} and its strictly positive solution $v:\left[-1,\infty\right)\rightarrow\mathbb{R}$
gives constants $K>0$ and $t\geq1$ such that 
\[
\left\Vert v_{s-1}\right\Vert \leq K\left\Vert v_{s}\right\Vert \qquad\mbox{for all }s\geq t.
\]
Equation \eqref{var eq} with this estimate then gives a uniform bound
for the derivatives $\dot{v}_{\gamma_{j}}/\left\Vert v_{\gamma_{j}}\right\Vert $,
$j\geq1$. So by the Arzel\`{a}\textendash{}Ascoli Theorem, there
exists a subsequence 
\[
\left(\frac{v_{\gamma_{j_{n}}}}{\left\Vert v_{\gamma_{j_{n}}}\right\Vert }\right)_{n=0}^{\infty}
\]
converging to a strictly positive unit vector $\rho$ as $n\rightarrow\infty$.
As the $C$-norm and the $C^{1}$-norm are equivalent on $\mathcal{A}$,
the convergence $x_{\gamma_{j}}=P_{Y}^{j}\left(\varphi\right)\rightarrow r_{0}$
implies that $\dot{x}_{\gamma_{j}}\rightarrow\dot{r}_{0}$ as $j\rightarrow\infty.$
It follows that 
\[
\frac{1}{\left\Vert v_{\gamma_{j_{n}}}\right\Vert }DP_{Y}^{j_{n}}\left(\varphi\right)\left(c\dot{\varphi}+\chi\right)\in T_{P_{Y}^{j_{n}}\left(\varphi\right)}\mathcal{W}
\]
converges to the vector 
\[
\rho-\frac{e^{*}\left(\rho\right)}{e^{*}\left(\dot{r}_{0}\right)}\dot{r}_{0}\in T_{r_{0}}\mathcal{W}=\begin{cases}
C_{s}, & \mbox{if }\mathcal{O}_{r}\mbox{ is hyperbolic,}\\
C_{s}\oplus\mathbb{R}\xi, & \mbox{if }\mathcal{O}_{r}\mbox{ is nonhyperbolic.}
\end{cases}
\]
As $T_{r_{0}}\mathcal{W}\subseteq C_{\leq1}$ and $\dot{r}_{0}\in C_{\leq1}$,
this means that $C_{\leq1}$ has a strictly positive element $\rho$.
This is a contradiction since $\mathcal{O}_{r}$ has a Floquet multiplier
$\lambda_{1}>1$ and $C_{<\lambda_{1}}\cap V^{-1}\left(0\right)=\emptyset$
by \eqref{C_<lambda_1}. 
\end{proof}
Now we can verify a part of Theorem \ref{main_theorem_4}.(i).
\begin{prop}
\label{prop: smooth connecting sets}$C_{q}^{p}$ and $C_{k}^{p}$
are both two-dimensional $C^{1}$-submanifolds of $\mathcal{W}^{u}\left(\mathcal{O}_{p}\right)$.\end{prop}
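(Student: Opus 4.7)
The plan is to show that every $\varphi\in C_{r}^{p}$ lies in a two-dimensional $C^{1}$-submanifold $W_{\varphi}$ of $\mathcal{W}^{u}\left(\mathcal{O}_{p}\right)$ that is contained in $C_{r}^{p}$, so that $C_{r}^{p}$ is immersed, and then to promote this to an embedded submanifold structure by mimicking, with $\pi_{2}$ in place of $\pi_{3}$, the argument carried out in the proof of Theorem \ref{main_theorem_3}.

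For the immersion step, fix $\varphi\in C_{r}^{p}$. Proposition \ref{prop:trajectory of P_Y} supplies $T\geq0$ and $\psi=\Phi\left(T,\varphi\right)\in\mathcal{W}^{u}\left(\mathcal{O}_{p}\right)\cap\mathcal{W}$. The preceding proposition shows that $\mathcal{W}^{u}\left(\mathcal{O}_{p}\right)\cap\mathcal{W}$ is a one-dimensional $C^{1}$-submanifold of $\mathcal{W}^{u}\left(\mathcal{O}_{p}\right)$, so $M:=\mathbb{R}\times\left(\mathcal{W}^{u}\left(\mathcal{O}_{p}\right)\cap\mathcal{W}\right)$ is a two-dimensional $C^{1}$-manifold. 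I consider the $C^{1}$-map
\[
\Sigma:M\ni\left(s,\chi\right)\mapsto\Phi_{\mathcal{W}^{u}\left(\mathcal{O}_{p}\right)}\left(s,\chi\right)\in\mathcal{W}^{u}\left(\mathcal{O}_{p}\right),
\]
whose smoothness is guaranteed by Proposition \ref{the flow is continuously differentiable}, and aim to apply Proposition \ref{prop:smoothness of submanifolds} at $\left(-T,\psi\right)$. A tangent vector $\left(\alpha,\eta\right)$ with $\eta\in T_{\psi}\left(\mathcal{W}^{u}\left(\mathcal{O}_{p}\right)\cap\mathcal{W}\right)\subset T_{\psi}\mathcal{W}$ is sent to $\alpha\dot{\varphi}+D_{2}\Phi_{\mathcal{W}^{u}\left(\mathcal{O}_{p}\right)}\left(-T,\psi\right)\eta$. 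If this vanishes, applying the injective operator $D_{2}\Phi_{\mathcal{W}^{u}\left(\mathcal{O}_{p}\right)}\left(T,\varphi\right)$ from Corollary \ref{injective derivative} and noting $D_{2}\Phi_{\mathcal{W}^{u}\left(\mathcal{O}_{p}\right)}\left(T,\varphi\right)\dot{\varphi}=\dot{\psi}$ yields $\alpha\dot{\psi}+\eta=\hat{0}$. Since $T_{\psi}\mathcal{W}\subset Y$ while $\dot{\psi}\notin Y$ by Proposition \ref{qvarphi dot near r_0} (shrinking $\mathcal{W}$ if necessary), this forces $\alpha=0$ and $\eta=\hat{0}$. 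Therefore $\Sigma$ maps an open neighborhood of $\left(-T,\psi\right)$ diffeomorphically onto a two-dimensional $C^{1}$-submanifold $W_{\varphi}$ of $\mathcal{W}^{u}\left(\mathcal{O}_{p}\right)$ through $\varphi$; the inclusion $W_{\varphi}\subset C_{r}^{p}$ follows from the invariance of $C_{r}^{p}$ under $\Phi_{\mathcal{W}^{u}\left(\mathcal{O}_{p}\right)}$ and from the fact that points of $\mathcal{W}^{u}\left(\mathcal{O}_{p}\right)\cap\mathcal{W}$ lie in $C_{r}^{p}$.

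For the embedding step, I repeat the $\pi_{3}$-argument from Section~4 with $\pi_{2}$. By Proposition \ref{prop:Pi_2 on tangent vectors}, $\pi_{2}$ is injective on the two-dimensional space $T_{\varphi}W_{\varphi}$, so the inverse mapping theorem shows that after shrinking $W_{\varphi}$ the restriction $\pi_{2}|_{W_{\varphi}}$ is a diffeomorphism onto an open subset $U$ of $\mathbb{R}^{2}$. Any sequence $\left(\varphi^{n}\right)\subset C_{r}^{p}$ with $\varphi^{n}\rightarrow\varphi$ satisfies $\pi_{2}\varphi^{n}\in U$ for large $n$, so $\pi_{2}\varphi^{n}=\pi_{2}\psi^{n}$ for some $\psi^{n}\in W_{\varphi}$; the injectivity of $\pi_{2}$ on $\overline{S_{k}}\supset C_{r}^{p}\cup W_{\varphi}$ granted by Proposition \ref{prop:Pi2 on S_k} then gives $\varphi^{n}=\psi^{n}\in W_{\varphi}$, ruling out any approximating sequence from $C_{r}^{p}\setminus W_{\varphi}$. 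Consequently $C_{r}^{p}$ is a two-dimensional $C^{1}$-submanifold of $\mathcal{W}^{u}\left(\mathcal{O}_{p}\right)$, and the smooth graph representation $C_{r}^{p}=\left\{ \chi+w_{k}\left(\chi\right):\,\chi\in P_{2}C_{r}^{p}\right\}$ then follows at once: $P_{2}|_{C_{r}^{p}}$ is a local diffeomorphism onto an open subset of $G_{2}$, so $w_{k}=\left(\mathrm{id}-P_{2}\right)\circ\left(P_{2}|_{C_{r}^{p}}\right)^{-1}$ is $C^{1}$ on $P_{2}C_{r}^{p}$.

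The main obstacle is the injectivity of $D\Sigma\left(-T,\psi\right)$: one must combine two complementary pieces of geometric information, the inclusion $T_{\psi}\mathcal{W}\subset Y$ inherited from $\mathcal{W}\subset r_{0}+Y$, and the transversality $\dot{\psi}\notin Y$ coming from Proposition \ref{qvarphi dot near r_0} (which in turn rests on the semicontinuity of $V$ and $V\left(\dot{r}_{0}\right)=2$). Everything else is bookkeeping: the embedding step is a direct transcription of the $\pi_{3}$-argument from Section~4, and the smoothness of $w_{k}$ on $P_{2}C_{r}^{p}$ is a routine consequence of the two-dimensional submanifold structure just established.
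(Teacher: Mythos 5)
Your proposal is correct and follows essentially the same strategy as the paper: transversality gives the one-dimensional intersection $\mathcal{W}^{u}\left(\mathcal{O}_{p}\right)\cap\mathcal{W}$, flowing it with $\Phi_{\mathcal{W}^{u}\left(\mathcal{O}_{p}\right)}$ and invoking Proposition \ref{prop:smoothness of submanifolds} produces the immersed piece $W_{\varphi}$, and the $\pi_{2}$-argument (with Propositions \ref{prop:Pi2 on S_k} and \ref{prop:Pi_2 on tangent vectors}) upgrades immersion to embedding. Your explicit computation of the injectivity of $D\Sigma\left(-T,\psi\right)$ — factoring through the inverse $D_{2}\Phi_{\mathcal{W}^{u}\left(\mathcal{O}_{p}\right)}\left(T,\varphi\right)$ to reduce to $\alpha\dot\psi+\eta=\hat{0}$ with $\eta\in Y$ and $\dot\psi\notin Y$ — is a clean rendering of the step the paper merely calls ``analogous to Proposition \ref{prop:certain subsets are submanifolds},'' and the final graph-representation paragraph anticipates the content of Proposition \ref{prop: smooth representation for connecting sets}.
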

\begin{proof}
Define $r$, $\mathcal{W}$ and $C_{r}^{p}$ as at the begining of
this subsection. 

1. As a first step we confirm that to all $\varphi\in C_{r}^{p}$,
one can give a subset $W_{\varphi}$ of $C_{r}^{p}$ so that $W_{\varphi}$
is a two-dimensional $C^{1}$-submanifold of $\mathcal{W}^{u}\left(\mathcal{O}_{p}\right)$
and contains $\varphi$. Let $\varphi\in C_{r}^{p}$. Choose $T\geq0$
such that $\psi=\Phi\left(T,\varphi\right)\in\mathcal{W}^{u}\left(\mathcal{O}_{p}\right)\cap\mathcal{W}$
and $\dot{\psi}\notin Y$. Propositions \ref{qvarphi dot near r_0}
and \ref{prop:trajectory of P_Y} guarantee that this is possible.
Consider the two-dimensional $C^{1}$-submanifold $\mathbb{R}\times\left(\mathcal{W}^{u}\left(\mathcal{O}_{p}\right)\cap\mathcal{W}\right)$
of $\mathbb{R}\times\mathcal{W}^{u}\left(\mathcal{O}_{p}\right)$
and the map 
\[
\Sigma:\mathbb{R}\times\left(\mathcal{W}^{u}\left(\mathcal{O}_{p}\right)\cap\mathcal{W}\right)\ni\left(t,\eta\right)\mapsto\Phi_{\mathcal{W}^{u}\left(\mathcal{O}_{p}\right)}\left(t,\eta\right)\in\mathcal{W}^{u}\left(\mathcal{O}_{p}\right).
\]
Proposition \ref{the flow is continuously differentiable} proves
that $\Sigma$ is $C^{1}$-smooth and gives formulas for its derivatives.
Note that the derivative of the map $\mathcal{W}^{u}\left(\mathcal{O}_{p}\right)\cap\mathcal{W}\ni\eta\mapsto\Phi_{\mathcal{W}^{u}\left(\mathcal{O}_{p}\right)}\left(-T,\eta\right)\in\mathcal{W}^{u}\left(\mathcal{O}_{p}\right)$
at $\psi$ is injective on $T_{\psi}\left(\mathcal{W}^{u}\left(\mathcal{O}_{p}\right)\cap\mathcal{W}\right)$
by Corollary \ref{injective derivative}. Also observe that $\dot{\psi}\notin Y$
implies that $\dot{\psi}\notin T_{\psi}\left(\mathcal{W}^{u}\left(\mathcal{O}_{p}\right)\cap\mathcal{W}\right)$.
Using these two properties and a reasoning analogous to the one applied
in Proposition \ref{prop:certain subsets are submanifolds}, it is
straightforward to show that $D\Sigma\left(-T,\psi\right)$ is injective
on $\mathbb{R}\times T_{\psi}\left(\mathcal{W}^{u}\left(\mathcal{O}_{p}\right)\cap\mathcal{W}\right)$.
Thus there exists an $\varepsilon>0$ by Proposition \ref{prop:smoothness of submanifolds}
such that the set 
\[
W_{\varphi}=\left\{ \Phi_{\mathcal{W}^{u}\left(\mathcal{O}_{p}\right)}\left(t,\eta\right):\, t\in\left(-T-\varepsilon,-T+\,\varepsilon\right),\,\eta\in\mathcal{W}^{u}\left(\mathcal{O}_{p}\right)\cap\mathcal{W}\cap B\left(\psi,\varepsilon\right)\right\} 
\]
 is a two-dimensional $C^{1}$-submanifold of $\mathcal{W}^{u}\left(\mathcal{O}_{p}\right)$.
It is clear that $\varphi\in W_{\varphi}$. The invariance of $C_{r}^{p}$
implies that $W_{\varphi}\subseteq C_{r}^{p}$.

2. To complete the proof, it suffices to exclude for all $\varphi\in C_{r}^{p}$
the existence of a sequence $\left(\varphi^{n}\right)_{n=0}^{\infty}$
in $ $$C_{r}^{p}$ so that $\varphi^{n}\notin W_{\varphi}$ for $n\geq0$
and $\varphi^{n}\rightarrow\varphi$ as $n\rightarrow\infty$. By
Proposition \ref{prop:Pi_2 on tangent vectors}, $D\pi_{2}\left(\varphi\right)=\pi_{2}$
is injective on the two-dimensional tangent space $T_{\varphi}W_{\varphi}$,
hence it defines an isomorphism from $T_{\varphi}W_{\varphi}$ onto
$\mathbb{R}^{2}$. Therefore there exists $\tilde{\varepsilon}>0$
such that the restriction of $\pi_{2}$ to $W_{\varphi}\cap B\left(\varphi,\tilde{\varepsilon}\right)$
is a diffeomorphism from $W_{\varphi}\cap B\left(\varphi,\tilde{\varepsilon}\right)$
onto an open set $U$ in $\mathbb{R}^{2}$. If a sequence $\left(\varphi^{n}\right)_{n=0}^{\infty}$
in $ $$C_{r}^{p}$ converges to $\varphi$ as $n\rightarrow\infty$,
then $\pi_{2}\varphi^{n}\rightarrow\pi_{2}\varphi$ as $n\rightarrow\infty$,
and $\pi_{2}\varphi^{n}\in U$ for all $n$ large enough. The injectivity
of $\pi_{2}$ on $\overline{S_{k}}$ verified in Proposition \ref{prop:Pi2 on S_k}
then implies that $\varphi^{n}\in W_{\varphi}$.
\end{proof}
It is worth noting that the second part of the above proof confirms
the following assertion. 
\begin{prop}
\label{a detail of the above proof}$\pi_{2}C_{q}^{p}$ and $\pi_{2}C_{k}^{p}$
are open subsets of $\mathbb{R}^{2}$.
\end{prop}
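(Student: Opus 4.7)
The plan is to read the openness of $\pi_{2}C_{q}^{p}$ and $\pi_{2}C_{k}^{p}$ directly off the construction used in the preceding proof of Proposition \ref{prop: smooth connecting sets}. Let $r\in\{q,x^{k}\}$ and let $\varphi\in C_{r}^{p}$ be arbitrary; it suffices to exhibit an open neighbourhood of $\pi_{2}\varphi$ in $\mathbb{R}^{2}$ that is contained in $\pi_{2}C_{r}^{p}$.

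First I would recall from part 1 of the proof of Proposition \ref{prop: smooth connecting sets} the two-dimensional $C^{1}$-submanifold $W_{\varphi}\subseteq C_{r}^{p}$ of $\mathcal{W}^{u}(\mathcal{O}_{p})$ containing $\varphi$, obtained by flowing a neighbourhood of a suitable $\psi=\Phi(T,\varphi)\in\mathcal{W}^{u}(\mathcal{O}_{p})\cap\mathcal{W}$ backwards in time. Then I would invoke Proposition \ref{prop:Pi_2 on tangent vectors} to conclude that $D\pi_{2}(\varphi)=\pi_{2}$ is injective on the two-dimensional tangent space $T_{\varphi}W_{\varphi}$, so that $\pi_{2}$ restricted to $T_{\varphi}W_{\varphi}$ is a linear isomorphism onto $\mathbb{R}^{2}$.

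Next, the inverse mapping theorem applied to $\pi_{2}|_{W_{\varphi}}$ at $\varphi$ provides an $\tilde{\varepsilon}>0$ such that $\pi_{2}$ restricts to a diffeomorphism from $W_{\varphi}\cap B(\varphi,\tilde{\varepsilon})$ onto an open set $U\subseteq\mathbb{R}^{2}$ with $\pi_{2}\varphi\in U$. Since $W_{\varphi}\subseteq C_{r}^{p}$, we obtain $U\subseteq\pi_{2}W_{\varphi}\subseteq\pi_{2}C_{r}^{p}$. As $\varphi\in C_{r}^{p}$ was arbitrary, every point of $\pi_{2}C_{r}^{p}$ is interior, proving that $\pi_{2}C_{q}^{p}$ and $\pi_{2}C_{k}^{p}$ are open in $\mathbb{R}^{2}$.

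There is essentially no obstacle here, since all the necessary ingredients, namely the local submanifolds $W_{\varphi}$ inside $C_{r}^{p}$ and the injectivity of $\pi_{2}$ on their tangent spaces, were already established in Proposition \ref{prop: smooth connecting sets} and Proposition \ref{prop:Pi_2 on tangent vectors}; the statement is really a by-product of the earlier argument, as the remark preceding it suggests.
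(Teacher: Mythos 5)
Your proof is correct and follows exactly the paper's route: the paper presents the proposition as an immediate by-product of part 2 of the proof of Proposition \ref{prop: smooth connecting sets}, which you reproduce faithfully (local submanifolds $W_{\varphi}\subseteq C_{r}^{p}$, injectivity of $\pi_{2}$ on $T_{\varphi}W_{\varphi}$ via Proposition \ref{prop:Pi_2 on tangent vectors}, inverse mapping theorem to get an open image $U\subseteq\pi_{2}C_{r}^{p}$). Nothing to add.
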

We know from Proposition \ref{prop:graph representation for the closure of S_k}
that there exist a projection $P_{2}$ from $C$ onto a two-dimensional
subspace $G_{2}$ of $C$ and a map $w_{k}:P_{2}\overline{S_{k}}\rightarrow P_{2}^{-1}\left(0\right)$
so that
\[
\overline{S_{k}}=\left\{ \chi+w_{k}\left(\chi\right):\,\chi\in P_{2}\overline{S_{k}}\right\} .
\]
Then 
\[
C_{q}^{p}=\left\{ \chi+w_{k}\left(\chi\right):\,\chi\in P_{2}C_{q}^{p}\right\} \quad\mbox{and}\quad C_{k}^{p}=\left\{ \chi+w_{k}\left(\chi\right):\,\chi\in P_{2}C_{k}^{p}\right\} .
\]
The next result implies that these representations of $C_{q}^{p}$
and $C_{k}^{p}$ are smooth. 
\begin{prop}
\label{prop: smooth representation for connecting sets}$P_{2}C_{q}^{p}$
and $P_{2}C_{k}^{p}$ are open subsets of $G_{2}$, and $w_{k}$ is
continuously differentiable on $P_{2}C_{q}^{p}\cup P_{2}C_{k}^{p}$.\end{prop}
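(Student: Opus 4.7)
The plan is to deduce both statements from the fact (Proposition \ref{prop: smooth connecting sets}) that $C_{q}^{p}$ and $C_{k}^{p}$ are two-dimensional $C^{1}$-submanifolds of $\mathcal{W}^{u}(\mathcal{O}_{p})$, combined with the injectivity properties of $\pi_{2}$ established in Subsection 5.1.

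First I would observe that the openness of $P_{2}C_{q}^{p}$ and $P_{2}C_{k}^{p}$ in $G_{2}$ is essentially immediate: by Proposition \ref{a detail of the above proof}, $\pi_{2}C_{q}^{p}$ and $\pi_{2}C_{k}^{p}$ are open in $\mathbb{R}^{2}$, and since $J_{2}:\mathbb{R}^{2}\to G_{2}$ is a linear isomorphism between finite-dimensional normed spaces and hence a homeomorphism, $P_{2}C_{r}^{p}=J_{2}(\pi_{2}C_{r}^{p})$ is open in $G_{2}$ for each $r\in\{q,x^{k}\}$.

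For the smoothness of $w_{k}$, I fix $r\in\{q,x^{k}\}$ and consider the restriction $\Pi := P_{2}|_{C_{r}^{p}}:C_{r}^{p}\to G_{2}$. Since $P_{2}$ is continuous and linear on $C$ and $C_{r}^{p}$ is a $C^{1}$-submanifold of $C$, $\Pi$ is of class $C^{1}$. The key step will be to verify that for every $\varphi\in C_{r}^{p}$ the derivative $D\Pi(\varphi)=P_{2}|_{T_{\varphi}C_{r}^{p}}$ is a linear isomorphism onto $G_{2}$. Both spaces being two-dimensional, injectivity is enough; and any nonzero $\eta\in T_{\varphi}C_{r}^{p}$ can be realized as $\gamma'(0)$ for a $C^{1}$-curve $\gamma:(-1,1)\to C_{r}^{p}\subset\overline{S_{k}}$, so Proposition \ref{prop:Pi_2 on tangent vectors} forces $\pi_{2}\eta\neq(0,0)$, hence $P_{2}\eta=J_{2}(\pi_{2}\eta)\neq\hat{0}$. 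The inverse function theorem on manifolds then makes $\Pi$ a local $C^{1}$-diffeomorphism at each point; combined with the global injectivity of $P_{2}|_{\overline{S_{k}}}$ from Proposition \ref{prop:Pi2 on S_k}, $\Pi$ becomes a $C^{1}$-diffeomorphism from $C_{r}^{p}$ onto $P_{2}C_{r}^{p}$, with $C^{1}$-smooth inverse $\Pi^{-1}:P_{2}C_{r}^{p}\to C_{r}^{p}\subset C$.

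Finally, on $P_{2}C_{r}^{p}$ the defining formula $w_{k}(\chi)=(\mathrm{id}-P_{2})((P_{2}|_{\overline{S_{k}}})^{-1}(\chi))$ coincides with $(\mathrm{id}-P_{2})\circ\Pi^{-1}(\chi)$, which is a composition of $C^{1}$-smooth maps into $C$. Thus $w_{k}$ is $C^{1}$-smooth on each of $P_{2}C_{q}^{p}$ and $P_{2}C_{k}^{p}$, and therefore on their union. The only place that genuinely exploits the deeper analytic content of the paper is the injectivity of $P_{2}$ on tangent spaces, which has already been reduced to Proposition \ref{prop:Pi_2 on tangent vectors}; the remaining arguments are standard finite-dimensional linear algebra and the manifold inverse function theorem.
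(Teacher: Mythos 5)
Your proposal is correct and takes essentially the same approach as the paper: both use the injectivity of $\pi_{2}$ (hence $P_{2}$) on the tangent spaces $T_{\varphi}C_{r}^{p}$ via Proposition \ref{prop:Pi_2 on tangent vectors}, apply the inverse mapping theorem to obtain a local (or, in your phrasing, global) $C^{1}$-smooth inverse of $P_{2}|_{C_{r}^{p}}$, and then read off the smoothness of $w_{k}$ from the composition formula $w_{k}=(\mathrm{id}-P_{2})\circ(P_{2}|_{\overline{S_{k}}})^{-1}$. The only cosmetic difference is that you cite Proposition \ref{a detail of the above proof} for openness of $P_{2}C_{r}^{p}$, whereas the paper re-derives it as a byproduct of the inverse function theorem step.
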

\begin{proof}
The proof is based on the smoothness of $C_{q}^{p}$ and $C_{k}^{p}$
and applies an argument which is analogous to the one in the proof
of Theorem \ref{main_theorem_3}. 

Let $C_{r}^{p}$ be any of the sets $C_{q}^{p}$ and $C_{k}^{p}$.
Let $\chi\in P_{2}C_{r}^{p}$ be arbitrary, and choose $\varphi\in C_{r}^{p}$
so that $\chi=P_{2}\varphi$. As the restriction of $\pi_{2}$ to
$T_{\varphi}C_{r}^{p}$ is injective, $J_{2}$ is a linear isomorhism
and $P_{2}=J_{2}\circ\pi_{2}$, $DP_{2}\left(\varphi\right)=P_{2}$
defines an isomorphism from $T_{\varphi}C_{r}^{p}$ to $G_{2}$. The
inverse mapping theorem implies that an $\varepsilon>0$ can be given
such that $P_{2}$ maps $C_{r}^{p}\cap B\left(\varphi,\varepsilon\right)$
one-to-one onto an open neighborhood $U\subset P_{2}C_{r}^{p}$ of
$\chi$ in $G_{2}$, $P_{2}$ is invertible on $C_{r}^{p}\cap B\left(\varphi,\varepsilon\right)$,
and the inverse $\tilde{P}_{2}^{-1}$ of the map 
\[
C_{r}^{p}\cap B\left(\varphi,\varepsilon\right)\ni\varphi\mapsto P_{2}\varphi\in U
\]
 is $C^{1}$-smooth. As 
\[
w_{k}\left(\chi\right)=\left(\mbox{id}-P_{2}\right)\circ\left(P_{2}|_{\overline{S_{k}}}\right)^{-1}\left(\chi\right)=\left(\mbox{id}-P_{2}\right)\circ\tilde{P}_{2}^{-1}\left(\chi\right)\in P_{2}^{-1}\left(0\right)
\]
 for all $\chi\in U$, the restriction of $w_{k}$ to $U$ is $C^{1}$-smooth.
\end{proof}
\noindent ~

\noindent \begin{center}
\textit{5.4} \textit{$C_{q}^{p}$, $C_{k}^{p}$ and $S_{k}$ are homeomorphic
to $A^{\left(1,2\right)}$, and their closures are homeomorphic to
}$A^{\left[1,2\right]}$\smallskip{}

\par\end{center}

Recall that 
\[
A_{q}^{p}=\mbox{ext}\left(\pi_{2}\mathcal{O}_{p}\right)\cap\mbox{int}\left(\pi_{2}\mathcal{O}_{q}\right),\qquad A_{k}^{p}=\mbox{ext}\left(\pi_{2}\mathcal{O}_{k}\right)\cap\mbox{int}\left(\pi_{2}\mathcal{O}_{p}\right)
\]
 and 
\[
A_{k,q}=\mbox{ext}\left(\pi_{2}\mathcal{O}_{k}\right)\cap\mbox{int}\left(\pi_{2}\mathcal{O}_{q}\right).
\]
We have already deduced that $\pi_{2}C_{q}^{p}\subseteq A_{q}^{p}$
and $\pi_{2}C_{k}^{p}\subseteq A_{k}^{p}$. As a result, $\pi_{2}S_{k}\subseteq A_{k,q}$.
\begin{prop}
\label{prop:hoemomorphic to the open annulus} The map $\pi_{2}|_{\overline{S_{k}}}$
is a homeomorphism onto $\overline{A_{k,q}}$, furthermore $\pi_{2}C_{k}^{p}=A_{k}^{p}$,
$\pi_{2}C_{q}^{p}=A_{q}^{p}$ and $\pi_{2}S_{k}=A_{k,q}$. \end{prop}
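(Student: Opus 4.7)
The plan is to exploit the connectedness of the annular regions $A_{k}^{p}$ and $A_{q}^{p}$ together with the openness of $\pi_{2}C_{k}^{p}$ and $\pi_{2}C_{q}^{p}$ already established in Proposition \ref{a detail of the above proof}, and to wrap things up with a compact-to-Hausdorff argument for the homeomorphism statement. First I would observe that $\overline{S_{k}}$ is a closed subset of the compact global attractor $\mathcal{A}$, hence compact. Since $\pi_{2}|_{\overline{S_{k}}}$ is continuous and injective (Proposition \ref{prop:Pi2 on S_k}), the standard compact-to-Hausdorff lemma (or directly Proposition \ref{prop:inverse of Pi_2 is Lip-cont}) yields that it is a homeomorphism onto its image, so only the identification of the image remains.

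Next I would prove $\pi_{2}C_{k}^{p}=A_{k}^{p}$ by a clopen argument. By Corollary \ref{cor: closures of connecting sets}, $\overline{C_{k}^{p}}=\mathcal{O}_{p}\cup C_{k}^{p}\cup\mathcal{O}_{k}$, and the containments $\pi_{2}\mathcal{O}_{p}\subset\mathrm{bd}(A_{k}^{p})$ and $\pi_{2}\mathcal{O}_{k}\subset\mathrm{bd}(A_{k}^{p})$ (noted just before Fig.~6) force
\[
\pi_{2}\overline{C_{k}^{p}}\cap A_{k}^{p}=\pi_{2}C_{k}^{p}.
\]
The left-hand side is closed in $A_{k}^{p}$ because $\overline{C_{k}^{p}}$ is compact; the right-hand side is open in $\mathbb{R}^{2}$ by Proposition \ref{a detail of the above proof}, hence open in $A_{k}^{p}$. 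Since $C_{k}^{p}\neq\emptyset$ by Theorem B, $\pi_{2}C_{k}^{p}$ is a nonempty clopen subset of the connected set $A_{k}^{p}$ (which, being homeomorphic to the open annulus by the Sch\"onflies theorem, is connected), so $\pi_{2}C_{k}^{p}=A_{k}^{p}$. The identical argument applied to $\overline{C_{q}^{p}}=\mathcal{O}_{p}\cup C_{q}^{p}\cup\mathcal{O}_{q}$ yields $\pi_{2}C_{q}^{p}=A_{q}^{p}$.

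Assembling these,
\[
\pi_{2}S_{k}=\pi_{2}C_{k}^{p}\cup\pi_{2}\mathcal{O}_{p}\cup\pi_{2}C_{q}^{p}=A_{k}^{p}\cup\pi_{2}\mathcal{O}_{p}\cup A_{q}^{p}=A_{k,q},
\]
since $A_{k}^{p}$, $\pi_{2}\mathcal{O}_{p}$, $A_{q}^{p}$ partition $A_{k,q}$. Using $\overline{S_{k}}=\mathcal{O}_{k}\cup S_{k}\cup\mathcal{O}_{q}$ from Proposition \ref{prop:the structure of the closure of S_k}, we then obtain $\pi_{2}\overline{S_{k}}=\pi_{2}\mathcal{O}_{k}\cup A_{k,q}\cup\pi_{2}\mathcal{O}_{q}=\overline{A_{k,q}}$, finishing the identification of the image and hence the proof.

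I do not expect a serious obstacle here: the only delicate point is verifying that $\pi_{2}\overline{C_{k}^{p}}\cap A_{k}^{p}=\pi_{2}C_{k}^{p}$, which reduces to checking that the boundary orbits $\pi_{2}\mathcal{O}_{k}$ and $\pi_{2}\mathcal{O}_{p}$ do not meet the open annulus $A_{k}^{p}$ — and this is true by the very definition of $A_{k}^{p}$. All the heavy lifting (injectivity of $\pi_{2}$ on $\overline{S_{k}}$, openness of $\pi_{2}C_{k}^{p}$ in $\mathbb{R}^{2}$, nonemptiness of the connecting sets, and the structure of $\overline{S_{k}}$) has already been done in the preceding subsections.
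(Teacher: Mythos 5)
Your proof is correct and follows essentially the same approach as the paper: both proofs identify $\pi_{2}C_{k}^{p}$ and $\pi_{2}C_{q}^{p}$ with the full annular regions $A_{k}^{p}$, $A_{q}^{p}$ by a clopen argument in a connected set, using the openness from Proposition \ref{a detail of the above proof}, the structure of the closures from Corollary \ref{cor: closures of connecting sets}, and nonemptiness from Theorem B, then assemble $\pi_{2}S_{k}$ and $\pi_{2}\overline{S_{k}}$ by union and obtain the homeomorphism from the continuity of $\pi_{2}^{-1}$ (Proposition \ref{prop:inverse of Pi_2 is Lip-cont}). The only cosmetic difference is in how closedness in $A_{k}^{p}$ is argued: you use compactness of $\overline{C_{k}^{p}}$ together with the observation that $\pi_{2}\overline{C_{k}^{p}}\cap A_{k}^{p}=\pi_{2}C_{k}^{p}$ (since the boundary curves $\pi_{2}\mathcal{O}_{k}$ and $\pi_{2}\mathcal{O}_{p}$ miss the open annulus), whereas the paper runs a Cauchy-sequence argument via the Lipschitz continuity of $\pi_{2}^{-1}$; both are valid and interchangeable here, and your version is perhaps slightly more streamlined.
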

\begin{proof}
First we show that $\pi_{2}C_{q}^{p}=A_{q}^{p}.$ By Proposition \ref{a detail of the above proof},
$\pi_{2}C_{q}^{p}$ is open in $A_{q}^{p}$. We claim that $\pi_{2}C_{q}^{p}$
is also closed in $A_{q}^{p}$. So assume that $\left(z_{n}\right)_{n=0}^{\infty}$
is a sequence in $\pi_{2}C_{q}^{p}$ and $z_{n}\rightarrow z\in A_{q}^{p}$
as $n\rightarrow\infty$. Let $\varphi_{n}=\pi_{2}^{-1}\left(z_{n}\right)\in C_{q}^{p}$,
$n\geq0$. By Proposition \ref{prop:inverse of Pi_2 is Lip-cont},
$\pi_{2}^{-1}$ is Lipschitz-continuous. Thus $\left\{ \varphi_{n}\right\} _{n=0}^{\infty}$
is a Cauchy-sequence in $C_{q}^{p}$ and a $\varphi\in\overline{C_{q}^{p}}$
can be given such that $\varphi_{n}\rightarrow\varphi$ as $n\rightarrow\infty$,
moreover, $\varphi=\pi_{2}^{-1}\left(z\right)$. It is clear that
$\varphi\notin\mathcal{O}_{p}$ and $\varphi\notin\mathcal{O}_{q}$
because then $z=\pi_{2}\varphi\notin A_{q}^{p}$. Thus $\varphi\in\overline{C_{q}^{p}}\setminus\left(\mathcal{O}_{p}\cup\mathcal{O}_{p}\right)=C_{q}^{p}$
(here we use Corollary \ref{cor: closures of connecting sets}) and
necessarily $z=\pi_{2}\varphi\in\pi_{2}C_{q}^{p}$. In consequence,
$\pi_{2}C_{q}^{p}=A_{q}^{p}.$

It is analogous to verify that $\pi_{2}C_{k}^{p}=A_{k}^{p}$. It follows
immediately that 
\[
\pi_{2}S_{k}=\pi_{2}\left(C_{k}^{p}\cup\mathcal{O}_{p}\cup C_{q}^{p}\right)=A_{k}^{p}\cup\pi_{2}\mathcal{O}_{p}\cup A_{q}^{p}=A_{k,q}
\]
 and 
\[
\pi_{2}\overline{S_{k}}=\pi_{2}\left(\mathcal{O}_{k}\cup S_{k}\cup\mathcal{O}_{q}\right)=\pi_{2}\mathcal{O}_{k}\cup A_{k,q}\cup\pi_{2}\mathcal{O}_{q}=\overline{A_{k,q}}.
\]

As both $\pi_{2}|_{\overline{S_{k}}}:\overline{S_{k}}\rightarrow\mathbb{R}^{2}$
and $\pi_{2}^{-1}:\pi_{2}\overline{S_{k}}\rightarrow C$ are continuous,
we obtain that $\pi_{2}|_{\overline{S_{k}}}$ defines a homeomorphism
from $\overline{S_{k}}$ onto $\overline{A_{k,q}}$. 
\end{proof}
As a consequence we obtain that $C_{q}^{p}$, $C_{k}^{p}$, and $S_{k}$
are homeomorphic to the open annulus 
\[
A^{\left(1,2\right)}=\left\{ u\in\mathbb{R}^{2}:\,1<\left|u\right|<2\right\} .
\]
 Since the above proposition implies that $\pi_{2}\overline{C_{k}^{p}}=\overline{A_{k}^{p}}$
and $\pi_{2}\overline{C_{q}^{p}}=\overline{A_{q}^{p}}$, we also deduce
that the closures $\overline{C_{q}^{p}}$, $\overline{C_{k}^{p}}$,
and $\overline{S_{k}}$ are homeomorphic to the closed annulus
\[
A^{\left[1,2\right]}=\left\{ u\in\mathbb{R}^{2}:\,1\leq\left|u\right|\leq2\right\} .
\]
 Note that we have proven all the statements of Theorem \ref{main_theorem_3}.(i)
regarding $C_{q}^{p}$ and $C_{k}^{p}$ (see propositions \ref{prop: smooth connecting sets},
\ref{prop: smooth representation for connecting sets} and \ref{prop:hoemomorphic to the open annulus}).
The smoothness of $S_{k}$ is considered in the next subsection.

~

\noindent \begin{center}
\textit{5.5 The smoothness of $S_{k}$, $\overline{C_{q}^{p}}$, $\overline{C_{k}^{p}}$
and $\overline{S_{k}}$}\smallskip{}

\par\end{center}

Now we can round up the proofs of Theorem \ref{main_theorem_4}.(i)
and (ii). Recall that 
\[
S_{k}=\left\{ \chi+w_{k}\left(\chi\right):\,\chi\in P_{2}S_{k}\right\} ,\qquad P_{2}S_{k}=P_{2}C_{k}^{p}\cup P_{2}\mathcal{O}_{p}\cup P_{2}C_{q}^{p}
\]
and $w_{k}$ is continuously differentiable on the set $P_{2}C_{k}^{p}\cup P_{2}C_{q}^{p}$.
Hence the smoothness of this representation for $S_{k}$ is proved
by showing that $P_{2}S_{k}$ is open in $G_{2}$ and $w_{k}$ is
smooth at the points of $P_{2}\mathcal{O}_{p}$. It follows at once
that $S_{k}$ is a two-dimensional $C^{1}$-submanifold of $C$. Since
$S_{k}$ is a subset of the three-dimensional $C^{1}$-submanifold
$\mathcal{W}^{u}\left(\mathcal{O}_{p}\right)$, it is obvious that
$S_{k}$ is also a $C^{1}$-submanifold of $\mathcal{W}^{u}\left(\mathcal{O}_{p}\right)$.
We likewise verify that all points of $P_{2}\mathcal{O}_{k}\cup P_{2}\mathcal{O}_{q}$
have open neighborhoods on which $w_{k}$ can be extended to $C^{1}$-functions.
As $P_{2}\mathcal{O}_{k}\cup P_{2}\mathcal{O}_{q}$ is the boundary
of $P_{2}\overline{S_{k}}$, this result shows that $\overline{S_{k}}$
has a smooth representation with boundary, and thus $\overline{S_{k}}$
is a two-dimensional $C^{1}$-submanifold of the phase space $C$
with boundary. Similar reasonings yield the analogous results for
$\overline{C_{q}^{p}}$ and $\overline{C_{k}^{p}}$. 

Let $r:\mathbb{R}\rightarrow\mathbb{R}$ be any of the periodic solutions
$x^{k}$, $p$ or $q$ shifted in time so that $r\left(0\right)=\xi_{k}$
and $\dot{r}\left(0\right)>0$. As $\xi_{k}$ belongs to the ranges
of $x^{k}$, $p$ or $q$, and $\xi_{k}$ is not an extremum of them,
the monotonicity property of periodic solutions in Proposition \ref{pro: monotone_type}
implies that this choice of $r$ is possible. Let $\omega>0$ denote
the minimal period of $r$. By Eq.~\eqref{eq:eq_general},
\[
f\left(r\left(-1\right)\right)=\dot{r}\left(0\right)+r\left(0\right)>\xi_{k}=f\left(\xi_{k}\right).
\]
As $f$ strictly increases, this means that $r\left(-1\right)>\xi_{k}$.
Conversely, if there was $t_{*}\in\left(0,\omega\right)$ such that
$r\left(t_{*}\right)=\xi_{k}$ and $r\left(t_{*}-1\right)>\xi_{k}$,
then 
\[
\dot{r}\left(t_{*}\right)=-r\left(t_{*}\right)+f\left(r\left(t_{*}-1\right)\right)>-\xi_{k}+f\left(\xi_{k}\right)=0
\]
would follow, which would contradict Proposition \ref{pro: monotone_type}.
Therefore the half line $L_{k}=\left\{ \left(\xi_{k},x_{2}\right)\in\mathbb{R}^{2}:\, x_{2}>\xi_{k}\right\} $
and $\pi_{2}\mathcal{O}_{r}=\left\{ \pi_{2}r_{t}:\, t\in\left[0,\omega\right)\right\} $
have exactly one point in common: $\left(r\left(0\right),r\left(-1\right)\right)=\left(\xi_{k},r\left(-1\right)\right)$.
See Fig. 7.

Choose $s_{k},s_{p},s_{q}>\xi_{k}$ so that 
\[
\left\{ \left(\xi_{k},s_{k}\right)\right\} =L_{k}\cap\pi_{2}\mathcal{O}_{k},\quad\left\{ \left(\xi_{k},s_{p}\right)\right\} =L_{k}\cap\pi_{2}\mathcal{O}_{p}
\]
 and 
\[
\left\{ \left(\xi_{k},s_{q}\right)\right\} =L_{k}\cap\pi_{2}\mathcal{O}_{q}.
\]
As $s$ increases, $\left(\xi_{k},\infty\right)\ni s\mapsto\left(\xi_{k},s\right)\in\mathbb{R}^{2}$
first intersects $\pi_{2}\mathcal{O}_{k}$, then $\pi_{2}\mathcal{O}_{p}$
and finally $\pi_{2}\mathcal{O}_{q}$ because 
\[
\left(\xi_{k},s\right)\rightarrow\pi_{2}\hat{\xi}_{k}=\left(\xi_{k},\xi_{k}\right)\in\mbox{int}\left(\pi_{2}\mathcal{O}_{k}\right)\mbox{ whenever }s\rightarrow\xi_{k}+,
\]
 $\pi_{2}\mathcal{O}_{k}\in\mbox{int}\left(\pi_{2}\mathcal{O}_{p}\right)$
and $\pi_{2}\mathcal{O}_{p}\in\mbox{int}\left(\pi_{2}\mathcal{O}_{q}\right)$.
So $\xi_{k}<s_{k}<s_{p}<s_{q}$, as it is shown by Fig. 7.

\begin{center}
\begin{figure}[h]
\begin{centering}
\includegraphics[scale=0.07]{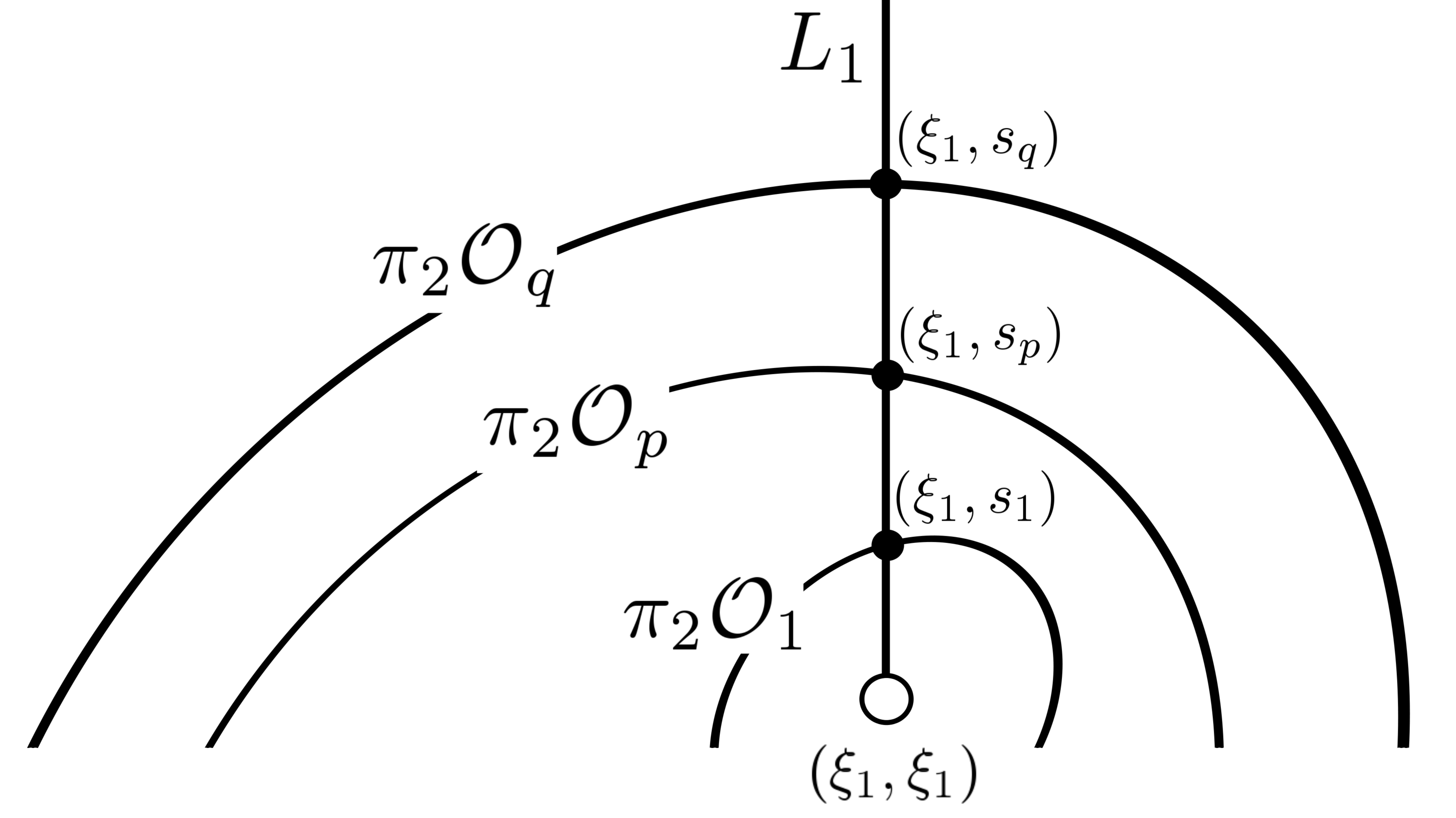} 
\par\end{centering}

\caption{The definition of $L_{1}$, $s_{1}$, $s_{p}$ and $s_{q}$ in the
case $k=1$. }
\end{figure}

\par\end{center}

Consider the curve 
\[
h:\left[s_{k},s_{q}\right]\ni s\mapsto\pi_{2}^{-1}\left(\xi_{k},s\right)\in C.
\]
 Then $h$ is Lipschitz-continuous and injective. By Proposition \ref{prop:hoemomorphic to the open annulus},
$h\left(\left[s_{k},s_{q}\right]\right)\subset\overline{S_{k}}$.
In detail, 
\[
h\left(s_{k}\right)\in\mathcal{O}_{k},\quad h\left(\left(s_{k},s_{p}\right)\right)\subset C_{k}^{p},\quad h\left(s_{p}\right)\in\mathcal{O}_{p},\quad h\left(\left(s_{p},s_{q}\right)\right)\subset C_{q}^{p},\quad\mbox{and}\quad h\left(s_{q}\right)\in\mathcal{O}_{q}.
\]
According to the next result, $h$ is $C^{1}$-smooth on $\left(s_{1},s_{q}\right)\setminus\left\{ s_{p}\right\} $.
 
\begin{prop}
$\pi_{2}^{-1}|_{\pi_{2}\left(C_{q}^{p}\cup C_{k}^{p}\right)}$ is
$C^{1}$-smooth.\end{prop}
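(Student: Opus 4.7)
The plan is to express $\pi_{2}^{-1}$ on $\pi_{2}(C_{q}^{p}\cup C_{k}^{p})$ as an explicit composition of maps already known to be $C^{1}$-smooth. Recall from the graph representation obtained earlier that every $\varphi\in C_{q}^{p}\cup C_{k}^{p}$ can be recovered from its projection via
\[
\varphi=\chi+w_{k}(\chi),\qquad \chi=P_{2}\varphi=J_{2}\pi_{2}\varphi,
\]
where $J_{2}:\mathbb{R}^{2}\to G_{2}$ is the linear isomorphism used to define $P_{2}=J_{2}\circ\pi_{2}$, and where $w_{k}$ has been shown to be $C^{1}$-smooth on the open subset $P_{2}C_{q}^{p}\cup P_{2}C_{k}^{p}$ of $G_{2}$. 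Consequently, for every $z\in\pi_{2}(C_{q}^{p}\cup C_{k}^{p})\subset\mathbb{R}^{2}$ one has the identity
\[
\pi_{2}^{-1}(z)=J_{2}z+w_{k}(J_{2}z).
\]

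First I would verify that the right-hand side is well defined on the domain in question. Since $J_{2}$ is a linear isomorphism onto $G_{2}$ and $J_{2}\circ\pi_{2}=P_{2}$, the map $J_{2}$ sends the open set $\pi_{2}(C_{q}^{p}\cup C_{k}^{p})\subset\mathbb{R}^{2}$ bijectively onto $P_{2}(C_{q}^{p}\cup C_{k}^{p})$, which is precisely the open subset of $G_{2}$ on which $w_{k}$ is continuously differentiable. The representation formula above then follows immediately by applying $\pi_{2}^{-1}$ to $z=\pi_{2}\varphi$ and unpacking $\chi+w_{k}(\chi)$.

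Next I would conclude the smoothness by composition. The map $z\mapsto J_{2}z$ is linear and bounded, hence $C^{1}$ on all of $\mathbb{R}^{2}$, and by the previous paragraph it sends the domain of $\pi_{2}^{-1}$ into the domain of smoothness of $w_{k}$. Therefore $z\mapsto w_{k}(J_{2}z)$ is $C^{1}$-smooth as a composition of two $C^{1}$-maps between Banach spaces, and $\pi_{2}^{-1}=J_{2}+w_{k}\circ J_{2}$ is $C^{1}$-smooth as a sum of $C^{1}$-maps.

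I do not foresee a genuine obstacle here: the substantive work has already been carried out in establishing that $C_{q}^{p}$ and $C_{k}^{p}$ are embedded $C^{1}$-submanifolds admitting the smooth graph representation over $P_{2}C_{q}^{p}\cup P_{2}C_{k}^{p}$, and that $\pi_{2}$ maps these sets homeomorphically onto the open annular regions $A_{q}^{p}$ and $A_{k}^{p}$. The present proposition is essentially a bookkeeping consequence of those facts together with the linearity of $J_{2}$; no new analytic ingredient is needed.
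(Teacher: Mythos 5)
Your proof is correct and follows essentially the same route as the paper's: both write $\pi_{2}^{-1}(z)=J_{2}z+w_{k}(J_{2}z)$ on the open set $\pi_{2}(C_{q}^{p}\cup C_{k}^{p})$ and conclude $C^{1}$-smoothness from the linearity of $J_{2}$ together with the already-established $C^{1}$-smoothness of $w_{k}$ on $P_{2}(C_{q}^{p}\cup C_{k}^{p})$.
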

\begin{proof}
We know from Proposition \ref{a detail of the above proof} that $\pi_{2}\left(C_{q}^{p}\cup C_{k}^{p}\right)$
is open in $\mathbb{R}^{2}$. 

For all $x\in\pi_{2}\left(C_{q}^{p}\cup C_{k}^{p}\right)$, the graph
representation of $C_{q}^{p}\cup C_{k}^{p}$ and the definition of
$P_{2}$ together give that 
\begin{align*}
C_{q}^{p}\cup C_{k}^{p}\ni\pi_{2}^{-1}\left(x\right) & =P_{2}\left(\pi_{2}^{-1}\left(x\right)\right)+w_{k}\left(P_{2}\left(\pi_{2}^{-1}\left(x\right)\right)\right)\\
 & =J_{2}\left(\pi_{2}\left(\pi_{2}^{-1}\left(x\right)\right)\right)+w_{k}\left(J_{2}\left(\pi_{2}\left(\pi_{2}^{-1}\left(x\right)\right)\right)\right)\\
 & =J_{2}\left(x\right)+w_{k}\left(J_{2}\left(x\right)\right).
\end{align*}
As $J_{2}$ defines a linear isomorphism from $\mathbb{R}^{2}$ to
$G_{2}$, it is continuously differentiable. In addition, $J_{2}\left(\pi_{2}\left(C_{q}^{p}\cup C_{k}^{p}\right)\right)=P_{2}\left(C_{q}^{p}\cup C_{k}^{p}\right)$,
and $w_{k}$ is continuously differentiable on the open subset $P_{2}\left(C_{q}^{p}\cup C_{k}^{p}\right)$
of $G_{2}$ by Proposition \ref{prop: smooth representation for connecting sets}.
Hence the statement follows.
\end{proof}
As a next step, we show the smoothness of $h$ at points $s_{k},s_{p}$
and $s_{q}$.$ $ We will need the following technical result, which
is part of Proposition 8.5 in  \cite{Krisztin-Walther-Wu}.
\begin{prop}
\label{prop:technical result 2}~

(i) Let $v:\mathbb{R}\rightarrow\mathbb{R}$ be a solution of Eq.~\eqref{eq:lin.eq.}
with $v_{0}\neq\hat{0}$. If $V\left(v_{t}\right)=2$ for all $t\in\mathbb{R}$,
then $v_{0}\in C_{r_{M}<}\cap C_{\leq1}$.

(ii) For every $\varphi\in C_{r_{M}<}\cap C_{\leq1}\setminus\left\{ \hat{0}\right\} $,
there is a solution $v:\mathbb{R}\rightarrow\mathbb{R}$ of Eq.~\eqref{eq:lin.eq.}
so that $v_{0}=\varphi$ and $V\left(v_{t}\right)=2$ for all $t\in\mathbb{R}$.
\end{prop}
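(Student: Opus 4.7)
The plan is to handle (ii) and (i) separately. Part (ii) reduces to linear algebra on the finite-dimensional invariant subspace $C_{r_{M}<}$, while (i) requires a compactness argument combined with the lower semicontinuity of $V$.

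For (ii), given $\varphi\in C_{r_{M}<}\cap C_{\leq1}\setminus\{\hat{0}\}$, I would define $v$ forward in time as the unique solution of \eqref{eq:lin.eq.} with $v_{0}=\varphi$, and extend it backward using that $C_{r_{M}<}$ is at most three-dimensional by \eqref{C_r_M<} and invariant under the linear solution semigroup $T_{t}$ of \eqref{eq:lin.eq.}. Since $T_{t}$ commutes with the monodromy $M=T_{\omega}$, it preserves every $M$-spectral subspace, and $M|_{C_{r_{M}<}}$ is invertible (its spectrum sits inside $\{|\lambda|>r_{M}\}$ and thus avoids $0$). Each $T_{t}|_{C_{r_{M}<}}$ is therefore an invertible endomorphism of a finite-dimensional space, providing a backward extension with $v_{t}\in C_{r_{M}<}$. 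The analogous $T_{t}$-invariance of $C_{\leq1}$ gives $v_{t}\in C_{r_{M}<}\cap C_{\leq1}\subset C_{<\lambda_{1}}$ for all $t\in\mathbb{R}$. Since $C_{r_{M}<}\setminus\{\hat{0}\}\subset V^{-1}(\{0,2\})$ and $C_{<\lambda_{1}}\cap V^{-1}(0)=\emptyset$ by \eqref{C_<lambda_1}, I conclude $V(v_{t})=2$ for every $t$.

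For (i), assume $v:\mathbb{R}\rightarrow\mathbb{R}$ is a nonzero solution of \eqref{eq:lin.eq.} with $V(v_{t})=2$ for every $t$. The first step is to establish $v_{0}\in C_{r_{M}<}$. Decompose $v_{0}=u+w$ along $C=C_{\leq r_{M}}\oplus C_{r_{M}<}$ and assume $u\neq\hat{0}$ for contradiction. I would then form the backward-shifted normalized sequence $\tilde{v}^{n}=v_{\cdot-n\omega}/\|v_{-n\omega}\|$; each $\tilde{v}^{n}$ solves the same variational equation because $r$ is $\omega$-periodic. Lemma \ref{lem:technical result 2} (applicable since $V(\tilde{v}_{t}^{n})\leq2$) pins $\|v_{-n\omega}\|$ at a controlled exponential rate, while $M^{-n}w$ decays strictly faster because $\min_{\lambda\in\sigma(M|_{C_{r_{M}<}})}|\lambda|>r_{M}$. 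This forces the $C_{r_{M}<}$-component of $\tilde{v}_{0}^{n}$ to vanish in the limit, so any subsequential limit $z_{0}$ (extracted via Lemma \ref{lem:technical result}) lies in the closed subspace $C_{\leq r_{M}}$ with $\|z_{0}\|=1$. By the lower semicontinuity of $V$ (Lemma \ref{lem: continuity_of_V}) and the monotonicity of $V$ along solutions (Lemma \ref{lem:4_properties_of_V}(i)), the limit solution $z$ satisfies $V(z_{t})\leq2$, hence $V(z_{0})\in\{0,2\}$ since $V$ is even-valued. This contradicts $C_{\leq r_{M}}\cap V^{-1}(\{0,2\})=\emptyset$. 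To then prove $v_{0}\in C_{\leq1}$, I would run a symmetric forward argument: decomposing $v_{0}$ along $C=C_{\leq1}\oplus C_{>1}$, a nonzero $C_{>1}$-component would cause the forward-normalized iterates to cluster in the direction $\mathbb{R}v_{1}$ of the positive leading Floquet eigenvector $v_{1}\gg\hat{0}$, forcing $V$ of the limit to vanish and contradicting $V(v_{t})=2$.

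The main obstacle lies in making the backward compactness of Part 1 of (i) rigorous. The delicate step is verifying that the $C_{r_{M}<}$-contribution to $\tilde{v}_{0}^{n}$ truly vanishes in the limit: this requires a careful comparison between the decay rate of $\|M^{-n}w\|$ on the finite-dimensional $C_{r_{M}<}$ (bounded by $(\min_{\lambda\in\sigma(M|_{C_{r_{M}<}})}|\lambda|)^{-n}$, strictly smaller than $r_{M}^{-n}$) and the possible growth of $\|v_{-n\omega}\|$ controlled by the iterated one-step estimate $\|v_{t-1}\|\leq K\|v_{t}\|$ from Lemma \ref{lem:technical result 2}. Once this decay balance is secured, the rest follows cleanly from the compactness and coefficient-convergence mechanism of Lemma \ref{lem:technical result} together with the properties of $V$, and the contradiction with the maximality in the definition of $r_{M}$ closes the argument.
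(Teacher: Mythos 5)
The paper itself offers no proof for this proposition — it is quoted directly from Proposition 8.5 of \cite{Krisztin-Walther-Wu} — so there is no internal argument to compare your attempt against. Judging the proposal on its own terms, the compactness skeleton for the first half of (i) is the right idea: normalize $\tilde{v}^{n}_{0}=v_{-n\omega}/\left\Vert v_{-n\omega}\right\Vert$, extract a limit $z$ through Lemma \ref{lem:technical result} (the coefficient is $\omega$-periodic, so $a^{n}\equiv a$), push lower semicontinuity to get $V\left(z_{0}\right)\in\left\{ 0,2\right\}$, and contradict $C_{\leq r_{M}}\cap V^{-1}\left(\left\{ 0,2\right\} \right)=\emptyset$ from \eqref{C_r_M<}. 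But your appeal to Lemma \ref{lem:technical result 2} is the wrong ingredient: that lemma furnishes an \emph{upper} bound on backward growth, while the estimate that actually makes the $C_{r_{M}<}$-component of $\tilde{v}^{n}_{0}$ vanish is a \emph{lower} bound on $\left\Vert v_{-n\omega}\right\Vert$ coming from $u\neq\hat{0}$. Writing $u^{(n)}$ for the $C_{\leq r_{M}}$-projection of $v_{-n\omega}$ gives $u=M^{n}u^{(n)}$, hence $\left\Vert u^{(n)}\right\Vert \geq\left\Vert u\right\Vert /\left\Vert M^{n}|_{C_{\leq r_{M}}}\right\Vert \geq c_{\varepsilon}\left(r_{M}+\varepsilon\right)^{-n}$ for small $\varepsilon$, and it is this lower bound (not the backward one-step estimate) that dominates $\left\Vert \left(M|_{C_{r_{M}<}}\right)^{-n}w\right\Vert =O\bigl(\left(\sigma_{\min}-\varepsilon\right)^{-n}\bigr)$, using $\sigma_{\min}=\min_{\lambda\in\sigma\left(M|_{C_{r_{M}<}}\right)}\left|\lambda\right|>r_{M}$.

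The more serious gap is in the second half of (i), the conclusion $v_{0}\in C_{\leq1}$. If $\dim C_{u}\geq2$ — which is exactly the case $r=p$ to which the paper applies this proposition — a nonzero component in $C_{>1}$ need not drive the normalized forward iterates toward $\pm v_{1}$. If the $v_{1}$-coefficient vanishes while the $v_{2}$-coefficient does not, the forward iterates cluster at $\pm v_{2}$, and $V\left(v_{2}\right)=2$ (this follows from \eqref{C_r_M<} and \eqref{C_<lambda_1}, since $v_{2}\in C_{r_{M}<}\cap C_{<\lambda_{1}}$), so no contradiction appears. Indeed $v_{0}=v_{2}$ already fails your argument: since $u^{v_{2}}_{t+\omega}=\lambda_{2}u^{v_{2}}_{t}$ and $V$ is scale invariant and monotone along solutions, $V\left(u^{v_{2}}_{t}\right)=V\left(v_{2}\right)=2$ for every $t\in\mathbb{R}$, yet $v_{2}\notin C_{\leq1}$. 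Your forward spectral-growth comparison therefore cannot close this half when more than one Floquet multiplier lies outside the unit circle, and a genuinely different mechanism (or an additional constraint on $v$) would be required.

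In (ii) the assertion that $T_{t}$ commutes with $M$ and hence preserves all $M$-spectral subspaces is false for the nonautonomous $\omega$-periodic variational equation; the solution operator from time $0$ to time $t$ intertwines $M$ with the monodromy based at time $t$, and these agree only when $t\in\omega\mathbb{Z}$. The correct route to $V\left(v_{t}\right)=2$ for all $t$ is: the finite-dimensional space $C_{r_{M}<}\cap C_{\leq1}$ is $M$-invariant and $M$ restricted to it is invertible, so $v_{n\omega}\in C_{r_{M}<}\cap C_{\leq1}\setminus\left\{ \hat{0}\right\}$ for every integer $n$; then $V\left(v_{n\omega}\right)=2$ by \eqref{C_r_M<} and \eqref{C_<lambda_1}; and finally the monotonicity in Lemma \ref{lem:4_properties_of_V}(i) sandwiches all intermediate $t$. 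The conclusion you want is thus reachable, but not by the commutation argument you state.
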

\vspace{0bp}

\begin{prop}
\label{prop:smoothness of h at three points}Let $*\in\left\{ k,p,q\right\} $
and set $r:\mathbb{R}\rightarrow\mathbb{R}$ to be the periodic solution
of Eq.~\eqref{eq:eq_general} with $\pi_{2}r_{0}=\left(\xi_{k},s_{*}\right)$. 

(i) There exists a unique continuously differentiable function $z=z^{*}:\mathbb{R}\rightarrow\mathbb{R}$
satisfying 
\begin{equation}
\begin{cases}
\dot{z}\left(t\right)=-z\left(t\right)+f'\left(r\left(t-1\right)\right)z\left(t-1\right), & t\in\mathbb{R},\\
z\left(-1\right)=1,\, z\left(0\right)=0,\\
V\left(z_{t}\right)=2, & t\in\mathbb{R}.
\end{cases}\label{eq:eq for the derivative}
\end{equation}

(ii) For every $\varepsilon>0$, there exists $\delta>0$ so that
for all $\chi\in\left[s_{k},s_{q}\right]$, $\nu\in\left[s_{k},s_{q}\right]$
with $\left|\chi-s_{*}\right|<\delta$, $\left|\nu-s_{*}\right|<\delta$
and $\chi\neq\nu$, 
\[
\left\Vert \frac{h\left(\chi\right)-h\left(\nu\right)}{\chi-\nu}-z_{0}\right\Vert <\varepsilon.
\]

(iii) $z_{0}$ and $\dot{r}_{0}$ are linearly independent.\end{prop}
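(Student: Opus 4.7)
The plan is to handle (i) by combining a compactness/limit construction with the Floquet-theoretic rigidity of Proposition \ref{prop:technical result 2}, then obtain (ii) by essentially the same limit construction applied to two-point quotients together with the uniqueness in (i), and read off (iii) from the boundary conditions on $z^*$.

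For existence in (i), I pick any sequence $(\chi_n)$ in $[s_k,s_q]\setminus\{s_*\}$ with $\chi_n\to s_*$ and set
\[
z^n(t)=\frac{x^{h(\chi_n)}(t)-r(t)}{\chi_n-s_*},\qquad t\in\mathbb{R}.
\]
Each $z^n$ solves a linear delay equation whose coefficient
$a_n(t)=\int_0^1 f'\!\bigl(s\,x^{h(\chi_n)}(t-1)+(1-s)r(t-1)\bigr)\,\mathrm{d}s$
converges uniformly on compact subsets of $\mathbb{R}$ to $f'(r(\cdot-1))$ and is uniformly trapped between two positive constants. Proposition \ref{prop:inverse of Pi_2 is Lip-cont} bounds $\|z^n_0\|$ uniformly in $n$ (since $\pi_2 h(\chi)=(\xi_k,\chi)$), and Proposition \ref{prop:V on S_k} yields $V(z^n_t)=2$ for every $n$ and $t$ because $x^{h(\chi_n)}_t,r_t\in\overline{S_k}$ are distinct. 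Lemma \ref{lem:technical result} then produces a subsequential $C^1_{\mathrm{loc}}$-limit $z$, satisfying the variational equation with $z(-1)=1$, $z(0)=0$; in particular $z_0\neq\hat{0}$, and Proposition \ref{prop:uniqueness of solutions} applied with the positive coefficient $f'(r(\cdot-1))$ forces $z_t\neq\hat{0}$ for every $t\in\mathbb{R}$.

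To promote $V(z_t)\le 2$ (which follows from lower semicontinuity, Lemma \ref{lem: continuity_of_V}) to $V(z_t)=2$ for every $t$, I use that $V$ is monotone nonincreasing along solutions (Lemma \ref{lem:4_properties_of_V}(i)) and takes values in $2\mathbb{N}\cup\{\infty\}$; if equality failed, $V(z_t)$ would eventually take the value $0$, so I can choose $t_0$ with $V(z_{t_0-3})=V(z_{t_0})=0$. Lemma \ref{lem:4_properties_of_V}(iii) then places $z_{t_0}$ in $R$, and the $C^1$-continuity statement of Lemma \ref{lem: continuity_of_V} forces $V(z^{n_j}_{t_0})\to 0$, contradicting $V(z^{n_j}_{t_0})=2$. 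Writing $z^*:=z$ establishes existence. Uniqueness follows from Proposition \ref{prop:technical result 2}: if $z_1,z_2$ both satisfy \eqref{eq:eq for the derivative}, part (i) puts $z_1{}_0,z_2{}_0\in C_{r_M<}\cap C_{\le 1}$, so $w_0:=(z_1-z_2)_0$ lies in the same subspace; were $w_0\neq\hat{0}$, part (ii) combined with Proposition \ref{prop:uniqueness of solutions} would force $V(w_t)=2$ for all $t$, but $w(-1)=w(0)=0$ and Lemma \ref{lem:4_properties_of_V}(ii) would then give $V(w_{-2})>V(w_0)=2$, a contradiction. Hence $w_0=\hat{0}$ and $z_1\equiv z_2$.

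Part (ii) is handled by running the same scheme in contrapositive on two-point quotients: if it failed, sequences $\chi_n,\nu_n\in[s_k,s_q]$ with $\chi_n\neq\nu_n$ and $\chi_n,\nu_n\to s_*$ would produce
$z^n(t)=(x^{h(\chi_n)}(t)-x^{h(\nu_n)}(t))/(\chi_n-\nu_n)$
with $\|z^n_0-z^*_0\|\ge\varepsilon$; the construction above yields a subsequential limit $z$ solving \eqref{eq:eq for the derivative}, which must coincide with $z^*$ by the uniqueness just proved, contradicting the lower bound. For (iii), $r$ was chosen at the start of the subsection with $\dot r(0)>0$, whereas $z^*_0(0)=0$ by construction, so any relation $z^*_0=c\dot r_0$ forces $c=0$ and hence $z^*_0=\hat{0}$, contradicting $z^*_0(-1)=1$. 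The main obstacle is the Lyapunov promotion $V(z^n_t)=2\Rightarrow V(z_t)=2$ for the limit: plain lower semicontinuity would allow the oscillation count to drop, and the argument must exploit both the regularization of Lemma \ref{lem:4_properties_of_V}(iii) and the sharper $C^1$-continuity of $V$ on $R$ from Lemma \ref{lem: continuity_of_V} to prevent this drop.
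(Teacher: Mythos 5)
Your proof is correct and follows essentially the same route as the paper's. The only organizational differences: you build $z$ from one-sided quotients $(x^{h(\chi_n)}-r)/(\chi_n-s_*)$ and then invoke the two-point version only for part (ii), whereas the paper runs the two-point quotients $(x^{h(\chi^n)}-x^{h(\nu^n)})/(\chi^n-\nu^n)$ from the outset, obtaining existence and the difference-quotient convergence simultaneously; and in the uniqueness step you close via Lemma \ref{lem:4_properties_of_V}(ii) (forcing $V(d_{-2})>2$) where the paper uses Lemma \ref{lem:4_properties_of_V}(iii) (forcing $d_0\in R$, impossible with $d(-1)=d(0)=0$) — both are equivalent one-liners once $V(d_t)\equiv 2$ is established from Proposition \ref{prop:technical result 2}. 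Your explicit check that $z_t\neq\hat0$ for every $t$ (via Proposition \ref{prop:uniqueness of solutions}) and your appeal to the Lipschitz bound of Proposition \ref{prop:inverse of Pi_2 is Lip-cont} for a uniform bound on $\|z^n_0\|$ are harmless additions the paper leaves implicit when citing Lemma \ref{lem:technical result}. One place worth spelling out a little more: when you pass to two-point quotients for (ii), the coefficient becomes the Hadamard integral $\int_0^1 f'\bigl(sx^{h(\chi_n)}(t-1)+(1-s)x^{h(\nu_n)}(t-1)\bigr)\,\mathrm{d}s$ and you need both segments distinct and in $\overline{S_k}$ to get $V(z^n_t)=2$; this goes through exactly as in your one-sided case, but saying so explicitly would close the gap between "the construction above" and what was actually constructed.
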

\begin{proof}
1. We prove that for all sequences $\left(\chi^{n}\right)_{n=0}^{\infty}$,
$\left(\nu^{n}\right)_{n=0}^{\infty}$ in $\left[s_{k},s_{q}\right]$
with $\chi^{n}\neq\nu^{n}$ for all $n\geq0$ and $\chi^{n}\rightarrow s_{*}$,
$\nu^{n}\rightarrow s_{*}$ as $n\rightarrow\infty$, there exist
a strictly increasing sequence $\left(n_{l}\right)_{l=0}^{\infty}$
and a continuously differentiable function $z=z^{*}:\mathbb{R}\rightarrow\mathbb{R}$
so that $z$ is a solution of the equation in \eqref{eq:eq for the derivative},
and 
\[
\lim_{l\rightarrow\infty}\frac{h\left(\chi^{n_{l}}\right)-h\left(\nu^{n_{l}}\right)}{\chi^{n_{l}}-\nu^{n_{l}}}=z_{0}.
\]

Consider the solutions $x^{n}:\mathbb{R}\rightarrow\mathbb{R}$ and
$y^{n}:\mathbb{R}\rightarrow\mathbb{R}$ of Eq.~\eqref{eq:eq_general}
with $x_{0}^{n}=h\left(\chi^{n}\right)$ and $y_{0}^{n}=h\left(\nu^{n}\right)$
for all indices $n\geq0$. Then $x^{n}\left(-1\right)=\chi^{n}$,
$y^{n}\left(-1\right)=\nu^{n}$ and $x^{n}\left(0\right)=y^{n}\left(0\right)=\xi_{k}$
for all $n\geq0$, moreover $x_{t}^{n}\in\overline{S_{k}}$ and $y_{t}^{n}\in\overline{S_{k}}$
for all $n\geq0$ and $t\in\mathbb{R}$. 

Introduce the functions 
\[
z^{n}=\frac{x^{n}-y^{n}}{\chi^{n}-\nu^{n}},\qquad n\geq0.
\]
It is clear that $z^{n}\left(0\right)=0$ and $z^{n}\left(-1\right)=1$
for all $n\geq0$. By Proposition \ref{prop:V on S_k}, $V\left(z_{t}^{n}\right)=2$
for all $n\geq0$ and $t\in\mathbb{R}$. In addition, $z^{n}$, $n\geq0$,
satisfies the equation 
\[
\dot{z}^{n}\left(t\right)=-z^{n}\left(t\right)+b^{n}\left(t\right)z^{n}\left(t-1\right)
\]
on $\mathbb{R}$, where the coefficient functions $b^{n}$ are defined
as 
\[
b^{n}:\mathbb{R}\ni t\mapsto\int_{0}^{1}f'\left(sx^{n}\left(t-1\right)+\left(1-s\right)y^{n}\left(t-1\right)\right)\mbox{d}s\in\left(0,\infty\right),\quad n\geq0.
\]

Since $\chi^{n}\rightarrow s_{*}$ and $\nu^{n}\rightarrow s_{*}$
as $n\rightarrow\infty$, $x_{0}^{n}\rightarrow r_{0}$ and $y_{0}^{n}\rightarrow r_{0}$
as $n\rightarrow\infty$. It follows that $b^{n}\rightarrow b$ as
$n\rightarrow\infty$ uniformly on compact subsets of $\mathbb{R}$,
where 
\[
b:\mathbb{R}\ni t\mapsto f'\left(r\left(t-1\right)\right)\in\left(0,\infty\right).
\]
As the global attractor is bounded, there are constants $b_{1}>b_{0}>0$
so that $b_{0}<b^{n}\left(t\right)<b_{1}$ for all $n\geq0$ and $t\in\mathbb{R}$.
Thus Lemma \ref{lem:technical result} ensures the existence of a
continuously differentiable function $z:\mathbb{R}\rightarrow\mathbb{R}$
and a subsequence $\left(z^{n_{l}}\right)_{l=0}^{\infty}$ of $\left(z^{n}\right)_{n=0}^{\infty}$
such that $z^{n_{l}}\rightarrow z$ and $\dot{z}^{n_{l}}\rightarrow\dot{z}$
as $l\rightarrow\infty$ uniformly on compact subsets of $\mathbb{R}$,
and $z$ is a solution of the equation in \eqref{eq:eq for the derivative}. 

It is obvious that $z\left(0\right)=0$ and $z\left(-1\right)=1$.

By the first part of Lemma \ref{lem: continuity_of_V}, $V\left(z_{t}\right)\leq2$
for all real $t$. Suppose $V\left(z_{t^{*}}\right)=0$ for some $t^{*}\in\mathbb{R}$.
Then $V\left(z_{t}\right)=0$ for all $t>t^{*}$ and $V\left(z_{t^{*}+3}\right)\in R$
by Lemma \ref{lem:4_properties_of_V}. The $C^{1}$-convergence of
$z^{n_{l}}$ to $z$ and the second part of Lemma \ref{lem: continuity_of_V}
then imply that $V\left(z_{t^{*}+3}^{n_{l}}\right)=0$ for all sufficiently
large index $l$, which is contradiction. So $V\left(z_{t}\right)=2$
for all real $t$.

2. Suppose  that $\hat{z}:\mathbb{R}\rightarrow\mathbb{R}$ is also
a continuously differentiable function satisfying \eqref{eq:eq for the derivative},
and $z\neq\hat{z}$. Then  Proposition \ref{prop:uniqueness of solutions}
yields that $z_{0}\neq\hat{z}_{0}$. Function $d=z-\hat{z}$ is a
solution of 
\[
\begin{cases}
\dot{d}\left(t\right)=-d\left(t\right)+f'\left(r\left(t-1\right)\right)d\left(t-1\right), & t\in\mathbb{R},\\
d\left(-1\right)=d\left(0\right)=0.
\end{cases}
\]
Since $z_{0},\hat{z}_{0}\in C_{r_{M}<}\cap C_{\leq1}$ by Proposition
\ref{prop:technical result 2} (i), $d_{0}\in C_{r_{M}<}\cap C_{\leq1}\setminus\left\{ \hat{0}\right\} $.
Then it follows from Proposition \ref{prop:technical result 2} (ii)
that $V\left(d_{t}\right)=2$ for all $t\in\mathbb{R}$. So $d_{0}\in R$
by Lemma \ref{lem:4_properties_of_V}.(iii), which is impossible as
$d\left(-1\right)=d\left(0\right)=0.$ 

These results imply both (i) and (ii).

3. Solution $r$ has been defined to be a time translate of $x^{k}$,
$p$ or $q$ with $r\left(0\right)=\xi_{k}$. Hence $\xi_{k}$ is
not an extremum of $r$, and thus $\dot{r}\left(0\right)\neq0$ by
Proposition \ref{pro: monotone_type}. Consequently, $z_{0}\notin\mathbb{R}\dot{r}_{0}\setminus\left\{ \hat{0}\right\} $,
and $z_{0}$ and $\dot{r}_{0}$ are linearly independent.\end{proof}
\begin{cor}
Function $h$ is $C^{1}$-smooth on $\left[s_{k},s_{q}\right]$.
\end{cor}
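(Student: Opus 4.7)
The plan is to combine the previous proposition (smoothness of $\pi_2^{-1}$ on the connecting sets $C_q^p\cup C_k^p$) with Proposition \ref{prop:smoothness of h at three points}. The former already yields the $C^1$-smoothness of $h$ on the open arcs $(s_k,s_p)$ and $(s_p,s_q)$, since $h(s)=\pi_2^{-1}(\xi_k,s)$ is the composition of a smooth affine embedding $s\mapsto(\xi_k,s)\in\mathbb{R}^2$ with $\pi_2^{-1}$. So the entire task reduces to showing $C^1$-smoothness at the three distinguished parameter values $s_k$, $s_p$, $s_q$.

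First I would establish differentiability at each of these three points. Fix $*\in\{k,p,q\}$ and let $z^*:\mathbb{R}\to\mathbb{R}$ be the unique solution of \eqref{eq:eq for the derivative} provided by Proposition \ref{prop:smoothness of h at three points}.(i). Applying part (ii) of that proposition with $\chi=s_*$ and $\nu\to s_*$ (approaching from inside $[s_k,s_q]$, which at $s_k$ and $s_q$ just means a one-sided limit), one obtains
\[
\lim_{\nu\to s_*}\Bigl\|\frac{h(\nu)-h(s_*)}{\nu-s_*}-z^*_0\Bigr\|=0,
\]
so $h$ is differentiable at $s_*$ (one-sidedly at $s_k$ and $s_q$, two-sidedly at $s_p$, where the limits from both sides coincide because they share the value $z^p_0$) with $h'(s_*)=z^*_0$.

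Next I would verify that $h'$ is continuous at $s_*$. Given $\varepsilon>0$, take the $\delta>0$ furnished by Proposition \ref{prop:smoothness of h at three points}.(ii). Pick any $\chi\in[s_k,s_q]\setminus\{s_*\}$ with $|\chi-s_*|<\delta/2$; then $\chi$ lies on one of the open arcs where $h$ is already known to be $C^1$, so the derivative $h'(\chi)$ exists as the limit of $(h(\chi)-h(\nu))/(\chi-\nu)$ as $\nu\to\chi$ with $\nu\ne\chi$. Restricting this approach to $\nu$ with $|\nu-s_*|<\delta$, each difference quotient lies within $\varepsilon$ of $z^*_0$, hence so does the limit $h'(\chi)$. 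This proves $h'(\chi)\to z^*_0=h'(s_*)$ as $\chi\to s_*$, giving continuity of $h'$ at $s_*$.

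Combining the $C^1$-smoothness on the two open arcs with the existence and continuity of $h'$ at the three exceptional points $s_k$, $s_p$, $s_q$ yields that $h$ is $C^1$-smooth on the whole interval $[s_k,s_q]$. No step here is delicate; the substantive work has already been absorbed into Proposition \ref{prop:smoothness of h at three points} (in particular the uniform version (ii), whose proof via Lemma \ref{lem:technical result} and the rigidity supplied by $V\equiv 2$ is the real analytic input).
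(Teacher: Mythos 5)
Your proposal is correct and is exactly the argument the paper leaves implicit: $C^{1}$-smoothness on the open arcs $(s_{k},s_{p})$ and $(s_{p},s_{q})$ follows from the smoothness of $\pi_{2}^{-1}$ there, while the uniform difference-quotient estimate of Proposition \ref{prop:smoothness of h at three points}.(ii) supplies both the existence of $h'$ at $s_{k}$, $s_{p}$, $s_{q}$ and the continuity of $h'$ at those points. Nothing further is needed.
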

We extend the definition of $h$ to the half line $\left(\xi_{k},\infty\right)$:
we define $\hat{h}:\left(\xi_{k},\infty\right)\rightarrow C$ as $\hat{h}\left(s\right)=h\left(s\right)$
for $s\in\left[s_{k},s_{q}\right]$, 
\[
\hat{h}\left(s\right)=h\left(s_{k}\right)+\left(s-s_{k}\right)z_{0}^{k}\quad\mbox{for }s\in\left(\xi_{k},s_{k}\right)
\]
and 
\[
\hat{h}\left(s\right)=h\left(s_{q}\right)+\left(s-s_{q}\right)z_{0}^{q}\quad\mbox{for }s>s_{q},
\]
where $z_{0}^{k}$ and $z_{0}^{q}$ are given by Proposition \ref{prop:smoothness of h at three points}.
Then $\hat{h}$ is $C^{1}$-smooth with $\hat{h}'\left(s_{k}\right)=z_{0}^{k}$,
$\hat{h}'\left(s_{p}\right)=z_{0}^{p}$, and $\hat{h}'\left(s_{q}\right)=z_{0}^{q}$.
According to the choice of $s_{k}<s_{p}<s_{q}$ and Proposition \ref{prop:hoemomorphic to the open annulus},
\begin{equation}
\hat{h}\left(s_{k}\right)\in\mathcal{O}_{k},\quad\hat{h}\left(\left(s_{k},s_{p}\right)\right)\subset C_{k}^{p},\quad\hat{h}\left(s_{p}\right)\in\mathcal{O}_{p},\quad\hat{h}\left(\left(s_{p},s_{q}\right)\right)\subset C_{q}^{p}\mbox{ and }\hat{h}\left(s_{q}\right)\in\mathcal{O}_{q}.\label{eq:properties of hat h}
\end{equation}
Observe that $\pi_{2}\hat{h}\left(s\right)=\left(\xi_{k},s\right)$
for all $s>\xi_{k}$, hence the map $\left(\xi_{k},\infty\right)\ni s\mapsto\pi_{2}\hat{h}\left(s\right)\in\mathbb{R}^{2}$
is injective on $\left(\xi_{k},\infty\right)$ and has range in $L_{k}=\left\{ \left(\xi_{k},x_{2}\right)\in\mathbb{R}^{2}:\, x_{2}>\xi_{k}\right\} $.
So it follows from $\pi_{2}\overline{S_{k}}=\overline{A_{k,q}}$ that
\begin{equation}
\hat{h}\left(\left(\xi_{k},s_{k}\right)\cup\left(s_{q},\infty\right)\right)\cap\overline{S_{k}}=\emptyset.\label{properties of hat h 2}
\end{equation}

As $J_{2}:\mathbb{R}^{2}\rightarrow G_{2}$ is a linear isomorphism
and $P_{2}=J_{2}\circ\pi_{2}$, Proposition \ref{prop:hoemomorphic to the open annulus}
shows that 
\[
P_{2}C_{k}^{p}=\mbox{ext}\left(P_{2}\mathcal{O}_{k}\right)\cap\mbox{int}\left(P_{2}\mathcal{O}_{p}\right),\quad P_{2}C_{q}^{p}=\mbox{ext}\left(P_{2}\mathcal{O}_{p}\right)\cap\mbox{int}\left(P_{2}\mathcal{O}_{q}\right),
\]
\[
P_{2}S_{k}=\mbox{ext}\left(P_{2}\mathcal{O}_{k}\right)\cap\mbox{int}\left(P_{2}\mathcal{O}_{q}\right)
\]
 and 
\begin{equation}
P_{2}\overline{S_{k}}=P_{2}\mathcal{O}_{k}\cup\left(\mbox{ext}\left(P_{2}\mathcal{O}_{k}\right)\cap\mbox{int}\left(P_{2}\mathcal{O}_{q}\right)\right)\cup P_{2}\mathcal{O}_{q}.\label{P_2 (closure of S_k)}
\end{equation}

As $P_{2}\mathcal{O}_{k}$ and $P_{2}\mathcal{O}_{q}$ are the images
of simple closed $C^{1}$-curves, the boundary $P_{2}\mathcal{O}_{k}\cup P_{2}\mathcal{O}_{q}$
of the domain $P_{2}\overline{S_{k}}$ of $w_{k}$ is a one-dimensional
$C^{1}$-submanifold of $G_{2}$. The next result shows that $w_{k}$
is continuously differentiable at the points of $P_{2}\mathcal{O}_{p}$,
and it is smooth at the points of $P_{2}\mathcal{O}_{k}\cup P_{2}\mathcal{O}_{q}$
in the sense that $w_{k}$ can be extended to continuously differentiable
functions on open neighborhoods of the boundary points. 
\begin{prop}
\label{extension of w_k}~

(i) To each $\varphi\in\mathcal{O}_{k}\cup\mathcal{O}_{q}$ there
corresponds an open neighborhood $U$ of $P_{2}\varphi$ in $G_{2}$
and a continuously differentiable map $w_{k}^{e}:U\rightarrow P_{2}^{-1}\left(0\right)$
such that 
\begin{equation}
w_{k}^{e}|_{U\cap P_{2}\overline{S_{k}}}=w_{k}|_{U\cap P_{2}\overline{S_{k}}},\label{w_k and w_k^e coincide}
\end{equation}
and $U\setminus\left\{ P_{2}x_{t}^{\varphi}:\, t\in\mathbb{R}\right\} $
is the union of open connected disjoint subsets $U^{+}$ and $U^{-}$
with the following property:

$U^{-}\cap P_{2}\overline{S_{k}}=\emptyset$ and $U^{+}\subset P_{2}C_{k}^{p}$
if $\varphi\in\mathcal{O}_{k}$,

$U^{-}\subset P_{2}C_{q}^{p}$ and $U^{+}\cap P_{2}\overline{S_{k}}=\emptyset$
if $\varphi\in\mathcal{O}_{q}$.

(ii) The map $w_{k}$ is continuously differentiable at the points
of $P_{2}\mathcal{O}_{p}$. All $\varphi\in\mathcal{O}_{p}$ has an
open neighborhood $U$ of $P_{2}\varphi$ in $G_{2}$ such that $U\setminus P_{2}\mathcal{O}_{p}$
is the union of open connected disjoint subsets $U^{+}$ and $U^{-}$
with $U^{-}\subset P_{2}C_{k}^{p}$ and $U^{+}\subset P_{2}C_{q}^{p}$.\end{prop}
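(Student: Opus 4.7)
The plan is to prove both parts by constructing, for each $\varphi\in\mathcal O_k\cup\mathcal O_p\cup\mathcal O_q$, an explicit two-dimensional $C^1$-submanifold $M_\varphi$ of $C$ through $\varphi$ on which $P_2$ restricts to a local $C^1$-diffeomorphism onto an open neighborhood of $P_2\varphi$ in $G_2$; the extension $w_k^e$ is then the graph function of $M_\varphi$. Write $r:\mathbb R\to\mathbb R$ for the relevant periodic solution of Eq.~\eqref{eq:eq_general} (a time translate of $x^k$, $p$, or $q$), normalized so that $r(0)=\xi_k$ and $\dot r(0)>0$, so that $\hat h(s_*)=r_0$ for the appropriate $s_*\in\{s_k,s_p,s_q\}$. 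Pick $\tau\in[0,\omega)$ with $r_\tau=\varphi$ and an integer $N$ so large that $T:=\tau+N\omega>2$, and set
\[
F:V\ni(s,t)\mapsto\Phi\bigl(T+t,\hat h(s)\bigr)\in C
\]
on a small neighborhood $V$ of $(s_*,0)$. Since $T+t>1$ and $\hat h$ is $C^1$, $F$ is $C^1$ on $V$ with $F(s_*,0)=\varphi$.

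The first technical step is to show that the tangent vectors $\partial_tF(s_*,0)=\dot\varphi$ and $\partial_sF(s_*,0)=D_2\Phi(T,r_0)z_0^*=z^*_T$ are linearly independent \emph{and} that $\pi_2$ is injective on their span, where $z^*\in\{z^k,z^p,z^q\}$ is the function from Proposition \ref{prop:smoothness of h at three points}. Linear independence is immediate from the uniqueness statement of Proposition \ref{prop:uniqueness of solutions}: a nontrivial combination $u=\alpha\dot r+\beta z^*$ with $u_T=\hat 0$ would vanish identically, contradicting $z^*(0)=0\neq\dot r(0)$. For the $\pi_2$-injectivity, observe that $\dot r_0\in C_c$ and $z_0^*\in C_{r_M<}\cap C_{\leq 1}$ by Proposition \ref{prop:technical result 2}.(i), while $r_M<1$ places $C_c$ inside $C_{r_M<}$; hence every nontrivial $u_0=\alpha\dot r_0+\beta z_0^*$ lies in $C_{r_M<}\cap C_{\leq 1}\setminus\{\hat 0\}$, and Proposition \ref{prop:technical result 2}.(ii) together with uniqueness forces $V(u_t)=2$ for every $t\in\mathbb R$. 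If in addition $\pi_2(\alpha\dot\varphi+\beta z^*_T)=(0,0)$, then $u(T)=u(T-1)=0$, and Lemma \ref{lem:4_properties_of_V}.(ii) yields $V(u_{T-2})>2$, contradicting the constant value $2$. Therefore $F$ is an immersion at $(s_*,0)$, and $P_2$ defines a $C^1$-diffeomorphism from a neighborhood of $\varphi$ in $M_\varphi:=F(V)$ onto an open neighborhood $U$ of $P_2\varphi$ in $G_2$, so the graph function $w_k^e:U\to P_2^{-1}(0)$ is $C^1$.

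The remaining step is to identify $M_\varphi\cap\overline{S_k}$ and verify $w_k^e=w_k$ on the overlap. For $s$ on the side of $s_*$ corresponding to $\overline{S_k}$---namely $s\in[s_k,s_p]$ when $\varphi\in\mathcal O_k$, $s\in[s_p,s_q]$ when $\varphi\in\mathcal O_q$, and both sides of $s_p$ when $\varphi\in\mathcal O_p$---one has $\hat h(s)\in\overline{S_k}$, and forward invariance of $\overline{S_k}$ under $\Phi$ gives $F(s,t)\in\overline{S_k}$; on this portion $w_k^e(P_2F(s,t))=F(s,t)-P_2F(s,t)=w_k(P_2F(s,t))$. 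For $s$ on the opposite side, $\hat h(s)\notin\overline{S_k}$ by \eqref{properties of hat h 2}, and since $\Phi(T+t,\cdot)$ is injective on $C$ whereas $\overline{S_k}$ is backward invariant under $\Phi_{\mathcal A}$, also $F(s,t)\notin\overline{S_k}$. The two components $U^\pm$ of $U\setminus\{P_2x_t^\varphi:t\in\mathbb R\}$ correspond to these two sides of $s_*$; for part (i) the inclusion $U^-\cap P_2\overline{S_k}=\emptyset$ (respectively $U^+\cap P_2\overline{S_k}=\emptyset$) follows from $P_2\overline{S_k}\subset P_2\mathcal O_k\cup A_{k,q}\cup P_2\mathcal O_q$ together with the fact that the interior of $P_2\mathcal O_k$ (resp.\ the exterior of $P_2\mathcal O_q$) is disjoint from $\overline{A_{k,q}}$, while the opposite component sits inside $P_2C_k^p$ (resp.\ $P_2C_q^p$). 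For part (ii), both components of $U\setminus P_2\mathcal O_p$ lie in $P_2S_k\subset P_2\overline{S_k}$, so the equality $w_k^e=w_k$ extends by continuity of $w_k$ on $P_2\overline{S_k}$ to all of $U$, which proves the $C^1$-smoothness of $w_k$ at $P_2\varphi$. The most delicate point in the scheme is the $\pi_2$-injectivity of Step~2, which hinges on the spectral placement of $z_0^*$ in $C_{r_M<}\cap C_{\leq 1}$ and the strict drop in Lemma \ref{lem:4_properties_of_V}.(ii); the rest reduces to bookkeeping with the invariance and nonordering properties already established for $\overline{S_k}$.
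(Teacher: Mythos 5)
Your proposal is correct and mirrors the paper's proof at the structural level: both construct the two‑parameter $C^1$-map $(s,t)\mapsto\Phi\bigl(T+t,\hat h(s_*+s)\bigr)$ through $\varphi$, show $\pi_2$ (hence $P_2$) is injective on its two‑dimensional tangent space $E_1=\mathbb R\dot\varphi\oplus\mathbb R z^*_T$, obtain $w_k^e$ as the graph function of the resulting local surface over $G_2$, and then identify the components $U^\pm$ via the Jordan‑curve decomposition induced by $P_2\mathcal O_r$. The one genuine divergence is in the $\pi_2$-injectivity step: the paper invokes Proposition \ref{prop:Pi_2 on tangent vectors} (whose proof rests on the Lipschitz bound for $\pi_2^{-1}$ of Proposition \ref{prop:inverse of Pi_2 is Lip-cont}, by applying it to curves in $E_1$ that can be approached one‑sidedly through $\overline{S_k}$), whereas you go directly through the Floquet spectral decomposition: you place $\dot r_0\in C_c\subset C_{r_M<}\cap C_{\leq 1}$ and $z_0^*\in C_{r_M<}\cap C_{\leq 1}$ via Proposition \ref{prop:technical result 2}.(i), use part (ii) together with Proposition \ref{prop:uniqueness of solutions} to force $V(u_t)\equiv 2$ along every nontrivial $u=\alpha\dot r+\beta z^*$, and then derive the contradiction from the strict drop in Lemma \ref{lem:4_properties_of_V}.(ii) when $\pi_2 u_T=(0,0)$. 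This is a legitimate and somewhat more intrinsic alternative — it uses the spectral placement of $z_0^*$ rather than the a priori injectivity of $\pi_2$ on $\overline{S_k}$ — at the cost of re‑deriving $V(u_t)\equiv 2$ from scratch rather than reusing the machinery already built for $\overline{S_k}$. Your auxiliary backward‑invariance observation that $F(s,t)\notin\overline{S_k}$ on the outward side of $s_*$ is valid but, as you then note in the final sentence, is not what actually yields $U^\pm\cap P_2\overline{S_k}=\emptyset$; that inclusion still comes from the Jordan‑curve topology of $P_2\mathcal O_k$ and $P_2\mathcal O_q$ relative to $P_2\overline{S_k}$, which is exactly the paper's step 5. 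Also note that your linear‑independence argument via Proposition \ref{prop:uniqueness of solutions} (evaluating the supposedly vanishing combination at $t=0,-1$) is equivalent to, and a hair cleaner than, the paper's appeal to injectivity of $D_2\Phi(T,r_0)$ combined with Proposition \ref{prop:smoothness of h at three points}.(iii).
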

\begin{proof}
The proof below verifies assertions (i) and (ii) simultaneously.

1. Let $r:\mathbb{R}\rightarrow\mathbb{R}$ be one of the periodic
solutions $x^{k}$, $p$ or $q$ shifted in time so that $r\left(0\right)=\xi_{k}$
and $\dot{r}\left(0\right)>0$ (that is $\pi_{2}r_{0}\in L_{k}$),
and fix $*\in\left\{ k,p,q\right\} $ accordingly. Set $s_{*}=r\left(-1\right)$.
Let $\varphi\in\mathcal{O}_{r}=\left\{ r_{t}:\, t\in\mathbb{R}\right\} $
and choose $T>1$ so that $\varphi=\Phi\left(T,r_{0}\right)$. For
all $0<\varepsilon<\min\left\{ T-1,s_{k}-\xi_{k}\right\} $, the map
\[
a:\left(-\varepsilon,\varepsilon\right)\times\left(-\varepsilon,\varepsilon\right)\ni\left(t,s\right)\mapsto\Phi\left(T+t,\hat{h}\left(s_{*}+s\right)\right)\in C
\]
is $C^{1}$-smooth with 
\[
Da\left(0,0\right)\mathbb{R}^{2}=\mathbb{R}\dot{\varphi}\oplus\mathbb{R}D_{2}\Phi\left(T,r_{0}\right)z_{0}^{*},
\]
 where $z^{*}:\mathbb{R}\rightarrow\mathbb{R}$ is the solution of
\eqref{eq:eq for the derivative} given by Proposition \ref{prop:smoothness of h at three points}.
The vectors $\dot{\varphi}=D_{2}\Phi\left(T,r_{0}\right)\dot{r}_{0}$
and $D_{2}\Phi\left(T,r_{0}\right)z_{0}^{*}$ are linearly independent
because $D_{2}\Phi\left(T,r_{0}\right)$ is injective, and $\dot{r}_{0}$
and $z_{0}^{*}$ are linearly independent by Proposition \ref{prop:smoothness of h at three points}
(iii). 

Therefore Proposition \ref{prop:smoothness of submanifolds} implies
that for all small $\varepsilon>0$, the set $a\left(\left(-\varepsilon,\varepsilon\right)\times\left(-\varepsilon,\varepsilon\right)\right)$
is a two-dimensional $C^{1}$-submanifold of $C$ with 
\[
T_{\varphi}a\left(\left(-\varepsilon,\varepsilon\right)\times\left(-\varepsilon,\varepsilon\right)\right)=Da\left(0,0\right)\mathbb{R}^{2}.
\]
Then $a\left(\left(-\varepsilon,\varepsilon\right)\times\left(-\varepsilon,0\right)\right)$
and $a\left(\left(-\varepsilon,\varepsilon\right)\times\left(0,\varepsilon\right)\right)$
are also two-dimensional $C^{1}$-submanifolds of $C$. 

2. Set $E_{1}=\mathbb{R}\dot{\varphi}\oplus\mathbb{R}D_{2}\Phi\left(T,r_{0}\right)z_{0}^{*}$
and let $E_{2}$ be a closed complement of $E_{1}$ in $C$. We claim
that for small $\varepsilon>0$, there exist an open neighborhood
$N_{\varepsilon}$ of $\hat{0}$ in $E_{1}$ and a continuously differentiable
function $b:N_{\varepsilon}\rightarrow E_{2}$ so that $b\left(\hat{0}\right)=0$,
$Db\left(\hat{0}\right)=0$ and $a\left(\left(-\varepsilon,\varepsilon\right)\times\left(-\varepsilon,\varepsilon\right)\right)$
is the shifted graph of $b$: 
\[
a\left(\left(-\varepsilon,\varepsilon\right)\times\left(-\varepsilon,\varepsilon\right)\right)=\varphi+\left\{ \chi+b\left(\chi\right):\,\chi\in N_{\varepsilon}\right\} .
\]

Let $\mbox{Pr}_{E_{1}}$ denote the projection of $C$ onto $E_{1}$
along $E_{2}$, and define $j:C\rightarrow C$ by $j\left(\chi\right)=\chi-\varphi$
for all $\chi\in C$. Then 
\[
D\left(\mbox{Pr}_{E_{1}}\circ j\circ a\right)\left(0,0\right)\mathbb{R}^{2}=\mbox{Pr}_{E_{1}}\circ Da\left(0,0\right)\mathbb{R}^{2}=E_{1}.
\]
Hence the inverse function theorem guarantees that $\mbox{Pr}_{E_{1}}\circ j\circ a$
is a local $C^{1}$-diffeomorphism, i.e.~for for small $\varepsilon>0$,
$\mbox{Pr}_{E_{1}}\circ j\circ a$ maps $\left(-\varepsilon,\varepsilon\right)\times\left(-\varepsilon,\varepsilon\right)$
injectively onto an open neighborhood $N_{\varepsilon}$ of $\hat{0}$
in $E_{1}$, and the inverse $\left(\mbox{Pr}_{E_{1}}\circ j\circ a\right)^{-1}$
of $\left(-\varepsilon,\varepsilon\right)\times\left(-\varepsilon,\varepsilon\right)\ni\left(t,s\right)\mapsto\mbox{Pr}_{E_{1}}\circ j\circ a\left(t,s\right)\in N_{\varepsilon}$
is $C^{1}$-smooth. In consequence, $\mbox{Pr}_{E_{1}}$ maps $j\circ a\left(\left(-\varepsilon,\varepsilon\right)\times\left(-\varepsilon,\varepsilon\right)\right)$
onto $N_{\varepsilon}$ injectively, and there exists a map $b:N_{\varepsilon}\rightarrow E_{2}$
so that $b\left(\hat{0}\right)=0$ and 
\[
j\circ a\left(\left(-\varepsilon,\varepsilon\right)\times\left(-\varepsilon,\varepsilon\right)\right)=\left\{ \chi+b\left(\chi\right):\,\chi\in N_{\varepsilon}\right\} .
\]
The smoothness of $b$ follows from 
\[
b=\left(\mbox{id}-\mbox{Pr}_{E_{1}}\right)\circ j\circ a\circ\left(\mbox{Pr}_{E_{1}}\circ j\circ a\right)^{-1}.
\]
$Db\left(\hat{0}\right)=0$ because $Da\left(0,0\right)\mathbb{R}^{2}=E_{1}$.

3. Next we show that the continuously differentiable map 
\[
c:E_{1}\supset N_{\varepsilon}\ni\chi\mapsto P_{2}\left(\varphi+\chi+b\left(\chi\right)\right)\in G_{2}
\]
 is a local $C^{1}$-diffeomorphism. 

Note that $Dc\left(\hat{0}\right)\chi=P_{2}\chi$ for all $\chi\in E_{1}$.
So it suffices to confirm that $P_{2}|_{E_{1}}$ is injective. $E_{1}$
is spanned by the derivatives $D\gamma\left(0\right)1$ of the curves
\[
\gamma:\left(-1,1\right)\ni s\mapsto a\left(c_{1}s,c_{2}s\right)\in C,
\]
where $\left(c_{1},c_{2}\right)\in\mathbb{R}^{2}$. From \eqref{eq:properties of hat h}
and the invariance of $\overline{S_{k}}$ it follows that if $s_{*}+c_{2}s\in\left[s_{k},s_{q}\right]$,
then $\gamma\left(s\right)\in\overline{S_{k}}$. Proposition \ref{prop:Pi_2 on tangent vectors}
gives that $\pi_{2}\gamma'\left(0\right)\neq\left(0,0\right)$ if
$\gamma'\left(0\right)\neq\hat{0}$. Thus $\pi_{2}|_{E_{1}}$ is injective.
As $J_{2}$ is a linear isomorphism, $P_{2}=J_{2}\circ\pi_{2}$ is
also injective on $E_{1}$.

In consequence, a positive constant $\varepsilon_{0}$ can be given
such that $c$ is a $C^{1}$-diffeomorphism from $N_{\varepsilon_{0}}$
onto an open neighborhood $U$ of $P_{2}\varphi$ in $G_{2}$. Define
$c^{-1}$ to be the inverse of $N_{\varepsilon_{0}}\ni\chi\mapsto c\left(\chi\right)\in U$. 

Constant $\varepsilon_{0}$ can be chosen so that $\varepsilon_{0}<\min\left\{ T-1,s_{k}-\xi_{k},s_{p}-s_{k},s_{q}-s_{p}\right\} $
also holds.

4. Notice that 
\[
U=P_{2}a\left(\left(-\varepsilon_{0},\varepsilon_{0}\right)\times\left(-\varepsilon_{0},\varepsilon_{0}\right)\right),
\]
and set 
\begin{align*}
U^{-}= & P_{2}a\left(\left(-\varepsilon_{0},\varepsilon_{0}\right)\times\left(-\varepsilon_{0},0\right)\right),\\
U^{0}= & P_{2}a\left(\left(-\varepsilon_{0},\varepsilon_{0}\right)\times\left\{ 0\right\} \right),\\
U^{+}= & P_{2}a\left(\left(-\varepsilon_{0},\varepsilon_{0}\right)\times\left(0,\varepsilon_{0}\right)\right).
\end{align*}

By steps $2$ and $3$ it is clear that $P_{2}$ restricted to $a\left(\left(-\varepsilon_{0},\varepsilon_{0}\right)\times\left(-\varepsilon_{0},\varepsilon_{0}\right)\right)$
defines a $C^{1}$-diffeomorphism from $a\left(\left(-\varepsilon_{0},\varepsilon_{0}\right)\times\left(-\varepsilon_{0},\varepsilon_{0}\right)\right)$
onto $U$. As $a\left(\left(-\varepsilon_{0},\varepsilon_{0}\right)\times\left(-\varepsilon_{0},0\right)\right)$
and $a\left(\left(-\varepsilon_{0},\varepsilon_{0}\right)\times\left(0,\varepsilon_{0}\right)\right)$
are two-dimensional $C^{1}$-submanifolds of $C$, the arcwise connected
sets $U^{-}$ and $U^{+}$ are open in $G_{2}$.

5. As $\hat{h}\left(s_{*}\right)=r_{0}\in\mathcal{O}_{r}$, we have
$U^{0}\subset P_{2}\mathcal{O}_{r}$. 

Assume that $\varphi\in\mathcal{O}_{k}$, that is $r$ is the time
translate of $x^{k}$, and $s_{*}=s_{k}$. Then the relations $\varepsilon_{0}<s_{p}-s_{k}$,
$\hat{h}\left(\left(s_{k},s_{p}\right)\right)\subset C_{k}^{p}$ and
the invariance of $C_{k}^{p}$ guarantee that $U^{+}\subset P_{2}C_{k}^{p}\subset\mbox{ext}\left(P_{2}\mathcal{O}_{k}\right)$.
As $U^{0}\subset P_{2}\mathcal{O}_{k}$ belongs to the boundary of
both connected components of $G_{2}\setminus P_{2}\mathcal{O}_{k}$,
$U^{-}$ and $U^{+}$ belong to different connected components of
$G_{2}\setminus P_{2}\mathcal{O}_{k}$. So $U^{-}\subset\mbox{int}\left(P_{2}\mathcal{O}_{k}\right)$.
Then \eqref{P_2 (closure of S_k)} implies that $U^{-}\cap P_{2}\overline{S_{k}}=\emptyset$.

In cases $\varphi\in\mathcal{O}_{p}$ and $\varphi\in\mathcal{O}_{q}$,
it is similar to show that $U^{-}\subset P_{2}C_{k}^{p}$, $U^{+}\subset P_{2}C_{q}^{p}$
and $U^{-}\subset P_{2}C_{q}^{p}$, $U^{+}\cap P_{2}\overline{S_{k}}=\emptyset$,
respectively. We omit the details.

6. Introduce the $C^{1}$-map 
\[
w_{k}^{e}:U\ni\eta\mapsto\varphi+c^{-1}\left(\eta\right)+b\left(c^{-1}\left(\eta\right)\right)-\eta\in C.
\]
For all $\eta\in U$, $c^{-1}\left(\eta\right)\in N_{\varepsilon_{0}}$,
and thus 
\begin{align*}
P_{2}\left(\varphi+c^{-1}\left(\eta\right)+b\left(c^{-1}\left(\eta\right)\right)-\eta\right) & =P_{2}\left(\varphi+c^{-1}\left(\eta\right)+b\left(c^{-1}\left(\eta\right)\right)\right)-P_{2}\eta\\
 & =c\left(c^{-1}\left(\eta\right)\right)-\eta=0.
\end{align*}
So $w_{k}^{e}$ maps $U$ into $P_{2}^{-1}\left(0\right)$.

7. It remains to confirm \eqref{w_k and w_k^e coincide}. Let $\eta\in U\cap P_{2}\overline{S_{k}}$
be arbitrary. Then $\eta=P_{2}a\left(t,s\right)=P_{2}\Phi\left(T+t,\hat{h}\left(s_{*}+s\right)\right)$
for some $t\in\left(-\varepsilon_{0},\varepsilon_{0}\right)$ and
$s\in\left(-\varepsilon_{0},\varepsilon_{0}\right)$ satisfying $s_{*}+s\in\left[s_{k},s_{q}\right]$.
As $\hat{h}\left(\left[s_{k},s_{q}\right]\right)\subset\overline{S_{k}}$
and $\overline{S_{k}}$ is invariant, $a\left(t,s\right)\in\overline{S_{k}}$.
Then due to the injectivity of $P_{2}$ on $\overline{S_{k}}$, 
\[
\eta+w_{k}\left(\eta\right)=a\left(t,s\right)
\]
follows. On the other hand, relation 
\[
\eta+w_{k}^{e}\left(\eta\right)=\varphi+c^{-1}\left(\eta\right)+b\left(c^{-1}\left(\eta\right)\right)\in a\left(\left(-\varepsilon_{0},\varepsilon_{0}\right)\times\left(-\varepsilon_{0},\varepsilon_{0}\right)\right)
\]
and the injectivity of $P_{2}$ on $a\left(\left(-\varepsilon_{0},\varepsilon_{0}\right)\times\left(-\varepsilon_{0},\varepsilon_{0}\right)\right)$
implies that 
\[
\eta+w_{k}^{e}\left(\eta\right)=a\left(t,s\right).
\]
Thus $w_{k}\left(\eta\right)=w_{k}^{e}\left(\eta\right)$.
\end{proof}
Recall from Proposition \ref{prop:graph representation for the closure of S_k},that
there exist a projection $P_{2}$ from $C$ onto a two-dimensional
subspace $G_{2}$ of $C$ and a map $w_{k}:P_{2}\overline{S_{k}}\rightarrow P_{2}^{-1}\left(0\right)$
so that 
\[
\overline{S_{k}}=\left\{ \chi+w_{k}\left(\chi\right):\,\chi\in P_{2}\overline{S_{k}}\right\} .
\]
This induces a global graph representation for any subset $W$ of
$\overline{S_{k}}$: 
\[
W=\left\{ \chi+w_{k}\left(\chi\right):\,\chi\in P_{2}W\right\} .
\]

\begin{proof}[Proof of Theorem \ref{main_theorem_4}.(i)]

We already know from Propositions \ref{prop: smooth connecting sets},
\ref{prop: smooth representation for connecting sets} and \ref{prop:hoemomorphic to the open annulus}
that the connecting sets $C_{q}^{p}$ and $C_{k}^{p}$ are two-dimensional
$C^{1}$-submanifolds of $\mathcal{W}^{u}\left(\mathcal{O}_{p}\right)$
with smooth global graph representations, furthermore $C_{q}^{p}$,
$C_{k}^{p}$ and $S_{k}$ are homeomorphic to the open annulus $A^{\left(1,2\right)}$.

As $J_{2}:\mathbb{R}^{2}\rightarrow G_{2}$ is a linear isomorphism
and $P_{2}=J_{2}\circ\pi_{2}$, Proposition \ref{prop:hoemomorphic to the open annulus}
shows that $P_{2}S_{k}$ is open in $G_{2}$. In addition, Propositions
\ref{prop: smooth representation for connecting sets} and \ref{extension of w_k}.(ii)
together give that $w_{k}$ is $C^{1}$-smooth on $P_{2}S_{k}=P_{2}\left(C_{k}^{p}\cup\mathcal{O}_{p}\cup C_{q}^{p}\right)$.
So the global graph representation 
\[
S_{k}=\left\{ \chi+w_{k}\left(\chi\right):\,\chi\in P_{2}S_{k}\right\} 
\]
given for $S_{k}$ is smooth. This property with $S_{k}\subset\mathcal{W}^{u}\left(\mathcal{O}_{p}\right)$
guarantees that $S_{k}$ is a two-dimensional $C^{1}$-submanifold
of $\mathcal{W}^{u}\left(\mathcal{O}_{p}\right)$ \cite{Lang}. 

\end{proof}

\begin{proof}[Proof of Theorem \ref{main_theorem_4}.(ii)]

Recall that Propositions \ref{prop:the structure of the closure of S_k}
and \ref{cor: closures of connecting sets} have confirmed the equalities
\[
\overline{C_{q}^{p}}=\mathcal{O}_{p}\cup C_{q}^{p}\cup\mathcal{O}_{q},\qquad\overline{C_{k}^{p}}=\mathcal{O}_{p}\cup C_{k}^{p}\cup\mathcal{O}_{k}
\]
 and
\[
\overline{S_{k}}=\mathcal{O}_{k}\cup S_{k}\cup\mathcal{O}_{q}.
\]

As $J_{2}:\mathbb{R}^{2}\rightarrow G_{2}$ is a linear isomorphism
and $P_{2}=J_{2}\circ\pi_{2}$, Proposition \ref{prop:hoemomorphic to the open annulus}
yields that $P_{2}\overline{S_{k}}$ is the closure of the open set
$P_{2}S_{k}$, and its boundary is $P_{2}\left(\mathcal{O}_{k}\cup\mathcal{O}_{q}\right).$
The sets $P_{2}\mathcal{O}_{k}$ and $P_{2}\mathcal{O}_{q}$ are the
images of simple closed $C^{1}$-curves, hence the boundary is a one-dimensional
$C^{1}$-submanifold of $G_{2}$. By the proof of Theorem \ref{main_theorem_4}.(i),
$w_{k}$ is continously differentiable on $P_{2}S_{k}$. Proposition
\ref{extension of w_k}.(i) in addition verifies that all points of
$P_{2}\left(\mathcal{O}_{k}\cup\mathcal{O}_{q}\right)$ have open
neighborhoods in $G_{2}$ on which $w_{k}$ can be extended to $C^{1}$-smooth
functions. Summing up, the representation given for $\overline{S_{k}}$
is a two-dimensional smooth global graph representation with boundary.
It is analogous to show that the induced representations of $\overline{C_{q}^{p}}$
and $\overline{C_{k}^{p}}$ are two-dimensional global graph representations
with boundary, therefore we omit the details. It follows immediately
that $\overline{C_{q}^{p}}$, $\overline{C_{k}^{p}}$ and $\overline{S_{k}}$
are two-dimensional $C^{1}$-submanifolds of $C$ with boundary \cite{Lang}.

The assertion that $\overline{C_{q}^{p}}$, $\overline{C_{k}^{p}}$
and $\overline{S_{k}}$ are homeomorphic to the closed annulus $A^{\left[1,2\right]}$
follows from Proposition \ref{prop:hoemomorphic to the open annulus}.

\end{proof}

\begin{center}
\textbf{~}
\par\end{center}

\begin{center}
5.6 \textit{$S_{1}$ and $S_{-1}$ are indeed separatrices}
\par\end{center}

\smallskip{}

To complete the proof of Theorem \ref{main_theorem_4}, it remains
to show that $S_{-1}$ and $S_{1}$ are separatrices in the sense
that $C_{2}^{p}$ is above $S_{1}$, $C_{0}^{p}$ is between $S_{-1}$
and $S_{1}$, furthermore $C_{-2}^{p}$ is below $S_{-1}$. The underlying
idea of the following proof is that the assertion restricted to a
local unstable manifold $\mathcal{W}_{loc}^{u}\left(P_{Y},p_{0}\right)$
is true, and the monotonicity of the semiflow can be used to extend
the statement for $\mathcal{W}^{u}\left(\mathcal{O}_{p}\right).$

Recall that for the periodic orbit $\mathcal{O}_{p}$, the unstable
space $C_{u}$ is two-dimensional: 
\[
C_{u}=\left\{ c_{1}v_{1}+c_{2}v_{2}:\, c_{1},c_{2}\in\mathbb{R}\right\} ,
\]
where $v_{1}$ is a positive eigenfunction corresponding the leading
real Floquet multiplier $\lambda_{1}>1$, and $v_{2}$ is an eigenfunction
corresponding the Floquet multiplier $\lambda_{2}$ with $1<\lambda_{2}<\lambda_{1}$.
Also recall that a local unstable manifold $\mathcal{W}_{loc}^{u}\left(P_{Y},p_{0}\right)$
of $P_{Y}$ at $p_{0}$ is a graph of a $C^{1}$-map: there exist
convex open neighborhoods $N_{s}$, $N_{u}$ of $\hat{0}$ in $C_{s}$,
$C_{u}$, respectively, and a $C^{1}$-map $w_{u}:N_{u}\rightarrow C_{s}$
with range in $N_{s}$ so that $w_{u}\left(\hat{0}\right)=\hat{0}$,
$Dw_{u}\left(\hat{0}\right)=0$ and 
\[
\mathcal{W}_{loc}^{u}\left(P_{Y},p_{0}\right)=\left\{ p_{0}+\chi+w_{u}\left(\chi\right):\,\chi\in N_{u}\right\} .
\]

Choose $\alpha\in\left(0,1\right)$ so small that $\left(-\alpha,\alpha\right)v_{1}+\left(-\alpha,\alpha\right)v_{2}\subset N_{u}$
and 
\begin{equation}
\sup_{\chi\in\left(-\alpha,\alpha\right)v_{1}+\left(-\alpha,\alpha\right)v_{2}}\left\Vert Dw_{u}\left(\chi\right)\right\Vert <\frac{1}{2}.\label{small derivative}
\end{equation}
Introduce the sets 
\[
A_{s}=\left\{ p_{0}+\chi+w_{u}\left(\chi\right):\,\chi\in\left(-\alpha,\alpha\right)v_{1}+sv_{2}\right\} \subset\mathcal{W}_{loc}^{u}\left(P_{Y},p_{0}\right),\quad s\in\left(-\alpha,\alpha\right).
\]

The elements of $A_{s}$, $s\in\left(-\alpha,\alpha\right)$, are
ordered pointwisely. Indeed, if $s\in\left(-\alpha,\alpha\right)$
is fixed and $a,b\in\left(-\alpha,\alpha\right)$ are arbitrary with
$a<b$, then \eqref{small derivative} implies that 
\[
\left[b-a+\int_{a}^{b}Dw_{u}\left(uv_{1}+sv_{2}\right)\mbox{d}u\right]v_{1}\gg\hat{0},
\]
 and thus 
\[
p_{0}+\left(av_{1}+sv_{2}\right)+w_{u}\left(av_{1}+sv_{2}\right)\ll p_{0}+\left(bv_{1}+sv_{2}\right)+w_{u}\left(bv_{1}+sv_{2}\right).
\]

Introduce the subsets 
\[
A_{s}^{k,+}=\left\{ \varphi\in A_{s}:\, x_{t}^{\varphi}\gg\hat{\xi_{k}}\mbox{ for some }t\geq0\right\} 
\]
and 
\[
A_{s}^{k,-}=\left\{ \varphi\in A_{s}:\, x_{t}^{\varphi}\ll\hat{\xi_{k}}\mbox{ for some }t\geq0\right\} 
\]
of $A_{s}$ for all $s\in\left(-\alpha,\alpha\right)$. Then $A_{s}^{k,+}$
and $A_{s}^{k,-}$ are open and disjoint in $A_{s}$. It is also clear
from the monotonicity of the semiflow that for any $\varphi^{-}\in A_{s}^{k,-}$
and $\varphi^{+}\in A_{s}^{k,+}$, $\varphi^{-}\ll\varphi^{+}$. 

We claim that there exists $\beta\in\left(0,\alpha\right]$ so that
$A_{s}^{k,+}$ and $A_{s}^{k,-}$ are nonempty for all $s\in\left(-\beta,\beta\right)$.
Choose 
\[
\eta_{1}=p_{0}-\frac{\alpha}{2}v_{1}+w_{u}\left(-\frac{\alpha}{2}v_{1}\right)\in A_{0}\mbox{ and }\eta_{2}=p_{0}+\frac{\alpha}{2}v_{1}+w_{u}\left(\frac{\alpha}{2}v_{1}\right)\in A_{0}.
\]
Then $\eta_{1}\ll p_{0}\ll\eta_{2}$. By Theorem 4.1 in Chapter 5
of \cite{Smith}, there is an open and dense set of initial functions
in $C$ so that the corresponding solutions converge to equilibria.
In consequence, there exist $\eta_{1}^{+},\eta_{1}^{-},\eta_{2}^{+},\eta_{2}^{-}\in C$
such that 
\[
\eta_{1}^{-}\ll\eta_{1}\ll\eta_{1}^{+}\ll p_{0}\ll\eta_{2}^{-}\ll\eta_{2}\ll\eta_{2}^{+},
\]
 and for both $i=1$ and $i=2$, $x_{t}^{\eta_{i}^{-}}$ and $x_{t}^{\eta_{i}^{+}}$
converge to one of the equilibrium points as $t\rightarrow\infty$.
Since $\max_{t\in\mathbb{R}}p\left(t\right)>\xi_{1}$, $\min_{t\in\mathbb{R}}p\left(t\right)<\xi_{-1}$
and 
\[
x_{t}^{\eta_{1}^{-}}\ll x_{t}^{\eta_{1}^{+}}\ll p_{t}\ll x_{t}^{\eta_{2}^{-}}\ll x_{t}^{\eta_{2}^{+}}\quad\mbox{for all }t\geq0
\]
 by Proposition \ref{pro:monotone_dynamical_system}, we obtain that
\[
x_{t}^{\eta_{1}^{-}}\rightarrow\hat{\xi}_{-2},\ x_{t}^{\eta_{1}^{+}}\rightarrow\hat{\xi}_{-2},\ x_{t}^{\eta_{2}^{-}}\rightarrow\hat{\xi}_{2}\mbox{ and }x_{t}^{\eta_{2}^{+}}\rightarrow\hat{\xi}_{2}\ \mbox{as}\ t\rightarrow\infty.
\]
 Using again Proposition \ref{pro:monotone_dynamical_system}, we
get that $x_{t}^{\eta_{1}}\rightarrow\hat{\xi}_{-2}$ and $x_{t}^{\eta_{2}}\rightarrow\hat{\xi}_{2}$
as $t\rightarrow\infty$, therefore $x_{t_{1}}^{\eta_{1}}\ll\hat{\xi}_{k}$
and $x_{t_{2}}^{\eta_{2}}\gg\hat{\xi}_{k}$ for some $t_{1},t_{2}\geq0$.
The continuity of the semiflow $\Phi$ implies that there exist open
balls $B_{1},B_{2}$ centered at $\eta_{1},\eta_{2}$, respectively,
such that $x_{t_{1}}^{\varphi}\ll\hat{\xi}_{k}$ for all $\varphi\in B_{1}$
and $x_{t_{2}}^{\varphi}\gg\hat{\xi}_{k}$ for all $\varphi\in B_{2}$.
It follows that there exists $\beta\in\left(0,\alpha\right]$ so that
$A_{s}^{k,+}$ and $A_{s}^{k,-}$ are nonempty for all $s\in\left(-\beta,\beta\right)$. 

Consequently, the set $A_{s}\setminus\left(A_{s}^{+}\cup A_{s}^{-}\right)$
is nonempty for all $s\in\left(-\beta,\beta\right)$, i.e., $A_{s}$
has at least one element in $S_{k}$. On the other hand, the nonordering
property of $S_{k}$ stated in Proposition \ref{prop:nonordeing of S_k}
implies that $A_{s}\cap S_{k}$ contains at most one element, i.e.,
$A_{s}\cap S_{k}$ is a singleton for all $s\in\left(-\beta,\beta\right)$.

Note that for any $s\in\left(-\beta,\beta\right),$ $\varphi^{-}\in A_{s}^{k,-}$,
$\varphi^{+}\in A_{s}^{k,+}$ and $\psi\in A_{s}\cap S_{k}$, $\varphi^{-}\ll\psi\ll\varphi^{+}$. 

Also observe that if $\left(\varphi_{n}\right)_{-\infty}^{0}$ is
a trajectory of $P_{Y}$ in $\mathcal{W}_{loc}^{u}\left(P_{Y},p_{0}\right)$
with $\varphi_{n}\rightarrow p_{0}$ as $n\rightarrow-\infty$, then
for all indeces with sufficiently large absolute value, $\varphi_{n}\in A_{s}$
for some $s\in\left(-\beta,\beta\right)$.

An element $\varphi$ of $\mathcal{W}^{u}\left(\mathcal{O}_{p}\right)$
is said to be above $S_{k}$ if $\psi\in S_{k}$ can be given with
$\psi\ll\varphi$, and $\varphi\in\mathcal{W}^{u}\left(\mathcal{O}_{p}\right)$
is said to be below $S_{k}$ if there exists $\psi\in S_{k}$ with
$\varphi\ll\psi$. An element of $\mathcal{W}^{u}\left(\mathcal{O}_{p}\right)$
is between $S_{-1}$ and $S_{1}$ if it is below $S_{1}$ and above
$S_{-1}$.

A subset $W$ of $\mathcal{W}^{u}\left(\mathcal{O}_{p}\right)$ is
above (below) $S_{k}$, if all elements of $W$ are above (below)
$S_{k}$. A subset $W$ of $\mathcal{W}^{u}\left(\mathcal{O}_{p}\right)$
is between $S_{-1}$ and $S_{1}$ if it is below $S_{1}$ and above
$S_{-1}$. 
\begin{prop}
For each $\varphi\in\mathcal{W}^{u}\left(\mathcal{O}_{p}\right)$,
exactly one of the following cases holds:

(i) $\varphi\in S_{k}$,

(ii) $\varphi$ is above $S_{k}$,

(ii) $\varphi$ is below $S_{k}$.\end{prop}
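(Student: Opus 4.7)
The plan is to reduce the trichotomy to a simple trichotomy inside one of the one-parameter families $A_s$ via the monotonicity of the semiflow, and to exclude overlaps using the nonordering of $\overline{S_k}$ (Proposition \ref{prop:nonordeing of S_k}).

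First I would verify mutual exclusivity of the three alternatives. Suppose $\varphi\in S_k$ were also above $S_k$. Then there would exist $\psi\in S_k$ with $\psi\ll\varphi$, which contradicts $\varphi,\psi\in\overline{S_k}$ and Proposition \ref{prop:nonordeing of S_k}. The same argument rules out $\varphi\in S_k$ being below $S_k$. If $\varphi$ were simultaneously above and below $S_k$, then $\psi_1\ll\varphi\ll\psi_2$ with $\psi_1,\psi_2\in S_k$, so $\psi_1\ll\psi_2$, again contradicting nonordering. Thus at most one of the three cases can occur.

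Next I would establish existence. Given $\varphi\in\mathcal{W}^u(\mathcal{O}_p)$, write $\varphi=\Phi(T,\psi)$ with $\psi\in\mathcal{W}_{loc}^u(P_Y,p_0)$ using the forward-extension formula \eqref{unstable set is forwad extension of unstable manifold-1}. By replacing $\psi$ with $P_Y^{-n}(\psi)$ for $n$ large (and adjusting $T$ accordingly), I may assume $T\geq 2$ and that $\psi=p_0+\chi+w_u(\chi)$ with $\chi=c_1v_1+c_2v_2$ satisfying $|c_1|,|c_2|<\beta$; in particular $\psi\in A_{c_2}$ for some $c_2\in(-\beta,\beta)$. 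By the preceding discussion, $A_{c_2}\cap S_k$ is the singleton $\{\psi^*\}$, and $A_{c_2}$ is totally ordered by $\ll$ along its $v_1$-parameter. Hence exactly one of $\psi=\psi^*$, $\psi\ll\psi^*$, $\psi\gg\psi^*$ holds.

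If $\psi=\psi^*\in S_k$, then $\varphi=\Phi(T,\psi^*)\in S_k$ by invariance of $S_k$. If $\psi\ll\psi^*$, then Proposition \ref{pro:monotone_dynamical_system} yields $\Phi(T,\psi)\ll\Phi(T,\psi^*)$, and since $\Phi(T,\psi^*)\in S_k$, $\varphi$ is below $S_k$. Symmetrically, if $\psi^*\ll\psi$, then $\varphi$ is above $S_k$. This exhausts all possibilities and completes the argument. The only delicate point is the preparatory step of pushing $\psi$ far enough back along its $P_Y$-trajectory so that it lies in some $A_s$ with $s\in(-\beta,\beta)$; this is exactly the observation noted just before the proposition statement that, along a backward $P_Y$-trajectory converging to $p_0$, the iterates eventually enter $A_s$ for some $s\in(-\beta,\beta)$.
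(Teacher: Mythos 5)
Your proof is correct and takes essentially the same approach as the paper: both rule out overlaps via the nonordering of $\overline{S_{k}}$ (Proposition \ref{prop:nonordeing of S_k}), pull $\varphi$ back along a backward $P_{Y}$-trajectory in $\mathcal{W}_{loc}^{u}\left(P_{Y},p_{0}\right)$ until it lands in some slice $A_{s}$ with $s\in\left(-\beta,\beta\right)$, compare it to the unique element of $A_{s}\cap S_{k}$ using the pointwise order on $A_{s}$, and transport the comparison forward by the monotonicity of the semiflow together with the invariance of $S_{k}$. The only cosmetic differences are that you fold the case $\varphi\in S_{k}$ into the trichotomy via invariance rather than treating $\varphi\in\mathcal{W}^{u}\left(\mathcal{O}_{p}\right)\setminus S_{k}$ separately, and your assumption $T\geq2$ is unnecessary since Proposition \ref{pro:monotone_dynamical_system} already gives $x_{t}^{\varphi}\ll x_{t}^{\psi}$ for all $t\geq0$ when $\varphi\ll\psi$.
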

\begin{proof}
It is clear that $\varphi\in\mathcal{W}^{u}\left(\mathcal{O}_{p}\right)$
cannot be below and above $S_{k}$ at the same time because then there
would exist $\psi_{1},\psi_{2}\in S_{k}$ with $\psi_{1}\ll\varphi\ll\psi_{2}$,
which would contradict Proposition \ref{prop:nonordeing of S_k}.
For the same reason, $\varphi\in S_{k}$ cannot be above (or below)
$S_{k}$. So at most one of the above cases holds for all $\varphi\in\mathcal{W}^{u}\left(\mathcal{O}_{p}\right)$.

Let $\varphi\in\mathcal{W}^{u}\left(\mathcal{O}_{p}\right)\backslash S_{k}$
be arbitrary. By \eqref{unstable set is forwad extension of unstable manifold-1}
and the characterization of $\mathcal{W}_{loc}^{u}\left(P_{Y},p_{0}\right)$,
there exists a sequence $\left(t_{n}\right)_{-\infty}^{0}$ with $t_{n}\rightarrow-\infty$
as $n\rightarrow-\infty$ so that $\left\{ x_{t_{n}}^{\varphi}\right\} _{-\infty}^{0}$
is a trajectory of $P_{Y}$ in $\mathcal{W}_{loc}^{u}\left(P_{Y},p_{0}\right)$
and $x_{t_{n}}^{\varphi}\rightarrow p_{0}$ as $n\rightarrow-\infty$.
So an index $n^{*}\leq0$ can be given with $t_{n^{*}}<0$ such that
$x_{t_{n^{*}}}^{\varphi}\in A_{s}$ for some $s\in\left(-\beta,\beta\right)$.
Let $\psi$ denote the single element of $A_{s}\cap S_{k}$. As the
elements of $A_{s}$ are ordered pointwisely, we obtain that $x_{t_{n^{*}}}^{\varphi}\ll\psi$
or $x_{t_{n^{*}}}^{\varphi}\gg\psi$ or $x_{t_{n^{*}}}^{\varphi}=\psi$.
Observe that $x_{t_{n^{*}}}^{\varphi}=\psi$ is impossible: as $\psi\in S_{k}$
and $S_{k}$ is invariant, $x_{t_{n^{*}}}^{\varphi}=\psi$ would imply
that $\varphi=x_{-t_{n^{*}}}^{\psi}\in S_{k}$, which contradicts
the choice of $\varphi$. If $x_{t_{n^{*}}}^{\varphi}\ll\psi$, then
the invariance of $S_{k}$ and the monotonicity of the semiflow imply
that $x_{-t_{n^{*}}}^{\psi}\in S_{k}$ and $\varphi\ll x_{-t_{n^{*}}}^{\psi}$,
that is, $\varphi$ is below $S_{k}$. If $x_{t_{n^{*}}}^{\varphi}\gg\psi$,
then $\varphi\gg x_{-t_{n^{*}}}^{\psi}$ and $\varphi$ is above $S_{k}$.
\end{proof}
Now we are able to complete the proof of Theorem \ref{main_theorem_4}.

\begin{proof}[Proof of Theorem \ref{main_theorem_4}.(iii)]

1. First we show that for any $\varphi\in\mathcal{W}^{u}\left(\mathcal{O}_{p}\right)\setminus\mathcal{O}_{p}$,
$\varphi\in C_{2}^{p}$ if and only if $\varphi$ is above $S_{1}$. 

Suppose that $\varphi\in C_{2}^{p}$. Then $x_{t_{1}}^{\varphi}\gg\hat{\xi}_{1}$
for some $t_{1}>0$. Choose $t_{2}>0$ in addition so that $x_{-t_{2}}^{\varphi}\in A_{s}$
for some $s\in\left(-\beta,\beta\right)$. Necessarily $x_{-t_{2}}^{\varphi}\in A_{s}^{1,+}$,
and thereby $x_{-t_{2}}^{\varphi}\gg\psi$, where $\psi$ is the single
element of $A_{s}\cap S_{1}$. Then $x_{t_{2}}^{\psi}\in S_{1}$ and
$\varphi\gg x_{t_{2}}^{\psi}$, that is, $\varphi$ is above $S_{1}$. 

Conversely, suppose that $\varphi\in\mathcal{W}^{u}\left(\mathcal{O}_{p}\right)\setminus\mathcal{O}_{p}$
is above $S_{1}$, and choose $\psi\in S_{1}$ with $\varphi\gg\psi$.
Recall that there is an open and dense set of initial functions in
$C$ so that the corresponding solutions are convergent (Theorem 4.1
in Chapter 5 of \cite{Smith}). Hence $\eta_{1}\in C$, $\eta_{2}\in C$
and $\eta_{3}\in C$ can be given such that 
\[
\psi\ll\eta_{1}\ll\eta_{2}\ll\varphi\ll\eta_{3},
\]
furthermore $x_{t}^{\eta_{1}}$, $x_{t}^{\eta_{2}}$ and $x_{t}^{\eta_{3}}$
converge to equilibria as $t\rightarrow\infty$. By the monotonicity
of the semiflow, 
\begin{equation}
x_{t}^{\psi}\ll x_{t}^{\eta_{1}}\ll x_{t}^{\eta_{2}}\ll x_{t}^{\varphi}\ll x_{t}^{\eta_{3}}\quad\mbox{for all }t\geq0,\label{***}
\end{equation}
hence the oscillation of $x^{\psi}$ around $\hat{\xi}_{1}$ implies
that $\omega\left(\eta_{i}\right)$ is either $\left\{ \hat{\xi}_{1}\right\} $
or $\left\{ \hat{\xi}_{2}\right\} $ for all $i\in\left\{ 1,2,3\right\} $.
If $\omega\left(\eta_{2}\right)=\left\{ \hat{\xi}_{1}\right\} $,
then necessarily $\omega\left(\eta_{1}\right)=\omega\left(\eta_{2}\right)=\left\{ \hat{\xi}_{1}\right\} $,
which contradicts Proposition \ref{prop:solutions converging to xi_k}.
So $\omega\left(\eta_{2}\right)=\left\{ \hat{\xi}_{2}\right\} $.
Then \eqref{***} guarantees that $x_{t}^{\eta_{3}}\rightarrow\hat{\xi}_{2}$
and thus $x_{t}^{\varphi}\rightarrow\hat{\xi}_{2}$ as $t\rightarrow\infty$.

2. It is similar to show that for any $\varphi\in\mathcal{W}^{u}\left(\mathcal{O}_{p}\right)\setminus\mathcal{O}_{p}$,
$\varphi\in C_{-2}^{p}$ if and only if $\varphi$ is below $S_{-1}$.

3. Relations $S_{k}=C_{k}^{p}\cup\mathcal{O}_{p}\cup C_{q}^{p}$,
$k\in\left\{ -1,1\right\} $, imply the equalities $C_{q}^{p}\cup\mathcal{O}_{p}=S_{-1}\cap S_{1}$
and $C_{k}^{p}=S_{k}\backslash S_{-k}$ for both $k\in\left\{ -1,1\right\} $. 

4. It remains to verify that for $\varphi\in\mathcal{W}^{u}\left(\mathcal{O}_{p}\right)\setminus\mathcal{O}_{p}$,
$\omega\left(\varphi\right)=\left\{ \hat{0}\right\} $ if and only
if  $\varphi$ is between $S_{-1}$ and $S_{1}$. Recall that for
both $k\in\left\{ -1,1\right\} $ and each $\varphi\in\mathcal{W}^{u}\left(\mathcal{O}_{p}\right)$,
$\varphi$ is either below $S_{k}$, or it is above $S_{k}$, or it
is an element of $S_{k}$. For this reason, $\varphi\in\mathcal{W}^{u}\left(\mathcal{O}_{p}\right)\setminus\mathcal{O}_{p}$
is between $S_{-1}$ and $S_{1}$ if and only if all the following
three properties hold: $\varphi\notin S_{-1}\cup S_{1}$, $\varphi$
is not above $S_{1}$ and $\varphi$ is not below $S_{-1}$. So by
the above results, $\varphi\in\mathcal{W}^{u}\left(\mathcal{O}_{p}\right)\setminus\mathcal{O}_{p}$
is between $S_{-1}$ and $S_{1}$ if and only if 
\[
\varphi\in\mathcal{W}^{u}\left(\mathcal{O}_{p}\right)\setminus\left\{ \mathcal{O}_{p}\cup C_{-2}^{p}\cup C_{-1}^{p}\cup C_{q}^{p}\cup C_{1}^{p}\cup C_{2}^{p}\right\} =C_{0}^{p}.
\]
\end{proof}

\end{document}